 \documentclass[11pt, letterpaper]{amsart}
\usepackage{amsmath,amsthm, amssymb, amsfonts, amscd}
\usepackage{verbatim}

\usepackage[all]{xy}
\usepackage{amsxtra}
\usepackage{url}
\usepackage{enumerate}
\DeclareFontEncoding{OT2}{}{} 
\DeclareTextFontCommand{\textcyr}{\fontencoding{OT2}
     \fontfamily{wncyr}\fontseries{m}\fontshape{n}\selectfont}



\theoremstyle{plain}

\newtheorem{lemma}{Lemma}[section]
\newtheorem{proposition}[lemma]{Proposition}
\newtheorem{theorem}[lemma]{Theorem}
\newtheorem{corollary}[lemma]{Corollary}

\theoremstyle{definition}

\newtheorem{example}[lemma]{Example}
\newtheorem{definition}[lemma]{Definition}

\newtheorem{notation}[lemma]{Notation}

\theoremstyle{remark}

\newtheorem{remark}[lemma]{Remark}

\numberwithin{equation}{section}




\DeclareFontEncoding{OT2}{}{} 
\DeclareTextFontCommand{\textcyr}{\fontencoding{OT2}
    \fontfamily{wncyr}\fontseries{m}\fontshape{n}\selectfont}

\begin{document}

\newcommand{\m}{^{\times}}

\newcommand{\Gspl}{{G}_{\rm spl}}
\newcommand{\Ginn}{{G}_{\rm inn}}
\newcommand{\Gout}{{G}_{\rm out}}
\newcommand{\Tspl}{{T}_{\rm spl}}
\newcommand{\Bspl}{{B}_{\rm spl}}

\newcommand{\ff}{F^{\times}}
\newcommand{\fs}{F^{\times 2}}
\newcommand{\llg}{\longrightarrow}
\newcommand{\tens}{\otimes}
\newcommand{\inv}{^{-1}}
\newcommand{\Dfn}{\stackrel{\mathrm{def}}{=}}
\newcommand{\iso}{\stackrel{\sim}{\to}}
\newcommand{\leftexp}[2]{{\vphantom{#2}}^{#1}{#2}}
\newcommand{\mult}{\mathrm{mult}}
\newcommand{\sep}{\mathrm{sep}}
\newcommand{\id}{\mathrm{id}}
\newcommand{\diag}{\mathrm{diag}}
\newcommand{\op}{^{\mathrm{op}}}
\newcommand{\ra}{\rightarrow}
\newcommand{\xra}{\xrightarrow}
\newcommand{\gen}{\mathrm{gen}}
\newcommand{\CH}{\operatorname{CH}}
\newcommand{\Span}{\operatorname{Span}}
\renewcommand{\Im}{\operatorname{Im}}
\newcommand{\Ad}{\operatorname{Ad}}
\newcommand{\NN}{\operatorname{N}}
\newcommand{\Ker}{\operatorname{Ker}}
\newcommand{\Pic}{\operatorname{Pic}}
\newcommand{\Tor}{\operatorname{Tor}}
\newcommand{\Lie}{\operatorname{Lie}}
\newcommand{\ind}{\operatorname{ind}}
\newcommand{\ch}{\operatorname{char}}
\newcommand{\Inv}{\operatorname{Inv}}
\newcommand{\Int}{\operatorname{Int}}
\newcommand{\Inn}{\operatorname{Inn}}
\newcommand{\SInn}{\operatorname{SInn}}
\newcommand{\res}{\operatorname{res}}
\newcommand{\cor}{\operatorname{cor}}
\newcommand{\Br}{\operatorname{Br}}
\newcommand{\Nil}{\operatorname{Nil}}
\newcommand{\Spec}{\operatorname{Spec}}
\newcommand{\Proj}{\operatorname{Proj}}
\newcommand{\SK}{\operatorname{SK}}
\newcommand{\Gal}{\operatorname{Gal}}
\newcommand{\SL}{\operatorname{SL}}
\newcommand{\PGL}{\operatorname{PGL}}
\newcommand{\GL}{\operatorname{GL}}
\newcommand{\gSL}{\operatorname{\mathbf{SL}}}
\newcommand{\gO}{\operatorname{\mathbf{O}}}
\newcommand{\gSO}{\operatorname{\mathbf{SO}}}
\newcommand{\gPSO}{\operatorname{\mathbf{PSO}}}
\newcommand{\gG}{\operatorname{\mathbf{G}}}
\newcommand{\gSp}{\operatorname{\mathbf{Sp}}}
\newcommand{\Sympl}{\operatorname{\gSp}}
\newcommand{\gGL}{\operatorname{\mathbf{GL}}}
\newcommand{\gPGL}{\operatorname{\mathbf{PGL}}}
\newcommand{\gPGU}{\operatorname{\mathbf{PGU}}}
\newcommand{\gSpin}{\operatorname{\mathbf{Spin}}}
\newcommand{\gSU}{\operatorname{\mathbf{SU}}}
\newcommand{\gPSU}{\operatorname{\mathbf{PSU}}}
\newcommand{\gU}{\operatorname{\mathbf{U}}}
\newcommand{\End}{\operatorname{End}}
\newcommand{\Hom}{\operatorname{Hom}}
\newcommand{\Mor}{\operatorname{Mor}}
\newcommand{\Map}{\operatorname{Map}}
\newcommand{\Aut}{\operatorname{Aut}}
\newcommand{\Coker}{\operatorname{Coker}}
\newcommand{\Ext}{\operatorname{Ext}}
\newcommand{\Nrd}{\operatorname{Nrd}}
\newcommand{\Norm}{\operatorname{Norm}}
\newcommand{\spann}{\operatorname{span}}
\newcommand{\Symd}{\operatorname{Symd}}
\newcommand{\Sym}{\operatorname{S}}
\newcommand{\red}{\operatorname{red}}
\newcommand{\Prp}{\operatorname{Prp}}
\newcommand{\Prd}{\operatorname{Prd}}
\newcommand{\tors}{\operatorname{tors}}
\newcommand{\Tr}{\operatorname{Tr}}
\newcommand{\Trd}{\operatorname{Trd}}
\newcommand{\disc}{\operatorname{disc}}
\newcommand{\divi}{\operatorname{div}}
\newcommand{\GCD}{\operatorname{g.c.d.}}
\newcommand{\rank}{\operatorname{rank}}
\newcommand{\A}{\mathbb{A}}
\renewcommand{\P}{\mathbb{P}}
\newcommand{\Z}{\mathbb{Z}}
\newcommand{\bbZ}{\mathbb{Z}}
\newcommand{\bbG}{\mathbb{G}}
\newcommand{\bbQ}{\mathbb{Q}}
\newcommand{\N}{\mathbb{N}}
\newcommand{\F}{\mathbb{F}}
\newcommand{\Q}{\mathbb{Q}}
\newcommand{\R}{\mathbb{R}}
\newcommand{\C}{\mathbb{C}}
\newcommand{\QZ}{\mathop{\mathbb{Q}/\mathbb{Z}}}
\newcommand{\gm}{\mathbb{G}_m}
\newcommand{\hh}{\mathbb{H}}

\newcommand{\cA}{\mathcal A}
\newcommand{\cB}{\mathcal B}
\newcommand{\cC}{\mathcal C}
\newcommand{\cU}{\mathcal U}
\newcommand{\cI}{\mathcal I}
\newcommand{\cJ}{\mathcal J}
\newcommand{\cO}{\mathcal O}
\newcommand{\cF}{\mathcal F}
\newcommand{\cG}{\mathcal G}
\newcommand{\cL}{\mathcal L}
\newcommand{\cP}{\mathcal P}

\newcommand{\falg}{F\mbox{-}\mathfrak{alg}}
\newcommand{\fgroups}{F\mbox{-}\mathfrak{groups}}
\newcommand{\fields}{F\mbox{-}\mathfrak{fields}}
\newcommand{\groups}{\mathfrak{Groups}}
\newcommand{\abelian}{\mathfrak{Ab}}
\newcommand{\p}{\mathfrak{p}}

\newcommand{\Kbar}{\overline{K}}
\newcommand{\Kgenbar}{\overline{K_{\gen}}}

\def\kbar{{\bar{k}}}
\def\Out{{\mathrm{Out}}}
\def\ad{{\mathrm{ad}}}
\def\ttt{{\mathfrak{t}}}
\def\ggg{{\mathfrak{g}}}
\def\G{{\mathbf{G}}}
\def\inn{{\mathrm{inn}}}

\def\AA{{\mathbf{A}}}
\def\BB{{\mathbf{B}}}
\def\CC{{\mathbf{C}}}
\def\DD{{\mathbf{D}}}
\def\GG{{\mathbf{G}}}
\def\ve{{\varepsilon}}
\def\vk{{\varkappa}}
\def\half{{\frac{1}{2}}}
\def\fl{{\mathrm{fl}}}

\def\A{{\mathbb{A}}}

\def\into{\hookrightarrow}

\newcommand{\Sha}{\textcyr{Sh}}

\def\Sh{{\Sha^2}}

\def\gg{{\mathfrak{g}}}

\def\C{{\mathbb{C}}}
\def\Q{{\mathbb{Q}}}
\def\Z{{\mathbb{Z}}}

\def\coker{{\rm coker}}
\def\Hom{{\rm Hom}}

\def\ba{{\mathbf{a}}}

\def\bo{{\mathbf{1}_m}}
\def\go{{\langle\bo\rangle}}

\def\aa{{\mathbf{a}}}
\def\ga{{\langle\aa\rangle}}

\def\bb{{\boldsymbol{\beta}}}
\def\bgen{{\langle\bb\rangle}}

\def\ov{\overline}

\def\Gbar{{\overline{G}}}
\def\Bbar{{\overline{B}}}
\def\Tbar{{\overline{T}}}
\def\im{{\rm im}}

\newcommand{\isoto}{\overset{\sim}{\to}}
\def\SAut{{\mathrm{SAut}}}
\def\lsig{{{}^\sigma}}
\def\sP{{\mathcal{P}}}
\newcommand{\labelto}[1]{\xrightarrow{\makebox[1.5em]{\scriptsize ${#1}$}}}

\newcommand{\X}{{\mathcal{X}}}
\newcommand{\XX}{{\textsf{X}}}
\newcommand{\rPsi}{{\operatorname{RD}}}
\newcommand{\rXi}{{\operatorname{BRD}}}
\newcommand{\V}{{\operatorname{W^{\rm ext}}}}

\def\Gammac{{\Gamma_{\text{\rm{c}}}}}
\def\Stab{{\mathrm{Stab}}}
\def\spl{{\rm spl}}
\def\SAut{{\rm SAut}}

\def\twisted{{\rm twisted}}
\def\sign{{\rm sign\,}}
\def\sS{{{\rm SU}_3}}

\def\SU{{\bf SU}}
\def\SL{{\bf SL}}
\def\GL{{\bf GL}}
\newcommand{\lt}{\mathfrak{t}}
\newcommand{\Lt}{\mathfrak{t}_L}
\def\bbP{{\mathbb{P}}}
\newcommand{\birat}{\overset{\simeq}{\dashrightarrow}}

\def\Gm{\bbG_{\rm m}}
\def\AAA{{\mathfrak{A}}}
\def\gen{{\rm gen}}
\def\der{{\rm der}}
\def\Kbar{{\overline{K}}}
\def\Stab{{\rm Stab}}
\def\ov{\overline}
\def\G{{\bbG}}
\def\tV{{^\theta\V}}
\def\ombar{{\overline{\omega}}}
\def\M{{\mathfrak{M}}}
\def\tA{{{}^\theta\!A}}
\newcommand{\st}{{\mathrm{st}}}



\subjclass[2010]{Primary 20G15, 20C10}

\title[Stably Cayley groups]{Stably Cayley  groups over
fields of characteristic zero}

\author{M.~Borovoi}
\address{Borovoi: Raymond and Beverly Sackler School of Mathematical Sciences,
Tel Aviv University, 69978 Tel Aviv, Israel}
\email{borovoi@post.tau.ac.il}
\thanks{Borovoi was supported in part
by the Hermann Minkowski Center for Geometry}

\author{B.\`E~Kunyavski\u\i}
\address{Kunyavski\u\i: Department of Mathematics, Bar-Ilan University, 52900
Ramat Gan, Israel}
\email{kunyav@macs.biu.ac.il}
\thanks{Kunyavski\u\i \  was supported in part
by the Minerva Foundation through the Emmy Noether Institute for
Mathematics}

\author{N.~Lemire}
\address{Lemire: Department of Mathematics, University of Western Ontario, London,
ON N6A 5B7, Canada}
\email{nlemire@uwo.ca}
\thanks{Lemire was supported in part by an NSERC Discovery Grant}

\author{Z.~Reichstein}
\address{Reichstein: Department of Mathematics, University of British Columbia,
  Vancouver, BC V6T 1Z2, Canada}
\email{reichst@math.ubc.ca}
\thanks{Reichstein was supported in part by an NSERC Discovery Grant}

\keywords{Linear algebraic group, Cayley group, Cayley map,
algebraic torus, integral representation, quasi-permutation lattice}

\begin{abstract}
A linear algebraic group $G$  over a field $k$
  is called a Cayley group if it admits
a Cayley map, i.e., a $G$-equivariant birational isomorphism over $k$
between the group variety $G$ and its Lie algebra.
A Cayley map can be thought of as a partial algebraic analogue
of the exponential map.  A prototypical example is the classical
``Cayley transform" for the special orthogonal group $\gSO_n$ defined
by Arthur Cayley in 1846. A linear algebraic group
$G$ is called {\em stably Cayley}
if $G \times \bbG_m^r$ is Cayley for some $r \ge 0$.
Here $\bbG_m^r$ denotes the split $r$-dimensional $k$-torus.
These notions were introduced in 2006 by Lemire, Popov and Reichstein,
who  classified Cayley and stably Cayley simple groups
over an algebraically closed field of characteristic zero.\\

In this paper we study reductive Cayley groups over
an arbitrary field $k$ of characteristic zero.
Our main results are a criterion for a reductive group $G$
to be stably Cayley, formulated in terms of its character
lattice, and a classification of stably Cayley simple groups.
\end{abstract}


\maketitle

\section{Introduction}

Let $k$ be a field of characteristic 0 and
$\kbar$  a fixed algebraic closure of $k$.
Let  $G$ be a connected linear algebraic $k$-group.
A birational isomorphism $\phi \colon \Lie(G)
\stackrel{\simeq}{\dashrightarrow} G$ is called a {\em Cayley map}
if it is equivariant with respect to the conjugation action of $G$
on itself and the adjoint action of $G$ on its Lie algebra
$\Lie(G)$, respectively. A Cayley map can be thought of as a
partial algebraic analogue of the exponential map.  A prototypical
example is the classical ``Cayley transform'' for the special
orthogonal group $\gSO_n$ defined by Arthur Cayley \cite{cayley} in
1846. A linear algebraic $k$-group $G$ is called {\em
Cayley} if it admits a Cayley map and {\em stably Cayley} if $G
\times_k \bbG_m^r$ is Cayley for some $r \ge 0$. Here $\bbG_m$
denotes the split one-dimensional $k$-torus. These notions were introduced
by Lemire, Popov and Reichstein \cite{LPR06}; for a more detailed
discussion and numerous classical examples, we refer the reader
to~\cite[Introduction]{LPR06}.  The main results of~\cite{LPR06} are
the classifications of  Cayley and stably Cayley simple groups in
the case where the base field $k$ is algebraically closed and of
characteristic $0$. The goal of this paper is to extend some of
these results to the case where $k$ is an arbitrary field of
characteristic $0$.
By a reductive $k$-group we always mean a {\em connected} reductive $k$-group.

\begin{example} \label{ex.alg-closed} If $k$ is algebraically closed
and $G$ is a reductive $k$-group, then by~\cite[Theorem 1.27]{LPR06}
$G$ is stably Cayley if and only if its character lattice is
quasi-permutation; see Definition~\ref{def.qp}.
\end{example}

\begin{example} \label{ex.torus} Let $T$ be a $k$-torus of dimension $d$.
By definition, $T$ is Cayley (respectively, stably Cayley) over $k$
if and only if $T$ is $k$-rational (respectively, stably $k$-rational).
If $k$ is algebraically closed, then $T \simeq \bbG_m^d$, hence $T$ is always
rational, and thus always Cayley. More generally,
Voskresenski\u\i's criterion for stable
rationality~\cite[Theorem 4.7.2]{Voskresenskii-book}
asserts that $T$ is stably rational if and only
if the character lattice $\XX(T)$ is quasi-permutation
(see Definition~\ref{def.qp}).

It has been conjectured that every stably rational torus is rational.
To the best of our knowledge, this conjecture is still open. Moreover,
we are not aware of any simple lattice-theoretic criterion for
the rationality of $T$.
\end{example}

Note that the term ``character lattice" is used in different ways in
Examples~\ref{ex.alg-closed} and~\ref{ex.torus}.  In both cases the
underlying $\mathbb{Z}$-module is $\XX(\Tbar)$ (where
$\Tbar=T\times_k \kbar$, $\kbar$ is an algebraic closure of $k$, and
$T$ is a maximal torus of $G$ in Example~\ref{ex.alg-closed}) but
the group acting on $\XX(\Tbar)$ is the Weyl group $W = W(G,T)$ in
Example~\ref{ex.alg-closed} and the Galois group $\Gal(\kbar/k)$ in
Example~\ref{ex.torus}.  A key role in this paper will be played by
the {\em character lattice} $\X(G)$ of a reductive $k$-group $G$, a
notion that bridges the special cases considered in these two
examples.
The underlying $\mathbb Z$-module in this general setting
is still $\XX(\Tbar)$, but the group acting on it
is the {\em extended Weyl group} $\V=W \rtimes A$, where $W$ is the usual Weyl group of $\Gbar$ and
$A$ is the image of $\Gal(\kbar/k)$ under the so-called ``$\ast$-action"
(see Tits \cite[\S\,2.3]{Tits} for a construction of the $*$-action).
For the definition of $\V$, see Section~\ref{sect.character-lattice}.
Equivalently, $\X(G)$ is the character lattice $\XX(T_\gen)$ of the generic torus
$T_{\text{\rm{gen}}}$ of $G$. This torus is defined over a certain
transcendental field extension $K_{\gen}$ of $k$;
see~\cite[\S4.2]{Voskresenskii-book}.
Informally speaking, we think of the Weyl group $W$ as
``the geometric part"  of $\V$,
and of the image $A$ of the $\ast$-action as
``the arithmetic part". Examples~\ref{ex.alg-closed}
and~\ref{ex.torus} represent two opposite extremes, where the group $\V$
is  ``purely geometric" and ``purely arithmetic", respectively.
As we pass from a reductive group $G$ to its generic torus $T_{\rm gen}$,
the geometric part migrates to the arithmetic part, while
the overall group $\V$ remains the same.
\medskip

We are now ready to state our first main theorem.

\begin{theorem} \label{thm.main1}
Let $G$ be a reductive $k$-group.  The following are equivalent:

\begin{enumerate}[\upshape(a)]
\item[\rm(a)] $G$ is stably Cayley;
\item[\rm(b)] for every field extension $K/k$, every maximal $K$-torus
$T \subset G_K$ is stably rational over $K$;
\item[\rm(c)] the generic $K_{\text{\rm{gen}}}$-torus $T_\gen$  of $G$ is stably rational;
\item[\rm(d)] the character lattice $\X(G)$ of $G$
is quasi-permutation.
\end{enumerate}
\end{theorem}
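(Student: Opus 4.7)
The plan is to prove the cyclic chain of implications (a) $\Rightarrow$ (b) $\Rightarrow$ (c) $\Rightarrow$ (d) $\Rightarrow$ (a). The first three implications are relatively direct; the last is the substantive step and the main obstacle.

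For (a) $\Rightarrow$ (b), assume $G \times_k \Gm^r$ admits a Cayley map $\phi$, and let $T \subset G_K$ be any maximal $K$-torus for some field extension $K/k$. Then $T \times_K \Gm^r$ is a maximal $K$-torus of $(G \times_k \Gm^r)_K$. A standard fact (cf.~\cite{LPR06}) is that a Cayley map restricts to a birational isomorphism between any maximal torus and its Cartan subalgebra: $\Lie(T)$ is the fixed-point scheme of the adjoint action of $T$ on $\Lie(G)$, while $T$ itself is the fixed-point scheme of conjugation by $T$ on $G$, and $\phi$ respects both. Applied to $\phi_K$, this yields a birational $K$-isomorphism $T \times_K \Gm^r \birat \Lie(T) \times_K \A^r_K \cong \A_K^{\dim T + r}$, so $T$ is stably $K$-rational. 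Specializing to $K = K_{\gen}$ and $T = T_{\gen}$ gives (b) $\Rightarrow$ (c).

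For (c) $\Rightarrow$ (d), apply Voskresenski\u\i's criterion (Example~\ref{ex.torus}) to $T_{\gen}$: stable rationality of $T_{\gen}$ over $K_{\gen}$ is equivalent to $\XX(\Tbar_{\gen})$ being quasi-permutation as a module over $\Gal(\overline{K_{\gen}}/K_{\gen})$. By the defining property of the generic torus, this Galois action factors through a surjection onto the extended Weyl group $\V$; hence quasi-permutation as a Galois module coincides with quasi-permutation as a $\V$-module, which by definition is the condition that $\X(G)$ be quasi-permutation.

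The main obstacle is (d) $\Rightarrow$ (a). The strategy is to start from a quasi-permutation resolution $0 \to \X(G) \to P_1 \to P_2 \to 0$ of $\V$-lattices, use $P_2$ to define an auxiliary $k$-torus $S$ (through the $\ast$-action of $\Gal(\kbar/k)$ on $P_2$ factoring through $\V$), and produce an equivariant birational $k$-isomorphism $G \times_k S \birat \Lie(G \times_k S)$. Geometrically this mirrors the algebraically closed construction of~\cite{LPR06}, in which the quasi-permutation resolution is translated into a Cayley structure via an explicit slice construction with respect to the adjoint quotient. The essential new difficulty is Galois descent: the resolution, the torus $S$, and the resulting Cayley map must all be set up compatibly with the arithmetic part $A$ of $\V$, so that the construction is defined over $k$ rather than merely over $\kbar$. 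A final step, applying Voskresenski\u\i's criterion to $S$ itself (which is quasi-permutation by construction), shows that $S$ is stably $k$-rational, so the Cayley property of $G \times_k S$ upgrades to the stably Cayley property of $G$.
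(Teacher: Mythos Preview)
Your treatment of (a)$\Rightarrow$(b)$\Rightarrow$(c)$\Rightarrow$(d) is correct and essentially matches the paper. For (d)$\Rightarrow$(a), however, there is a genuine gap: you propose to build an auxiliary $k$-torus $S$ from $P_2$ and then ``produce an equivariant birational $k$-isomorphism $G \times_k S \birat \Lie(G \times_k S)$,'' but you give no mechanism for doing so beyond naming Galois descent as the difficulty. The construction of~\cite{LPR06} over $\kbar$ works on a split torus with its $W$-action; over $k$, the maximal torus $T$ of $G$ is not split and $W(G,T)$ is not a constant $k$-group, so there is no evident way to carry that construction down to $k$. Moreover, with your choice of the $\ast$-action, the torus $S$ sees only the $A$-part of the $\V$-action on $P_2$, whereas the actual Galois action on $\XX(\Tbar)$ lands in all of $\V$ (Lemma~\ref{lem.SW1}); so the $\V$-resolution does not dualize in any obvious way to an exact sequence of $k$-tori relating $S$ and $T$.

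The paper's route sidesteps descent entirely by passing to the split form $\Gspl$. Since $\Tspl$ is split, the full extended Weyl group $\V$ acts on $\Tspl$ by $k$-group automorphisms, and the quasi-permutation hypothesis yields (via Lemma~\ref{lem.stably-linearizable} and the no-name lemma) a $\V$-equivariant birational $k$-isomorphism $\Tspl \times \Gm^r \birat \Lie(\Tspl) \times \A^r$. The $(G,S)$-fibration machinery of Section~\ref{sect.(G,S)} then lifts this to an $M_G$-equivariant map $\Gspl \times \Gm^r \birat \Lie(\Gspl) \times \A^r$, where $M_G = \Inn(\Gspl) \rtimes A \subset \Aut(\Gspl)$. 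Finally, $G \cong {}_z\Gspl$ for some cocycle $z \in Z^1(k, M_G)$ (Proposition~\ref{prop:cocycle}), and twisting by $z$ (Lemma~\ref{lem.outer-form}) transports the stable Cayley map from $\Gspl$ to $G$. The idea you are missing is precisely this: convert the arithmetic part $A$ of $\V$ from a descent obstruction into extra equivariance on the split form, and then recover $G$ by twisting.
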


Next we turn our attention to classifying stably Cayley simple
groups over an arbitrary field $k$ of characteristic zero.
The following results extend~\cite[Theorem 1.28]{LPR06}, where $k$
is assumed to be algebraically closed.

\begin{theorem} \label{cor.main2}
Let $k$ be a field of characteristic $0$ and $G$ an absolutely
simple $k$-group. Then the following conditions are equivalent:

\begin{enumerate}[\upshape(a)]
\item $G$ is stably Cayley over $k$;
\item $G$ is an arbitrary $k$-form of one of the following groups:
\[ \text{$\gSL_3$, $\gPGL_n$ ($n=2$ or $n \ge 3 {\textit{ odd}})$, $\gSO_n$ $(n \ge 5)$,
$\Sympl_{2n}$ $(n \ge 1)$, $\gG_2$}, \] or an {\em inner} $k$-form of
$\gPGL_n$ $(n \ge 4 {\textit{ even}})$.
\end{enumerate}
\end{theorem}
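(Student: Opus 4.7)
The plan is to reduce, via Theorem~\ref{thm.main1}, to a combinatorial statement about the extended Weyl-group lattice $\X(G)$, and then to combine base extension with the LPR06 classification over algebraically closed fields to narrow down the possibilities, the remaining work being a pair of case-by-case lattice computations.

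\textbf{For (a)$\Rightarrow$(b)}, I first observe that stably Cayley is preserved under base change: a Cayley map over $k$ base-changes to one over $\bar k$. Hence $G_{\bar k}$ is a stably Cayley absolutely simple group over $\bar k$, which by \cite[Theorem 1.28]{LPR06} forces it to be one of $\gSL_3$, $\gPGL_n$, $\gSO_n$ $(n\ge 5)$, $\Sympl_{2n}$, or $\gG_2$. It then remains to rule out \emph{outer} $k$-forms of $\gPGL_n$ when $n \ge 4$ is even. Such a form corresponds to a nontrivial image $A = \Z/2$ of the $\ast$-action inside $\Out(\gPGL_n) = \Z/2$, and by Theorem~\ref{thm.main1} it suffices to show that the character lattice $\X(G)$ (the root lattice of type $A_{n-1}$), with the resulting action of $\V = S_n \rtimes \Z/2$, is not quasi-permutation. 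I would establish this by exhibiting a cohomological obstruction, for instance a nonvanishing Tate cohomology group $\hat{H}^1(\V',\X(G))$ for a carefully chosen cyclic subgroup $\V' \subset \V$; such a nonvanishing is a stable-isomorphism invariant and rules out any quasi-permutation resolution.

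\textbf{For (b)$\Rightarrow$(a)}, I need, for each absolute type listed in (b) and every possible image $A \subseteq \Out(G_{\bar k})$ of the $\ast$-action, to verify that $\X(G)$ is quasi-permutation as a $\V = W \rtimes A$-module. When $\Out$ is trivial (types $B_m$, $C_m$, $G_2$) we have $\V = W$ and the conclusion is immediate from LPR06. The substantive cases are: type $A_{n-1}$ with $n = 3$, and with $n = 2$ or $n$ odd $\ge 3$, for both $A = 1$ and $A = \Z/2$; and type $D_m$, $m \ge 3$, for every $A \subseteq \Out(\gSO_{2m})$. In each case the strategy is to exhibit an explicit $\V$-equivariant short exact sequence $0 \to \X(G) \to P_1 \to P_0 \to 0$ with $P_0, P_1$ permutation $\V$-modules, upgrading the $W$-equivariant quasi-permutation resolutions from LPR06 so that they are also compatible with the outer action.

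\textbf{The main obstacle} is the pair of lattice-theoretic computations just described: on the obstruction side, pinpointing why the outer-twisted root lattice in type $A_{n-1}$ fails to be quasi-permutation \emph{precisely} when $n$ is even; and on the positive side, producing outer-equivariant quasi-permutation resolutions in types $A$ (with nontrivial twist, for $n$ odd or $n=3$) and $D$. Once these computations are in place, the remaining ingredients---Theorem~\ref{thm.main1}, base change, and the LPR06 list over $\bar k$---fit together routinely to yield the equivalence (a)$\Leftrightarrow$(b).
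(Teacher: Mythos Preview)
Your overall architecture---Theorem~\ref{thm.main1}, base change to $\bar k$, the LPR06 list, then residual cases---matches the paper's. The gap is in your proposed obstruction for outer forms of $\gPGL_n$ with $n\ge 4$ even: nonvanishing of $\hat H^1(\V',\X(G))$ for a cyclic $\V'$ does \emph{not} rule out quasi-permutation. A quasi-permutation lattice can have nonzero $H^1$; for instance the sign lattice $\Z^-$ over $\Z/2\Z$ sits in $0\to\Z^-\to\Z[\Z/2\Z]\to\Z\to 0$ and is therefore quasi-permutation, yet $H^1(\Z/2\Z,\Z^-)=\Z/2\Z$. What $\hat H^1\neq 0$ detects is failure to be \emph{stably permutation} ($L\oplus P\cong P'$ with $P,P'$ permutation), a strictly stronger condition than quasi-permutation. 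The invariant that actually works is $\Sh(\Gamma',L)=\ker\bigl[H^2(\Gamma',L)\to\prod_{\Gammac}H^2(\Gammac,L)\bigr]$, which vanishes for every quasi-invertible (hence every quasi-permutation) lattice. The paper does not carry out this computation itself but cites \cite{CK}, where a subgroup $\Gamma'\cong(\Z/2\Z)^2$ of $\Sym_n\times\Z/2\Z$ is exhibited with $\Sh(\Gamma',\Z\AA_{n-1})\neq 0$ precisely when $n\ge 4$ is even; see also Example~\ref{ex.outerPGLn}.

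For (b)$\Rightarrow$(a), your plan of constructing explicit $\V$-equivariant permutation resolutions would succeed but is more work than necessary. The paper bypasses almost all of it: the classical Cayley transform already furnishes a Cayley map for \emph{every} $k$-form of $\gSO_n$ and $\gSp_{2n}$, so no lattice argument is needed for types $\BB$, $\CC$, $\DD$; $\gSL_3$ and $\gG_2$ have rank $2$ and fall to Lemma~\ref{lem:Voskresenskii}; inner forms of $\gPGL_n$ are Cayley via an explicit map from \cite{LPR06}; and outer forms of $\gPGL_n$ for odd $n$ are dispatched by citing \cite{VK}, where the generic torus is shown to be rational.
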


Using Theorem \ref{cor.main2} we can give a complete classification of
stably Cayley simply connected and adjoint semisimple groups
over an arbitrary field $k$ of characteristic zero; see
Section~\ref{sect.sc-ad}.
A complete description of all stably Cayley semisimple $k$-groups
is out of our reach at the moment, even if $k$ is algebraically closed.
However, we will prove the following classification of
stably Cayley {\em simple} $k$-groups
(not necessarily absolutely simple).

\begin{theorem} \label{thm.main3}
Let $G$ be a simple (but not necessarily absolutely simple)
$k$-group over a field $k$ of characteristic 0.
Then the following conditions are equivalent:
\begin{enumerate}[\upshape(a)]
\item $G$ is stably Cayley over $k$;
\item $G$ is isomorphic to $R_{l/k}(G_1)$,
where $l/k$ is a finite field extension and $G_1$ is either a
stably Cayley absolutely simple group over $l$ $($i.e., one of the
groups listed in Theorem~\ref{cor.main2}(b)) or
an outer $l$-form of $\gSO_4$.
\end{enumerate}
Here $R_{l/k}$ denotes the Weil functor of restriction of scalars.
\end{theorem}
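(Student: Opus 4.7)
The proof couples the standard structure theory of simple algebraic groups with the character-lattice criterion of Theorem~\ref{thm.main1}(d) (stably Cayley $\Leftrightarrow$ $\X(G)$ quasi-permutation) via a Key Lemma comparing $\X(R_{l/k}(H))$ with $\X(H)$.

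\emph{Structural reduction.} Every simple $k$-group is isomorphic to $G \cong R_{l/k}(G_1)$ for some finite separable extension $l/k$ and some simple $l$-group $G_1$ that is not itself a proper Weil restriction. Such $G_1$ is either absolutely simple, or else \emph{exotic}: the simply connected cover $\tilde G_1 \cong R_{l'/l}(\tilde H)$ is a nontrivial Weil restriction of an absolutely simple simply connected $l'$-group $\tilde H$, and $G_1 = \tilde G_1/Z$ for some $l$-subgroup $Z \subset R_{l'/l}(Z(\tilde H))$ which is \emph{not} of the form $R_{l'/l}(Z')$ for any $l'$-subgroup $Z' \subset Z(\tilde H)$.

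\emph{Key Lemma: Weil restriction respects stable Cayley-ness.} For every reductive $l$-group $H$, $R_{l/k}(H)$ is stably Cayley over $k$ if and only if $H$ is stably Cayley over $l$. By Theorem~\ref{thm.main1}(d) it suffices to show that $\X(R_{l/k}(H))$ is QP as a $\V(R_{l/k}(H))$-module iff $\X(H)$ is QP as a $\V(H)$-module. Taking $T = R_{l/k}(T_1)$ for a maximal $l$-torus $T_1 \subset H$ gives
$$
\X(R_{l/k}(H)) \;=\; \bigoplus_{\sigma \colon l \hookrightarrow \bar k} \XX(\bar T_1)^\sigma,
$$
which is naturally isomorphic, as a $\V(R_{l/k}(H))$-module, to the induced module $\mathrm{Ind}_{\V'}^{\V(R_{l/k}(H))}\X(H)$, where $\V' \subset \V(R_{l/k}(H))$ is the stabilizer of the identity-embedding summand (which projects onto $\V(H)$). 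Induction preserves QP (since $\Z[\V(R_{l/k}(H))]$ is free over $\Z[\V']$ and induction sends permutation lattices to permutation lattices). For reflection, Mackey's formula realizes $\X(H)$ as a direct $\V'$-summand of the restricted induced lattice, after which the standard fact that direct summands of QP lattices are QP concludes.

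\emph{Concluding the classification.} For the direction (b)$\Rightarrow$(a): if $G_1$ is a stably Cayley absolutely simple group listed in Theorem~\ref{cor.main2}(b), the Key Lemma gives $G = R_{l/k}(G_1)$ stably Cayley; if $G_1$ is an outer $l$-form of $\gSO_4$, I compute directly that $\X(G_1) = \Z^2$, with the dihedral extended Weyl group $\V(G_1)$ of order $8$ (generated by the $D_2 = A_1 + A_1$ Weyl group $(\Z/2)^2$ and the outer involution swapping the two $A_1$ factors), fits into an exact sequence $0 \to \Z^2 \to \Z[X] \to \Z[Y] \to 0$ of permutation lattices, with $X = \{\pm e_1, \pm e_2\}$ (a single $\V$-orbit of size $4$) and $Y$ the quotient $\V$-set of size $2$; thus $\X(G_1)$ is QP and the Key Lemma finishes. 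For (a)$\Rightarrow$(b), the Key Lemma reduces stably Cayley-ness of $G$ to that of $G_1$: the absolutely simple case is immediately settled by Theorem~\ref{cor.main2}, and in the exotic case one enumerates the data $(\tilde H, l'/l, Z)$ and computes $\X(G_1)$ for each instance, showing $\X(G_1)$ fails to be QP unless $G_1$ is an outer $\gSO_4$ --- typically by exhibiting a subgroup of $\V(G_1)$ over which the Tate cohomology $\hat H^i$ of $\X(G_1)$ does not vanish, which obstructs QP. The main technical obstacle is precisely this exotic-case enumeration, which carries the bulk of the work; a secondary subtle point is the ``direct summand of QP is QP'' fact in the reflection direction of the Key Lemma, which follows from the flasque-resolution formalism.
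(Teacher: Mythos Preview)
Your Key Lemma's reflection direction contains a genuine gap. The assertion ``direct summands of quasi-permutation lattices are quasi-permutation'' is not a standard fact---it is false in general---and the flasque-resolution formalism yields only that such a summand is \emph{quasi-invertible} (see Lemma~\ref{lem:q-inv} and the definition opening Section~\ref{sect8}). The paper draws this distinction explicitly; Remark~\ref{rem.Cayley-invertible} points to tori that are Cayley-invertible but not stably Cayley. Mackey's formula does exhibit $\X(H)$ as a direct $\V'$-summand of a quasi-permutation lattice, so $\X(H)$ is quasi-invertible over $\V'$, but you cannot conclude it is quasi-permutation. Consequently your reduction from $G$ to $G_1$ in (a)$\Rightarrow$(b) is unjustified as written. (The forward direction of your Key Lemma, and your handling of (b)$\Rightarrow$(a), are correct.)

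The paper sidesteps this difficulty by a genuinely different route. Rather than reducing to $G_1$ over $l$, it base-changes all the way to $\kbar$ and invokes Theorem~\ref{thm:product-closed-field}, whose proof (Sections~\ref{sect.family}--\ref{sect.sl3}) is the analogue of your ``exotic-case enumeration'' but carried out with only the Weyl group acting. This produces $\Gbar = G_{1,\kbar}\times\cdots\times G_{s,\kbar}$ with each factor simple stably Cayley or $\gSO_4$; $k$-simplicity of $G$ forces Galois to permute the factors transitively, whence $G=R_{l/k}(G_{1,l})$. The remaining descent---showing $G_{1,l}$ is stably Cayley over $l$, not merely over $\kbar$---is Lemma~\ref{lem:direct-product}. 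That lemma uses the classification of Theorem~\ref{cor.main2} to isolate outer $\gPGL_{2n}$ ($n\ge 2$) as the only possible obstruction, and then eliminates it via Proposition~\ref{cor.directproduct}, which needs only that the relevant character lattice is not quasi-\emph{invertible} (Example~\ref{ex.outerPGLn}). Thus the paper never relies on ``summand of QP is QP''; it uses the weaker ``summand of QP is quasi-invertible'' together with the stronger input that the specific bad lattices arising are not even quasi-invertible.
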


The rest of this paper is structured as follows.
Sections~\ref{sect.lattices}--\ref{sect.(G,S)}
are devoted to preliminary material on quasi-permutation lattices,
automorphisms and semi-automorphisms
of algebraic groups over non-algebraically closed fields, and
$(G, S)$-fibrations.  While some of this material is known,
we have not been able to find references, where the definitions and
results we need are proved in full generality. We have thus
opted for a largely self-contained exposition.

Theorem~\ref{thm.main1} is proved in
Section~\ref{sect.proof-of-thm.main1}.  Theorem~\ref{cor.main2} is an easy
consequence of Theorem~\ref{thm.main1} and previously known results
on character lattices of absolutely simple groups from  \cite{LPR06}
and Cortella and Kunyavski\u\i's paper \cite{CK}; the details
of this argument are presented in Section~\ref{sect.proof-of-cor.main2}.
The proof of Theorem~\ref{thm.main3} relies on new results of
character lattices and thus requires considerably more work.
After passing to an algebraic closure $\kbar$ of $k$, we are faced with
the problem of classifying semisimple stably Cayley groups of the form
$G=H^m/C$, where $H$ is a simply connected simple group over $\kbar$
and $C\subset H^m$ is a central subgroup. Our classification theorem
for such groups is stated in Section~\ref{sect7}; see
Theorem~\ref{thm:product-closed-field}. In Section~\ref{sect8}
we present criteria
for a lattice not to be quasi-permutation (or even quasi-invertible).
In Section~\ref{sect.sc-ad} we classify stably Cayley simply
connected and adjoint groups.
The proof of Theorem~\ref{thm:product-closed-field}, based on
case-by-case analysis, occupies Section~\ref{sect.family}--\ref{sect.sl3}.
In Section~\ref{sect.proof-of-thm.main3} we deduce Theorem~\ref{thm.main3}
  from Theorem~\ref{thm:product-closed-field} by passing
back from $\kbar$ to $k$.

\begin{remark} \label{rem.char}
The assumption that $\operatorname{char}(k) = 0$ is used primarily in
Section~\ref{sect.(G,S)}, which, in turn, relies on~\cite{CTKPR}.
It seems plausible that our main results should remain true in arbitrary
characteristic, but we have not checked this.
\end{remark}

\begin{remark} \label{rem.Cayley}
A key consequence
of Theorem~\ref{thm.main1} is that, for a reductive $k$-group $G$,
being stably Cayley is a property of its character lattice.
If ``stably Cayley" is replaced by ``Cayley", this is no longer the case,
even for absolutely simple groups.
Indeed, the  groups  $\gSU_3$ and
split $\gG_2$,  defined over the field $\mathbb R$ of real numbers,
have isomorphic character lattices; both are stably Cayley.  By a theorem of
Iskovskikh~\cite{Isk}, $\gG_2$ is not Cayley over $\mathbb R$
(not even over $\mathbb C$); cf.~\cite[Proposition 9.10]{LPR06}.
On the other hand,  $\gSU_3$ is Cayley, see
Borovoi--Dolgachev \cite[Theorem 1.2]{BD}.

For reasons illustrated by the above example, the problem of classifying
simple Cayley groups, in a manner analogous
to Theorems~\ref{cor.main2} and~\ref{thm.main3},
appears to be out of reach at the moment.  In particular,
we do not know which outer forms of $\gPGL_n$ (if any)
are Cayley, for any odd integer $n \ge 5$.
\end{remark}

\begin{remark} \label{rem.outer}
Suppose $\Gspl$ is a split reductive group over $k$,
$\Ginn$ is an inner form of $\Gspl$ over $k$,
and $\Gbar := \Gspl \times_k \bar{k} \simeq \Ginn \times_k \bar{k}$.
As a consequence of Theorem~\ref{thm.main1} we
see that $\Ginn$ is stably Cayley over $k$
if and only if $\Gspl$ is stably Cayley over $k$ if and only if $\Gbar$
is stably Cayley over $\bar{k}$; see Corollary~\ref{cor:inner}.
The reason is that these three groups have the same character lattice.

Stably Cayley groups over
$\bar{k}$ were studied in~\cite{LPR06}. Thus the new and most interesting
phenomena in this paper occur only for outer forms. In particular,
an outer form $\Gout$ of $\Gspl$ may not be stably Cayley over $k$ even if
$\Gspl$ is Cayley; see Theorem~\ref{cor.main2}.
\end{remark}

\begin{remark} \label{rem.cayley-vs-rational} For a reductive group
$G$ the condition of being Cayley is much stronger than the condition of being rational.
For example, for $n \ge 4$ the special linear group $\SL_n$ is rational but is not stably Cayley;
see~\cite[Theorem 1.28]{LPR06}.

Let $G$ be a split reductive $k$-group.
As we pointed out above, if $G$ is stably Cayley over $k$ then any inner form of $G$
over any field extension $K/k$ is also stably Cayley and hence, stably rational over $K$.
We do not know whether or not the converse to the last assertion is true.
That is, suppose for every field extension $K/k$ every inner $K$-form of $G$ is $K$-stably rational.
Can we conclude that $G$ is $k$-stably Cayley?
\end{remark}

\begin{remark} \label{rem.weakly-Cayley}
The choice of $\bbG_m$ in the definition of stably Cayley groups
may seem arbitrary. An alternative definition is as follows.  Let us
say that a linear algebraic $k$-group $G$ is {\em  weakly  Cayley} if
$G\times_k H$ is Cayley for some Cayley $k$-group $H$.
However, an easy modification of the proof of \cite[Lemma 4.7]{LPR06}
shows that $G$ is weakly  Cayley if and only if it is stably Cayley.
\end{remark}

\begin{remark} \label{rem.Cayley-invertible}
An even broader class of groups can be defined as follows.
Let us say that $G$ is {\em Cayley invertible} if $G \times H$
is Cayley for some reductive $k$-group $H$.
Some of the arguments in this paper intended to show that a certain group
$G$ is not stably Cayley, in fact, prove that $G$ is not Cayley invertible;
cf. Proposition~\ref{cor.directproduct}.
Note that a Cayley invertible group need not be stably Cayley. 
See \cite[Theorem 9.1 and Remark 9.3]{CTS2} for an example of a torus 
that is Cayley invertible but not stably Cayley.
\end{remark}

\section{Preliminaries on quasi-permutation lattices}
\label{sect.lattices}

Let $\Gamma$ be a finite group.
By a $\Gamma$-lattice we mean a finitely generated free
abelian group $M$ viewed together with an integral representation
$\Gamma \to \Aut(M)$. We also think of $M$ as a $\Z[\Gamma]$-module;
by a morphism (or exact sequence) of lattices we mean a morphism (or
exact sequence) of $\Z[\Gamma]$-modules.
When we write ``lattice'', rather than ``$\Gamma$-lattice'', we
mean a $\Gamma$-lattice for some finite group $\Gamma$. We say that
a lattice is faithful if the underlying integral representation is faithful.
In those cases where we want to emphasize the dependence on $\Gamma$,
we will sometimes write a lattice as a pair $(\Gamma, M)$.
(The  integral representation $\Gamma \to \Aut(M)$ is assumed
to be clear from the context.) This notation will be particularly
useful when we view $M$ as a $\Gamma_0$-lattice
with respect to  different subgroups $\Gamma_0$ of $\Gamma$.

If $\varphi \colon \Gamma \isoto \Gamma'$ is an isomorphism
of finite groups, then by a $\varphi$-isomorphism of lattices
$(\Gamma, L)$, $(\Gamma', L')$ we will mean an isomorphism
$\psi\colon L\isoto L'$ such that
\[ \psi(\gamma x)=\varphi(\gamma)\psi(x)\text{ for all
$\gamma\in\Gamma, x\in L$.} \]
By abuse of notation we will sometimes say that the lattices
$(\Gamma, M)$ and $(\Gamma', M')$ are {\em isomorphic} instead
of ``$\varphi$-isomorphic" in two special cases:
(i)  if $\Gamma = \Gamma'$ and $\varphi = \id$, or (ii)
$(\Gamma, M)$ and $(\Gamma', M')$ are $\varphi$-isomorphic for some
$\varphi \colon \Gamma \isoto \Gamma'$.

Now let $k$ be a field, $\Tspl = \bbG^d_{\text{\rm{m}}}$
the split $d$-dimensional $k$-torus,  and $\Gamma$  a finite group.
By a multiplicative action of $\Gamma$ on $\Tspl$
we mean an action by automorphisms of $\Tspl$ as an algebraic group over $k$.
Recall that the following objects are in a natural bijective
correspondence:
\begin{enumerate}[\upshape(i)]
\item $\Gamma$-lattices of rank $d$ (up to isomorphism);
\item integral representations $\phi \colon \Gamma \to
\GL_d(\mathbb{Z})$ (up to conjugacy in $\GL_d(\mathbb{Z})$);
\item multiplicative actions $\Gamma \to \Aut_{\text{$k$-grp}}(\Tspl)$
(up to an automorphism of $\Tspl$ as an algebraic $k$-group).
\end{enumerate}

A $\Gamma$-lattice $L$ is called {\em permutation} if it
has a $\bbZ$-basis permuted by $\Gamma$.
We say that two $\Gamma$-lattices $L$ and $L'$ are
{\em equivalent}, and write $L\sim L'$,
if there exist short exact sequences
\[
0\to L\to E\to P\to 0 \quad \quad {\rm and} \quad \quad
0\to L'\to E\to P'\to 0
\]
with the same $\Gamma$-lattice $E$, where $P$ and $P'$
are permutation $\Gamma$-lattices.
For a proof that this is indeed an equivalence relation,
see \cite[Lemma 8, p.~182]{CTS1}.
Note that if there exists a short exact sequence
\[
0\to L\to L'\to Q\to 0,
\]
where $Q$ is a permutation $\Gamma$-lattice, then
the trivial short exact sequence
\[
0\to L'\to L'\to 0\to 0
\]
shows that $L\sim L'$.
In particular, if $P$ is a permutation $\Gamma$-lattice, then the  short exact sequence
\[
0\to 0\to P\to P\to 0
\]
shows that $P\sim 0$.
If $\Gamma$-lattices $L,L',M,M'$ satisfy $L\sim L'$ and $M\sim M'$
then $L\oplus M\sim L'\oplus M'$.

\begin{definition} \label{def.qp}
A $\Gamma$-lattice $L$ is called {\em quasi-permutation}
if it is equivalent to a permutation lattice, i.e.,
if there exists a short exact sequence
\begin{equation}\label{eq:quasi-projective}
0\to L\to P\to P'\to 0,
\end{equation}
where both $P$ and $P'$ are permutation $\Gamma$-lattices.
\end{definition}

\begin{lemma} \label{lem:quasi-perm}
Let $\Gamma_1\twoheadrightarrow \Gamma$ be a surjective homomorphism of finite groups, and let $L$ be a $\Gamma$-lattice.
Then $L$ is quasi-permutation as a $\Gamma_1$-lattice if and only if it is quasi-permutation as a $\Gamma$-lattice.
\end{lemma}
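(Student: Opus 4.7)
The plan is to prove the two directions separately, with the nontrivial content in the ``only if'' direction. The ``if'' direction is essentially formal: a $\mathbb{Z}$-basis of a permutation $\Gamma$-lattice that is permuted by $\Gamma$ is also permuted by $\Gamma_1$ acting through the surjection $\pi\colon \Gamma_1 \twoheadrightarrow \Gamma$, so any exact sequence $0 \to L \to P \to P' \to 0$ witnessing that $L$ is quasi-permutation as a $\Gamma$-lattice automatically witnesses the same fact over $\Gamma_1$.

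For the ``only if'' direction, set $N := \ker(\pi)$, a normal subgroup of $\Gamma_1$ with $\Gamma_1/N = \Gamma$. Since $L$ is a $\Gamma$-lattice viewed as a $\Gamma_1$-lattice via $\pi$, the subgroup $N$ acts trivially on $L$. Given an exact sequence $0 \to L \to P_1 \to P_1' \to 0$ of $\Gamma_1$-lattices with $P_1$ and $P_1'$ permutation, I would apply the $N$-invariants functor and examine the resulting long exact cohomology sequence
\[
0 \to L^N \to P_1^N \to (P_1')^N \to H^1(N, L) \to H^1(N, P_1).
\]
Normality of $N$ in $\Gamma_1$ ensures that each term is naturally a $\Gamma$-module.

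The proof then reduces to three cohomological verifications: (a) $P^N$ is a permutation $\Gamma$-lattice whenever $P$ is a permutation $\Gamma_1$-lattice---if $P = \mathbb{Z}[X]$ for a $\Gamma_1$-set $X$, then $P^N$ has a $\mathbb{Z}$-basis consisting of the orbit sums $\sum_{y \in O} y$ over $N$-orbits $O \subseteq X$, and $\Gamma = \Gamma_1/N$ permutes these orbits; (b) $H^1(N, P_1) = 0$ for any permutation $\Gamma_1$-lattice---by decomposing $P_1$ into a direct sum of transitive $N$-sets and applying Shapiro's lemma, this reduces to $H^1(K, \mathbb{Z}) = \Hom(K, \mathbb{Z}) = 0$ for $K$ finite; and (c) $H^1(N, L) = \Hom(N, L) = 0$, since $N$ acts trivially on $L$, $N$ is finite, and $L$ is torsion-free. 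Combined with $L^N = L$, these collapse the long exact sequence to
\[
0 \to L \to P_1^N \to (P_1')^N \to 0,
\]
which is the desired witness that $L$ is quasi-permutation as a $\Gamma$-lattice.

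This is a short cohomological argument rather than a deep theorem, so I do not anticipate a serious obstacle; the most substantive step is checking (a), which requires only elementary bookkeeping once one notes that $\Gamma_1$ permutes the $N$-orbits in $X$ through the quotient $\Gamma$, and that the $\Gamma_1$-action on $P^N$ must factor through $\Gamma$ by construction.
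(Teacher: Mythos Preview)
Your proposal is correct and follows essentially the same approach as the paper: apply the $N$-invariants functor (the paper calls the kernel $\Gamma_0$) to the witnessing short exact sequence, use $L^N = L$ and $H^1(N,L)=0$ to truncate the long exact sequence, and observe that invariants of a permutation lattice are again permutation via orbit sums. Your verification (b) that $H^1(N,P_1)=0$ is true but unnecessary here, since the vanishing of $H^1(N,L)$ already forces the connecting map $(P_1')^N \to H^1(N,L)$ to be zero; the paper accordingly omits it.
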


\begin{proof}
It suffices to prove ``only if''.
Assume that $L$ is quasi-permutation as a $\Gamma_1$-lattice
and let $\Gamma_0$ denote the kernel $\ker[\Gamma_1\to\Gamma]$.
From the short exact sequence \eqref{eq:quasi-projective} of $\Gamma_1$-lattices, where $P$ and $P'$ are some permutation $\Gamma_1$-lattices,
we obtain the  $\Gamma_0$-cohomology exact sequence
$$
0\to L\to P^{\Gamma_0}\to (P')^{\Gamma_0}\to 0
$$
(because $L^{\Gamma_0}=L$ and  $H^1(\Gamma_0,L)=0$), which is a short exact sequence of $\Gamma$-lattices.
It is easy to see that $P^{\Gamma_0}$ and $(P')^{\Gamma_0}$ are permutation $\Gamma$-lattices,
thus $L$ is a quasi-permutation $\Gamma$-lattice.
\end{proof}

\begin{definition} We say two algebraic varieties
$X$ and $Y$ defined over $k$ and both equipped with a $\Gamma$-action are $\Gamma$-equivariantly
stably birationally isomorphic if there exist $r,s\ge 0$
such that $X\times \Gm^r$ is $\Gamma$-equivariantly birationally
isomorphic to $Y\times \Gm^s$ where the $\Gamma$-actions on $\Gm^r$
and $\Gm^s$ are both trivial.
\end{definition}

\begin{proposition} \label{prop.stabeq} Let $L,M$ be two faithful
$\Gamma$-lattices, and $T_L$ and $T_M$ the associated split $k$-tori
with associated multiplicative $\Gamma$-actions
(i.e., $\XX(T_L)=L$ and $\XX(T_M)=M$\,).
The following statements are equivalent:
\begin{enumerate}
\item[(i)] $L\sim M$.
\item[(ii)]$T_L$ and $T_M$ are
$\Gamma$-equivariantly stably birationally isomorphic.
\end{enumerate}
\end{proposition}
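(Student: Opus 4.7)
The plan is to treat the two implications separately, viewing the proposition as the translation, into the setting of split tori carrying a multiplicative action of an abstract finite group $\Gamma$, of the classical Voskresenski\u\i{}--Colliot-Th\'el\`ene--Sansuc theorem relating stable birational equivalence of tori to equivalence of their character lattices; here $\Gamma$ plays the role that $\Gal(\kbar/k)$ plays in the classical Galois setting.

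For (i) $\Rightarrow$ (ii), suppose $L\sim M$, so that we have exact sequences of $\Gamma$-lattices
\[ 0\to L\to E\to P\to 0 \quad\text{and}\quad 0\to M\to E\to P'\to 0 \]
with $P,P'$ permutation. Applying the contravariant functor $N\mapsto T_N=\Spec k[N]$ yields compatible short exact sequences of tori with $\Gamma$-action,
\[ 1\to T_P\to T_E\to T_L\to 1 \quad\text{and}\quad 1\to T_{P'}\to T_E\to T_M\to 1. \]
The key observation is that when $P$ is permutation, $T_P$ embeds $\Gamma$-equivariantly as a dense open subset of the permutation $\Gamma$-representation $V_P=\A^{\rank P}$ on which $\Gamma$ permutes a basis, and the inclusion $T_P\hookrightarrow V_P$ is $T_P$-equivariant for the obvious linear scaling action of $T_P=\Gm^{\rank P}$ on $V_P$. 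The associated contracted product $T_E\times^{T_P}V_P\to T_L$ is therefore a $\Gamma$-equivariant vector bundle of rank $\rank P$ that contains $T_E$ as a dense open $\Gamma$-invariant subvariety. Faithfulness of $L$ ensures $\Gamma$ acts generically freely on $T_L$, so the no-name lemma applies and yields a $\Gamma$-equivariant birational isomorphism $T_E\;\birat\;T_L\times_k\A^{\rank P}$ with trivial $\Gamma$-action on the second factor. Running the identical argument on the second exact sequence gives $T_E$ $\Gamma$-equivariantly birational to $T_M\times_k\A^{\rank P'}$ (trivial action); combining the two, and replacing $\A^n$ by $\Gm^n$ (a birational operation), yields the required stable $\Gamma$-equivariant birational equivalence of $T_L$ and $T_M$.

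For (ii) $\Rightarrow$ (i), note first that $\Z^r$ with trivial $\Gamma$-action is a permutation $\Gamma$-lattice, so $L\sim L\oplus\Z^r$ and $M\sim M\oplus\Z^s$; hence it suffices to show that a $\Gamma$-equivariant birational isomorphism $T_L\birat T_M$ forces $L\sim M$. For this I would invoke the theory of flasque resolutions: by a theorem of Endo--Miyata, every $\Gamma$-lattice $N$ sits in an exact sequence $0\to N\to P_N\to F_N\to 0$ with $P_N$ permutation and $F_N$ flasque, and the class $[F_N]$ in the monoid of flasque $\Gamma$-lattices modulo permutation is well-defined. The Colliot-Th\'el\`ene--Sansuc theorem then asserts that $[F_N]$ is a $\Gamma$-equivariant birational invariant of $T_N$; applying this to the given birational isomorphism yields $[F_L]=[F_M]$, from which $L\sim M$ follows via a standard diagram chase.

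The main technical obstacle is the direction (ii) $\Rightarrow$ (i), which rests on the full flasque-resolution machinery. While this theory is classical when $\Gamma=\Gal(\kbar/k)$ acts on a non-split torus over $k$, one needs to verify that the existence of flasque resolutions, the birational invariance of the flasque class, and the passage from equality of flasque classes to equivalence of the underlying lattices all transfer verbatim to the present setting of split tori over $k$ equipped with an abstract finite-group action. This adaptation is routine but requires some care.
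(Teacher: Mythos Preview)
Your proof is correct; note, however, that the paper does not actually prove the proposition at all---it simply identifies the function field of $T_L$ with $k(L)$ and cites \cite[Proposition~1.4]{LL00}. Your proposal is therefore not an alternative argument but a reconstruction of what lies behind that citation.

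Your treatment of (i)$\Rightarrow$(ii) via the contracted product $T_E\times^{T_P}V_P$ and the vector-bundle form of the no-name lemma is standard and correct. For (ii)$\Rightarrow$(i), the flasque-resolution route works, but as you rightly flag, the Colliot-Th\'el\`ene--Sansuc birational invariance of $[L]^{\fl}$ is stated for $k$-tori under a Galois action, and carrying it over to split tori with an abstract $\Gamma$-action means redoing the argument with $\Gamma$-equivariant smooth compactifications and their Picard lattices. This is heavier than necessary. A more direct argument, closer in spirit to \cite{LL00} and \cite{Lorenz}, runs as follows: given a $\Gamma$-equivariant birational isomorphism $\phi\colon T_L\dashrightarrow T_M$, choose a $\Gamma$-invariant affine open $W\subset T_L$ on which $\phi$ is an isomorphism onto an open $W'\subset T_M$, and set $E:=\mathcal O(W)^\times/k^\times$. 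Since $\Pic(T_L)=\Pic(T_M)=0$, the boundary divisors are principal, and one gets exact sequences $0\to L\to E\to P\to 0$ and $0\to M\to E\to P'\to 0$ in which $P$ and $P'$ are the permutation lattices on the irreducible components of $T_L\smallsetminus W$ and $T_M\smallsetminus W'$. This yields $L\sim M$ without invoking flasque resolutions at all. Either approach is fine, but the second avoids the adaptation you were worried about.
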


\begin{proof} Since the function field of $T_L$ is $k(L)$, the field of fractions of the group algebra $k[L]$ of $L$,  this
follows from~\cite[Proposition 1.4]{LL00}.
\end{proof}

\begin{definition}
We say that a $\Gamma$-action on an algebraic variety $X$,
defined over $k$,
is {\em linearizable} (respectively, {\em stably linearizable}) if
$X$ is $\Gamma$-equivariantly birationally isomorphic
(respectively, $\Gamma$-equivariantly stably birationally
isomorphic) to a finite-dimensional $k$-vector space $V$
with a linear $\Gamma$-action.
\end{definition}

\begin{remark} \label{rem.no-name}
By the no-name lemma any two faithful linear actions
of a finite group $\Gamma$ on $k$-vector spaces $V_1$ and
$V_2$ are stably $\Gamma$-equivariantly birationally
equivalent; see, e.g., \cite[Lemma 2.12(c)]{LPR06}. This
makes stable linearizability a particularly natural notion.
\end{remark}

\begin{lemma} \label{lem.stably-linearizable}
Let $L$ be a  $\Gamma$-lattice,
and let $T_L$ be the associated split $k$-torus
with multiplicative $\Gamma$-action.
\begin{enumerate}[\upshape(a)]
\item  If $L$ is a permutation lattice then
the $\Gamma$-action on $T_L$ is linearizable.
\item  $L$ is quasi-permutation if and only if
the $\Gamma$-action on $T_L$ is stably linearizable.
\end{enumerate}
\end{lemma}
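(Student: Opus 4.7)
For part (a), my approach is direct: choose a $\bbZ$-basis $\{e_1,\dots,e_d\}$ of $L$ permuted by $\Gamma$, which identifies $T_L = \Spec k[L]$ with $\Gm^d$ on which $\Gamma$ permutes the coordinate factors. The standard open immersion $\Gm^d \hookrightarrow \A^d_k$ is $\Gamma$-equivariant, with $\Gamma$ acting on $\A^d_k$ by permuting coordinates in the same way, so $T_L$ is $\Gamma$-equivariantly birational to the affine space $\A^d_k$ equipped with its linear permutation $\Gamma$-action. This establishes (a).

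For part (b), my plan is to combine Proposition \ref{prop.stabeq}, part (a), and the no-name lemma (Remark \ref{rem.no-name}). Since these tools require faithful $\Gamma$-lattices and faithful linear actions, I first reduce to the case that $\Gamma$ acts faithfully on $L$. Set $\Gamma_0 = \ker(\Gamma \to \Aut(L))$ and $\bar\Gamma = \Gamma/\Gamma_0$. By Lemma \ref{lem:quasi-perm}, $L$ is quasi-permutation as a $\Gamma$-lattice if and only if it is quasi-permutation as a $\bar\Gamma$-lattice. The analogous equivalence holds for stable linearizability, using that the kernel of a $\Gamma$-action is a $\Gamma$-equivariant stable birational invariant: if $V$ witnesses stable $\Gamma$-linearizability of $T_L$, then $\Gamma_0$ acts trivially on $V$, so the $\Gamma$-action on $V$ factors through $\bar\Gamma$; conversely any $\bar\Gamma$-linearization inflates to a $\Gamma$-linearization.

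With $L$ now assumed faithful, let $P = \bbZ[\Gamma]$, a faithful permutation $\Gamma$-lattice, and let $V_P$ be the associated faithful linear $\Gamma$-representation from part (a), so that $T_P$ is $\Gamma$-equivariantly birational to $V_P$. For the ``only if'' direction, $L$ quasi-permutation gives $L \sim 0 \sim P$, so Proposition \ref{prop.stabeq} makes $T_L$ $\Gamma$-equivariantly stably birational to $T_P$ and hence to $V_P$; this is stable linearizability. For the ``if'' direction, suppose $T_L$ is stably linearizable via some linear $\Gamma$-representation $V$. Since $L$ is faithful and kernels of $\Gamma$-actions are stable birational invariants, $V$ is also faithful. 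The no-name lemma then makes $V$ $\Gamma$-equivariantly stably birational to $V_P$. Chaining these equivalences, $T_L$ becomes $\Gamma$-equivariantly stably birational to $T_P$, and Proposition \ref{prop.stabeq} yields $L \sim P \sim 0$, i.e., $L$ is quasi-permutation.

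The main obstacle I anticipate is the faithfulness reduction, and specifically the claim that the kernel of a $\Gamma$-action is preserved under $\Gamma$-equivariant stable birational equivalence. This should amount to the observation that if $\gamma \in \Gamma$ acts trivially on a dense open of $X \times \Gm^r$ (with trivial action on the $\Gm^r$ factor), then it acts trivially on $X$. Granted this, the rest is a short composition of Proposition \ref{prop.stabeq}, part (a), and the no-name lemma.
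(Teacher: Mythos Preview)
Your proof is correct and follows essentially the same approach as the paper's: reduce to faithful $\Gamma$-action, take $P=\bbZ[\Gamma]$, and chain together Proposition~\ref{prop.stabeq}, part~(a), and the no-name lemma. The only difference is that you spell out the faithfulness reduction on the stable-linearizability side (via kernel preservation under equivariant stable birational equivalence), whereas the paper simply invokes Lemma~\ref{lem:quasi-perm} and leaves that half implicit.
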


\begin{proof}
(a) Suppose $L \simeq \mathbb Z[S]$ for some finite $\Gamma$-set $S$.
Let $V$ be the $k$-vector space with basis  $(e_s)_{s\in S}$.
Then $V$ carries a natural (permutation) $\Gamma$-action. The morphism
$T_L \to V$ given by \[ t \to \sum_{s \in S} s(t) e_s \]
is easily seen to be a $\Gamma$-equivariant birational isomorphism.

(b) By Lemma \ref{lem:quasi-perm} we may assume that $\Gamma$ acts faithfully on $L$.
Let $P$ be a faithful permutation $\Gamma$-lattice (e.g.,
$P = \mathbb Z[\Gamma]$).
Let $V$ be the linear representation of $G$ constructed
in part (a).  It now suffices to show that the following conditions
are equivalent:
\begin{enumerate}[\upshape(i)]
\item $L$ is quasi-permutation,
\item $L \sim P$,
\item $T_L$ and $T_P$ are $\Gamma$-equivariantly stably birationally
isomorphic,
\item $T_L$ and $V$ are $\Gamma$-equivariantly stably birationally isomorphic,
\item $T_L$ is stably linearizable.
\end{enumerate}

\noindent
Indeed, (i) and (ii) are equivalent by Definition~\ref{def.qp}.
Conditions (ii) and (iii) are equivalent
by Proposition~\ref{prop.stabeq}.
In the proof of part (a) we showed that $T_P$ and $V$
are $\Gamma$-equivariantly birationally isomorphic. Consequently,
(iii) is equivalent to (iv). Finally, (iv) $\Longrightarrow$ (v)
by definition, and (v) $\Longrightarrow$ (iv) by
the no-name lemma; see Remark~\ref{rem.no-name}.
\end{proof}

\begin{lemma}[cf.~\cite{LPR06}, Proposition 4.8]
\label{LPR-reduction-two}
  Let $W_1,\dots,W_m$ be finite groups.
  For each $i=1,\dots,m$, let $V_i$ be a finite-dimensional
  $\Q$-representation of $W_i$. Set $V:=V_1\oplus\dots\oplus V_m$.
  Suppose $L\subset V$ is a free abelian subgroup, invariant under
  $W :=W_1\times\dots\times W_m$.

  If $L$ is a quasi-permutation $W$-lattice,
  then  $L_i:=L\cap V_i$ is a quasi-permutation $W_i$-lattice,
for each $i=1, \dots, m$.
  \end{lemma}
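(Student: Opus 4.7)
The plan is to induct on $m$, reducing to $m = 2$ by regrouping $V_2 \oplus \cdots \oplus V_m$ as a single factor acted upon by $\Gamma := W_2 \times \cdots \times W_m$. So assume $W = W_1 \times \Gamma$ and $V = V_1 \oplus V^1$, where $V^1 := V_2 \oplus \cdots \oplus V_m$, and observe the identification $L_1 = L \cap V_1 = L \cap V^{\Gamma} = L^{\Gamma}$, which recasts the $W_1$-lattice $L_1$ as a fixed-point subgroup.

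Given a quasi-permutation resolution $0 \to L \to P \to P' \to 0$ of $L$ as $W$-lattices, I would apply the functor of $\Gamma$-invariants. Two facts drive the computation. First, if $P$ is a permutation $W$-lattice, then $P^{\Gamma}$ is a permutation $W_1$-lattice: the $\Gamma$-orbit sums on a $W$-permutation basis of $P$ form a $\Z$-basis of $P^{\Gamma}$ that is permuted by $W_1 = W/\Gamma$. Second, $H^1(\Gamma, P) = 0$ for any permutation $W$-lattice $P$ by Shapiro's lemma, since restricted to $\Gamma$, such a $P$ decomposes as a direct sum of modules of the form $\Z[\Gamma/H']$, and $H^1(H', \Z) = 0$ for every finite $H'$. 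Combined with $L^{\Gamma} = L_1$, these yield a four-term exact sequence of $W_1$-modules
\[
0 \longrightarrow L_1 \longrightarrow P^{\Gamma} \longrightarrow (P')^{\Gamma} \longrightarrow H^1(\Gamma, L) \longrightarrow 0,
\]
whose middle terms are permutation $W_1$-lattices and whose right-most term is finite.

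The heart of the proof is handling the possibly nonzero cokernel $C := H^1(\Gamma, L)$. Setting $Q := \mathrm{im}(P^{\Gamma} \to (P')^{\Gamma})$, I obtain the two short exact sequences $0 \to L_1 \to P^{\Gamma} \to Q \to 0$ and $0 \to Q \to (P')^{\Gamma} \to C \to 0$, so it suffices to show that $Q$---a $W_1$-sublattice of finite index in the permutation $W_1$-lattice $(P')^{\Gamma}$---is itself quasi-permutation. I would prove this by induction on $|C|$: any nontrivial $W_1$-filtration of $C$ reduces the assertion to the base case of a cyclic quotient with trivial $W_1$-action, for which an explicit quasi-permutation resolution $0 \to Q \to (P')^{\Gamma} \oplus \Z \to \Z \to 0$ is available by pairing a $\Z$-lift of the projection $(P')^{\Gamma} \to C$ with multiplication by $|C|$ on the extra $\Z$-summand. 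Splicing the resulting quasi-permutation resolution of $Q$ with $0 \to L_1 \to P^{\Gamma} \to Q \to 0$ then yields a quasi-permutation resolution of $L_1$, as required. The main obstacle throughout is precisely this final step of absorbing the finite cokernel $H^1(\Gamma, L)$, which constitutes the principal combinatorial content of the argument; compare \cite[Proposition~4.8]{LPR06}.
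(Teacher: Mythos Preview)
Your identification $L_1 = L^{\Gamma}$ is not correct in general. Since each $V_j$ is an \emph{arbitrary} $\Q$-representation of $W_j$, the invariants $(V^1)^{\Gamma} = \bigoplus_{j\ge 2} V_j^{W_j}$ need not vanish, so $V^{\Gamma} = V_1 \oplus (V^1)^{\Gamma}$ may strictly contain $V_1$, and then $L^{\Gamma} = L \cap V^{\Gamma}$ may strictly contain $L_1 = L \cap V_1$. There is also a gap in your treatment of the finite cokernel $C=H^1(\Gamma,L)$: a finite $W_1$-module need not admit a filtration whose successive quotients are cyclic with trivial $W_1$-action (the simple $\Z[W_1]$-modules are the simple $\F_p[W_1]$-modules, which can have nontrivial action or $\F_p$-dimension $>1$), so your reduction to that base case is unjustified; and even granting the reduction, the inductive step leaves you with $Q$ of finite index in a merely quasi-permutation lattice rather than a permutation one, so the induction does not close.

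The paper's argument bypasses all of this. Instead of taking $\Gamma$-invariants, it restricts $L$ to $W_1$ and looks at the \emph{quotient} $L/L_1$, which injects into $V/V_1 = V_2\oplus\cdots\oplus V_m$. Since $W_1$ acts trivially on each $V_j$ for $j\ge 2$, the lattice $L/L_1$ carries the trivial $W_1$-action and is therefore a permutation $W_1$-lattice. The short exact sequence $0\to L_1\to L\to L/L_1\to 0$ then gives $L_1\sim L$ as $W_1$-lattices; since $L$ is quasi-permutation over $W$, it is quasi-permutation over the subgroup $W_1$ by restriction, and hence so is $L_1$. No cohomology, no finite cokernels, no splicing of resolutions is needed. (Incidentally, the natural fix for your first gap---observing that $L^{\Gamma}/L_1$ has trivial $W_1$-action, hence $L_1\sim L^{\Gamma}$---is exactly this idea; applying it directly to $L$ rather than to $L^{\Gamma}$ gives the paper's two-line proof.)
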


  \begin{proof}
  It suffices to prove the lemma for $i=1$.
  Set $V' := V/V_1=V_2\oplus\dots\oplus V_m$ and $L'=L/L_1\subset V'$.
  Then  $W_1$ acts trivially on $V'$ and on $L'$,
  in particular,  $L'$ is a permutation $W_1$-lattice.
  It follows from the short exact sequence of $W_1$-lattices
  \[
  0\to L_1\to L\to L'\to 0
  \]
  that the $W_1$-lattices $L_1$ and $L$ are equivalent.

Now assume that $L$ is a quasi-permutation $W$-lattice.
Then it is a quasi-permutation $W_1$-lattice, and hence so is $L_1$.
\end{proof}

\begin{lemma}[cf.~\cite{LPR06}, Lemma 4.7]
\label{LPR-reduction-product}
  Let $W_1,\dots,W_m$ be finite groups.
  For each $i=1,\dots,m$, let $L_i$ be a $W_i$-lattice.
  Set  $W :=W_1\times\dots\times W_m$ and
construct a $W$-lattice $L:=L_1\oplus \dots\oplus L_m$.

  Then $L$  is a quasi-permutation
  $W$-lattice if and only if
  $L_i$ is a quasi-permutation $W_i$-lattice
  for each $i=1,\dots, m$.
  \end{lemma}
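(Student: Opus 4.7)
The plan is to prove the two directions separately, with the forward direction following by a direct sum construction and the reverse direction reducing to the previous Lemma \ref{LPR-reduction-two}.

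For the ``if'' direction, I would start with quasi-permutation resolutions
\[
0 \to L_i \to P_i \to P_i' \to 0
\]
of each $L_i$ as a $W_i$-lattice, guaranteed by Definition \ref{def.qp}. I then view each term as a $W$-lattice by letting $W_j$ act trivially on it for $j \ne i$, and take the direct sum over $i$ to obtain
\[
0 \to L \to P_1 \oplus \cdots \oplus P_m \to P_1' \oplus \cdots \oplus P_m' \to 0.
\]
The key observation is that if $P_i$ has a $\Z$-basis permuted by $W_i$, the same basis is permuted by $W = W_1 \times \cdots \times W_m$ (the other factors act trivially), so $P_i$ remains a permutation $W$-lattice, and a direct sum of permutation lattices is permutation. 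Hence $L$ is quasi-permutation as a $W$-lattice.

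For the ``only if'' direction, I would invoke Lemma \ref{LPR-reduction-two}. Set $V_i := L_i \otimes_\Z \Q$, viewed as a finite-dimensional $\Q$-representation of $W_i$, and let $V := V_1 \oplus \cdots \oplus V_m$ carry the product $W$-action. Then $L = L_1 \oplus \cdots \oplus L_m$ embeds as a $W$-invariant free abelian subgroup of $V$, and clearly $L \cap V_i = L_i$. Applying Lemma \ref{LPR-reduction-two} (to each index in turn) yields that each $L_i$ is a quasi-permutation $W_i$-lattice.

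Neither direction should pose any real obstacle, since the setup of Lemma \ref{LPR-reduction-two} is tailored precisely to this situation; the only point that merits care is verifying that inflating a permutation $W_i$-lattice along the projection $W \twoheadrightarrow W_i$ yields a permutation $W$-lattice, which is immediate from the definition. The cited analogue \cite[Lemma 4.7]{LPR06} follows exactly this pattern.
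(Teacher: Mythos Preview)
Your proof is correct and follows exactly the paper's approach: the paper dismisses the ``if'' direction as ``obvious from the definition'' (which your direct-sum construction spells out) and derives the ``only if'' direction from Lemma~\ref{LPR-reduction-two}, precisely as you do by setting $V_i = L_i \otimes_{\Z} \Q$.
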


  \begin{proof}
  The ``if" assertion is obvious from the definition.
  The ``only if'' assertion follows from Lemma \ref{LPR-reduction-two}.
  \end{proof}

\begin{lemma}\label{lem:Voskresenskii}
Let $\Gamma$ be a finite group and $L$ a $\Gamma$-lattice
of rank $1$ or $2$.  Then $L$ is quasi-permutation.
\end{lemma}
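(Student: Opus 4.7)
The plan is to use Lemma~\ref{lem:quasi-perm} to reduce to the case where $\Gamma$ acts faithfully on $L$, so that $\Gamma$ embeds in $\GL_n(\bbZ)$ with $n=\rank L\le 2$. If $n=1$, either $\Gamma$ is trivial (and $L=\bbZ$ is already permutation) or $\Gamma=\{\pm 1\}$ acts on $L=\bbZ$ by sign; in the latter case, the augmentation map $\bbZ[\Gamma]\to\bbZ$ fits into a short exact sequence
\[
0\to L\to \bbZ[\Gamma]\to \bbZ\to 0
\]
of $\Gamma$-lattices whose two right-hand terms are permutation, so $L$ is quasi-permutation.

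For $n=2$, the cleanest route is to invoke Voskresenski\u\i's classical theorem that every two-dimensional algebraic torus over any field is rational, hence stably rational; combined with the criterion recalled in Example~\ref{ex.torus}, this immediately gives that the character lattice of such a torus---equivalently, every faithful rank $2$ $\Gamma$-lattice---is quasi-permutation. Via Proposition~\ref{prop.stabeq} or Lemma~\ref{lem.stably-linearizable}, this may be regarded as a purely lattice-theoretic statement, and it completes the proof together with the rank $1$ argument above.

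I expect the rank $2$ case to be the main obstacle, at least if one wishes to avoid citing Voskresenski\u\i's theorem and remain self-contained. A direct argument would proceed through the classification of finite subgroups of $\GL_2(\bbZ)$: up to conjugacy these are all contained in one of the two maximal finite subgroups (the dihedral groups of orders $8$ and $12$, realized as Weyl groups of the root systems $B_2$ and $G_2$), and for each conjugacy class one would exhibit an explicit resolution $0\to L\to P\to P'\to 0$ with $P$ and $P'$ permutation. The subgroups containing $-I$ are the most delicate, since handling them cleanly requires combining an induced permutation lattice with the sign resolution from the rank $1$ step; by contrast, the subgroups acting via reflections on a root lattice admit very natural resolutions coming from the permutation action on the associated weights.
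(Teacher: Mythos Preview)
Your proposal is correct and takes essentially the same approach as the paper: the paper's proof is a one-line citation to Voskresenski\u\i's book, \S4.9, Examples~6 and~7, which establish rationality of tori of dimension $\le 2$ and hence (via the criterion of Example~\ref{ex.torus}) that their character lattices are quasi-permutation. You have simply spelled out the deduction in more detail---the reduction to the faithful case via Lemma~\ref{lem:quasi-perm}, an explicit rank~$1$ resolution, and a sketch of how a self-contained rank~$2$ argument via the finite subgroups of $\GL_2(\bbZ)$ would go---but the substance is the same.
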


\begin{proof} This is easily deduced
from ~\cite[\S4.9, Examples 6, 7]{Voskresenskii-book}.
\end{proof}

\section{Automorphisms and semi-automorphisms of split reductive groups}
\label{sect.aut-group}

\subsection{Notational conventions}
\label{subsec:B,T}

Let $G$ be a split reductive group over a field $k$.
We will write $T$ for a maximal $k$-torus of $G$, $B$ for a Borel subgroup,
$Z=Z(G)$ for the center of $G$, $G^\ad$ for $G/Z$, and $\ T^\ad$ for $T/Z$.
We identify $G^\ad$  with the algebraic group
$\Inn(G)$ of inner automorphisms of $G$.
If $g\in G^\ad(k)$ (or $g\in T^\ad(k)$), we write
$\inn(g)$ for the corresponding inner automorphism of $G$.

We will sometimes refer to a pair $(T, B)$, where $T$ is a split
maximal $k$-torus and $T \subset B \subset G$ is a Borel subgroup
defined over $k$, as a {\em Borel pair}.
It is well known that the natural action of $G^\ad(k)$
on the set of Borel pairs is transitive and that the stabilizer
in $G^\ad(k)$ of a Borel pair $(T,B)$ is $T^\ad(k)$.

Given a split maximal torus $T\subset G$, let
$\rPsi(G,T) :=(X,X^\vee, R, R^\vee)
$
be the {\em root datum} of $(G,T)$. Here
$X=\XX(T)$ is the character group of $T$,  $X^\vee=\Hom(X,\Z)$
is the cocharacter group of $T$, $R=R(G,T)\subset X$ is the root
system of $G$ with respect to $T$, and  $R^\vee\subset X^\vee$ is the
coroot system of $G$ with respect to $T$.
The bijection $R\to R^\vee$ sending a root to the corresponding coroot
is a part of the root datum structure.
For details, see
\cite[\S1.1]{Springer79} or \cite[\S7.4]{Springer-book}.

Given a Borel pair $(T, B)$, let
$
\rXi(G,T,B) :=(X,X^\vee, R, R^\vee, \Delta,\Delta^\vee)
$
be the {\em based root datum of} $(G,T,B)$. Here
$\Delta\subset R$ is the basis of $R$ defined by $B$,
and $\Delta^\vee\subset R^\vee$ is the
corresponding basis of $R^\vee$.
For details, see \cite[\S1.9]{Springer79}.

The automorphism group $\Aut(G)$ is known to carry
the structure of a $k$-group scheme; note however,
that this $k$-group scheme may not be of finite type.
The automorphism groups $\Aut\rPsi(G, T)$ and
$\Aut\rXi(G, T, B)$ are closed group subschemes of
$\Aut(G)$ defined over $k$. These $k$-group schemes are
discrete, in the sense that their identity components
are trivial.

\subsection{Semi-automorphisms}
\label{subsec:G,T,B}
Let $\Gbar$ be a reductive group over an algebraic
closure $\kbar$ of $k$.  We denote by $\SAut(\Gbar)$ the group
of $\kbar/k$-semi-automorphisms of $\Gbar$. We view
$\SAut(\Gbar)$ as an abstract group.
For a definition of a semi-automorphism, see~\cite[\S 1.1]{Borovoi-Duke}
or \cite[\S 1.2]{FSS}. (Note that in these papers semi-automorphisms
are called ``semialgebraic" and ``semilinear" automorphisms, respectively.)
If $G$ is a $k$-form of $\Gbar$, then any
element $\sigma\in\Gal(\kbar/k)$ defines a $\sigma$-semi-automorphism
$\sigma_*\colon \Gbar\to\Gbar$, and any semi-automorphism of $\Gbar$
is of the form $a=\alpha\circ\sigma_*$ where $\sigma\in\Gal(\kbar/k)$ and $\alpha\colon\Gbar\to\Gbar$
is a $\kbar$-automorphism of the $\kbar$-group $\Gbar$.

Fix $(\Tbar,\Bbar)$ as above. For any $a \in \SAut(\Gbar)$ there
exists $g\in \Gbar^\ad(\kbar)$ such
that $\inn(g) (a(\Tbar),a(\Bbar))=(\Tbar,\Bbar)$.
The semi-automorphism $\inn(g) a$ of $\Gbar$ defines
a semi-automorphism of $\Tbar$ depending only on $a$
(since the coset $\Tbar^\ad g$ is uniquely determined).
The automorphism of $X=\XX(\Tbar)$ induced by $\inn(g) a$
preserves $R=R(\Gbar,\Tbar)$ and $\Bbar$ and thus permutes
the elements of the basis $\Delta$ of $R$ defined by $\Bbar$.
In other words, it gives rise to an automorphism
$
\rXi(\Gbar,\Tbar,\Bbar) \to \rXi(\Gbar,\Tbar,\Bbar),
$
depending only on $a$, which we denote by $\varphi_{\Tbar, \Bbar}(a)$.

\begin{proposition} \label{prop.semi}

\begin{enumerate}[\upshape(a)]
\item $\varphi_{\Tbar,\Bbar} \colon \SAut(\Gbar) \to \Aut \rXi(\Gbar, \Tbar, \Bbar)$
is a group homomorphism.

\item  $\Inn(\Gbar) \subset \Ker(\varphi_{\Tbar,\Bbar})$.

\item Suppose $(\Tbar', \Bbar')$ is another Borel pair for $\Gbar$.
Choose $u \in \Gbar^\ad(\kbar)$ so that $(\Tbar, \Bbar) = \inn(u)(\Tbar', \Bbar')$.  Then
the following diagram commutes:
\[ \xymatrix{
   & & \Aut \rXi(\Gbar, \Tbar, \Bbar) \ar[dd]^{\inn(u)^\ast} \\
\SAut (\Gbar) \ar[rrd]^-{\varphi_{\Tbar', \Bbar'}} \ar[rru]^-{\varphi_{\Tbar,\Bbar}}
& & \\
  & & \Aut \rXi(\Gbar, \Tbar', \Bbar').} \]
Moreover, the automorphism $\inn(u)^*$ in this diagram
is independent of the choice of $u$.
\end{enumerate}
\end{proposition}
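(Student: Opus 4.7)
The proof of all three parts hinges on a single algebraic identity satisfied by semi-automorphisms: for any $a\in\SAut(\Gbar)$ and any $h\in\Gbar^\ad(\kbar)$,
\[
a\circ \inn(h)=\inn(a(h))\circ a,
\]
which follows from the very definition of a $\sigma$-semi-automorphism. Throughout I will use the fact already noted in the excerpt that the stabilizer of the Borel pair $(\Tbar,\Bbar)$ under the natural $\Gbar^\ad(\kbar)$-action is $\Tbar^\ad(\kbar)$, together with the trivial observation that any element of $\Tbar^\ad(\kbar)$ lifts to $\Tbar(\kbar)$ and therefore acts trivially on $\Tbar$ by conjugation.

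For (a), given $a,b\in\SAut(\Gbar)$, choose $g_a,g_b\in\Gbar^\ad(\kbar)$ with $\inn(g_a)a$ and $\inn(g_b)b$ preserving $(\Tbar,\Bbar)$. Applying the identity above to $h=g_b$ I rewrite
\[
\bigl(\inn(g_a)\,a\bigr)\circ\bigl(\inn(g_b)\,b\bigr)=\inn\!\bigl(g_a\cdot a(g_b)\bigr)\circ(a\,b),
\]
so $g_{ab}:=g_a\cdot a(g_b)$ is a valid choice for $ab$. Restricting both sides to $\Tbar$ shows that the induced automorphism of $\rXi(\Gbar,\Tbar,\Bbar)$ is the composition of the two automorphisms induced by $a$ and $b$, proving $\varphi_{\Tbar,\Bbar}$ is a homomorphism. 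For (b), given $a=\inn(h)$ with $h\in\Gbar(\kbar)$, pick $g\in\Gbar^\ad(\kbar)$ so that $gh\bmod Z\in\Tbar^\ad(\kbar)$; then $\inn(g)\,a=\inn(gh)$ preserves $(\Tbar,\Bbar)$ and its restriction to $\Tbar$ is trivial, so it induces the identity on $\rXi(\Gbar,\Tbar,\Bbar)$.

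For the commutativity in (c), take $a\in\SAut(\Gbar)$ and let $g\in\Gbar^\ad(\kbar)$ be such that $\alpha:=\inn(g)a$ preserves $(\Tbar,\Bbar)$. Setting $g':=u^{-1}g\,a(u)$ and applying the identity $a\circ\inn(u)=\inn(a(u))\circ a$, I get the clean formula
\[
\inn(g')\,a=\inn(u^{-1})\circ\alpha\circ\inn(u),
\]
so $\alpha':=\inn(g')a$ preserves $(\Tbar',\Bbar')=\inn(u^{-1})(\Tbar,\Bbar)$. Restricting to $\Tbar'$ and passing to character lattices via $\iota:=\inn(u)^\ast\colon\XX(\Tbar)\isoto\XX(\Tbar')$ then gives
\[
\varphi_{\Tbar',\Bbar'}(a)=\iota\circ\varphi_{\Tbar,\Bbar}(a)\circ\iota^{-1}=\inn(u)^\ast\bigl(\varphi_{\Tbar,\Bbar}(a)\bigr),
\]
which is the commutativity of the diagram. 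Finally, if $u_1$ is another element conjugating $(\Tbar',\Bbar')$ to $(\Tbar,\Bbar)$, then $u_1^{-1}u\in\Tbar^{'\ad}(\kbar)$, so its conjugation action on the abelian group $\Tbar'$ is trivial; hence $\inn(u)$ and $\inn(u_1)$ coincide as isomorphisms $\Tbar'\to\Tbar$, giving $\inn(u)^\ast=\inn(u_1)^\ast$.

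\textbf{Main obstacle.} Nothing here is deep, but bookkeeping is the only real difficulty: one must consistently distinguish between $\Gbar^\ad$-cosets acting on the set of Borel pairs and lifts of those cosets to $\Gbar$ acting on $\Tbar$ by conjugation, and one must keep track of the twist $a\circ\inn(h)=\inn(a(h))\circ a$ every time an inner automorphism is moved past a semi-automorphism. The cleanest way to organize this, which I would adopt in the final write-up, is to prove once at the start the lemma that for any $a\in\SAut(\Gbar)$ and $v\in\Gbar^\ad(\kbar)$ one has $a\circ\inn(v)=\inn(a(v))\circ a$; parts (a) and (c) are then essentially one-line consequences.
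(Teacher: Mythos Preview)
Your proof is correct and follows essentially the same approach as the paper's. The only difference is organizational: you isolate the identity $a\circ\inn(h)=\inn(a(h))\circ a$ up front and name the resulting elements explicitly (e.g., $g_{ab}=g_a\cdot a(g_b)$, $g'=u^{-1}g\,a(u)$), whereas the paper simply observes at each step that conjugating an inner automorphism by a semi-automorphism yields another inner automorphism and denotes the result by $\inn(g)$ or $\inn(g')$ without computing the element; the underlying computations are identical.
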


\begin{proof} (a) Given $a_1 , a_2 \in  \SAut(\Gbar)$, choose
$g_1, g_2 \in \Gbar^\ad$ so that $$\inn(g_i) \, a_i(\Tbar,\Bbar) =
(\Tbar,\Bbar).$$ Then $\inn(g_1) \, (a_1 \, \inn(g_2) \, a_1^{-1})
\in \Inn(\Gbar)$; denote this inner automorphism by $\inn(g)$ for
some $g \in \Gbar^{\ad}$. Then $\inn(g) a_1 a_2 (\Tbar,\Bbar) =
(\Tbar,\Bbar)$ and thus
\[ \varphi_{\Tbar,\Bbar}(a_1 a_2) = \inn(g) \, a_1 a_2 = \inn(g_1) \, a_1
\, \inn(g_2) \, a_2 = \varphi_{\Tbar,\Bbar}(a_1) \,
\varphi_{\Tbar,\Bbar}(a_2) \, . \]
Therefore, $\varphi_{\Tbar,\Bbar}$ is a homomorphism.

\smallskip
(b) is obvious from the definition.

\smallskip
(c) Let $a \in \SAut(\Gbar)$.
By our choice of $u \in \Gbar^\ad$, we have  $(\Tbar,\Bbar)= \inn(u) (\Tbar',\Bbar')$.
Choose $g \in \Gbar^\ad$ such  that $\inn(g) \, a (\Tbar,\Bbar) =
(\Tbar,\Bbar)$.
Then
$$
\inn(u^{-1}) \, \inn(g) \, (a \, \inn(u) \, a^{-1}) \in \Inn(\Gbar);
$$
denote this automorphism by $\inn(g')$ for some $g' \in \Gbar^\ad$.
One readily checks that $\inn(g') a (\Tbar',\Bbar') = (\Tbar',\Bbar')$
and thus \[ \varphi_{\Tbar', \Bbar'}(a)= \inn(g') \, a =
\inn(u^{-1}) \, \inn(g) \, a \, \inn(u) =
\inn(u^{-1}) \, \varphi_{\Tbar,\Bbar}(a) \, \inn(u) , \]
as desired. To prove the last assertion of part (c),
note that the coset $uT$ is independent of the choice of $u$.
Hence, so is the map $\inn(u)^*$ in the diagram.
\end{proof}

\subsection{Automorphisms of split reductive groups}
\label{subsec:split-triple}

\begin{proposition}[cf.~{\cite[Expos\'e XXIV, Theorem 1.3]{SGA3}}]
\label{prop.auto}
Let $G$ be a split  reductive group defined over $k$,
$T\subset G$ a split maximal torus, and $B \supset T$
a Borel subgroup of $G$ defined over $k$. Set $\overline{G} := G \times_k \kbar$.
\begin{enumerate}[\upshape(a)]
\item The composite homomorphism of abstract groups
$$\phi_{T, B} \colon \Aut(\Gbar)\into \SAut(\Gbar) \labelto{\varphi_{\Tbar,\Bbar}} \Aut \rXi(G, T, B)$$
admits a $\Gal(\kbar/k)$-equivariant splitting (homomorphic section) $\psi$ of the form
\[ \psi\colon \Aut\rXi(G, T, B) \hookrightarrow \Aut(G, T, B) \hookrightarrow
\Aut(\Gbar). \]
Here $\Aut(G, T, B)$ denotes the subgroup of $\Aut(G)$ consisting
of automorphisms that preserve the Borel pair $(T, B)$.

\item The homomorphism $\phi_{T, B}$ of part (a) fits into a split
short exact sequence of abstract groups
\[ 1 \longrightarrow \Inn(\Gbar) \longrightarrow \Aut(\Gbar)
\labelto{\phi_{T, B}} \Aut\rXi(G, T, B) \longrightarrow 1,  \]
which comes from a split short exact sequences of group schemes over $k$
\begin{equation} \label{eq:SGA3}
  1 \longrightarrow G^\ad \longrightarrow \Aut(G) \labelto{\phi}
\Aut\rXi(G, T, B) \longrightarrow 1.
\end{equation}
\end{enumerate}
\end{proposition}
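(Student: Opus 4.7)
The plan is to deduce both parts from the classical Existence and Uniqueness Theorems for split reductive groups (Chevalley, Demazure--Grothendieck), applied to $G$ directly rather than to $\Gbar$. The crucial input is that because $G$ is split over $k$, we may fix a pinning (\'epinglage) $\{x_\alpha\}_{\alpha\in\Delta}$ of $(G,T,B)$ with root vectors $x_\alpha\in\Lie(G)$ all defined over $k$.

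First I would construct the section $\psi$ in part (a). By the Isomorphism Theorem for split reductive groups, for each $f\in\Aut\rXi(G,T,B)$ there is a unique $k$-automorphism $\psi(f)$ of $G$ preserving $(T,B)$, inducing $f$ on $\XX(T)$, and sending $x_\alpha\mapsto x_{f(\alpha)}$ for every $\alpha\in\Delta$. Uniqueness forces $\psi$ to be a group homomorphism, and by construction $\psi$ factors through $\Aut(G,T,B)\subset\Aut(G)\subset\Aut(\Gbar)$. Moreover, since $\psi(f)$ already preserves $(\Tbar,\Bbar)$, no inner correction is needed to compute $\phi_{T,B}(\psi(f))=\varphi_{\Tbar,\Bbar}(\psi(f))$, so the composite $\phi_{T,B}\circ\psi$ is the identity. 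Galois equivariance is automatic: $\Aut\rXi(G,T,B)$ is a constant group scheme over $k$ (because $(T,B)$ is $k$-rational), so the Galois action on it is trivial, and each $\psi(f)$ is $k$-rational since the pinning is, giving $\sigma_*\psi(f)=\psi(f)=\psi(\sigma f)$.

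For exactness in part (b), Proposition~\ref{prop.semi}(b) gives the inclusion $\Inn(\Gbar)\subset\Ker\phi_{T,B}$, so only the reverse inclusion requires work. Given $a\in\Ker\phi_{T,B}$, the definition of $\varphi_{\Tbar,\Bbar}$ allows us to multiply $a$ by a suitable inner automorphism to arrange that $a$ preserves $(\Tbar,\Bbar)$ while still acting trivially on $\rXi(\Gbar,\Tbar,\Bbar)$. Then $a|_{\Tbar}$ acts trivially on $\XX(\Tbar)$, hence is the identity on $\Tbar$, and $a$ sends each simple-root vector $x_\alpha$ to $c_\alpha x_\alpha$ for some $c_\alpha\in\kbar\m$. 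Choosing $t\in\Tbar^\ad(\kbar)$ with $\alpha(t)=c_\alpha$ for all $\alpha\in\Delta$—possible because the simple roots form a $\Z$-basis of $\XX(\Tbar^\ad)$—the automorphism $\inn(t^{-1})\circ a$ preserves the full pinning, so by the uniqueness half of the Isomorphism Theorem it equals $\id$; thus the original $a$ is inner. Surjectivity of $\phi_{T,B}$ is immediate from the existence of $\psi$.

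The scheme-theoretic sequence~\eqref{eq:SGA3} is then the content of \cite[Expos\'e XXIV, Theorem 1.3]{SGA3}: all three terms carry natural $k$-group scheme structures ($\Aut\rXi(G,T,B)$ being constant finite), and our $\psi$ is a morphism of $k$-schemes precisely because the pinning is $k$-rational. The main conceptual step is the rigidification via a $k$-pinning plus the uniqueness clause of the Isomorphism Theorem; the rest is bookkeeping using Proposition~\ref{prop.semi} and transitivity of $\Gbar^\ad$ on Borel pairs. I expect the one subtlety requiring care is that the inner adjustment by $t\in\Tbar^\ad$ (rather than $\Tbar$) is exactly what is needed because the scalars $c_\alpha$ may not be realizable by an element of $\Tbar$, only of its adjoint quotient.
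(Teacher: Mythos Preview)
Your proposal is correct and follows essentially the same approach as the paper: both construct $\psi$ via a $k$-rational pinning and the Isomorphism Theorem, obtaining the section as the inverse of the isomorphism $\Aut(G,T,B,(x_\alpha))\isoto\Aut\rXi(G,T,B)$ composed with the obvious embeddings. For part (b) the paper simply refers to \cite[Expos\'e XXIV, Proof of Theorem 1.3]{SGA3}, whereas you unpack the argument (inner correction onto the Borel pair, then a torus correction via $t\in\Tbar^\ad$ to kill the scalars $c_\alpha$); this is exactly the standard SGA3 proof, so your extra detail is welcome but not a different route.
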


Note that since $T$ is split over $k$, the $\Gal(\kbar/k)$-action on
$\Aut\rXi(G, T, B)$ is trivial.

\begin{proof} (a) Recall that a {\em pinning} of $(G,T,B)$ is
a choice of a nonzero $X_\alpha\subset \ggg_\alpha$ for each
$\alpha\in \Delta$, where
$$
\Lie(G)=\Lie(T)\oplus \bigoplus_{\alpha\in R} \ggg_\alpha
$$
is the root decomposition, and $\Delta$ is the basis of $R=R(G,T)$
associated with $B$. By the isomorphism theorem,  see \cite[Expos\'e
XXIII, Theorem 4.1]{SGA3} or \cite[Proposition~1.5.5]{Conrad},
the  canonical homomorphism
$$
\Aut(G,T,B, (X_\alpha)_{\alpha\in\Delta})\,\to \Aut\,\rXi(G, T, B)
$$
is an isomorphism.  Composing the inverse isomorphism with
the natural embeddings
\[ \Aut(G,T,B, (X_\alpha)_{\alpha\in\Delta}) \hookrightarrow
\Aut(G, T, B) \hookrightarrow \Aut(G)\into\Aut(\Gbar), \]
we obtain a section $\psi$ of $\phi_{T, B}$ of the desired form.

(b) See \cite[Expos\'e XXIV, Proof of Theorem 1.3]{SGA3}.
\end{proof}

\begin{corollary}\label{cor:M}
  Every  abstract subgroup $\M\subset\Aut(\Gbar)$,
containing $\Inn(\Gbar)$ as a subgroup of finite index,
is of the form $\M=M(\kbar)$ for some linear algebraic
$k$-group $M=\Inn(G)\rtimes A\subset\Aut(G)$.
Here $A \subset \Aut(G,T,B)$ is a finite $k$-group, all
of whose $\kbar$-points are defined over $k$.
\end{corollary}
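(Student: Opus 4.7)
The plan is to exploit the split short exact sequence of $k$-group schemes
\[
1 \to G^\ad \to \Aut(G) \xrightarrow{\phi} \Aut\rXi(G,T,B) \to 1
\]
from Proposition~\ref{prop.auto}(b), together with its $\Gal(\kbar/k)$-equivariant section $\psi$ from part (a). Since $G$ is split, the discrete $k$-group scheme $\Aut\rXi(G,T,B)$ is constant, so its Galois action is trivial and every element of $\Aut\rXi(G,T,B)(\kbar)$ is automatically $k$-rational. This is the key reason we will be able to produce an $A$ whose $\kbar$-points are all defined over $k$.

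First I would pass to the quotient: since $\Inn(\Gbar)=\Ker(\phi_{T,B})$ has finite index in $\M$, the image $\bar A:=\phi_{T,B}(\M)\subset \Aut\rXi(G,T,B)$ is a finite subgroup, and $\phi_{T,B}$ induces an isomorphism $\M/\Inn(\Gbar)\isoto \bar A$. Next, I would lift $\bar A$ through the section, setting
\[
A := \psi(\bar A)\ \subset\ \Aut(G,T,B).
\]
Because $\psi$ is $\Gal(\kbar/k)$-equivariant and the Galois action on $\Aut\rXi(G,T,B)$ is trivial, each element of $A$ is Galois-invariant; therefore $A$ is a finite constant $k$-subgroup of $\Aut(G,T,B)$, all of whose $\kbar$-points are defined over $k$.

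Now I would define $M:=\Inn(G)\cdot A\subset \Aut(G)$. Since $\Inn(G)$ is normal in $\Aut(G)$ and since $A\cap \Inn(G)=\{1\}$ (as $\psi$ is a section of $\phi$), $M$ is the semidirect product $\Inn(G)\rtimes A$ in the category of $k$-group schemes, hence a linear algebraic $k$-group. It remains to verify $M(\kbar)=\M$. For $\M\subset M(\kbar)$: every $a\in\M$ decomposes as
\[
a \;=\;\bigl(a\cdot \psi(\phi_{T,B}(a))^{-1}\bigr)\cdot \psi(\phi_{T,B}(a)),
\]
where the first factor lies in $\Ker(\phi_{T,B})=\Inn(\Gbar)$ and the second in $\psi(\bar A)=A$. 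For $M(\kbar)\subset \M$: clearly $\Inn(\Gbar)\subset\M$, and for any $\alpha=\psi(\bar a)\in A$, choose $m\in\M$ with $\phi_{T,B}(m)=\bar a$; then $\alpha=\inn(h)^{-1} m\in\M$ for the appropriate $h\in \Gbar^{\ad}(\kbar)$, so $A\subset\M$ as well.

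There is no real obstacle here: the argument is a direct application of the splitting in Proposition~\ref{prop.auto}. The only point requiring care is the descent statement on $A$, which is handled cleanly by combining the equivariance of $\psi$ with the fact that the Galois action on $\Aut\rXi(G,T,B)$ is trivial because $G$ is split over $k$.
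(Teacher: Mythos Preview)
Your proof is correct and follows essentially the same route as the paper: take the image $\bar A=\phi_{T,B}(\M)$ in $\Aut\rXi(G,T,B)$, lift it via the section $\psi$ to obtain $A$, and form $M=\Inn(G)\rtimes A$. The only cosmetic difference is that the paper defines $M$ as the scheme-theoretic preimage $\phi^{-1}(\bar A)\subset\Aut(G)$ (so that $M(\kbar)=\M$ is immediate) and then observes $M=\Inn(G)\rtimes A$, whereas you build $M$ directly as the product and verify $M(\kbar)=\M$ by hand; both arguments rest on the same splitting from Proposition~\ref{prop.auto}.
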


\begin{proof}
Set $A':= \phi_{T,B}(\M)\subset \Aut \rXi(G, T, B)$. Then $A'$ is
a finite algebraic $k$-group all of whose $\kbar$-points are defined
over $k$.  Set $M=\phi^{-1}(A')\subset\Aut(G)$, where
$\phi\colon \Aut(G)\to \Aut \rXi(G, T, B)$ is a homomorphism
of $k$-group schemes, as in~\eqref{eq:SGA3}.
Then $M$ is a $k$-group scheme and $M(\kbar)=\M$.  Set
$A=\psi(A')\subset \Aut(G,T,B)$, where $\psi$ is
the splitting of Proposition~\ref{prop.auto}, then
$M= \Inn(G)\rtimes A$.  Since $M$ has finitely many connected
components, and the identity component $G^\ad$ of $M$ is
an affine algebraic $k$-group, we conclude
that $M$ is affine algebraic as well.
In other words, $M$ is a linear algebraic $k$-group, as desired.
\end{proof}

\section{The character lattice and the generic torus}
\label{sect.character-lattice}

Throughout this section
$G$ will denote a (connected) reductive $k$-group, not necessarily split,
and $T \subset G$ will denote a maximal $k$-torus.
We write $\Gbar:=G\times_k \kbar$, $\Tbar=T\times_k \kbar$,
and choose a Borel subgroup $\Bbar\supset \Tbar$ of $\Gbar$.

\subsection{The character lattice of a reductive group}

\begin{definition} \label{def:character-lattice}
{\rm (a)} We define $A_{T, \Bbar}$ to be the image of the composite homomorphism
$$\Gal(\kbar/k)\into\SAut(\Gbar)\labelto{\varphi_{\Tbar,\Bbar}}
\Aut\,\rXi(\Gbar,\Tbar,\Bbar) \hookrightarrow \Aut \, \XX(\Tbar).$$

\smallskip
{\rm(b)} We define the {\em extended Weyl group} $\V(G, T, \Bbar)$ by
  $$
  \V(G, T, \Bbar):=
W(\Gbar, \Tbar)\cdot A_{ T ,\Bbar} \subset \Aut\, \XX(\Tbar).
  $$
Note that $\V(G, T,\Bbar)$ is a subgroup  of
$\Aut\,\rPsi(\Gbar,\Tbar)$ (and therefore of $\Aut\,\XX(\Tbar)$),
because $W(\Gbar,\Tbar)$ is normal in $\Aut\,\rPsi(\Gbar,\Tbar)$. We
call the pair $$(\V(G, T, \Bbar), \XX(\Tbar))$$ {\em the character
lattice of} $G$.
\end{definition}

\begin{remark} \label{rem.SW}
Let $T'\subset G$ be another maximal $k$-torus, and $\Bbar'\supset \Tbar'$
a Borel subgroup of $\Gbar$. Then
it is easy to see that for $u$ as in Proposition \ref{prop.semi}(c),
the isomorphism $\inn(u)^*\colon \XX(\Tbar)\to \XX(\Tbar')$
induces an isomorphism of groups
$$
A_{\Tbar,\Bbar}\isoto A_{\Tbar',\Bbar'}
$$
and an isomorphism of lattices
$$ (\V(G, T, \Bbar), \XX(\Tbar))
\isoto ( \V(G, T', \Bbar')), \XX(\Tbar')).$$ In other words, the
character lattice $( \V(\Gbar, \Tbar,\Bbar), \XX(\Tbar))$ is defined
unique\-ly up to a canonical isomorphism.

Moreover, if $T = T'$ then $A_{ T, \Bbar'} = w
A_{ T, \Bbar} w^{-1}$ for some $w \in W(\Gbar, \Tbar)$.
Thus different choices of $\Bbar$ give rise to the same (and not
just isomorphic)
subgroups $\V(G, T,\Bbar)$ of $\Aut\,\XX(\Tbar)$. For this reason
we will write $\V(G, T)$ in place of $\V(G, T, \Bbar)$ from
now on.
\end{remark}

\begin{lemma} \label{lem.SW1}
$\V(G, T)=W(\Gbar, \Tbar)\cdot\im\,\lambda_T$, where
$$
\lambda_T \colon \Gal(\kbar/k) \to \Aut\,\XX(\Tbar)
$$
is the usual action of the Galois group on the characters of $T$.
\end{lemma}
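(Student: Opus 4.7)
The plan is to unwind the definition of $\V(G,T)$ and observe that the only discrepancy between $\varphi_{\Tbar,\Bbar}(\sigma_*)$ and the ordinary Galois action $\lambda_T(\sigma)$ is a Weyl group element, which gets absorbed by the factor $W(\Gbar,\Tbar)$.

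First I would fix $\sigma\in\Gal(\kbar/k)$ and consider the associated $\sigma$-semi-automorphism $\sigma_*\colon\Gbar\to\Gbar$. Because $T$ is defined over $k$, we have $\sigma_*(\Tbar)=\Tbar$, so $\sigma_*$ induces an automorphism of $\XX(\Tbar)$, and by definition this induced automorphism is precisely $\lambda_T(\sigma)$. The Borel subgroup $\Bbar$ however is only defined over $\kbar$, so $\sigma_*(\Bbar)$ need not equal $\Bbar$; but both are Borel subgroups of $\Gbar$ containing the common maximal torus $\Tbar$, hence conjugate by some element $n\in N_{\Gbar}(\Tbar)(\kbar)$. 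Equivalently there is $g\in\Gbar^\ad(\kbar)$ with $\inn(g)\sigma_*(\Tbar,\Bbar)=(\Tbar,\Bbar)$ and $g$ lies in the image of $N_{\Gbar}(\Tbar)$ in $\Gbar^\ad$; let $w\in W(\Gbar,\Tbar)$ be its class.

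Next, by the construction of $\varphi_{\Tbar,\Bbar}$ given in Section~\ref{subsec:G,T,B}, the element $\varphi_{\Tbar,\Bbar}(\sigma_*)$ acts on $\XX(\Tbar)$ as the composition of the action of $\inn(g)$ and that of $\sigma_*$, namely $w\cdot\lambda_T(\sigma)$. This gives the two containments:
\begin{itemize}
\item $A_{T,\Bbar}\subset W(\Gbar,\Tbar)\cdot\im\lambda_T$, so $\V(G,T)=W(\Gbar,\Tbar)\cdot A_{T,\Bbar}\subset W(\Gbar,\Tbar)\cdot\im\lambda_T$;
\item $\lambda_T(\sigma)=w^{-1}\cdot\varphi_{\Tbar,\Bbar}(\sigma_*)\in W(\Gbar,\Tbar)\cdot A_{T,\Bbar}=\V(G,T)$, so $\im\lambda_T\subset\V(G,T)$ and hence $W(\Gbar,\Tbar)\cdot\im\lambda_T\subset\V(G,T)$.
\end{itemize}

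Combining these two inclusions yields the claimed equality. The argument is essentially bookkeeping; the only subtle point is to verify that the element $g$ chosen in the definition of $\varphi_{\Tbar,\Bbar}(\sigma_*)$ can indeed be taken in the normalizer of $\Tbar$ (equivalently, represents a Weyl group element), which follows from the fact that $\sigma_*$ already stabilizes $\Tbar$ so only the Borel needs to be corrected, and any two Borel subgroups containing $\Tbar$ are conjugate via $N_{\Gbar}(\Tbar)$.
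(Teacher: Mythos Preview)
Your proof is correct and follows the same approach as the paper's own proof, which simply records that by the definition of $\varphi_{\Tbar,\Bbar}$ there exists $w_\sigma\in W(\Gbar,\Tbar)$ with $\varphi_{\Tbar,\Bbar}(\sigma)=w_\sigma\,\lambda_T(\sigma)$. You have merely spelled out in more detail why the correcting element $g$ can be taken to represent a Weyl group element (namely because $\sigma_*$ already preserves $\Tbar$, so only the Borel needs adjusting within $N_{\Gbar}(\Tbar)$), which is exactly the content behind the paper's one-line proof.
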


\begin{proof}
By the definition of $\varphi_{\Tbar,\Bbar}$, for any
$\sigma\in\Gal(\kbar/k)$ there exists $w_\sigma\in W(\Gbar,\Tbar)$
such that $
\varphi_{\Tbar,\Bbar}(\sigma)=w_\sigma\,\lambda_T(\sigma). $
\end{proof}

\begin{corollary} \label{cor.SW1}
Suppose $G$ is a reductive $k$-group,
$T$ is a maximal $k$-torus, and $K/k$ is a field extension such
that $k$ is algebraically closed in $K$. Then $\V(G, T) = \V(G_K, T_K)$
as subgroups of $\Aut \, \XX(\overline{T})=\Aut\,\XX(T_{\Kbar})$.
\end{corollary}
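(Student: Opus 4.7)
The plan is to invoke Lemma~\ref{lem.SW1} on both sides of the claimed equality, which reduces the statement to two separate identifications inside $\Aut\,\XX(\overline{T})=\Aut\,\XX(T_{\overline{K}})$:
\[
W(\Gbar,\Tbar) \;=\; W(G_{\overline{K}},T_{\overline{K}})
\qquad\text{and}\qquad
\im\,\lambda_T \;=\; \im\,\lambda_{T_K}.
\]
Fix a $k$-algebra embedding $\overline{k}\hookrightarrow \overline{K}$, giving canonical identifications $\Tbar_{\overline{K}} = T_{\overline{K}}$ and $\XX(\overline{T})=\XX(T_{\overline{K}})$.

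For the first identification, I would argue that both Weyl groups are, as subgroups of $\Aut\,\XX(\overline{T})$, generated by the reflections associated with the root system $R(\Gbar,\Tbar)\subset \XX(\overline{T})$, and that this root system is preserved under base change from $\overline{k}$ to $\overline{K}$. Thus the two Weyl groups coincide as abstract subgroups of $\Aut\,\XX(\overline{T})$. This is essentially formal.

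For the second identification, I would choose a finite Galois extension $k'/k$ inside $\overline{k}$ over which $T$ splits, so that $\lambda_T$ factors through $\Gal(k'/k)$ and its image is realized by this finite quotient. Then $T_K$ splits over the composite $Kk'\subset \overline{K}$, and $\lambda_{T_K}$ factors through $\Gal(Kk'/K)$. The hypothesis that $k$ is algebraically closed in $K$ forces $K\cap k' = k$ (any element of $K\cap k'$ is algebraic over $k$, hence lies in $k$), and since $k'/k$ is Galois this yields the standard isomorphism
\[
\Gal(Kk'/K)\;\isoto\;\Gal(k'/k)
\]
given by restriction. Under this isomorphism and the identification $\XX(T_{k'})=\XX(T_{Kk'})$, the two Galois actions on $\XX(\overline{T})$ match, which gives $\im\,\lambda_T = \im\,\lambda_{T_K}$.

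I do not anticipate any substantial obstacle; the main point is the linear disjointness argument ensuring that the relevant finite Galois groups (and hence their images in $\Aut\,\XX(\overline{T})$) do not shrink upon passing from $k$ to $K$. Independence from the choice of embedding $\overline{k}\hookrightarrow \overline{K}$ and of Borel subgroup is handled by Remark~\ref{rem.SW}, so there is no genuine ambiguity in identifying $\V(G,T)$ with $\V(G_K,T_K)$ as subgroups of $\Aut\,\XX(\overline{T})$.
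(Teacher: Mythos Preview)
Your proposal is correct and follows essentially the same approach as the paper: both invoke Lemma~\ref{lem.SW1} to reduce to $\im\,\lambda_T = \im\,\lambda_{T_K}$, and both use that $k$ being algebraically closed in $K$ forces the relevant Galois groups to match. The only cosmetic difference is that the paper works with the full absolute Galois groups and cites the surjectivity of $\Gal(\Kbar/K)\to\Gal(\kbar/k)$ from \cite[Theorem VI.1.12]{lang}, whereas you work at the finite level with a splitting field $k'/k$ and the linear-disjointness isomorphism $\Gal(Kk'/K)\isoto\Gal(k'/k)$; these are two phrasings of the same fact.
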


\begin{proof} By Lemma~\ref{lem.SW1}, it suffices to show that
$\im \, \lambda_T = \im \, \lambda_{T_K}$.
Since $T$ splits over $\kbar$, the action of the
Galois group $\Gal(\Kbar/K)$
on $\XX(\Tbar)$ factors through the natural
homomorphism $\Gal(\Kbar/K)\to\Gal(\kbar/k)$.
By \cite[Theorem VI.1.12]{lang}, this homomorphism
is surjective. Thus
$\im \, \lambda_T = \im \, \lambda_{T_K}$, as desired.
\end{proof}

\begin{lemma} \label{lem.SW2}
Let $T$ be a maximal $k$-torus of $G$, and $\Bbar \supset \Tbar$  a Borel
subgroup of $\Gbar$. Then $\V(G,T)$ is a semi-direct product:
  $\V(G, T)=W(\Gbar,\Tbar)\rtimes A_{ T, \Bbar}$.
\end{lemma}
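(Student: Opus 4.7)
The plan is to verify the two defining conditions for the internal semidirect product: normality of $W(\Gbar,\Tbar)$ in $\V(G,T)$ and triviality of the intersection $W(\Gbar,\Tbar)\cap A_{T,\Bbar}$.

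First, by Definition~\ref{def:character-lattice}(b), $\V(G,T)=W(\Gbar,\Tbar)\cdot A_{T,\Bbar}$ inside $\Aut\,\XX(\Tbar)$, and $W(\Gbar,\Tbar)$ is normal in $\Aut\,\rPsi(\Gbar,\Tbar)$, hence also in $\V(G,T)$. So it only remains to show
\[
W(\Gbar,\Tbar)\cap A_{T,\Bbar}=\{1\}.
\]

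For this I would unwind the construction of $\varphi_{\Tbar,\Bbar}$ from Section~\ref{subsec:G,T,B}. By construction, for every $\sigma\in\Gal(\kbar/k)$ there exists $g_\sigma\in\Gbar^{\ad}(\kbar)$ with $\inn(g_\sigma)\sigma_*(\Tbar,\Bbar)=(\Tbar,\Bbar)$, and the induced action of $\inn(g_\sigma)\sigma_*$ on $\XX(\Tbar)$ is $\varphi_{\Tbar,\Bbar}(\sigma)$; this automorphism of $\XX(\Tbar)$ lies in $\Aut\,\rXi(\Gbar,\Tbar,\Bbar)$, so in particular it preserves the basis $\Delta$ of $R=R(\Gbar,\Tbar)$ determined by $\Bbar$. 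Thus every element of $A_{T,\Bbar}$ preserves $\Delta$.

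Now suppose $w\in W(\Gbar,\Tbar)\cap A_{T,\Bbar}$. Then $w$ is a Weyl group element that preserves $\Delta$. Since the Weyl group acts simply transitively on the set of bases of $R$ (equivalently, on the set of Borel subgroups containing $\Tbar$), the only Weyl element preserving $\Delta$ is the identity, so $w=1$. Combined with the surjectivity $\V(G,T)=W(\Gbar,\Tbar)\cdot A_{T,\Bbar}$ and the normality above, this gives the semidirect product decomposition $\V(G,T)=W(\Gbar,\Tbar)\rtimes A_{T,\Bbar}$.

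The only mildly subtle point—and really the only thing to check carefully—is that $A_{T,\Bbar}$ genuinely lies in $\Aut\,\rXi(\Gbar,\Tbar,\Bbar)$ (not just in $\Aut\,\XX(\Tbar)$), so that its elements preserve $\Delta$; this is immediate from the definition of $\varphi_{\Tbar,\Bbar}$ recalled in Section~\ref{subsec:G,T,B}. Once this is noted, the argument is formal.
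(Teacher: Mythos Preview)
Your proof is correct and follows essentially the same approach as the paper: both show $W(\Gbar,\Tbar)\cap A_{T,\Bbar}=\{1\}$ by observing that $A_{T,\Bbar}\subset\Aut\,\rXi(\Gbar,\Tbar,\Bbar)$ preserves the basis $\Delta$, while the only Weyl element fixing $\Delta$ is the identity. You are slightly more explicit about the normality of $W(\Gbar,\Tbar)$, but this is already noted in Definition~\ref{def:character-lattice}(b) and the paper takes it as understood.
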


\begin{proof}
Since $A_{ T, \Bbar}\subset\Aut\,\rXi(\Gbar,\Tbar,\Bbar)$, every element of
$A_{ T,\Bbar}$ preserves the basis $\Delta$  of $R(\Gbar, \Tbar)$ corresponding to $\Bbar$,
while in $W(\Gbar, \Tbar)$ only the identity element $1$ preserves $\Delta$.
Thus $W(\Gbar, \Tbar)\cap A_{ T, \Bbar}=\{1\}$.
By definition  $W(\Gbar, \Tbar)\cdot A_{ T,\Bbar}=\V(G, T)$,
and the lemma follows.
\end{proof}

\subsection{The generic torus}

Let $T$ be a maximal $k$-torus of $G$ and
$T_\gen$ the generic torus of $G$.
Recall that $T_\gen$ is defined over the field $K_{\gen} := k(G/N_G(T))$,
where $N_G(T)$ denotes the normalizer of $T$ in $G$. For details of
this construction, see~\cite[\S4.2]{Voskresenskii-book}. For notational
simplicity we will write $K$ in place of $K_{\gen}$ for the remainder
of this section.

\begin{proposition}
\label{prop:Voskr}
Let $G$ be a  reductive $k$-group and $T$ a maximal $k$-torus.
Then
\begin{enumerate}[\upshape(a)]
\item  the image $\AAA$ of $\Gal(\Kbar/K)$ in
$\Aut\,\XX(\overline{T_\gen})$ coincides with the extended Weyl
group $\V(G_K, T_\gen)$.

\item The character lattice
$(\AAA, \XX(\overline{T_{\gen}}) )$ of the generic torus is isomorphic
to the character lattice of $G$.
\end{enumerate}
\end{proposition}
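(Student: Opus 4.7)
My plan is to deduce both parts from the formal identifications set up in Section~\ref{sect.character-lattice}, with the key geometric input being that the Galois action on the characters of the generic torus realises the full Weyl group.

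First I will verify that Corollary~\ref{cor.SW1} applies to the extension $K=k(G/N_G(T))/k$. The variety $X=G/N_G(T)$ is smooth, and over $\kbar$ it coincides with the variety of maximal tori of $\Gbar$, which is connected since $\Gbar$ is connected. Hence $X$ is geometrically integral, so $k$ is algebraically closed in $K$, and Corollary~\ref{cor.SW1} yields $\V(G,T)=\V(G_K,T_K)$ as subgroups of $\Aut\,\XX(\Tbar)$.

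For part (a), the inclusion $\AAA\subseteq \V(G_K,T_\gen)$ is immediate from Lemma~\ref{lem.SW1}. The reverse inclusion reduces, again by Lemma~\ref{lem.SW1}, to showing $W(\overline{G_K},\overline{T_\gen})\subseteq \AAA$. I would argue geometrically as follows. Let $\tilde X:=G/T$; then $\tilde X\to X$ is an \'etale $N_G(T)/T$-torsor, and by the universal property of $\tilde X$ (it parametrises maximal tori together with a trivialization) there is a canonical isomorphism $T_\gen\times_K \tilde K \isoto T_{\tilde K}$ over $\tilde K:=k(\tilde X)$, identifying $\XX(\overline{T_\gen})$ with $\XX(\Tbar)$. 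The action of $W(\Gbar,\Tbar)=(N_G(T)/T)(\kbar)$ as deck transformations of $\tilde X_{\kbar}\to X_{\kbar}$ then gives, after passing to the normal closure of $\tilde K/K$ inside $\Kbar$, a subgroup of $\Gal(\Kbar/K)$ whose induced action on $\XX(\overline{T_\gen})$ is precisely the intrinsic Weyl group action. This places $W(\Gbar,\Tbar)$ inside $\AAA$ and finishes (a).

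For part (b) I would chain together the identifications now in hand. Corollary~\ref{cor.SW1} gives $(\V(G,T),\XX(\Tbar))=(\V(G_K,T_K),\XX(\Tbar))$; Remark~\ref{rem.SW} supplies a canonical isomorphism of lattices $(\V(G_K,T_K),\XX(\Tbar))\isoto (\V(G_K,T_\gen),\XX(\overline{T_\gen}))$ induced by conjugating $T_K$ to $T_\gen$ by a suitable element of $G^\ad(\Kbar)$; and part (a) identifies $\V(G_K,T_\gen)$ with $\AAA$. Composing yields the desired isomorphism $(\V(G,T),\XX(\Tbar))\cong (\AAA,\XX(\overline{T_\gen}))$. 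The main obstacle will be the geometric step inside (a): pinning down the canonical isomorphism $T_\gen\times_K\tilde K\cong T_{\tilde K}$ and checking that the subgroup of $\Gal(\Kbar/K)$ coming from the $W$-torsor $\tilde X\to X$ acts on $\XX(\overline{T_\gen})$ through the intrinsic Weyl group action rather than through some twist of it. Once that is secured, everything else is bookkeeping with the definitions of Section~\ref{sect.character-lattice}.
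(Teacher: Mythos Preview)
Your overall strategy is sound and your part (b) matches the paper's argument exactly: use Remark~\ref{rem.SW} to pass from $T_\gen$ to $T_K$ inside $G_K$, then Corollary~\ref{cor.SW1} (licensed by the geometric integrality of $G/N_G(T)$) to descend from $K$ to $k$.

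For part (a) you take a genuinely different route from the paper. The paper does not argue directly with the $W$-torsor $G/T\to G/N_G(T)$. Instead it invokes Voskresenski\u\i's theorem \cite[Theorem~4.2.1]{Voskresenskii-book} as a black box for the \emph{semisimple} case, and then reduces the reductive case to it by writing $T_\gen = T'_\gen\cdot R_K$ with $T'_\gen$ the generic torus of $G^\der$ and $R$ the radical, noting that $\Gal(\Kbar/\kbar K)$ acts trivially on $\XX(R_\Kbar)$ (since $R$ splits over $\kbar$), so the image on $\XX(\overline{T_\gen})\otimes\Q$ is $W(G^\der_{\Kbar},T'_\gen)\times\{1\}=W(G_{\Kbar},T_\gen)$. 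Your approach, by contrast, is essentially a sketch of the proof of Voskresenski\u\i's theorem itself, carried out uniformly for reductive $G$: base change to $\kbar$ makes $W$ constant, the cover $\tilde X_{\kbar}\to X_{\kbar}$ is then Galois with group $W(\kbar)$, and the tautological trivialization of the pulled-back universal torus identifies the deck action on $\XX(\overline{T_\gen})$ with the intrinsic $W$-action. This is more self-contained and avoids the derived-group/radical splitting; the price is the verification you flag as the ``main obstacle'', namely that under the isomorphism $T_\gen\times_K\kbar\tilde K\cong T_{\kbar\tilde K}$ the Galois action really does coincide with the standard $W(\Gbar,\Tbar)$-action on $\XX(\Tbar)$ (and not some twist). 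That check is routine once one writes the trivialization explicitly as conjugation by the tautological coset in $G/T$, but it does need to be written down. Either route is fine; the paper's is shorter by outsourcing the key point to the literature.
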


If $G$ is semisimple then the proposition is an immediate consequence of
a theorem of Voskresenski\u\i's~\cite[Theorem~4.2.2]{Voskresenskii-book};
cf.~Lemma~\ref{lem.SW2}.

\begin{proof}
(a) We claim that the image of the Galois group $\Gal(\Kbar/\kbar K)$
in $\Aut \, \XX(\overline{T_\gen})$ coincides with the Weyl
group $W(\overline{G_K}, \overline{T_\gen})$. If $G$ is semisimple
this is Theorem 4.2.1 in~\cite{Voskresenskii-book}. In the general
case, we consider the derived subgroup $G^\der = [G, G]$ of $G$
which is a connected semisimple group. Consider the radical $R$ of
$G$ (the identity component of the center), which is a $k$-torus.
The generic torus $T_\gen$ of $G$ and the generic
torus $T_\gen' \subset G_{K}^\der $ of $G^\der$ are defined over the
same field $K = k(G/N_G(T)) = k(G^\der/N_{G^\der}(T \cap G^\der))$.
Note that $T_\gen=T_\gen ' \cdot R_K$ and $T_\gen ' \cap R_K$ is
finite. Hence, there is a canonical isomorphism
$$
\XX(T_\gen)\otimes\Q =\XX(T_\gen ')\otimes\Q\ \oplus\ \XX(R_\Kbar)\otimes \Q
\, ,
$$
where $\XX(T_\gen)$ stands for $\XX(\overline{T_\gen})$.
Let
\begin{align*}
\rho\colon& \Gal(\Kbar/\kbar K)\to\Aut\,\XX(T_\gen)\otimes\Q, \\
\rho'\colon& \Gal(\Kbar/\kbar K)\to\Aut\,\XX(T_\gen ')\otimes \Q
\end{align*}
be the corresponding actions.
Since $R_K$  splits over $\kbar K$,
the Galois group
$\Gal(\Kbar/\kbar K)$ acts trivially on $\XX(R_\Kbar)$.
Hence, for every $\sigma\in\Gal(\Kbar/\kbar K)$ we have
$$
\rho(\sigma)=(\rho'(\sigma),1)\ \in\   \Aut\, \XX(T_\gen ')\otimes\Q\ \times\ \Aut\,\XX(R_\Kbar)\otimes\Q
\ \subset\ \Aut\,\XX(T_\gen)\otimes\Q.
$$
By Voskresenski\u\i's theorem~\cite[Theorem
4.2.1]{Voskresenskii-book}, we have $$\im\,\rho'=W(G^\der_\Kbar,
T_\gen ')$$ and hence
$$\im\,\rho=W(G^\der_\Kbar, T_\gen ')\times\{1\}=W(G_\Kbar,{T_\gen}).$$
This proves the claim.

Now recall that by Lemma~\ref{lem.SW1}, $\V(G_{K}, T_\gen)$ is
generated by $\AAA$ and $W(\overline{G_{K}}, {T_\gen})$. The claim
tells us that, in fact, $W(\overline{G_{K}}, {T_\gen}) \subset \AAA$.
Hence,   $\V(G_{K}, T_\gen)=\AAA$.

(b) Consider two maximal tori in $G_{K}$, $T_\gen$ and $T_{K} = T
\times_k K$. By Remark~\ref{rem.SW} the lattices
\[ \text{$(\V(G_{K}, T_\gen), \XX({\overline{T_\gen}}))$
and
$ ( \V(G_{K}, T_{K}), \XX({\overline{T_K}}))$} \]
are isomorphic.
By part (a), the character lattice
$(\AAA, \XX(\overline{T_{\gen}}))$ of the generic torus
coincides with $(\V(G_{K}, T_\gen), \XX(\overline{{T_\gen}}))$.
On the other
hand, since the $k$-variety $G/N_G(T)$ is absolutely irreducible,
$k$ is algebraically closed in $K = k(G/N_G(T))$. Thus by
Corollary~\ref{cor.SW1},
$( \V(G_{K}, T_{K}),   \XX(\overline{{T_K}}) )$
coincides with $(\V(G, T), \XX(\Tbar))$.
We conclude that the character lattice $(\AAA, \XX(\overline{T_{\gen}}))$ of the generic torus
is isomorphic to the lattice  $(\V(G, T), \XX(\Tbar))$,
which is the character lattice of $G$.
\end{proof}

\section{Forms of reductive groups}
\label{sect.forms}

Let $\Gspl$ be a split  reductive $k$-group.
Recall that any $k$-form $G$ of $\Gspl$ is $k$-isomorphic to
a twisted group $_z \Gspl$ for some cocycle
$z\in Z^1(k,\Aut(\overline{\Gspl}))$.  Sending $z$ to $_z \Gspl$ gives
rise to a natural bijective correspondence between
the non-abelian Galois cohomology set $H^1(k, \Aut(\overline{\Gspl}))$
and the isomorphism classes of $k$-forms of $\Gspl$.
For details on this, see e.g.  \cite[\S\S11.3 and 12.3]{Springer-book}.

\subsection{Choosing a ``small" cocycle}\label{subsec:cocycle}

Let $G$ be a  reductive $k$-group, not necessarily split.
Let $T\subset G$ be a maximal torus, and let $\Bbar\supset \Tbar$ be a Borel subgroup.
Let $G_\spl$ be a split $k$-form of $G$.
We choose and fix
a $\kbar$-isomorphism $\theta \colon\overline{\Gspl}\to \Gbar$.
Choose a Borel pair $(T_\spl, B_\spl)$ in $G_\spl$.
After composing $\theta$ with an inner automorphism of
$\Gbar$, we may (and shall) assume that
$\theta$ takes $(\overline{\Tspl},\overline{\Bspl})$ to $(\Tbar,\Bbar)$.
Then $\theta$ induces
isomorphisms $\Aut(\overline{G}) \to \Aut(\overline{\Gspl})$,
$\rXi(\Gbar, \Tbar, \Bbar) \to \rXi(\overline{\Gspl}, \overline{\Tspl},
\overline{\Bspl})=\rXi({\Gspl}, {\Tspl},{\Bspl})$, etc.

\begin{definition}\label{def:M-G}
Let $G$, $G_\spl$ and $\theta$ be as above.
Let $A_{ T, \Bbar}$ denote
the image of $\Gal(\kbar/k)$ in $\Aut \rXi(\Gbar, \Tbar, \Bbar)$,
as in Definition~\ref{def:character-lattice}, it is a finite group.
Note that $\theta$ induces an isomorphism
$$\theta_*\colon\Aut \rXi(\Gbar, \Tbar, \Bbar)\isoto
\Aut \rXi({\Gspl}, \Tspl, \Bspl).$$
Set $\tA :=\theta_*(A_{T,\Bbar}) \subset \Aut \rXi({\Gspl}, \Tspl, \Bspl)$.
We define $M_G\subset \Aut({\Gspl})$ to be  the preimage  in $\Aut({\Gspl})$
of the finite group $\tA$  under
\[ \phi\colon\Aut(\Gspl) \to \Aut\rXi(\Gspl, \Tspl, \Bspl) \, ; \]
see  exact sequence \eqref{eq:SGA3} of Proposition~\ref{prop.auto}(b)
(for $\Gspl$).
Then $M_G$
is an algebraic group defined over $k$; see Corollary \ref{cor:M}.
Set
\[ ^\theta\V :=\theta_*(\V(G,T))\subset\Aut\,\XX(\Tspl) \, , \]
so that $^\theta \V=W(\Gspl,\Tspl)\cdot \tA$.
Note that
the group $^\theta\V$ acts multiplicatively (i.e., by group automorphisms)
on the split $k$-torus $T_\spl$.
\end{definition}

\begin{proposition}\label{prop:cocycle}
With the notation of Definition \ref{def:M-G}, $G$ is isomorphic to
$_z G_\spl$ for some cocycle $z\in Z^1(k,M_G)$.
\end{proposition}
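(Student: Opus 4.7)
The plan is to build $z$ explicitly out of the $\kbar$-isomorphism $\theta$ and then to verify that its values lie in the subgroup $M_G(\kbar) \subset \Aut(\overline{\Gspl})$. For each $\sigma \in \Gal(\kbar/k)$, let $\sigma_G \colon \Gbar \to \Gbar$ and $\sigma_\spl \colon \overline{\Gspl} \to \overline{\Gspl}$ denote the $\sigma$-semi-automorphisms coming from the $k$-structures on $G$ and $\Gspl$. I would set
\[
z_\sigma := \theta^{-1} \circ \sigma_G \circ \theta \circ \sigma_\spl^{-1},
\]
which is a $\kbar$-automorphism of $\overline{\Gspl}$ because the Galois signatures of the four factors multiply to $1\cdot\sigma\cdot 1\cdot\sigma^{-1}=1$. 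A short direct calculation, using that the action of $\Gal(\kbar/k)$ on $\Aut(\overline{\Gspl})$ is conjugation by $\sigma_\spl$, shows that $\sigma\mapsto z_\sigma$ satisfies the cocycle identity and that $\theta$ descends to a $k$-isomorphism $G \simeq {}_z\Gspl$; this is the standard recipe for extracting a cocycle from a $\kbar$-identification of two $k$-forms.

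The heart of the proof is to show $\phi(z_\sigma) \in \tA$, which is exactly the statement $z_\sigma \in \phi^{-1}(\tA) = M_G(\kbar)$. I would introduce the $\sigma$-semi-automorphism $\tilde\sigma := \theta\circ\sigma_\spl\circ\theta^{-1}$ of $\Gbar$, so that
\[
\theta\, z_\sigma\, \theta^{-1} \;=\; \sigma_G \circ \tilde\sigma^{-1}
\]
as a $\kbar$-automorphism of $\Gbar$. By the multiplicativity of $\varphi_{\Tbar,\Bbar}$ from Proposition~\ref{prop.semi}(a), combined with the naturality of $\varphi$ under $\theta$ (which matches $(\overline{\Tspl},\overline{\Bspl})$ with $(\Tbar,\Bbar)$), the image of $z_\sigma$ under $\phi$ equals
\[
\theta_*\bigl(\varphi_{\Tbar,\Bbar}(\sigma_G)\bigr)\cdot \theta_*\bigl(\varphi_{\Tbar,\Bbar}(\tilde\sigma)\bigr)^{-1}.
\]
The first factor lies in $\theta_*(A_{T,\Bbar})=\tA$ directly from the definition of $A_{T,\Bbar}$ in Definition~\ref{def:character-lattice}.

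It remains to check that $\varphi_{\Tbar,\Bbar}(\tilde\sigma)$ is trivial. Since $\sigma_\spl$ preserves $(\overline{\Tspl},\overline{\Bspl})$ (both are defined over $k$) and $\theta$ transports this pair to $(\Tbar,\Bbar)$, the semi-automorphism $\tilde\sigma$ already preserves $(\Tbar,\Bbar)$, so no Weyl-group correction is needed and $\varphi_{\Tbar,\Bbar}(\tilde\sigma)$ is simply the automorphism of $\rXi(\Gbar,\Tbar,\Bbar)$ that $\tilde\sigma$ induces on characters. Via $\theta^\ast$ this is conjugate to the action of $\sigma_\spl$ on $\XX(\overline{\Tspl})$, which is trivial because $\Tspl$ is a split $k$-torus and hence $\Gal(\kbar/k)$ acts trivially on $\XX(\overline{\Tspl})$. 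Therefore $\varphi_{\Tbar,\Bbar}(\tilde\sigma)=1$, so $\phi(z_\sigma)\in\tA$ and $z\in Z^1(k,M_G)$, as required. The only real obstacle I foresee is careful bookkeeping of the semi-automorphism signatures and an honest verification of the naturality of $\varphi_{\Tbar,\Bbar}$ under the identification $\theta$; the remainder of the argument is essentially formal.
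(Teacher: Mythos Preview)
Your proposal is correct and follows essentially the same route as the paper's proof: you define the cocycle $z_\sigma = \theta^{-1}\sigma_G\theta\sigma_\spl^{-1}$ exactly as the paper does (the paper writes it as ${}^\theta\!\beta(\sigma)\circ\beta_\spl(\sigma)^{-1}$), and your computation of $\phi(z_\sigma)$ via the auxiliary semi-automorphism $\tilde\sigma$ is just an unwound version of the paper's commutative diagram relating $\varphi_{\Tbar,\Bbar}$ and $\varphi_{\overline{\Tspl},\overline{\Bspl}}$. In both arguments the decisive point is that the split-side contribution vanishes because $\Gal(\kbar/k)$ acts trivially on $\XX(\overline{\Tspl})$, leaving only $\theta_*(\varphi_{\Tbar,\Bbar}(\sigma_G)) \in \theta_*(A_{T,\Bbar}) = \tA$.
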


\begin{proof}
For $\sigma\in\Gal(\kbar/k)$ denote by $\beta(\sigma)$ the semi-automorphism
of $\Gbar$ and by $\beta_\spl(\sigma)$ the semi-automorphism of
$\overline{\Gspl}$ induced by $\sigma$. Under the usual
correspondence between $k$-forms of $\Gspl$ and
$H^1(k, \Aut(\overline{\Gspl}))$, $G$ is $k$-isomorphic to
$_z G$, for the cocycle
$ z(\sigma) := \, ^\theta \! \beta(\sigma) \circ
\beta_\spl(\sigma)^{-1}
\colon \overline{\Gspl}\to\overline{\Gspl}$,
where $^\theta \! \beta(\sigma)$ is the image of
$\beta(\sigma)$ under the isomorphism
$\Aut(\overline{G}) \labelto{\simeq} \Aut(\overline{\Gspl})$
induced by $\theta$.

It remains to show that $z(\sigma)\in M_G(\kbar)$, or equivalently,
$z_{\rXi}(\sigma) : = \varphi_{\overline{\Tspl},
\overline{\Bspl}} \circ z(\sigma)$
lies in $\tA$, for every $\sigma\in\Gal(\kbar/k)$.
Consider the diagram
\[ \xymatrix{
& \SAut (\Gbar)  \ar[dd]_{\simeq}  \ar[rr]^-{\varphi_{\Tbar, \Bbar}} &
  & \Aut \rXi(\Gbar, \Tbar, \Bbar)  \ar[dd]_{\simeq}^{\theta_*}  \\
\Gal(\kbar/k) \ar@{^{(}->}[ru]^{\beta} \ar@{^{(}->}[rd]^{\beta_{\spl}} & & &  \\
  & \SAut (\overline{\Gspl})  \ar[rr]^-{\varphi_{\overline{\Tspl}, \overline{\Bspl}}} &
   &
  \Aut \rXi (\overline{\Gspl}, \overline{\Tspl}, \overline{\Bspl}),
} \]
where the vertical isomorphisms are induced by $\theta$. The
commutativity of this diagram tells us that
\[
z_\rXi(\sigma)= \theta_*(\gamma(\sigma))\circ
\gamma_{\spl}(\sigma)^{-1},
\]
where $\gamma := \varphi_{\Tbar, \Bbar} \circ \beta$
and $\gamma_{\spl} := \varphi_{\overline{\Tspl}, \overline{\Bspl}} \circ \beta_{\spl}$
denote the actions of $\Gal(\kbar/k)$ on $\rXi(\Gbar,\Tbar,\Bbar)$ and
on $\rXi(\overline{\Gspl}, \overline{\Tspl}, \overline{\Bspl})=\rXi(\Gspl, \Tspl, \Bspl)$,
respectively. Since
$\Gal(\kbar/k)$ acts trivially on
$\rXi(\Gspl,\Tspl,\Bspl)$, we see that $\gamma_{\spl}(\sigma)=\id$ and
$z_\rXi(\sigma)= \theta_*(\gamma (\sigma))$.
By definition,
the image of the homomorphism $\gamma$ is  $A_{T, \Bbar}$.
Thus the image of the homomorphism $z_\rXi$ is $\theta_*(A_{T, \Bbar}) =\tA$.
In particular, $z_\rXi(\sigma) \in \tA$, as desired.
\end{proof}

\begin{remark}
We can define the character lattice of $G$ using a split form $G_\spl$ of $G$
as follows.

Let $G,T,\Bbar, G_\spl,T_\spl,B_\spl,\theta$ be as at the beginning
of this Subsection \ref{subsec:cocycle}. Then we obtain a cocycle
$z$ with values in $\Aut(\overline{G_\spl})$ such that $G$ is
isomorphic to ${}_z G_\spl$, see the proof of Proposition
\ref{prop:cocycle}. Composing $z$ with the canonical homomorphism
{\footnotesize{
$$\Aut(\overline{G_\spl})\into \SAut(\overline{G_\spl})\to
\Aut \rXi(\overline{\Gspl}, \overline{\Tspl}, \overline{\Bspl})=\Aut
\rXi({\Gspl}, {\Tspl},{\Bspl}),$$ }} we obtain a cocycle
(homomorphism)
$$
z_\rXi\colon \Gal(\kbar/k)\to  \Aut\rXi({\Gspl}, {\Tspl},{\Bspl}).
$$
Set $$A':=\im\, z_\rXi\subset \Aut\rXi({\Gspl}, {\Tspl},{\Bspl})\subset\Aut\rPsi({\Gspl}, {\Tspl}),$$
and set $W'=W(G_\spl,T_\spl)\cdot A'\subset \Aut\rPsi({\Gspl}, {\Tspl})$,
then the proof of Proposition \ref{prop:cocycle} shows that $A'=\tA$, hence $W'=^\theta\!\V$,
and therefore the pair $(W', \XX(T_\spl))$ is isomorphic via $\theta_*$ to the character lattice $\X(G)$ of $G$.
\end{remark}

\subsection{Forms of Cayley groups}

\begin{lemma} \label{lem.outer-form}
Let $G$ be a split reductive $k$-group and
$M$ a closed algebraic $k$-subgroup  of the $k$-group
scheme $\Aut(G)$ such that  $\Inn(G)\subset M$.
Let $z \in Z^1(k, M)$.
\begin{enumerate}[\upshape(a)]
\item If there exists an $M$-equivariant birational isomorphism
$$f \colon G \dasharrow \Lie(G),$$ then $_z G$ is a Cayley group.

\item If there exists an $M$-equivariant birational
isomorphism $f \colon G \times_k \A^r \dasharrow \Lie(G) \times_k \A^r$
for some $r \ge 0$, where $M$ acts trivially on
the affine space $\A^r$, then $_z G$ is a stably Cayley group.

\item If $G$ is Cayley,
then any inner form of $G$ is also Cayley.
\item If $G$ is stably Cayley,
then any inner form of $G$ is also stably Cayley.
\end{enumerate}
\end{lemma}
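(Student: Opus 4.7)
The plan is to use Galois twisting to prove (a) and (b), and then deduce (c) and (d) as special cases by taking $M=\Inn(G)$.

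First I would handle part (a). The idea is that the given $M$-equivariant birational isomorphism $f\colon G\dashrightarrow \Lie(G)$, viewed over $\kbar$, intertwines the twisted Galois actions on source and target because $f$ is both $k$-defined and $M$-equivariant, while $z$ takes values in $M(\kbar)$. Concretely, the twisted Galois action on ${}_z G$ sends $\sigma$ to $z(\sigma)\circ\sigma$ acting on $\Gbar$, and the twisted action on $\Lie({}_z G)={}_z\Lie(G)$ is $dz(\sigma)\circ\sigma$; the $M$-equivariance of $f$ gives $f\circ z(\sigma)=dz(\sigma)\circ f$, so $f$ descends to a $k$-defined birational isomorphism $\tilde f\colon {}_z G\dashrightarrow \Lie({}_z G)$. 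To see that $\tilde f$ is a Cayley map, I would use that $\Inn(G)$ is normal in $M$, which gives a canonical identification ${}_z\Inn(G)=\Inn({}_z G)$ under which the conjugation action of ${}_z G$ on itself corresponds to the twist of the conjugation action of $G$; the $\Inn(G)$-equivariance of $f$, inherited from $M$-equivariance since $\Inn(G)\subset M$, then descends to $\Inn({}_z G)$-equivariance of $\tilde f$, which is the Cayley property.

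For part (b), apply the same twisting procedure to the $M$-equivariant map $f\colon G\times_k\A^r\dashrightarrow \Lie(G)\times_k\A^r$. Because $M$ acts trivially on $\A^r$, twisting leaves this factor unchanged, yielding a $k$-defined birational isomorphism $\tilde f\colon {}_z G\times_k\A^r\dashrightarrow \Lie({}_z G)\times_k\A^r$. Replacing $\A^r$ birationally by $\Gm^r$ (on which $\Inn({}_z G)$ acts trivially) and identifying $\Lie({}_z G)\times_k\A^r$ with $\Lie({}_z G\times_k\Gm^r)$ produces a Cayley map for ${}_z G\times_k\Gm^r$, so that ${}_z G$ is stably Cayley.

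Parts (c) and (d) will then follow at once by taking $M=\Inn(G)$. Indeed, a Cayley map $f\colon G\dashrightarrow \Lie(G)$ is by definition $G$-equivariant under the conjugation and adjoint actions, and both factor through $\Inn(G)=G/Z(G)$; hence the hypothesis of (a), respectively (b), is satisfied with $M=\Inn(G)$. Any inner $k$-form of $G$ is of the form ${}_z G$ for some $z\in Z^1(k,\Inn(G))$, so (a), respectively (b), yields the conclusion. The main obstacle I anticipate is the careful justification that the descended map $\tilde f$ inherits the equivariance of $f$, not just its birationality. This reduces to verifying the compatibility of twisting with the formation of inner automorphism groups and of the adjoint representation, namely the canonical identification ${}_z\Inn(G)=\Inn({}_z G)$ and the equality of the differential of the twist with the twist of the differential, both of which rely on $\Inn(G)$ being a normal subgroup of $M$.
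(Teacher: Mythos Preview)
Your proposal is correct and follows essentially the same approach as the paper: twist the $M$-equivariant birational isomorphism by $z$, use the functorial identifications ${}_z\Inn(G)=\Inn({}_zG)$ and ${}_z\Lie(G)=\Lie({}_zG)$ to see that the twisted map is a Cayley map, and then specialize to $M=\Inn(G)$ for (c) and (d). The only cosmetic difference is in (b): the paper packages the argument as ``replace $G$ by $G\times_k\bbG_m^r$ and apply (a)'', whereas you twist first and then replace $\A^r$ by $\bbG_m^r$; these are the same argument in a different order.
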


\begin{proof}
(a) Since $f$ is $M$-equivariant,
we can twist $f$ by $z$ and obtain an $_z M$-equivariant
birational isomorphism
\[
  _z f \colon _z G \dasharrow _z\!\Lie(G) \, .
\]
By functoriality of the twisting operation,
$_z\!\Inn(G) = \Inn(_z G) \subset\, _z M$ (\cite[Lemma 16.4.6]{Springer-book})
and $_z\!\Lie(G) = \Lie(_z G)$. Thus $_z f$ is an $_z M$-equivariant
(and, in particular, $\Inn(_z G)$-equivariant)
rational map $_z G \dasharrow \Lie(_z G)$.  Twisting $f^{-1}$
by $z$ in a similar manner, we see that $_z f$ is, in fact,
a birational isomorphism, i.e.,  a Cayley map for $_z G$.

(b) Replace $G$ by $G \times \bbG_m^r$ and apply part (a).

(c) An inner form of $G$ is, by definition, a twisted form $_z G$,
where $z\in Z^1(k, \Inn(G))$. If $G$ is a Cayley group, then there
exists an $\Inn(G)$-equi\-va\-ri\-ant birational isomorphism $f
\colon G \dasharrow \Lie(G)$, hence by (a), $_z G$ is a Cayley
group.

(d) If $G$ is a stably Cayley group, then $G\times_k \bbG_m^r$ is Cayley for some $r$,
and we may identify $\Inn(G)$ with $\Inn(G\times_k \bbG_m^r)$.
If $z\in Z^1(k, \Inn(G))= Z^1(k, \Inn(G \times_k \bbG_m^r))$,
then by (b), the twisted group $_z(G\times_k \bbG_m^r)={}_z G\times_k
\bbG_m^r$ is Cayley, hence $_z G$ is stably Cayley.
\end{proof}

\section{$(G, S)$-fibrations and $(G, S)$-varieties}
\label{sect.(G,S)}

The proof of Theorem~\ref{thm.main1} in the next section relies on
the notions of $(G, S)$-fibration and $(G, S)$-variety. This section
will be devoted to preliminary material on these notions.

\subsection{$(G, S)$-fibrations}
Let $G$ be a linear algebraic $k$-group and $S$ a $k$-subgroup.
Recall that a $(G, S)$-fibration is a morphism of $k$-varieties
$\pi \colon X \to Y$, where $G$ acts on $X$ on the left, $\pi$ is constant
on $G$-orbits, and after a surjective \'etale base change $Y' \to Y$
there is a $G$-equivariant isomorphism between $G/S \times_k Y'$
and $X \times_Y Y'$ over $Y'$, cf. \cite[\S2.2]{CTKPR}.
If $S = \{ 1 \}$, then a $(G, S)$-fibration is the
same thing as a left $G$-torsor. Note that in general, $X \to Y$ can
be both a $(G, S_1)$-fibration and a $(G, S_2)$-fibration for
non-isomorphic $k$-subgroups $S_1, S_2 \subset G$. However
over an algebraic closure of $k$, $S_1$ and $S_2$ become
conjugate.

The following lemma generalizes well-known properties of torsors to
the category of $(G, S)$-fibrations.

\begin{lemma} \label{lem.fibrations1}
Let $\pi \colon X \to Y$, $\pi_1 \colon X_1 \to Y_1$ and $\pi_2
\colon X_2 \to Y_2$ be $(G,S)$-fibrations.
\begin{enumerate}[\upshape(a)]
\item[\rm (a)] Every $G$-equivariant morphism $f \colon X_1 \to X_2$ is a
morphism of $(G, S)$-fibrations, i.e., gives rise to a Cartesian
diagram
\[
  \xymatrix{ X_1 \ar[d]_{\pi_1} \ar[r]^{f}   & X_2 \ar[d]^{\pi_2} \\
  Y_1 \ar[r]^{\overline{f}}    & Y_2 \, . }
\]
In other words, $X_1 = X_2 \times_{Y_2} Y_1$, where the $G$-action on
$X_2 \times_{Y_2} Y_1$ is induced by the $G$-action on $X_2$.

\item[\rm (b)] Every $G$-invariant closed $($respectively, open$)$ subvariety
$X_0 \subset X$ is of the form $\pi^{-1}(Y_0)$ for some closed
$($respectively, open$)$ subvariety $Y_0$ of $Y$.
In particular,
$X_0$ is itself the total space of a $(G, S)$-fibration $\pi_{\, |
X_0} \colon X_0 \to Y_0$.
\item[\rm (c)] The map $f$ in part $($a$)$ is dominant
if and only if $\overline{f}$ is dominant.
\end{enumerate}
\end{lemma}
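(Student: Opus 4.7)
The unifying plan is to reduce each of the three assertions to its étale-local counterpart by choosing a common surjective étale base change that trivializes the $(G,S)$-fibrations involved, and then to conclude by faithfully flat descent.

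For part (a), I would first descend $f$ to the base. Because $\pi_i$ is étale-locally the projection $G/S\times Y'\to Y'$, each $Y_i$ represents the sheaf quotient $X_i/G$ in the étale topology, so the $G$-invariant composition $\pi_2\circ f$ factors through a unique morphism $\bar f\colon Y_1\to Y_2$. Next I would form the $G$-equivariant morphism
\[
\phi\colon X_1\longrightarrow X_2\times_{Y_2} Y_1,\qquad x\mapsto\bigl(f(x),\pi_1(x)\bigr),
\]
which lies over $Y_1$, and argue that $\phi$ is an isomorphism. On a common étale refinement $Y_1'\to Y_1$ trivializing both $(G,S)$-fibrations over $Y_1$, the morphism $\phi$ pulls back to a $G$-equivariant morphism $\phi'\colon G/S\times Y_1'\to G/S\times Y_1'$ over $Y_1'$. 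Such a $\phi'$ is determined by $\phi'(eS,y')$, which must be $S$-fixed, hence corresponds to a morphism $Y_1'\to N_G(S)/S$; the induced self-map of $G/S\times Y_1'$ is pointwise right-multiplication by an element of $N_G(S)/S$ and is therefore invertible. By faithfully flat descent, $\phi$ itself is an isomorphism.

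For part (b), after pulling back along a trivializing étale cover $Y'\to Y$ with $X\times_Y Y'\cong G/S\times Y'$, the subvariety $X_0\times_Y Y'$ is a $G$-invariant closed (resp.\ open) subvariety of $G/S\times Y'$. Since $G$ acts transitively on $G/S$, it must have the form $G/S\times Y_0'$ for a unique closed (resp.\ open) subvariety $Y_0'\subset Y'$; the descent datum on $X_0\times_Y Y'$ restricts to one on $Y_0'$ compatible with that on $Y'$, so $Y_0'$ descends to a subvariety $Y_0\subset Y$ with $\pi^{-1}(Y_0)=X_0$, and the restriction of $\pi$ to $X_0$ inherits the $(G,S)$-fibration structure by pulling back the trivialization to $Y_0'$. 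Part (c) then follows at once from (a): under the identification $X_1=X_2\times_{Y_2}Y_1$, the morphism $f$ becomes the first projection, whose image equals $\pi_2^{-1}(\bar f(Y_1))$; since $\pi_2$ is étale-locally a projection it is surjective and open, so $\overline{\pi_2^{-1}(Z)}=\pi_2^{-1}(\overline Z)$ for every subset $Z\subset Y_2$, and dominance of $f$ becomes equivalent to dominance of $\bar f$.

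The principal obstacle is the key step in (a) of verifying that every $G$-equivariant self-morphism of the trivial $(G,S)$-fibration $G/S\times Y_1'$ over $Y_1'$ is automatically an isomorphism; once this is identified with a section of the relative group scheme $N_G(S)/S$ over $Y_1'$, the remaining work is routine étale descent together with the elementary topological remark about $\pi_2$ being open and surjective used in (c).
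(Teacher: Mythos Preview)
Your proposal is correct and follows essentially the same étale-local-plus-descent strategy as the paper. The only differences are in emphasis: in (a) you spell out explicitly that a $G$-equivariant self-map of $G/S\times Y_1'$ over $Y_1'$ is right translation by a section of $N_G(S)/S$ (the paper simply asserts the local inverse exists), and in (c) you invoke openness and surjectivity of $\pi_2$ directly, whereas the paper appeals to part (b) to identify $\overline{f(X_1)}$ as $\pi_2^{-1}(C)$ for some closed $C\subset Y_2$. Both routes are short and equivalent.
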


\begin{proof}
(a) We first define the map $\overline{f} \colon Y_1 \to Y_2$
locally in the \'etale topology on $Y_1$. Let $\{U_{\alpha}\}$ be an
\'etale open cover of $Y_1$ such that $X_1$ is $G$-equivariantly
isomorphic to $G/S \times_k U_{\alpha}$, over each $U_{\alpha}$. Then
over each $U_{\alpha}$, the map $\pi_1$ has a section $s_{\alpha}
\colon U_{\alpha} \to \pi_1^{-1}(U_{\alpha}) $, and we can
define $\overline{f}$ by composing $s$, $f$
and $\pi_2$. The resulting local map is independent of the choice
of $s$; these maps patch up to a $k$-morphism $\overline{f} \colon Y_1 \to
Y_2$ by \'etale descent.

By the universal property of fibered products there exists a
morphism $\phi \colon X_1 \to X_2 \times_{Y_2} Y_1$ over $Y_1$. This
morphism is unique and hence, $G$-equivariant.  Thus it suffices to show that
$\phi$ is an isomorphism. Note that $\phi$ is a $G$-equivariant
morphism between $(G, S)$-fibrations over $Y_1$.
We want to show that if $Y_1 = Y_2$ and $\overline{f} =
\operatorname{id}$ in the above diagram then $f$ is an isomorphism.
We do this by constructing $f^{-1}$. Let $\{ U_\alpha \}$ be an
\'etale local cover of $Y_1$, trivializing both $X_1$ and $X_2$.
That is,  over each $U_\alpha$, $X_1$ and $X_2$ are both
$G$-equivariantly isomorphic to $G/S \times_k U_\alpha$. Hence,
$f^{-1}$ is (uniquely) defined and is $G$-equivariant over each
$U_{\alpha}$.  Once again, using \'etale descent, we see that these
local inverses patch together to a  well-defined $G$-equivariant
$k$-morphism $f^{-1} \colon X_2 \to X_1$.

\smallskip
(b) Since open subsets are complements of closed subsets, it
suffices to consider the case where $X_0$ is closed. We claim that
$\pi(X_0)$ is closed in $Y$.  It is enough to check this claim locally in
the \'etale topology, so we may assume that $X = G/S \times_k Y$ and
$\pi$ is the projection onto the second factor. Since $X_0$ is
$G$-equivariant, $X_0$ contains $\{ 1 \} \times_k \pi(X_0)$. Moreover,
since $X_0$ is closed, $X_0$ contains $\{ 1 \} \times
\overline{\pi(X_0)}$. We conclude that $\overline{\pi(X_0)}$ is
contained in $\pi(X_0)$, i.e., $\pi(X_0)$ is closed, as claimed.

After replacing $Y$ by $\pi(X_0)$ and $X$ by $\pi^{-1}(\pi(X_0))$,
it now suffices to show that if $X_0 \subset X$ is closed and $G$-invariant
and $\pi(X_0) = Y$ then $X_0 = X$. To do this, we construct
the inverse to the inclusion map $X_0 \hookrightarrow X$. We first do
this \'etale-locally, where we may assume $X = G/S \times_k Y$ and hence,
$X_0 = X$, then use \'etale descent to patch together local
inverses into a morphism $X \to X_0$ defined over $Y$.

\smallskip
(c) By part (b), the closure of $f(X_1)$ in $X_2$ is of the
form $\pi_2^{-1}(C)$ for some closed subset $C \subset Y_2$.
Thus $f$ is dominant if and only if $C = Y_2$, that is, if and only if
$\overline{f}$ is dominant.
\end{proof}

Let $N := N_G(S)$ be the normalizer of $S$ in $G$, $W:= N/S$, and $X
\to Y$ a $(G, S)$-fibration. Note that $W$ is again a linear algebraic
group over $k$.  Denote the $S$-fixed point locus in
$X$ by $X^S$. The $G$-action on $X$ induces an $N$-action on $X^S$.
Since $S$ acts trivially on $X^S$, this $N$-action descends to a
$W$-action on $X^S$. By trivializing the $(G, S)$-fibration $X \to
Y$ over an \'etale cover $Y' \to Y$, we see that $X^S \to Y$ is in
fact a $W$-torsor; see~\cite[Proposition 2.9]{CTKPR}. Conversely,
starting with a $W$-torsor $Z \to Y$, we can build a $(G,
S)$-fibration $X \to Y$ by setting $X$ to be the ``homogeneous fiber
space" $G \times^N Z$, i.e., the quotient of $G \times_k Z$ by the
left $N$-action given by $n \cdot (g, x) \to (g n^{-1}, nx)$. This
quotient can either be constructed locally, in the \'etale topology
on $Y$, by descent, or globally as a geometric quotient in the sense
of geometric invariant theory.  For details on these constructions,
we refer the reader to~\cite[\S2.2]{CTKPR}.

\begin{proposition} \label{prop.fibrations2}
Let $Var_k$ be the category of quasi-projective varieties, and
$Fib_{(G, S)}$ the functor from $Var_k$ to the category of sets
which associates to a quasi-projective variety $Y$ the set of
isomorphism classes
of $(G,S)$-fibrations over $Y$, and to a
$k$-morphism of varieties $\tilde{Y} \to Y$ the pull-back morphism
which base-changes $(G, S)$-fibrations over $Y$ to $\tilde{Y}$. If
$S = \{ 1 \}$, we will write $Tor_G$ in place of $Fib_{(G, S)}$.

Then the two constructions described above give rise to an
isomorphism between the functors $Fib_{(G, S)}$ and $Tor_W$.
\end{proposition}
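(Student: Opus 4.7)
The plan is to verify that the two constructions described just before the statement define natural transformations $\Phi \colon Fib_{(G,S)} \to Tor_W$ and $\Psi \colon Tor_W \to Fib_{(G,S)}$, and then to check that they are mutually inverse. The fact that $\Phi$ sends a $(G,S)$-fibration to a $W$-torsor, and that $\Psi$ produces an honest $(G,S)$-fibration, is already recorded in~\cite[\S 2.2 and Proposition~2.9]{CTKPR}. Functoriality of $\Phi$ (compatibility with pullbacks) reduces, by \'etale descent, to the observation that forming $S$-fixed points commutes with flat base change on $Y$; functoriality of $\Psi$ is immediate from the universal property of the geometric quotient defining $G \times^N Z$, which is clearly preserved under base change in $Y$.

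Next I would write down the two candidate natural isomorphisms. For $\Phi \circ \Psi \simeq \mathrm{id}_{Tor_W}$, given a $W$-torsor $Z \to Y$, I define $Z \to (G \times^N Z)^S$ by $z \mapsto [1,z]$. This lands in the $S$-fixed locus because $Z$ is a $W = N/S$-torsor, so $S \subset N$ acts trivially on $Z$; using the defining relation $[g,z] = [gn^{-1}, nz]$ we get $s\cdot[1,z] = [s,z] = [1,sz] = [1,z]$ for every $s \in S$. The map is $W$-equivariant, since $W$ lifts into $N \subset G$ and acts on $G\times^N Z$ by left multiplication on the first factor. For $\Psi \circ \Phi \simeq \mathrm{id}_{Fib_{(G,S)}}$, given a $(G,S)$-fibration $X \to Y$, I define $G\times^N X^S \to X$ by $[g,x]\mapsto gx$; it is well defined on the quotient and $G$-equivariant by inspection.

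To conclude that both maps are isomorphisms, I would pass to an \'etale cover $Y' \to Y$ trivializing the torsor/fibration in question. Over $Y'$ the check collapses to two canonical identifications over a point: the $S$-fixed locus $(G/S)^S$, computed from $sgS = gS \iff g^{-1}Sg \subset S$, equals $N_G(S)/S = W$; and $G \times^N (N/S) \cong G/S$ via $[g,nS] \mapsto gnS$, which is $G$-equivariant and compatible with the residual $W$-actions. Both candidate maps become isomorphisms over $Y'$, and Lemma~\ref{lem.fibrations1}(a) together with \'etale descent for affine morphisms assembles these local isomorphisms into global ones, proving naturality of the inverse as well.

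The main obstacle is not conceptual but one of bookkeeping: one must consistently track the three groups $S \lhd N \subset G$ and two distinct meanings of ``trivial action'' — the action of $S$ on $Z$ obtained from $Z$ being a $W = N/S$-torsor, and the left multiplication action of $S$ on the fiber $G/S$, whose fixed set is $N/S$. Once these two local computations and the \'etale-local trivializations of $(G,S)$-fibrations are in hand, every step of the verification is a formal descent argument.
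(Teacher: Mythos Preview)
Your argument is essentially correct and is presumably close to what is carried out in~\cite[Proposition 2.10]{CTKPR}; the paper itself gives no proof beyond that citation, so there is nothing to compare at the level of strategy.

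One small imprecision: you write that ``$W$ lifts into $N \subset G$'' to check $W$-equivariance of $z \mapsto [1,z]$. In general there is no section $W \to N$, but none is needed: the $W$-action on $X^S$ is by definition induced from the $N$-action on $X$, and for $n \in N$ one has $n\cdot[1,z] = [n,z] = [1,nz]$, which depends only on the image of $n$ in $W$. So the equivariance holds without any splitting. Similarly, your identification $(G/S)^S = N_G(S)/S$ uses that $g^{-1}Sg \subset S$ implies $g^{-1}Sg = S$; this is fine here since conjugation is an automorphism of $S$ as an algebraic group, but it is worth saying explicitly. With these clarifications your proof goes through.
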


\begin{proof} See~\cite[Proposition 2.10]{CTKPR}.
\end{proof}

\subsection{$(G, S)$-varieties}
  A $k$-variety $X$ with a left action of $G$ is called a $(G, S)$-{\em variety}
  if it contains a dense open subset $X' \subset X$ which is
  the total space of a $(G, S)$-fibration $X' \to Y$.

\begin{lemma} \label{lem.reduction}
Let $G$ be a reductive $k$-group, $T\subset G$ a maximal $k$-torus,
and $M$ a closed algebraic $k$-subgroup
of the $k$-group scheme $\Aut(G)$ such that  $\Inn(G)\subset M$.
Then $G$  and its Lie algebra $\Lie(G)$ are both
$(M,T^\ad)$-varieties.
\end{lemma}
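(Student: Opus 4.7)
The plan is to exhibit $M$-invariant dense open subvarieties of $G$ and of $\Lie(G)$ that are total spaces of $(M, T^\ad)$-fibrations. The arguments for the two cases are parallel---in $\Lie(G)$ one replaces conjugation by the adjoint action and $T$ by $\Lie(T)$---so I describe the case of $G$ in detail.

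Since $G^\ad = \Inn(G)$ is the identity component of the $k$-group scheme $\Aut(G)$, the hypothesis $\Inn(G) \subset M$ forces $\Inn(G)$ to have finite index in $M$. Passing to $\kbar$ if necessary, Corollary \ref{cor:M} then yields a decomposition $M = \Inn(G) \rtimes A$ in which $A$ is a finite group preserving some Borel pair $(T, B)$ of $\Gbar$, and the induced homomorphism $A \hookrightarrow \Aut \XX(\Tbar)$ is injective. Together with the natural faithful action of $W(\Gbar, \Tbar)$ on $\Tbar$, this makes the extended Weyl group $\V = W(\Gbar, \Tbar) \rtimes A$ act faithfully on $\Tbar$.

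Next, let $G^{\mathrm{rs}} \subset G$ be the dense open subvariety of regular semisimple elements; it is $M$-stable because every automorphism of $G$ preserves regular semisimplicity. Let $T^\circ \subset T \cap G^{\mathrm{rs}}$ be the dense open subset on which $\V$ acts freely (which exists because $\V$ is a finite group acting faithfully on the irreducible variety $\Tbar$), and set $G^\circ := M \cdot T^\circ$, a dense open $M$-invariant subvariety of $G$. I claim that the $M$-stabilizer at every $g \in G^\circ$ is conjugate to $T^\ad$. By $M$-equivariance, it suffices to check this for $g \in T^\circ$. If $\mu = \inn(h) \circ \alpha \in M$ (with $h \in G^\ad$, $\alpha \in A$) fixes $g$, then $\alpha(g) \in T$ is $G$-conjugate to $g$, hence $W$-conjugate: $\alpha(g) = w(g)$ for some $w \in W$. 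Therefore $w^{-1}\alpha \in \V$ fixes $g$, and the choice of $g$ forces $w^{-1}\alpha = 1$. Since $\V = W \rtimes A$, this gives $\alpha = w = 1$, whence $\mu = \inn(h) \in \Stab_{\Inn(G)}(g) = T^\ad$.

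Once the constant stabilizer type $T^\ad$ is established on $G^\circ$, the $(M, T^\ad)$-fibration structure on $G^\circ \to G^\circ/M$ is produced by Proposition \ref{prop.fibrations2}: the corresponding $W_M$-torsor is $(G^\circ)^{T^\ad} \to G^\circ/M$, with $W_M = N_M(T^\ad)/T^\ad \cong \V$ and $(G^\circ)^{T^\ad} = \V \cdot T^\circ$, on which $\V$ acts freely. The main obstacle is precisely this last step, where the étale-local triviality required for a genuine $(M, T^\ad)$-fibration relies on the characteristic-zero machinery of \cite[\S 2.2]{CTKPR}, in line with Remark \ref{rem.char}.
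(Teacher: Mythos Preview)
Your strategy and the stabilizer computation are essentially the same as the paper's: reduce to the split case, write $M = \Inn(G) \rtimes A$ via Corollary~\ref{cor:M}, and show that for $t$ in a dense open subset of $T^{\rm reg}$ the $M$-stabilizer is exactly $T^\ad$. That part is correct.

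The gap is in your final step. Proposition~\ref{prop.fibrations2} is an equivalence between $(M,T^\ad)$-fibrations over $Y$ and $\V$-torsors over $Y$; it does not by itself manufacture a fibration from the knowledge that stabilizers are constant. To use it in the direction you want, you would still have to (i) show that a geometric quotient $G^\circ/M$ exists, (ii) show that $T^\circ \to G^\circ/M$ is genuinely a $\V$-torsor, and (iii) identify $G^\circ$ with the homogeneous fiber space $M \times^{N_M(T^\ad)} T^\circ$. None of these is addressed, and your appeal to Proposition~\ref{prop.fibrations2} is circular as written.

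The paper handles this differently and more cleanly: it first checks that the $M$-action on $G$ is \emph{stable} (orbits in general position are closed), using that $[M:G^\ad]<\infty$ and that the conjugation action of $G^\ad$ is stable by \cite[Corollary~4.2]{CTKPR}. Then it invokes \cite[Proposition~2.15(i)]{CTKPR} to get that $G$ is an $(M,S)$-variety for some $S$, and \cite[Proposition~2.16]{CTKPR} to pin down $S = T^\ad$ from the stabilizer computation on a dense set. Those two results from \cite{CTKPR} are precisely the ``characteristic-zero machinery'' packaging the missing steps (i)--(iii); you should cite them rather than Proposition~\ref{prop.fibrations2}, and you should not skip the stability verification.
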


In the case where $M = \Inn(G)$, the lemma was proved in
\cite[Proposition 4.3]{CTKPR}.

\begin{proof} Being a $(G, S)$-variety is a geometric notion.
That is, suppose $k'/k$ is a field extension. Then $X$ is a $(G, S)$-variety
over $k$ if and only if $X_{k'}$ is a $(G_{k'}, S_{k'})$-variety over $k'$.
Thus, after replacing $k$ by a suitable $k'$, we may assume that
$G$ and $T$ are split.

We will only consider the $M$-action on $G$;
the case of the $M$-action on $\Lie(G)$ is similar.
By Corollary~\ref{cor:M}, $M = \Inn(G) \rtimes A$,
where $A$ is a finite group of automorphisms of $G$ and
every element of $A$ preserves $T$.

Our proof will rely on~\cite[Proposition 2.16]{CTKPR}.
To apply this proposition we need to check that the
$M$-action on $G$ is stable, i.e.,
the $M$-orbit of $x \in G(\bar{k})$ is closed
for $x$ in general position.
By \cite[Corollary 4.2]{CTKPR}, the conjugation action of $G$ on
itself is stable.  Since $A$ is a finite group,
the group $M$ contains $G^\ad$ as a subgroup of finite index,
and therefore the $M$-action on $G$ is also stable.

By~\cite[Proposition 2.15(i)]{CTKPR}, we can now conclude that
$G$ is an $(M, S)$-variety for some subgroup
$S \subset M$.  Moreover, by \cite[Proposition 2.16]{CTKPR},
in order to show that we may take $S = T^\ad$, it suffices
to exhibit a dense subset $D \subset G(k)$ defined over $k$
such that the stabilizer of every $p \in D$ in $M$
is conjugate to $T^\ad$.

In fact, it suffices to construct a dense open subset $U \subset T$
defined over $k$ such that the stabilizer of every  $p \in U(k)$ is
conjugate to $T^\ad$; we can then take $D$ to be the union of
$\Inn(G)$-translates of $U(k)$.

Consider the set $T^{\rm reg}$ of regular points of $T$.
By~\cite[\S12.2]{Borel}, $T^{\rm reg}$ is a dense open subset
of $T$ defined over $k$. We claim that for $t \in T^{\rm reg}$ in
general position, $\Stab_M(t) = T^\ad$. Indeed, suppose $g \in M$
stabilizes $t$.  Since $t$ lies in a unique
maximal torus of $G$ (see~\cite[Proposition 12.2(4)]{Borel}),
$g(T) = T$. Equivalently, $g$ lies in
$N_{G^\ad}(T^\ad) \rtimes A \subset M$. The latter group
acts on $T$ via its finite quotient $W \rtimes A$, and
the $W \rtimes A$-action on $T$ is faithful (see the proof of
Lemma~\ref{lem.SW2}).
The fixed points of each element of $W \rtimes A$ form a proper
closed subvariety of $T^{\rm reg}$. Removing these closed subvarieties
from  $T^{\rm reg}$, we obtain a dense open subset $U \subset T$
such that $\Stab_{W \rtimes A}(t) = \{ 1 \}$ or equivalently,
$\Stab_M(t) = T^{\ad}$ for every $t \in U$, as desired.
\end{proof}

\begin{proposition} \label{prop.fibrations3}
Suppose $X_1$ and $X_2$ are $(G, S)$-varieties such that
the fixed point loci $X_1^S$ and $X_2^S$ are irreducible.
Set $N :=N_G(S),\ W :=N/S$.
Then

\begin{enumerate}[\upshape(a)]
\item every $G$-equivariant dominant
rational map $\alpha \colon X_1 \dasharrow X_2$
restricts to a $W$-equivariant dominant rational map
$\beta \colon X_1^S \dasharrow X_2^S$.

\item Every $W$-equivariant dominant rational map
$\beta \colon X_1^S \dasharrow X_2^S$
lifts to a unique $G$-equivariant dominant
rational map $\alpha \colon X_1 \dasharrow X_2$.

\item Moreover, $\beta$ is a birational isomorphism if and only if
so is $\alpha$.
\end{enumerate}
\end{proposition}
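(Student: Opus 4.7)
The plan is to reduce Proposition~\ref{prop.fibrations3} to the equivalence in Proposition~\ref{prop.fibrations2} between $(G,S)$-fibrations and $W$-torsors (via $X\mapsto X^S$), together with the basic properties of morphisms of $(G,S)$-fibrations from Lemma~\ref{lem.fibrations1}. The underlying idea is that, under this equivalence, a $G$-equivariant map between $(G,S)$-fibrations corresponds to the $W$-equivariant map it induces on $S$-fixed loci, and both are controlled by the same map of base varieties.

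First I would shrink $X_1$ and $X_2$ to $G$-invariant dense open subsets that are themselves $(G,S)$-fibrations $\pi_i\colon X_i\to Y_i$, which is possible by the definition of a $(G,S)$-variety. Because each $X_i^S$ is irreducible and meets every $G$-invariant nonempty open subset of $X_i$ in a nonempty, hence dense, open subset, rational maps on $X_i^S$ are determined by their restrictions to such subsets. For part (a), after further shrinking to make $\alpha$ an everywhere-defined $G$-equivariant morphism $X_1\to X_2$, Lemma~\ref{lem.fibrations1}(a) produces a Cartesian square with a base morphism $\bar\alpha\colon Y_1\to Y_2$. Taking $S$-fixed points turns this into a Cartesian square of $W$-torsors whose top arrow is exactly the restriction $\beta=\alpha|_{X_1^S}$; in particular $\beta$ is $W$-equivariant, and Lemma~\ref{lem.fibrations1}(c), applied first to the $(G,S)$-fibrations and then to the $W$-torsors, equates dominance of $\alpha$, of $\bar\alpha$, and of $\beta$.

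For part (b) I would start from a $W$-equivariant dominant rational map $\beta$, shrink so that it becomes a morphism of $W$-torsors $X_1^S\to X_2^S$, and apply Lemma~\ref{lem.fibrations1}(a) (with $W$ in place of $G$ and the trivial group in place of $S$) to obtain a Cartesian square with base map $\bar\beta\colon Y_1\to Y_2$. Proposition~\ref{prop.fibrations2} then identifies the $W$-torsor $X_1^S\cong X_2^S\times_{Y_2}Y_1$ with the $W$-torsor associated to the $(G,S)$-fibration $X_2\times_{Y_2}Y_1\to Y_1$; the inverse equivalence identifies $X_1$ with $X_2\times_{Y_2}Y_1$, and projection to $X_2$ yields the required $G$-equivariant lift $\alpha$, whose dominance again follows from Lemma~\ref{lem.fibrations1}(c). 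For uniqueness, any two $G$-equivariant dominant lifts $\alpha,\alpha'$ of $\beta$ have, by part (a), base maps $\bar\alpha=\bar\alpha'=\bar\beta$, and since each lift is recovered from its base map via the Cartesian-square description, $\alpha=\alpha'$. Part (c) is then immediate: in the Cartesian square, $\alpha$ is a birational isomorphism iff $\bar\alpha$ is, iff $\beta$ is.

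The principal technical obstacle will be the shrinking step. One must choose compatible open subsets so that $\alpha$ becomes a morphism of $(G,S)$-fibrations and $\beta$ becomes a morphism of the associated $W$-torsors over the same open subset of $Y_1$, and one must verify that the pullback identifications coming from Proposition~\ref{prop.fibrations2} respect the chosen shrinkings. The hypothesis that each $X_i^S$ is irreducible is exactly what makes this harmless: it guarantees that no information about $\beta$ is lost upon shrinking, so that the uniqueness in (b) and the ``only if'' direction of (c) follow automatically from the Cartesian-square picture.
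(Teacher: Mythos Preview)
Your proposal is correct and follows essentially the same approach as the paper: shrink to genuine $(G,S)$-fibrations, invoke Proposition~\ref{prop.fibrations2} to pass between $G$-equivariant maps and $W$-equivariant maps on $S$-fixed loci via a common base map, and control dominance through Lemma~\ref{lem.fibrations1}(c). The only cosmetic difference is in part~(c): the paper argues more directly by applying (a) to $\alpha^{-1}$ and (b) to $\beta^{-1}$, whereas you route through birationality of the base map $\bar\alpha=\bar\beta$; both work, but the paper's version avoids having to justify the intermediate equivalence with $\bar\alpha$.
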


\begin{proof} For $i = 1, 2$ let $X_i'$ be a $G$-invariant dense
open subset of $X_i$ which is the total space of a $(G, S)$-fibration,
$X_i' \to Y_i$. Since each $X_i^S$ is irreducible, the non-empty open
subset $(X_i')^S$ is dense in $X_i^S$.  Hence, the dominant rational map
$X_1^S \dasharrow X_2^S$ restricts to a dominant rational map
$(X_1')^S \dasharrow (X_2')^S$, and we may, without loss of
generality, replace $X_i$ by $X_i'$ and thus assume that $X_i$ is
the total space of a $(G, S)$-fibration $X_i \to Y_i$.
Lemma~\ref{lem.fibrations1}(b) now tells us that
after removing a proper closed subset from $Y_1$ (and its preimages
from $X_1$ and $X_1^S$), we may assume that $f$ is regular.
By Proposition~\ref{prop.fibrations2},
$X_i^S \to Y_i$ is a $W$-torsor for $i = 1, 2$.
By Lemma~\ref{lem.fibrations1}(a), $\alpha$ is a morphism
of $(G, S)$-fibrations, and $\beta = \alpha_{| \, X_1^S}\, \colon
X_1^S \to X_2^S$ is a morphism of $W$-torsors.
We thus obtain the following diagram
\[
  \xymatrix{ X_1 \ar[dd] \ar[rrr]^{\alpha} & & & X_2 \ar[dd] \\
& X_1^S \ar@{^{(}->}[ul] \ar[dl] \ar[r]^{\beta} & X_2^S \ar[dr]  \ar@{^{(}->}[ur] & \\
  Y_1 \ar[rrr]^{\overline{\alpha} = \overline{\beta}} & &   & Y_2 \, . }
\]
By Proposition~\ref{prop.fibrations2}, $\alpha$ restricts to $\beta$
and $\beta$ lifts to $\alpha$ in a unique way. Moreover, $\alpha$ and
$\beta$ induce the same morphism $\overline{\alpha}
= \overline{\beta} \colon Y_1 \to Y_2$.

By Lemma~\ref{lem.fibrations1}(c), $\alpha$ is dominant
if and only if $\overline{\alpha} = \overline{\beta}$ is
dominant if and only if $\beta$ is dominant. This proves (a) and (b).

\smallskip
(c) If $\alpha$ is a birational isomorphism, then restricting
$\alpha^{-1}$ to $X_1^S$, we obtain an inverse for $\beta$.
Similarly, if $\beta$ is a birational isomorphism, then extending
$\beta^{-1}$ to $X_2 \dasharrow X_1$, we obtain an inverse for
$\alpha$.
\end{proof}

\begin{corollary} \label{cor.reduction-to-torus}
Let $G$ be a  reductive $k$-group
and $T \subset G$  a maximal $k$-torus.
Then $G$ is Cayley if and only if there exists a $W(G, T)$-equivariant
birational isomorphism $T \stackrel{\simeq}{\dasharrow} \Lie(T)$
defined over $k$.
\end{corollary}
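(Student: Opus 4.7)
The plan is to apply Proposition \ref{prop.fibrations3} to the conjugation action of $G$ on itself and the adjoint action of $G$ on $\Lie(G)$, with $M = \Inn(G) = G^\ad$ and $S = T^\ad$. The Cayley condition asks for a $G$-equivariant birational isomorphism $G \dashrightarrow \Lie(G)$; since the center of $G$ acts trivially on both sides, such a map is the same as a $G^\ad$-equivariant (equivalently, $\Inn(G)$-equivariant) birational isomorphism. Thus the statement to prove can be rephrased as asserting a bijective correspondence between $G^\ad$-equivariant birational isomorphisms $G \dashrightarrow \Lie(G)$ and $W(G,T)$-equivariant birational isomorphisms $T \dashrightarrow \Lie(T)$.

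First I would invoke Lemma \ref{lem.reduction} with $M = \Inn(G)$ to conclude that both $G$ and $\Lie(G)$ are $(G^\ad, T^\ad)$-varieties over $k$. Next I would identify the $T^\ad$-fixed loci: under the conjugation action the fixed subvariety $G^{T^\ad}$ equals the centralizer $Z_G(T) = T$ (using that $T$ is a maximal torus in a reductive group), while under the adjoint action $\Lie(G)^{T^\ad}$ is the zero weight space of $T$ in $\Lie(G)$, which is precisely $\Lie(T)$. Both $T$ and $\Lie(T)$ are irreducible (indeed, connected), so the hypotheses of Proposition \ref{prop.fibrations3} are satisfied. Finally, the group $W$ appearing in Proposition \ref{prop.fibrations3} is $N_{G^\ad}(T^\ad)/T^\ad$, which coincides canonically with the Weyl group $W(G,T)$.

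Granting these identifications, Proposition \ref{prop.fibrations3}(a)--(c) directly yields the desired equivalence: any $G^\ad$-equivariant birational isomorphism $G \dashrightarrow \Lie(G)$ restricts to a $W(G,T)$-equivariant birational isomorphism $T \dashrightarrow \Lie(T)$, and conversely any such map on the $T^\ad$-fixed loci lifts uniquely to a $G^\ad$-equivariant birational isomorphism of the ambient varieties, with one being a birational isomorphism iff the other is.

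The only mildly subtle step is the rephrasing of $G$-equivariance as $G^\ad$-equivariance at the outset and the verification that the fixed-point loci are exactly $T$ and $\Lie(T)$; both are standard but worth stating explicitly. Everything else is a direct application of the machinery developed in Section \ref{sect.(G,S)} together with Lemma \ref{lem.reduction}.
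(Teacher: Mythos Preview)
Your proposal is correct and follows essentially the same route as the paper: invoke Lemma~\ref{lem.reduction} with $M=\Inn(G)$ to see that $G$ and $\Lie(G)$ are $(\Inn(G),T^\ad)$-varieties, identify the $T^\ad$-fixed loci as $T$ and $\Lie(T)$, and then apply Proposition~\ref{prop.fibrations3}. Your explicit remarks on replacing $G$-equivariance by $G^\ad$-equivariance and on the identification $N_{G^\ad}(T^\ad)/T^\ad \cong W(G,T)$ are useful clarifications that the paper leaves implicit.
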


Note that here, as before, we view the Weyl group $W(G, T)$ as
an algebraic group over $k$.

\begin{proof} By Lemma~\ref{lem.reduction}, with $M = \Inn(G)$,
$X_1 = G$ and $X_2 = \Lie(G)$ are both $(\Inn(G), T^\ad)$-varieties.
The fixed point loci, $X_1^{T^{\ad}} = T$ and $X_2^{T^{\ad}} = \Lie(T)$,
are irreducible. The desired conclusion is now a direct consequence of
Proposition~\ref{prop.fibrations3}: there exists a $G$-equivariant
birational isomorphism
\[ \alpha \colon G = X_1 \stackrel{\simeq}{\dasharrow}
X_2 = \Lie(G) \]
(i.e., a Cayley map for $G$)
if and only if there exists a $W(T)$-equivariant birational isomorphism
$\beta \colon T = X_1^{T^{\ad}} \stackrel{\simeq}{\dasharrow} X_2^{T^{\ad}} = \Lie(T)$.
\end{proof}

\section{Proof of Theorem~\ref{thm.main1}}
\label{sect.proof-of-thm.main1}

(a) $\Longrightarrow$ (b). First suppose $G$ is Cayley over $k$.
Then $G_K$ is Cayley over $K$ for every field extension
$K/k$.  Then by Corollary~\ref{cor.reduction-to-torus},
every maximal $K$-torus $T$ of $G_K$ is $K$-rational.

Now suppose $G$ is stably Cayley over $k$, i.e., $G \times \mathbb G_m^r$
is Cayley
for some $r \ge 0$. Then the above argument shows that for every $K$-torus
$T$ of $G$, $T \times \bbG_m^r$ is $K$-rational. Hence, $T$ is stably
$K$-rational, as claimed.

(b) $\Longrightarrow $ (c) is obvious.

(c) $\Longleftrightarrow $ (d). By Proposition~\ref{prop:Voskr},
the  character lattice $\X(G)$ of $G$ is isomorphic to
the character lattice of the generic torus $T_\gen$ of $G$.
Since a torus $T$ is stably rational if and only if its character
lattice $\XX(T)$ is quasi-permutation
(see~\cite[Theorem 4.7.2]{Voskresenskii-book}), (c) and (d) are equivalent.

(d) $\Longrightarrow $ (a). Let $\Gspl$ be a split $k$-form of $G$.
Let $(\Tspl, \Bspl)$ be a Borel pair in $\Gspl$ defined over $k$,
$T$ a maximal $k$-torus of $G$, and
$\Bbar$  a Borel subgroup defined over the algebraic closure $\kbar$ and
containing $\Tbar$.
We choose and fix an isomorphism $\theta\colon \ov{\Gspl}\to\Gbar$ taking
$(\ov{\Tspl},\ov{\Bspl})$ to $(\Tbar,\Bbar)$, and we construct
the subgroup $M_G\subset\Aut(\Gspl)$ using $\theta$,
as in Subsection \ref{subsec:cocycle}.
By Proposition~\ref{prop:cocycle},
$G$ is isomorphic to $_z G_\spl$ for some cocycle $z \in Z^1(k,M_G)$.
By Lemma~\ref{lem.outer-form}(b), in order to show that $G$ is stably Cayley,
it suffices to construct an $M_G$-equivariant birational
isomorphism
\begin{equation} \label{e1.d=>a}
\Gspl \times \bbG_m^r \stackrel{\simeq}{\dasharrow} \Lie(\Gspl) \times \mathbb{A}^r
\end{equation}
for some $r \ge 0$, where $M_G$ acts trivially on the split torus
$\bbG_m^r$ and the affine space $\mathbb{A}^r$.  By
Lemma~\ref{lem.reduction}, $X_1:= \Gspl \times \bbG_m^r $ and $X_2
:= \Lie(\Gspl) \times \mathbb{A}^r$ are both $(M_G, S)$-varieties,
where  $S: =(\Tspl)^\ad$. By Proposition~\ref{prop.fibrations3}, in
order to construct an $M_G$-equivariant birational isomorphism
\eqref{e1.d=>a}, it suffices to construct an
$N_{M_G}(S)/S$-equivariant birational isomorphism $X_1^S \dasharrow
X_2^S$, where $X_1^S = \Tspl \times \G_m^r$, $X_2^S = \Lie(\Tspl)
\times \mathbb{A}^r$. Note that $N_{M_G}(S)/S$ is isomorphic to the
group $\tV\subset\Aut\,\XX(T_\spl)$ (see Subsection
\ref{subsec:cocycle}).

It thus remains to
show that there exists a $\tV$-equivariant birational isomorphism
\begin{equation} \label{e.cayley-split}
T_{\spl} \times \bbG_m^r \stackrel{\simeq}{\dasharrow} \Lie(\Tspl)
\times \mathbb{A}^r
\end{equation}
for some $r \ge 0$. By the definition of $\tV$, the
lattice $(\tV, \XX(T_\spl))$ is isomorphic to the character lattice
$(\V(G, T), \XX(\Tbar))$ of $G$. By condition (d) of the theorem, the
character lattice of $G$ is quasi-permutation, hence so is the
lattice $(\tV, \XX(T_\spl))$. By
Lemma~\ref{lem.stably-linearizable}(b), this implies that the
$\tV$-action on the split torus $\Tspl$ is stably linearizable. In
other words, $T_{\spl}$ is $\tV$-equivariantly stably birationally
isomorphic to a faithful linear representation $V$ of the finite
group $\tV$.  On the other hand, by Remark~\ref{rem.no-name}, the
vector space $V$ is $\tV$-equivariantly stably birationally
isomorphic to the faithful $\tV$-representation $\Lie(\Tspl)$. Composing these two $\tV$-equivariant
birational isomorphisms, we see that $\Tspl$ and $\Lie(\Tspl)$ are
$\tV$-equivariantly stably birationally isomorphic. In other
words,~\eqref{e.cayley-split} holds for a suitable $r \ge 0$, as
claimed. This completes the proof of Theorem~\ref{thm.main1}. \qed

\begin{corollary} \label{cor.rank2} Let $k$ be a field of characteristic $0$.
Then every reductive $k$-group $G$ of rank $\le 2$ is stably Cayley.
\end{corollary}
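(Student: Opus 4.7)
The plan is to combine Theorem~\ref{thm.main1} with the rank-based criterion from Lemma~\ref{lem:Voskresenskii}. Specifically, I would argue via the chain of implications (d) $\Longrightarrow$ (a) in Theorem~\ref{thm.main1}: to show that $G$ is stably Cayley, it suffices to verify that the character lattice $\X(G) = (\V(G,T), \XX(\overline T))$ is a quasi-permutation lattice.

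First I would observe that the underlying $\bbZ$-module $\XX(\overline T)$ of $\X(G)$ has rank equal to $\dim T$, which is the rank of $G$, hence at most $2$ by hypothesis. Next I would verify that the extended Weyl group $\V(G,T) = W(\overline G, \overline T) \cdot A_{T,\Bbar}$ is a finite group: both the ordinary Weyl group $W(\overline G, \overline T)$ and the image $A_{T,\Bbar}$ of $\Gal(\kbar/k)$ in the discrete automorphism group $\Aut \rXi(\overline G, \overline T, \overline\Bbar)$ are finite, so their product inside $\Aut \XX(\overline T)$ is finite as well. Thus $\X(G)$ is a finite-group lattice of rank $\le 2$.

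Now I would directly invoke Lemma~\ref{lem:Voskresenskii}, which asserts that any lattice of rank $1$ or $2$ over an arbitrary finite group is quasi-permutation. This gives condition (d) of Theorem~\ref{thm.main1}, and therefore $G$ is stably Cayley over $k$, completing the proof.

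There is essentially no obstacle here: the entire content is already encapsulated in the two results cited. The only point requiring a brief justification is the finiteness of $\V(G,T)$, which is immediate from the construction in Section~\ref{sect.character-lattice}. No case analysis or further lattice computation is needed, since Lemma~\ref{lem:Voskresenskii} handles all finite groups uniformly in ranks $\le 2$.
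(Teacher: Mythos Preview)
Your proposal is correct and follows essentially the same approach as the paper: apply Lemma~\ref{lem:Voskresenskii} to conclude that the character lattice $\X(G)$ is quasi-permutation, then invoke the implication (d)$\Longrightarrow$(a) of Theorem~\ref{thm.main1}. The paper also notes an alternative route via condition~(c) (the generic torus of dimension $\le 2$ is rational), but your argument matches its primary proof.
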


\begin{proof} By Lemma~\ref{lem:Voskresenskii}
the character lattice of $G$ is quasi-permutation.
Thus $G$ is stably Cayley by Theorem~\ref{thm.main1}.

Alternatively, the generic torus of $G$ is of dimension $\le 2$ and
hence, is rational, hence stably rational; see \cite[\S4.9, Examples 6, 7]{Voskresenskii-book}.
Once again, we conclude that $G$ is stably Cayley by Theorem~\ref{thm.main1}.
\end{proof}

The following Corollary amplifies Lemma~\ref{lem.outer-form}.

\begin{corollary}\label{cor:inner}
Let $\Gspl$ be a split reductive group over a field $k$ of characteristic $0$,
$\Ginn$ an inner $k$-form of $\Gspl$, and $G$ an arbitrary $k$-form of $\Gspl$.
\begin{enumerate}[\upshape(a)]
\item If $G$ is stably Cayley, then so is $\Gspl$.
\item $\Ginn$ is stably Cayley if and only if $\Gspl$ is stably Cayley.
\end{enumerate}
\end{corollary}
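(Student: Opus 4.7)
The plan is to deduce the corollary from Theorem~\ref{thm.main1}, which translates ``stably Cayley'' into ``the character lattice is quasi-permutation''. The central observation is that, for $\Gspl$ and for any inner form $\Ginn$, the extended Weyl group collapses to the ordinary Weyl group, while for an arbitrary form $G$ only a subgroup inclusion $W\subset\V(G,T)$ is available.

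First I would make this observation precise. For the split group $\Gspl$, the torus $\Tspl$ is split over $k$, so Galois acts trivially on $\XX(\ov{\Tspl})$ and on $\rXi(\Gspl,\Tspl,\Bspl)$; hence $A_{\Tspl,\Bspl}=\{1\}$ and $\V(\Gspl,\Tspl)=W(\Gspl,\Tspl)$. For an inner form $\Ginn={}_z\Gspl$ with $z\in Z^1(k,\Inn(\Gspl))$, Proposition~\ref{prop.semi}(b) says $\Inn(\ov{\Gspl})\subset\Ker\varphi_{\ov{\Tspl},\ov{\Bspl}}$, so the cocycle $z_\rXi$ computed in the proof of Proposition~\ref{prop:cocycle} is trivial, i.e.\ $\tA=\{1\}$ and $\V(\Ginn,T)=W$. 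By Remark~\ref{rem.SW} the character lattices $\X(\Gspl)=(W,\XX(\ov{\Tspl}))$ and $\X(\Ginn)=(W,\XX(\ov{T}))$ are canonically isomorphic. For a general $k$-form $G$, Lemma~\ref{lem.SW2} gives $\V(G,T)=W\rtimes A$ for a finite group $A$, so $W$ sits as a subgroup acting on the same $\Z$-module $\XX(\ov{T})$.

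Part (b) is then immediate from Theorem~\ref{thm.main1}: $\Gspl$ and $\Ginn$ have the same character lattice, hence one is stably Cayley if and only if the other is. (The implication $\Gspl$ stably Cayley $\Rightarrow$ $\Ginn$ stably Cayley is also recovered directly from Lemma~\ref{lem.outer-form}(d).)

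For part (a) I would check that the quasi-permutation property restricts from $W\rtimes A$ to its subgroup $W$. Given a resolution
\[
0\to \XX(\ov{T})\to P\to P'\to 0
\]
of $(W\rtimes A)$-lattices with $P,P'$ permutation, restricting the action to $W$ preserves both exactness and the permutation property, since any $\Z$-basis of $P$ permuted by $W\rtimes A$ is \emph{a fortiori} permuted by $W$. Thus, if $\X(G)=(W\rtimes A,\XX(\ov{T}))$ is quasi-permutation then so is $\X(\Gspl)=(W,\XX(\ov{T}))$, and Theorem~\ref{thm.main1} concludes that $\Gspl$ is stably Cayley. No real obstacle remains; the entire argument consists of correctly identifying $A=\{1\}$ in the split and inner cases and then invoking Theorem~\ref{thm.main1} together with the trivial restriction-of-scalars step for permutation lattices.
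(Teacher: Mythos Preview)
Your proof is correct and follows essentially the same route as the paper: both reduce to Theorem~\ref{thm.main1} by identifying the character lattices. The only cosmetic difference is in part~(a): the paper first base-changes $G$ to $\kbar$ (where the extended Weyl group of $\Gbar$ is just $W$) and observes $\X(\Gbar)\simeq\X(\Gspl)$, whereas you stay over $k$ and restrict the quasi-permutation $(W\rtimes A)$-lattice $\X(G)$ to the subgroup $W$; these two moves amount to the same thing.
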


\begin{proof}
Set $\Gbar:=G \times_k \kbar$.
If $G$ is stably Cayley over $k$, then clearly  $\Gbar$
is stably Cayley over $\kbar$,
and by Theorem \ref{thm.main1} (or by \cite[Theorem 1.27]{LPR06})
the character lattice $\X(\Gbar)$ is quasi-permutation.
Since $\X(\Gbar) \simeq \X(\Gspl)$, assertion (a) follows from  Theorem \ref{thm.main1}.
Similarly, since $\X(\Gspl) \simeq \X(\Ginn)$, assertion (b) follows from  Theorem \ref{thm.main1}.
\end{proof}

\section{Proof of Theorem~\ref{cor.main2}}
\label{sect.proof-of-cor.main2}

To show that (a) $\Longrightarrow$ (b), suppose that
$G$ is stably Cayley over $k$. Then $G_{\bar{k}}$ is stably Cayley
over $\bar{k}$, where $\kbar$ denotes an algebraic closure of $k$.
By~\cite[Theorem 1.28]{LPR06}, $G_\kbar$ is one of
the following groups:
\begin{equation} \label{e.lpr}
  \text{ $\gSL_3$, $\gSO_{n}$ ($n\ge 5$),
$\gSp_{2n}$ ($n \ge 1$), $\gPGL_n$ ($n \ge 2$), $\gG_2$.}
\end{equation}
In other words, $G$ is a $k$-form of one of these groups. (Note that
the groups $\gSL_2$ and $\gSO_3$, which appear in the
statement of~\cite[Theorem 1.28]{LPR06},
are isomorphic to $\gSp_2$ and $\gPGL_2$, respectively.)
If $G$ is an outer form of $\gPGL_n$ where $n \ge 4$ is even,
then by \cite[Theorem 0.1]{CK} the generic torus of $G$ is not stably rational,
and by Theorem \ref{thm.main1}, $G$ is not stably Cayley.
Thus if $G$ is stably Cayley,
then $G$ is one of the groups listed in part (b).

It remains to prove that (b) $\Longrightarrow$ (a), i.e., that all groups
listed in part (b) are stably Cayley.

The classical Cayley transform shows that all forms of $\gSO_n$ and
$\gSp_{2n}$ are Cayley; see~\cite[Example 1.16]{LPR06}. All forms of
the groups $\gSL_3$ and $\gG_2$ are of rank $2$, hence their generic
tori are rational by \cite[Example 4.9.7]{Voskresenskii-book}, and
by Theorem~\ref{thm.main1}, these groups are stably Cayley. Every
inner form of $\gPGL_n$ is Cayley by \cite[Example 1.11]{LPR06};
cf.~also Lemma~\ref{lem.outer-form}(c). Finally, the generic torus
of any form of $\gPGL_n$ for $n$ odd is rational, hence stably
rational by~\cite[Corollary of Theorem 8]{VK}. By Theorem
\ref{thm.main1}, we conclude that outer forms of $\gPGL_n$  for $n$
odd are stably Cayley. This completes the proof of
Theorem~\ref{cor.main2}. \qed

\section{Statement of Theorem~\ref{thm:product-closed-field}
and first reductions}
\label{sect7}

In view of Theorem~\ref{cor.main2} it is natural to ask for a
classification of stably Cayley semisimple  groups, initially over an
algebraically closed field of characteristic zero. This problem
turns out to be significantly more complicated; a complete solution
is out of reach at the moment; cf. Remark~\ref{rem.conjecture}.
Fortunately, for the purpose of
proving Theorem~\ref{thm.main3}, we can limit our attention to
semisimple groups all of whose simple components are of the same
type. Theorem \ref{thm:product-closed-field} stated below gives a
classification of stably Cayley groups of this form; this theorem
will be a key ingredient in our proof of Theorem~\ref{thm.main3} in
Section~\ref{sect.proof-of-thm.main3}. The proof of Theorem
\ref{thm:product-closed-field} will occupy most of the remainder of
this paper.

\begin{theorem}\label{thm:product-closed-field}
Let $k$ be an algebraically closed field of characteristic $0$ and
$G$ a semisimple $k$-group of the form $H^m/C$, where $H$ is a
simple and simply connected $k$-group and $C$ is a central
$k$-subgroup of $H^m$. (In other words, the universal cover of $G$
is of the form $H^m$.) Then $G$ is stably Cayley if and only if $G$
is isomorphic to a direct product $G_1\times_k\dots\times_k G_s$,
where each $G_i$ is either a stably Cayley simple $k$-group
(i.e., is one of the groups listed in  \eqref{e.lpr}) or
$\gSO_4$.
\end{theorem}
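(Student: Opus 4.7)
The plan is to establish each direction of the biconditional separately.

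For the ``if'' direction, assume $G = G_1 \times_k \cdots \times_k G_s$ with each $G_i$ either from the list \eqref{e.lpr} or isomorphic to $\gSO_4$. By Example \ref{ex.alg-closed}, it suffices to show that the character lattice $\X(G) \cong \bigoplus_i \X(G_i)$ is quasi-permutation under the product of the respective Weyl groups. For each $G_i$ from \eqref{e.lpr} this is part of \cite[Theorem 1.28]{LPR06}, and for $G_i = \gSO_4$ it follows from Lemma \ref{lem:Voskresenskii} since $\X(\gSO_4)$ has rank $2$. Combining the summands via Lemma \ref{LPR-reduction-product} then completes this direction.

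For the ``only if'' direction, suppose $G = H^m/C$ is stably Cayley, and let $L := \X(G)$, viewed as a sublattice of $P^m$ (where $P$ denotes the weight lattice of $H$) on which $W(H)^m$ acts componentwise. By Example \ref{ex.alg-closed}, $L$ is a quasi-permutation $W(H)^m$-lattice. Decomposing $P^m \otimes \Q = V_1 \oplus \cdots \oplus V_m$ into its natural summands and applying Lemma \ref{LPR-reduction-two}, each $L \cap V_i$ is a quasi-permutation $W(H)$-lattice. A direct calculation identifies $L \cap V_i$ with the character lattice $\X(H/p_i(C))$, where $p_i \colon H^m \to H$ is the $i$-th projection. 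Since this quotient is simple, \cite[Theorem 1.28]{LPR06} forces $H/p_i(C)$ to be isomorphic to one of the groups in \eqref{e.lpr}. In particular, $H$ cannot be of type $E_6$, $E_7$, $E_8$, or $F_4$, since these types admit no central quotient appearing in \eqref{e.lpr}.

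From this point the analysis proceeds case-by-case on the Dynkin type of $H$. For $H = \gG_2$ the center is trivial, so $C = 1$ and $G = \gG_2^m$, which is already of the desired form. For the other allowed types ($A_\ell$, $B_\ell$, $C_\ell$, $D_\ell$), the pointwise constraints above severely restrict each $p_i(C)$: for example, if $H = \SL_n$ with $n \geq 4$ then $p_i(C) = Z(H)$ for every $i$, since $\gPGL_n$ is the only central quotient of $\SL_n$ in \eqref{e.lpr}; similarly, for $H = \gSpin_n$ with $n \geq 7$, $p_i(C)$ must contain the kernel of $\gSpin_n \to \gSO_n$ for every $i$. The remaining task is to use the global quasi-permutation property of $L$ to show that $C$ is, up to permutation of the factors of $H^m$, block-diagonal with blocks corresponding to factors from the allowed list---the only permissible non-product block being the diagonal $\Z/2 \subset (\Z/2)^2$ that gives rise to $\gSO_4$ in the case $H = \SL_2$.

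The principal obstacle lies in this last step: even once each projection $H/p_i(C)$ is stably Cayley simple, there can exist ``diagonal'' elements in $C$ nontrivially gluing several factors of $H^m$ together (for instance, the weight-three element $(1,1,1) \in (\Z/2)^3$ in the case $H = \SL_2$, $m = 3$), and one must rule out all such gluings except the $\SL_2 \times \SL_2 \to \gSO_4$ case. The plan is to apply the criteria for failure of the quasi-permutation (and quasi-invertible) property developed in Section \ref{sect8} to each candidate $C$, carrying out the verification type by type as foreshadowed by Sections \ref{sect.family}--\ref{sect.sl3}. This case-by-case analysis is the main technical work of the proof.
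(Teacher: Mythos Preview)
Your outline follows the same strategy as the paper: reduce to a lattice question via Example~\ref{ex.alg-closed}, use the projection argument (the paper packages this as Lemma~\ref{lem.types}) to constrain the type of $H$ and the projections $p_i(C)$, then carry out a type-by-type analysis using the non-quasi-invertibility criteria developed from Section~\ref{sect8} onward. One small correction: your claim that for $H=\SL_n$ with $n\ge 4$ the only stably Cayley central quotient is $\gPGL_n$ fails at $n=4$, since $\SL_4/\mu_2\cong\gSO_6$ also appears in~\eqref{e.lpr}; accordingly the paper handles $\AA_{n-1}$ only for $n\ge 5$ in Theorem~\ref{thm.SLm} and treats $\AA_3=\DD_3$ separately in Theorem~\ref{thm.SO6}.
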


Note that $\gSO_4$ is semisimple but not simple.
The ``if'' direction of Theorem~\ref{thm:product-closed-field}
is obvious, since the direct product of stably Cayley groups
is stably Cayley.
(As we mentioned in the previous section, $\gSO_4$ is Cayley via the
classical Cayley transform.)
Thus we only need to prove the ``only if'' direction.
The proof will proceed by case-by-case analysis, depending
on the type of $H$.  We begin with the following easy reduction.

\begin{lemma} \label{lem.types}
Let $H$ be a simply connected simple group over an algebraically
closed field $k$, and $C$ a central subgroup of $H^m$ for some $m
\ge 1$. Let $H_i$ denote the $i^{\textit{th}}$ factor of $H^m$,
$\pi_i$ denote the natural projection $H^m \to H_i$, and $C_i : =
\pi_i(C) \subset Z(H_i)$, where $Z(H_i)$ denotes the center of
$H_i$. Assume $H^m/C$ is stably Cayley. Then
\begin{enumerate}[\upshape(a)]
\item[\rm (a)] $H_i/C_i$ is stably Cayley;
\item[\rm (b)] $H$ is of type $\AA_n$ $(n \ge 1)$, $\BB_n$ $(n\ge 2)$,
$\CC_n$ $(n \ge 3)$ , $\DD_n$ $(n \ge 4)$, or $\GG_2$.
\end{enumerate}
\end{lemma}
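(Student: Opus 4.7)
The plan is to reduce both statements to properties of the character lattice of $H^m/C$ as a $W(H)^m$-module and then invoke the decomposition lemma \ref{LPR-reduction-two}. Since $k$ is algebraically closed, the extended Weyl group in Definition~\ref{def:character-lattice} reduces to the usual Weyl group, so by Example~\ref{ex.alg-closed} (or Theorem~\ref{thm.main1}), being stably Cayley is equivalent to the character lattice being quasi-permutation.

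First I would set up the lattice. Fix a maximal torus $T \subset H$, so that $T^m$ is a maximal torus of $H^m$ and $T^m/C$ is a maximal torus of $H^m/C$. Identify the character lattice
\[
L \;=\; \XX(T^m/C) \;=\; \{\chi \in \XX(T)^{\oplus m} : \chi|_C = 1\} \;\subset\; \XX(T_1)\oplus\cdots\oplus\XX(T_m),
\]
carrying the natural action of $W(H^m) = W(H)^m$ factor-by-factor. The key computational point is that the sublattice $L_i := L \cap \XX(T_i)$ consists of characters $\chi_i \in \XX(T_i)$ vanishing on $\pi_i(C) = C_i$, and therefore coincides with the character lattice $\XX(T_i/C_i)$ of $H_i/C_i$ as a $W(H)$-module. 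This identification is the only non-formal ingredient.

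Part (a) is then immediate: by hypothesis $L$ is a quasi-permutation $W(H)^m$-lattice, and Lemma~\ref{LPR-reduction-two} (applied with $W_i = W(H)$ and $V_i = L_i \otimes \Q$) yields that each $L_i$ is quasi-permutation over $W(H)$. By Example~\ref{ex.alg-closed} this means $H_i/C_i$ is stably Cayley.

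For part (b), I would apply part (a) with $i=1$ to conclude that $H/C_1$ is a stably Cayley absolutely simple $k$-group of the same Lie type as $H$. Invoking Theorem~1.28 of \cite{LPR06} (the classification recalled in~\eqref{e.lpr}), every stably Cayley absolutely simple group over an algebraically closed field of characteristic zero is of type $\AA_n$, $\BB_n$, $\CC_n$, $\DD_n$, or $\GG_2$; the exceptional types $\EE_6,\EE_7,\EE_8,\FF_4$ do not occur. Hence $H$ has one of the listed types. There is no substantial obstacle in this argument: once the identification $L_i = \XX(T_i/C_i)$ is made, both claims follow from tools already established in Sections~\ref{sect.lattices}, \ref{sect.character-lattice}, and the previously known classification in the split case.
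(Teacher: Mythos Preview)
Your proof is correct and follows the same approach as the paper. The paper's proof of part (a) simply cites \cite[Proposition 4.8]{LPR06}, of which Lemma~\ref{LPR-reduction-two} is the lattice-theoretic reformulation; you have unwound this reference by explicitly identifying $L_i = L \cap \XX(T_i)$ with $\XX(T_i/C_i)$ and then applying Lemma~\ref{LPR-reduction-two}, which is exactly the content of that citation. Part (b) is handled identically in both proofs via \cite[Theorem 1.28]{LPR06}.
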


\begin{proof}
Part (a) is a direct consequence of~\cite[Proposition 4.8]{LPR06}. To
prove part (b), note that by~\cite[Theorem 1.28]{LPR06}, $H_1/C_1$  is
of one of the types listed in the statement of the lemma.
\end{proof}

We will now settle two easy cases of
Theorem~\ref{thm:product-closed-field}, where $H$ is of type $\CC_n$
($n \ge 3$) and $\GG_2$.

\begin{proof}[Proof of Theorem~\ref{thm:product-closed-field}
for $H= \GG_2$.]
Here $Z(H) = \{ 1 \}$, so $C \subset Z(H)^m$ is trivial, and
\[ \text{$H^m/C = H^m = \GG_{2} \times_k \dots \times_k
\GG_{2}$ ($m$ times)} \]
is a product of stably Cayley simple groups.
\end{proof}

\begin{proof}[Proof of Theorem~\ref{thm:product-closed-field} for $H$ of type
$\CC_n$ $(n \ge 3)$.] Let $H = \Sympl_{2n}$ and $C$ be a subgroup of
$Z(H)^m = \mu_2^m$.  We will show that if $H^m/C$ is stably Cayley,
then $C = \{ 1 \}$.

Indeed, if $H^m/C$ is stably Cayley, then, by Lemma~\ref{lem.types},
so is $H_i/C_i$. Here $H_i = \Sympl_{2n}$, and $C_i$ is a central
subgroup (either $\mu_2$ or $\{ 1 \}$). On the other hand, by
\cite[Theorem 1.28]{LPR06}, if the group $\Sympl_{2n}/C_i$
is stably Cayley for some $n\ge 3$ then
$C_i = \{ 1 \}$. Thus $C$ projects trivially to
every $H_i$, which is only possible if $C = \{ 1 \}$.
We conclude that
\[ \text{$H^m/C = H^m = \Sympl_{2n} \times_k \dots \times_k
\Sympl_{2n}$ ($m$ times)} \]
is a product of Cayley simple groups, as desired.
\end{proof}

\begin{remark} \label{rem.conjecture}
We conjecture that Theorem~\ref{thm:product-closed-field} remains
true for every semi\-sim\-ple $k$-group $G$ over an algebraically
closed field $k$ of characteristic $0$, without any additional
assumption on the universal cover of $G$. That is, a semisimple
$k$-group is stably Cayley if and only if it is isomorphic to a
direct product $G_1 \times_k \dots \times_k G_s$, where each $G_i$
is either a stably Cayley simple group or $\gSO_4$.
\end{remark}

\section{Quasi-invertible lattices}
\label{sect8}

The proof of the ``only if" direction of Theorem
\ref{thm:product-closed-field} in the remaining cases, where $H$ is
of type $\AA_n$, $\BB_n$ or $\DD_n$, is more involved.  In this
section, in preparation for this proof, we will describe a general
method for showing that certain lattices are not quasi-permutation
(and more generally, cannot even be direct summands of
quasi-permutation lattices).

\begin{definition}
A $\Gamma$-lattice $L$ is called {\em quasi-invertible} if it is
a direct summand of a quasi-permutation $\Gamma$-lattice.
\end{definition}

\begin{lemma}[J.-L. Colliot-Th\'el\`ene] \label{lem:q-inv}
A $\Gamma$-lattice $L$ is  {\em quasi-invertible} if and only if it fits into a short exact sequence
\begin{equation}\label{eq:q-inv}
0\to L\to P\to I\to 0,
\end{equation}
where $P$ is a permutation $\Gamma$-lattice and $I$ is an invertible $\Gamma$-lattice,
i.e. a direct summand of a permutation $\Gamma$-lattice.
\end{lemma}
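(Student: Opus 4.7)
My plan is to prove the two directions separately, with the ``only if'' direction being the substantive one.

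For the ``if'' direction, the argument is essentially immediate from the definition of invertibility. Given a sequence $0\to L\to P\to I\to 0$ with $P$ permutation and $I$ invertible, I would pick $J$ so that $I\oplus J$ is permutation, and take the direct sum of the given sequence with the split sequence $0\to 0\to J\to J\to 0$ to obtain $0\to L\to P\oplus J\to I\oplus J\to 0$. Both outer nonzero terms are permutation, so $L$ is quasi-permutation and hence, trivially, quasi-invertible (as a direct summand of itself).

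For the ``only if'' direction, I would start from the assumption that $L\oplus L' = M$ is quasi-permutation and fix a resolution $0\to M\to P_1\to P_2\to 0$ with $P_1, P_2$ permutation. The natural candidate is the sequence $0\to L\to P_1\to P_1/L\to 0$, and the problem reduces to showing that $I:=P_1/L$ is invertible. To exhibit $I$ as a direct summand of a permutation lattice, my plan is to pair $I$ with $I':=P_1/L'$. Since $L\cap L' = 0$ inside $M\subset P_1$, the ``diagonal'' map $\phi\colon P_1\to I\oplus I'$, $p\mapsto (p+L, p+L')$, is injective; and the two natural surjections $I\twoheadrightarrow P_2$ and $I'\twoheadrightarrow P_2$ (both induced by $P_1\twoheadrightarrow P_1/M = P_2$) combine into a ``difference'' map $\psi\colon I\oplus I'\to P_2$. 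A short diagram chase should yield the exact sequence
$$
0\to P_1\xrightarrow{\phi} I\oplus I'\xrightarrow{\psi} P_2\to 0.
$$

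The main obstacle is to show that this sequence splits, which will give $I\oplus I'\cong P_1\oplus P_2$ (a permutation lattice), so that $I$ is invertible. The key ingredient I would invoke is the vanishing $\Ext^1_\Gamma(P_2, P_1) = 0$ for any two permutation $\Gamma$-lattices. By Shapiro's lemma and the Mackey double-coset decomposition, computing $\Ext^1_\Gamma(\Z[\Gamma/H_2], \Z[\Gamma/H_1])$ reduces to a sum of terms $H^1(H_2\cap gH_1g^{-1}, \Z)$, which all vanish because the coefficients are trivial and the groups are finite. Once this vanishing is in hand, the sequence splits automatically, $I$ is a direct summand of the permutation lattice $I\oplus I'$, and $0\to L\to P_1\to I\to 0$ is the sequence required by the lemma.
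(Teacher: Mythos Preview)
Your ``if'' direction contains a slip: in the sequence $0\to L\to P\oplus J\to I\oplus J\to 0$, the middle term $P\oplus J$ need not be permutation, since $J$ is only known to be invertible; so you have not shown that $L$ is quasi-permutation (and indeed quasi-invertible lattices need not be quasi-permutation). The fix the paper uses is to add $0\to I\to I\to 0\to 0$ and then $0\to 0\to J\to J\to 0$ to the original sequence, obtaining $0\to L\oplus I\to P\oplus(I\oplus J)\to I\oplus J\to 0$; now both remaining terms are permutation, so $L\oplus I$ is quasi-permutation and $L$ is a direct summand of it.

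Your ``only if'' direction is correct and takes a genuinely different route from the paper. The paper invokes flasque resolutions: for any lattice $L$ one has $0\to L\to P\to F\to 0$ with $P$ permutation and $F$ flasque, and the class $[L]^\fl$ of $F$ modulo permutation lattices is additive; from $[L]^\fl+[L']^\fl=[L\oplus L']^\fl=0$ one reads off that $[L]^\fl$ is invertible in this monoid, hence $F$ is an invertible lattice. Your argument avoids this machinery: you produce the complement $I'=P_1/L'$ by hand and verify $I\oplus I'\cong P_1\oplus P_2$ via an explicit short exact sequence together with the standard vanishing $\Ext^1_\Gamma(P_2,P_1)=0$ for permutation lattices. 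The paper's route is shorter once the flasque formalism is in place; yours is self-contained.
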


\begin{proof}
For a $\Gamma$-lattice $L$ we have a flasque resolution
$$
0\to L\to P\to F\to 0,
$$
where $P$ is a permutation $\Gamma$-lattice and $F$ is a flasque $\Gamma$-lattice,
see \cite[\S\,1]{CTS1} or \cite[Ch.~2]{Lorenz} for the theory of flasque resolutions.
We write $[L]^\fl$ for the class of $F$ up to addition of a permutation lattice
(note that $F$ is defined up to addition of a permutation lattice).
We have $[L\oplus L']^\fl=[L]^\fl\oplus [L']^\fl$.
If  $L$ is quasi-invertible, then  $L\oplus L'$ is quasi-permutation for some $L'$,
hence $[L]^\fl\oplus [L']^\fl=[L\oplus L']^\fl=0$.
We see that $[L]^\fl$ is invertible, hence $L$ fits into an exact sequence \eqref{eq:q-inv} with $I$ invertible.

Conversely, if $L$ fits into an exact sequence \eqref{eq:q-inv} with $I$ invertible, say $I\oplus J=P'$ is permutation,
then adding $I$ to \eqref{eq:q-inv} twice on the left, and then adding $J$ twice on the right, we obtain an exact sequence
$$
0\to L\oplus I\to P\oplus I\oplus J\to I\oplus J\to 0,
$$
which shows that $L$ is quasi-invertible.
\end{proof}

\begin{lemma}[J.-L. Colliot-Th\'el\`ene] \label{lem:q-inv-Gamma1}
Let $\Gamma_1\twoheadrightarrow \Gamma$ be a surjective homomorphism of finite groups, and let $L$ be a $\Gamma$-lattice.
Then $L$ is quasi-invertible as a $\Gamma_1$-lattice if and only if it is quasi-invertible as a $\Gamma$-lattice.
\end{lemma}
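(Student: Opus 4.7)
The plan is to mimic the proof of Lemma~\ref{lem:quasi-perm} closely, but using Colliot-Th\'el\`ene's characterization of quasi-invertibility in terms of short exact sequences $0 \to L \to P \to I \to 0$ with $P$ permutation and $I$ invertible (Lemma~\ref{lem:q-inv}). Set $\Gamma_0 := \ker[\Gamma_1 \to \Gamma]$.

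For the ``if'' direction, assume $L$ is quasi-invertible as a $\Gamma$-lattice, so there is a short exact sequence $0\to L\to P\to I\to 0$ of $\Gamma$-lattices with $P$ permutation and $I$ invertible. Pulling back along $\Gamma_1 \twoheadrightarrow \Gamma$, the inflated $\Gamma_1$-action on $P$ still permutes the given $\mathbb{Z}$-basis, so $P$ is $\Gamma_1$-permutation; and if $I\oplus J$ is $\Gamma$-permutation, then after inflation $I\oplus J$ is $\Gamma_1$-permutation, so $I$ is $\Gamma_1$-invertible. By Lemma~\ref{lem:q-inv}, $L$ is quasi-invertible as a $\Gamma_1$-lattice.

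For the ``only if'' direction, suppose $L$ is quasi-invertible as a $\Gamma_1$-lattice, and choose a short exact sequence of $\Gamma_1$-lattices
\[
0\to L\to P\to I\to 0
\]
with $P$ a permutation $\Gamma_1$-lattice and $I$ an invertible $\Gamma_1$-lattice. Exactly as in the proof of Lemma~\ref{lem:quasi-perm}, take $\Gamma_0$-invariants. Since $\Gamma_0$ acts trivially on $L$ (the $\Gamma_1$-action on $L$ factors through $\Gamma$), we have $L^{\Gamma_0}=L$; moreover $H^1(\Gamma_0,L)=\Hom(\Gamma_0,L)=0$ because $\Gamma_0$ is finite and $L$ is torsion-free. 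We therefore obtain a short exact sequence of $\Gamma$-lattices
\[
0\to L\to P^{\Gamma_0}\to I^{\Gamma_0}\to 0.
\]
The lattice $P^{\Gamma_0}$ is a permutation $\Gamma$-lattice, as observed in the proof of Lemma~\ref{lem:quasi-perm}. The key remaining point is that $I^{\Gamma_0}$ is invertible as a $\Gamma$-lattice: write $I\oplus J=Q$ for some permutation $\Gamma_1$-lattice $Q$; then $I^{\Gamma_0}\oplus J^{\Gamma_0}=Q^{\Gamma_0}$, and $Q^{\Gamma_0}$ is a permutation $\Gamma$-lattice, so $I^{\Gamma_0}$ is a direct summand of a permutation $\Gamma$-lattice. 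Applying Lemma~\ref{lem:q-inv} once more completes the proof.

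The argument is essentially routine; the only slightly non-obvious step is the last one, the observation that taking $\Gamma_0$-invariants preserves invertibility, which works because invertibility is defined as being a direct summand of a permutation lattice and the operation $(-)^{\Gamma_0}$ commutes with finite direct sums and sends permutation $\Gamma_1$-lattices to permutation $\Gamma$-lattices.
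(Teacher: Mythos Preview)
Your proof is correct and follows essentially the same approach as the paper: both use Lemma~\ref{lem:q-inv} to characterize quasi-invertibility via a short exact sequence $0\to L\to P\to I\to 0$, then take $\Gamma_0$-invariants exactly as in Lemma~\ref{lem:quasi-perm}, observing that $P^{\Gamma_0}$ is permutation and $I^{\Gamma_0}$ is invertible as $\Gamma$-lattices. Your version is slightly more detailed (you spell out the ``if'' direction and the reason $I^{\Gamma_0}$ is invertible), but the argument is the same.
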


\begin{proof}
We argue as in the proof of Lemma \ref{lem:quasi-perm}.
It suffices to prove ``only if''.
Assume that $L$ is quasi-invertible as a $\Gamma_1$-lattice, then by Lemma \ref{lem:q-inv}, $L$ fits
into a short exact sequence \eqref{eq:q-inv} of $\Gamma_1$-lattices,
where $P$ is a permutation $\Gamma_1$-lattice and $I$ is an invertible $\Gamma_1$-lattice.
Set $\Gamma_0=\ker[\Gamma_1\to\Gamma]$.
From \eqref{eq:q-inv}
we obtain the  $\Gamma_0$-cohomology exact sequence
$$
0\to L\to P^{\Gamma_0}\to I^{\Gamma_0}\to 0
$$
(because $L^{\Gamma_0}=L$ and  $H^1(\Gamma_0,L)=0$), which is a short exact sequence of $\Gamma$-lattices.
It is easy to see that $P^{\Gamma_0}$ is a permutation $\Gamma$-lattice and  $I^{\Gamma_0}$ is an invertible $\Gamma$-lattice,
hence by Lemma \ref{lem:q-inv}, $L$ is a quasi-invertible $\Gamma$-lattice.
\end{proof}

Suppose that $(\Gamma, L)$ and $(\Gamma', L')$ are
$\varphi$-isomorphic for some isomorphism $\varphi \colon \Gamma \to
\Gamma'$; for a definition of $\varphi$-isomorphism, see the
beginning of Section~\ref{sect.lattices}.  Then clearly $L$ is
permutation (respectively, quasi-permutation, respectively,
quasi-invertible) if and only if so is $L'$.

\medskip
The Tate--Shafarevich group of  a $\Gamma$-lattice $L$ is defined as
\[ \Sh(\Gamma, L)=\ker\left[H^2(\Gamma,L)\to\prod_{\Gammac\subset
\Gamma} H^2(\Gammac,L)\right] , \]
where $\Gammac$ runs over the set of all {\em cyclic} subgroups of $\Gamma$.
If $L$ is a quasi-invertible $\Gamma$-lattice, then for any subgroup
$\Gamma'\subset\Gamma$ we have $\Sh(\Gamma',L)=0$, cf.
\cite[Proposition 2.9.2(a)]{Lorenz}.  Note however, that
there exist $\Gamma$-lattices
$L$ such that $\Sh(\Gamma',L)=0$ for every subgroup $\Gamma'$ of $\Gamma$
but $L$ is not quasi-invertible;
see  the end of the proof of Proposition \ref{prop:SL3}.
\smallskip

The following lemmas can be used to show that a given lattice is
not quasi-invertible.
Our approach is originally due to
Voskresenski\u\i. Proposition~\ref{thm:J-Gamma} is
essentially~\cite[Theorem~7 and its corollary]{Voskresenskii70}; see
also~\cite[Proposition~1(ii), p.~183]{CTS1} and \cite[Proposition~9.5(ii)]{CTS2}.
For the sake of completeness we supply short proofs
for Lemmas \ref{lem:Sha-two} and \ref{lem:H-three} below.

Let $\Gamma$ be a finite group. Consider the norm homomorphism
$$
N_\Gamma\colon \Z\to \Z[\Gamma], \quad N_\Gamma(a)=
a\sum_{s\in\Gamma} s \text{ for } a\in\Z,
$$
and the short exact sequence
\begin{equation}\label{eq:J}
0\to \Z\to \Z[\Gamma]\to J_\Gamma\to 0,
\end{equation}
where $J_\Gamma=\coker\, N_\Gamma$.

\begin{lemma} \label{lem:Sha-two}
Let $\Gamma$ be a finite group, and $\Gamma'\subset\Gamma$ any subgroup.
Then $\Sh(\Gamma',J_\Gamma)\cong H^3(\Gamma',\Z)$.
\end{lemma}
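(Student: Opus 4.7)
The plan is to apply the long exact cohomology sequence of $\Gamma'$ to the defining short exact sequence
\[ 0\to\Z\to\Z[\Gamma]\to J_\Gamma\to 0, \]
together with a dimension-shift argument.

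First I would show that $\Z[\Gamma]$ is cohomologically trivial as a $\Gamma'$-module. Decomposing $\Gamma$ into right $\Gamma'$-cosets $\Gamma=\bigsqcup_{i} \Gamma' g_i$ gives an isomorphism of $\Gamma'$-modules $\Z[\Gamma]\cong\bigoplus_i \Z[\Gamma']$, so $\Z[\Gamma]$ is a free $\Z[\Gamma']$-module. Equivalently, $\Z[\Gamma]\cong\mathrm{Ind}_1^{\Gamma'}(\bigoplus_i \Z)$, so by Shapiro's lemma $H^n(\Gamma',\Z[\Gamma])=0$ for every $n\ge 1$. Plugging this into the long exact sequence produces canonical isomorphisms $H^n(\Gamma',J_\Gamma)\isoto H^{n+1}(\Gamma',\Z)$ for all $n\ge 1$; in particular
\[ H^2(\Gamma',J_\Gamma)\cong H^3(\Gamma',\Z). \]

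Next I would invoke the same isomorphism on each cyclic subgroup $\Gammac\subset\Gamma'$: this identifies $H^2(\Gammac,J_\Gamma)$ with $H^3(\Gammac,\Z)$. Now cohomology of a finite cyclic group with trivial integer coefficients is $2$-periodic, with $H^{2i+1}(\Gammac,\Z)=0$ for all $i\ge 0$. Hence $H^2(\Gammac,J_\Gamma)=0$ for every cyclic $\Gammac\subset\Gamma'$.

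Combining these observations, the restriction map $H^2(\Gamma',J_\Gamma)\to\prod_{\Gammac}H^2(\Gammac,J_\Gamma)$ is the zero map, so its kernel is all of $H^2(\Gamma',J_\Gamma)$. By definition this kernel is $\Sh(\Gamma',J_\Gamma)$, so
\[ \Sh(\Gamma',J_\Gamma)=H^2(\Gamma',J_\Gamma)\cong H^3(\Gamma',\Z), \]
as claimed. No step should be an obstacle here: the argument is a textbook dimension shift plus the periodicity of cyclic cohomology, and the only mild point to keep track of is the compatibility of the dimension-shift isomorphism with restriction to cyclic subgroups, which is automatic since it is induced by the same short exact sequence of $\Gamma$-modules.
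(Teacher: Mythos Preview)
Your proof is correct and follows essentially the same approach as the paper: dimension-shift via the long exact sequence of the defining short exact sequence, using that $\Z[\Gamma]$ is $\Gamma'$-cohomologically trivial, together with the vanishing of odd-degree cohomology of cyclic groups with trivial $\Z$-coefficients. Your write-up is slightly more explicit about why $\Z[\Gamma]$ is free as a $\Z[\Gamma']$-module, but otherwise the arguments are identical.
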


\begin{proof}
  From \eqref{eq:J} we obtain a cohomology exact sequence
\begin{equation}\label{eq:J-cohomology}
H^2(\Gamma',\Z[\Gamma])\to H^2(\Gamma', J_\Gamma)\to H^3(\Gamma',
\Z)\to H^3(\Gamma', \Z[\Gamma]).
\end{equation}
We have $H^i(\Gamma',\Z[\Gamma'])=0$ for $i\ge 1$, hence $H^i(\Gamma',\Z[\Gamma])=0$ for $i\ge 1$,
and we see from \eqref{eq:J-cohomology} that $H^2(\Gamma', J_\Gamma)\cong H^3(\Gamma', \Z)$.

Now let $\Gammac\subset\Gamma'$ be a {\em cyclic} subgroup. We have
$H^2(\Gammac, J_\Gamma)\cong H^3(\Gammac, \Z)$. By periodicity for
cyclic groups, cf. \cite[IV.8, Theorem 5]{CF}, we have
$$
H^3(\Gammac,\Z)\cong H^1(\Gammac,\Z)=\Hom(\Gammac,\Z)=0.
$$
Thus $H^2(\Gammac, J_\Gamma)=0$ and 
hence, $\Sh(\Gamma',J_\Gamma)=H^2(\Gamma',J_\Gamma)\cong
H^3(\Gamma',\Z)$.
\end{proof}

\begin{lemma} \label{lem:H-three}
Let $\Gamma=\Z/p\Z\times \Z/p\Z$, where $p$ is a prime.
Then $H^3(\Gamma,\Z) \cong\Z/p\Z$.
\end{lemma}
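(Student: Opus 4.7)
The plan is to reduce to the cohomology of the cyclic factors by applying the Künneth formula for group cohomology. Writing $\Gamma = \Gamma_1 \times \Gamma_2$ with $\Gamma_1 = \Gamma_2 = \Z/p\Z$, one has a (split) short exact sequence
\[
0 \longrightarrow \bigoplus_{i+j=3} H^i(\Gamma_1,\Z) \otimes_\Z H^j(\Gamma_2,\Z) \longrightarrow H^3(\Gamma,\Z) \longrightarrow \bigoplus_{i+j=4} \Tor_1^\Z\bigl(H^i(\Gamma_1,\Z), H^j(\Gamma_2,\Z)\bigr) \longrightarrow 0.
\]
This can be derived in two standard ways: either by tensoring the explicit periodic free resolutions of $\Z$ over $\Z[\Z/p\Z]$ to obtain a free resolution of $\Z$ over $\Z[\Gamma]$ and applying the algebraic Künneth theorem, or by invoking the topological Künneth formula for the classifying space $B\Gamma \simeq B\Gamma_1 \times B\Gamma_2$.

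Next I would feed in the cohomology of $\Z/p\Z$ with integer coefficients, already used in the proof of Lemma \ref{lem:Sha-two}: by periodicity, $H^0(\Z/p\Z,\Z) = \Z$, $H^{2k+1}(\Z/p\Z,\Z) = 0$ for $k \ge 0$, and $H^{2k}(\Z/p\Z,\Z) = \Z/p\Z$ for $k \ge 1$. Since $3$ is odd, every pair $(i,j)$ with $i+j=3$ has at least one odd coordinate, so the tensor product piece vanishes identically. For the Tor piece with $i+j=4$, the pairs $(0,4)$ and $(4,0)$ give zero because $\Z$ is flat, the pairs $(1,3)$ and $(3,1)$ give zero because the relevant cohomology groups vanish, and only the middle pair $(2,2)$ survives, contributing $\Tor_1^\Z(\Z/p\Z,\Z/p\Z) \cong \Z/p\Z$. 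Putting everything together yields $H^3(\Gamma,\Z) \cong \Z/p\Z$.

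The one step requiring care is the justification of the Künneth decomposition in this group-cohomology setting; the main obstacle is really just writing it down cleanly. A self-contained alternative, which avoids Künneth altogether, is to use the exact sequence $0 \to \Z \to \Q \to \Q/\Z \to 0$: since $\Gamma$ is finite and $\Q$ is uniquely divisible, $H^i(\Gamma,\Q) = 0$ for $i \ge 1$, giving $H^3(\Gamma,\Z) \cong H^2(\Gamma,\Q/\Z)$, and then applying the universal coefficient theorem together with Hopf's identification $H_2(\Gamma,\Z) \cong \Lambda^2\Gamma \cong \Z/p\Z$ (while $\Ext^1(H_1(\Gamma,\Z),\Q/\Z)$ vanishes by divisibility of $\Q/\Z$). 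Either route is routine; I would use whichever fits best with the conventions already invoked in the paper.
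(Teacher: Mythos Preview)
Your proof is correct. The K\"unneth computation is accurate: for $i+j=3$ every summand has an odd index and vanishes, while for $i+j=4$ only $\Tor_1^\Z(\Z/p\Z,\Z/p\Z)\cong\Z/p\Z$ from the pair $(2,2)$ survives.

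The paper takes a different route. Its primary argument identifies $H^3(\Gamma,\Z)$ with the Schur multiplier $H^2(\Gamma,\C^\times)$ and simply cites Schur's classical computation for finite abelian groups. Its alternative argument uses Tate duality to pass from $H^3(\Gamma,\Z)$ to $H_2(\Gamma,\Z)$, then invokes the identity $H_2(\Gamma,\Z)\cong\Lambda^2\Gamma$ for abelian $\Gamma$. Your ``self-contained alternative'' is essentially this second proof of the paper, reaching $H_2$ via the universal coefficient theorem and the coefficient sequence $0\to\Z\to\Q\to\Q/\Z\to 0$ rather than Tate duality, but landing on the same key input $H_2\cong\Lambda^2\Gamma$. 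Your primary K\"unneth argument, by contrast, is genuinely different: it is more hands-on and avoids citing either Schur or the $\Lambda^2$ formula, at the cost of needing to justify the K\"unneth decomposition itself. Both approaches are of comparable length; the paper's has the advantage of pointing to the conceptual identification with the Schur multiplier, while yours is more self-contained.
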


\begin{proof}
For any group $\Gamma$, the group $H^3(\Gamma, \Z)$ is
canonically isomorphic to $H^2(\Gamma, \C^\times)$.
The latter group is called the {\em Schur multiplier } of $\Gamma$.
For finite abelian groups, the Schur multipliers were computed by Schur
in \cite[\S4, VIII]{Schur}.
In particular, by \cite[\S4, VIII]{Schur}, the Schur multiplier of $\Z/p\Z\times \Z/p\Z$
is a cyclic group of order $p$, which proves the lemma.

An alternative proof based on modern references proceeds as follows.
For any finite group $\Gamma$, the group $H^3(\Gamma,\Z)$ is dual
to $H^{-3}(\Gamma,\Z)$,
cf. \cite[Theorem XII.6.6]{CE} or \cite[Theorem VI.7.4]{Brown}.
By definition $H^{-3}(\Gamma,\Z)=H_2(\Gamma,\Z)$.
For an {\em abelian} group $\Gamma$ we have $H_2(\Gamma,\Z)=\Lambda^2(\Gamma)$
(the second exterior power of the $\Z$-module $\Gamma$), see \cite[Theorem 3]{Miller}
or \cite[Theorem V.6.4(c)]{Brown}.
Clearly $\Lambda^2(\Z/p\Z\times \Z/p\Z)\cong\Z/p\Z$, hence $H_2(\Z/p\Z\times \Z/p\Z,\Z)\cong\Z/p\Z$
and $H^3(\Z/p\Z\times \Z/p\Z,\Z)\cong\Z/p\Z$.
\end{proof}

As an immediate consequence, we obtain the following

\begin{proposition}\label{thm:J-Gamma}
Let $\Gamma=\Z/p\Z\times \Z/p\Z$, where $p$ is a prime.
Then $\Sh(\Gamma,J_\Gamma)\cong \Z/p\Z$,
and therefore the $\Gamma$-lattice $J_\Gamma$ is not quasi-invertible.
\qed
\end{proposition}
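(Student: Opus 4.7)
The proposition is essentially an immediate corollary of the two lemmas just proved, so the plan is to combine them and then invoke the vanishing property of $\Sha$ for quasi-invertible lattices stated in the paragraph preceding Lemma~\ref{lem:Sha-two}.

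The plan is as follows. First, I would apply Lemma~\ref{lem:Sha-two} in the special case $\Gamma' = \Gamma$ to obtain a canonical isomorphism $\Sha(\Gamma, J_\Gamma) \cong H^3(\Gamma, \Z)$. Then Lemma~\ref{lem:H-three} identifies the right-hand side with $\Z/p\Z$, yielding the first assertion $\Sha(\Gamma, J_\Gamma) \cong \Z/p\Z$.

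For the second assertion, I would argue by contradiction. Recall the fact (cited earlier in the section from \cite[Proposition 2.9.2(a)]{Lorenz}) that if $L$ is a quasi-invertible $\Gamma$-lattice, then $\Sha(\Gamma', L) = 0$ for \emph{every} subgroup $\Gamma' \subset \Gamma$. Specializing this to $\Gamma' = \Gamma$ and $L = J_\Gamma$, quasi-invertibility of $J_\Gamma$ would force $\Sha(\Gamma, J_\Gamma) = 0$, contradicting the first part, since $\Z/p\Z \neq 0$.

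There is no real obstacle here: all the substantive work has been done in Lemmas~\ref{lem:Sha-two} and~\ref{lem:H-three} (the cohomological computation via the exact sequence \eqref{eq:J-cohomology} combined with periodicity for cyclic groups, and the Schur-multiplier calculation for $\Z/p\Z \times \Z/p\Z$, respectively). The proof of the proposition itself amounts to chaining these two isomorphisms together and then reading off the contrapositive of the $\Sha$-vanishing property of quasi-invertible lattices.
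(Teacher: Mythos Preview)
Your proposal is correct and matches the paper's approach exactly: the paper presents the proposition as an immediate consequence of Lemmas~\ref{lem:Sha-two} and~\ref{lem:H-three} together with the $\Sh$-vanishing criterion for quasi-invertible lattices, which is precisely the chaining you describe.
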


The following example and subsequent proposition will be used in the proof
of Lemma \ref{lem:direct-product} in the next section, and thus in the proof of
Theorem~\ref{thm.main3} in Section~\ref{sect.proof-of-thm.main3}.

\begin{example}\label{ex.outerPGLn}
Let $H$ be an outer $k$-form of $\gPGL_n$ for some even integer $n\ge 4$.
Recall (see Section~\ref{sect.proof-of-cor.main2})
that by~\cite[Theorem 0.1]{CK} the character lattice of $H$  is not
quasi-permutation.
In fact, it is shown in \cite[\S5.1]{CK} that the character lattice of $H$
is not quasi-invertible. Indeed, let $T$ be a maximal $k$-torus of $H$.
Note that $\V(H, T)=\Sym_n \times \Z/ 2 \Z$
and $X(\Tbar)=\Z \AA_{2n-1}$ on which $\V(H,T)$ acts by permutations
and sign changes.
It is shown in \cite[\S5.1]{CK} that there exists a subgroup $\Gamma$
of $\Sym_n \times \Z/2\Z$
isomorphic to $\Z/2\Z\times\Z/2\Z$, and a direct
summand $M$ of the
$\Gamma$-lattice $\XX(\Tbar)$ isomorphic to $J_{\Gamma}$.
Then $\Sh(\Gamma,M)\ne 0$ and so $\Sh(\Gamma,X(\Tbar))\ne 0$.
This implies that the
$\V(H,T)$-lattice $X(\Tbar)$ is not quasi-invertible.
\end{example}

\begin{proposition} \label{cor.directproduct}
Let $k$ be a field of characteristic zero and
$H$ a reductive  $k$-group with maximal $k$-torus $T$
such that the character lattice $\X(H)=(\V(H,T),\XX(\Tbar))$
is not quasi-invertible.
Then $G := H \times H'$ is not stably Cayley for any
reductive  $k$-group $H'$.
\end{proposition}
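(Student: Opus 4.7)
The plan is to derive a contradiction from the assumption that $G = H \times H'$ is stably Cayley. By Theorem~\ref{thm.main1}, this would mean the character lattice $\X(G)$ is quasi-permutation, hence in particular quasi-invertible. The strategy is to deduce from this that $\X(H)$ would be quasi-invertible as a $\V(H,T)$-lattice, contradicting the hypothesis.

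Let $T'$ be a maximal $k$-torus of $H'$, so that $T \times T'$ is a maximal $k$-torus of $G$. Set $\Gamma := \V(G, T \times T')$, $L := \XX(\Tbar)$, and $L' := \XX(\overline{T'})$, so that $\XX(\overline{T\times T'}) = L \oplus L'$. The first key observation is that this decomposition is $\Gamma$-stable. Indeed, the geometric Weyl group $W(\Gbar, \overline{T\times T'}) = W(\Hbar,\Tbar)\times W(\overline{H'},\overline{T'})$ respects the decomposition, and the Galois $\ast$-action does as well, since it acts factor-wise on $T \times T'$. Consequently, $\X(G) = L \oplus L'$ as $\Gamma$-lattices.

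Next, I would identify the image $\Gamma_H$ of $\Gamma$ in $\Aut(L)$ with $\V(H, T)$. By Lemma~\ref{lem.SW1}, $\V(H, T)$ is generated by $W(\Hbar, \Tbar)$ and the image of $\Gal(\kbar/k)$ acting on $L$; both clearly lie in $\Gamma_H$, and the reverse inclusion is immediate from the same description of $\V(G, T \times T')$. Hence the $\Gamma$-action on $L$ factors through the surjection $\Gamma \twoheadrightarrow \V(H, T)$.

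Now, if $G$ were stably Cayley, then by Theorem~\ref{thm.main1} the $\Gamma$-lattice $\X(G) = L \oplus L'$ would be quasi-permutation, hence in particular quasi-invertible. Since $L$ is a direct summand of $\X(G)$, it would then be quasi-invertible as a $\Gamma$-lattice. Applying Lemma~\ref{lem:q-inv-Gamma1} to the surjection $\Gamma \twoheadrightarrow \V(H,T)$, we would conclude that $L = \X(H)$ is quasi-invertible as a $\V(H, T)$-lattice, contradicting the hypothesis. The argument is essentially formal once the direct-sum decomposition $\X(G) = \X(H)\oplus\X(H')$ as $\Gamma$-modules is established; the only subtle point, which is the main thing to verify carefully, is that the $\Gamma$-action on the summand $L$ factors through $\V(H,T)$, which is what allows us to descend quasi-invertibility from $\Gamma$ to $\V(H,T)$ via Lemma~\ref{lem:q-inv-Gamma1}.
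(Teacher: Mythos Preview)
Your proof is correct and follows essentially the same route as the paper's: both establish that $\V(G,T\times T')$ preserves the decomposition $\XX(\overline{T\times T'})=\XX(\Tbar)\oplus\XX(\overline{T'})$, that the action on $\XX(\Tbar)$ factors through a surjection onto $\V(H,T)$, and then combine the direct-summand property of quasi-invertibility with Lemma~\ref{lem:q-inv-Gamma1}. The only cosmetic difference is that you phrase the argument as a proof by contradiction, whereas the paper argues directly that $\X(G)$ is not quasi-invertible.
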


\begin{proof} Let $T$, $T'$ and $S = T \times T'$ be maximal
$k$-tori in $H$, $H'$ and $G$,
respectively. By Theorem~\ref{thm.main1}, it suffices to show that
the character lattice $\X(G) = (\V(G, S), \XX(\overline{S}))$ is not
quasi-invertible (and hence, not quasi-permutation).
By Lemma~\ref{lem.SW1}, the extended Weyl
group $\V(G, S)$ is generated by the Weyl group $W(G, S)
= W(H, T) \times W(H', T')$ and the image of the natural action
$\lambda_S \colon \Gal(\kbar /k) \to \Aut(\XX(\overline{S}))$.
Since both $T$ and
$T'$ are defined over $k$, the image of $\lambda_S$
preserves the direct sum decomposition
\[ \XX(\overline{S}) = \XX(\Tbar) \oplus \XX(\overline{T}') \]
and hence, so does $\V(G, S)$.
Moreover, $\V(G, S)$ acts on $\XX(\Tbar)$ via a surjection
$\pi \colon \V(G, S) \to \V(H, T)$.
By Lemma~\ref{lem:q-inv-Gamma1}, $\XX(\Tbar)$ is not quasi-invertible as a $\V(G,S)$ lattice
and, since
$\XX(\Tbar)$ is a direct summand of $\XX(\overline{S})$,
we conclude that $\XX(\overline{S})$ is not quasi-invertible
as a $\V(G, S)$-lattice.
In other words, the character lattice $\X(G) = (\V(G, S), \XX(\overline{S}))$
is not quasi-invertible,
and therefore $G$ is not stably Cayley, as desired.
\end{proof}

\section{Simply connected and adjoint semisimple groups}
\label{sect.sc-ad}

We are now in a position to classify stably Cayley simply connected
and adjoint $k$-groups.

\begin{lemma}\label{lem:direct-product}
Let $G$ be a stably Cayley semisimple $k$-group.
Assume that $G\times_k \kbar$ is a direct product
of simple $\kbar$-groups.
Then $G$ is $k$-isomorphic to a direct product
$R_{l_1/k} G_1 \times \dots \times R_{l_r/k} G_r$,
where each $G_j$ is
a stably Cayley absolutely simple group defined
over a finite field extension $l_j/k$.
\end{lemma}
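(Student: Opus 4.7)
The plan is to first decompose $G$ via Galois descent, then to use base-change together with the known classifications to pin down the absolutely simple factors, and finally to rule out the one potentially problematic family via a character-lattice obstruction.

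Since $G_{\kbar}$ is a direct product of simple $\kbar$-groups $H_1,\dots,H_n$, the Galois group $\Gamma=\Gal(\kbar/k)$ permutes the $H_i$. Partitioning into $\Gamma$-orbits and taking $l_j$ to be the fixed field of the stabilizer of a representative of the $j$-th orbit, Galois descent yields a $k$-isomorphism $G\cong\prod_{j=1}^{r} R_{l_j/k}\,G_j$ with each $G_j$ absolutely simple over the finite separable extension $l_j/k$.

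Fix $j$, and write $H=G_j$, $l=l_j$. Stable Cayleyness is preserved by base change, so the $\kbar$-direct product
\[ G_{\kbar}\ \cong\ \prod_{i=1}^{r} G_{i,\kbar}^{[l_i:k]} \]
is stably Cayley over $\kbar$. Over $\kbar$ the extended Weyl group coincides with the geometric Weyl group, which is the direct product of the Weyl groups of the simple $\kbar$-factors; Theorem~\ref{thm.main1} combined with Lemma~\ref{LPR-reduction-product} (applied to peel off one simple $\kbar$-factor at a time) therefore yields that $H_{\kbar}$ itself is stably Cayley over $\kbar$. By~\cite[Theorem~1.28]{LPR06}, $H_{\kbar}$ must be one of the groups in~\eqref{e.lpr}, so $H$ is an $l$-form of $\gSL_3$, $\gSO_n$ ($n\ge 5$), $\gSp_{2n}$, $\gPGL_n$, or $\gG_2$. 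Theorem~\ref{cor.main2} then gives that every such $H$ is automatically stably Cayley over $l$, \emph{except} possibly when $H$ is an outer $l$-form of $\gPGL_n$ for some even integer $n\ge 4$; ruling out this last case is the main obstacle.

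To exclude it, suppose for contradiction that $H$ is an outer form of $\gPGL_n$ with $n\ge 4$ even. Writing $G=(R_{l/k}H)\times G'$ for a reductive $k$-group $G'$, Proposition~\ref{cor.directproduct} forces the character lattice $\X(R_{l/k}H)$ to be quasi-invertible. Choose a finite Galois extension $l'/k$ containing $l$, put $\Gamma=\Gal(l'/k)$ and $\Sigma=\Gal(l'/l)$, and let $L=\XX(\overline{S})$ denote the character lattice of a maximal $l$-torus of $H$. The underlying lattice of $\X(R_{l/k}H)$ is $L':=\operatorname{Ind}_{\Sigma}^{\Gamma}L=\bigoplus_{\tau\in\Gamma/\Sigma}\tau L$, and the identity-coset summand $L\subset L'$ has the $\Sigma$-invariant complement $N:=\bigoplus_{\tau\neq 1}\tau L$, since $\Sigma$ fixes the trivial coset and sends every non-trivial coset to a non-trivial one. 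Inside $\V(R_{l/k}H)\supset W(H)^{[l:k]}$, the first-factor Weyl group $W:=W(H)$ preserves this decomposition (acting on $L$ and trivially on $N$) and is normalized by $\Sigma$, so $\V'':=W\rtimes\Sigma$ is a subgroup of $\V(R_{l/k}H)$ preserving $L\oplus N$, and the restriction of its action to $L$ factors through a surjection $\V''\twoheadrightarrow\V(H)$. Restricting from $\V(R_{l/k}H)$ to $\V''$ preserves quasi-invertibility of $L'$, and taking the direct summand $L$ keeps it quasi-invertible as a $\V''$-lattice; Lemma~\ref{lem:q-inv-Gamma1} then transfers quasi-invertibility across $\V''\twoheadrightarrow\V(H)$, giving that $L$ is quasi-invertible as a $\V(H)$-lattice. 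This contradicts Example~\ref{ex.outerPGLn}, so the outer $\gPGL_n$ case cannot arise, and each $G_j$ is stably Cayley over $l_j$.
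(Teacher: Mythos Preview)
Your proof is correct and follows the same overall strategy as the paper: decompose $G$ via Galois orbits into $\prod_j R_{l_j/k}G_j$, base-change to $\kbar$ to see each $G_{j,\kbar}$ is stably Cayley, invoke the classification to reduce to the case of an outer form of $\gPGL_n$ with $n\ge 4$ even, and then rule that case out via the non-quasi-invertibility of its character lattice (Example~\ref{ex.outerPGLn}).

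The difference lies in how the outer-$\gPGL_n$ case is excluded. You stay over $k$: from $G=(R_{l/k}H)\times G'$ you deduce that $\X(R_{l/k}H)$ is quasi-invertible, then unwind the induced-lattice structure $L'=\operatorname{Ind}_\Sigma^\Gamma L$ by passing to the subgroup $\V''=W\rtimes\Sigma$, splitting off $L$ as a direct summand, and pushing quasi-invertibility through the surjection $\V''\twoheadrightarrow\V(H)$ via Lemma~\ref{lem:q-inv-Gamma1}. The paper instead base-changes to $l$: it observes that $H=G_{i,l}$ is a direct factor of $G_l:=G\times_k l$ (the complement being the product of the other $\Gal(\kbar/l)$-orbits of simple factors), notes that $G_l$ is stably Cayley since $G$ is, and applies Proposition~\ref{cor.directproduct} directly over $l$ to get the contradiction. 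The paper's route is shorter and avoids the analysis of the Weil-restriction character lattice; your route is more hands-on with the lattice and shows explicitly how the obstruction survives inside the induced module, which is instructive but not needed here.
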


\begin{proof}
By assumption $G_\kbar=\prod_{i\in I} G_{i,\kbar}$ for some index set $I$,
where each $G_{i,\kbar}$ is a simple $\kbar$-group.
The Galois group $\Gal(\kbar/k)$ acts on $G_\kbar$, hence on $I$.
Let $\Omega$ denote the set of orbits of $\Gal(\kbar/k)$ in $I$.
For $\omega\in\Omega$ set $G^\omega_\kbar=\prod_{i\in\omega} G_{i,\kbar}$,
then $G_\kbar=\prod_{\omega\in \Omega} G^\omega_\kbar$.
Each $G^\omega_\kbar$ is $\Gal(\kbar/k)$-invariant,
hence it defines a $k$-form $G^\omega_k$ of $G^\omega_\kbar$.
We have $G=\prod_{\omega\in \Omega} G^\omega_k$.

Fix $\omega\in\Omega$ and choose $i\in\omega$.
Let $l_i/k$ denote the Galois extension in $\kbar$
corresponding to the stabilizer of $i$ in $\Gal(\kbar/k)$.
The group $G_{i,\kbar}$ is $\Gal(\kbar/l_i)$-invariant,
hence it comes from  an $l_i$-form $G_{i,l_i}$.
By the definition of Weil's restriction of scalars, see \cite[\S\,3.12]{Voskresenskii-book},
$G^\omega_k\cong R_{l_i/k} G_{i,l_i}$.

We next show that $G_{i,l_i}$ is a direct factor of $G_{l_i}:=G\times_k l_i$.
It is clear from the definition that $G_{i,\kbar}$ is a direct factor
of $G_\kbar$ with complement
$G'_\kbar=\prod_{j\in I\smallsetminus \{i\} } G_{j,\kbar}$.
Then $G'_\kbar$ is $\Gal(\kbar/l_i)$-invariant, hence
it comes from some $l_i$-group $G'_{l_i}$.
We have $G_{l_i}=G_{i,l_i}\times_{l_i} G'_{l_i}$,
hence $G_{i,l_i}$ is a direct factor of $G_{l_i}$.

It remains to show that $G_{i,l_i}$ is stably Cayley over $l_i$.
Since $G$ is stably Cayley over $k$, the group $G_\kbar$
is  stably Cayley over $\kbar$.
Since $G_{i,\kbar}$ is a direct factor of
the stably Cayley $\kbar$-group $G_\kbar$
over the algebraically closed field $\kbar$,
by \cite[Lemma 4.7]{LPR06} $G_{i,\kbar}$ is stably Cayley over $\kbar$.
Comparing \cite[Theorem 1.28]{LPR06} and Theorem \ref{cor.main2},
we see that $G_{i,l_i}$ is either stably Cayley over $l_i$ (in which case
we are done) or an outer form of $\gPGL_{n}$ for some even $n\ge 4$.
Thus assume, by way of contradiction, that $G_{i,l_i}$
is an outer form of $\gPGL_{n}$ for some even $n \ge 4$.
Then by Example~\ref{ex.outerPGLn}
the character lattice of $G_{i,l_i}$ is not quasi-invertible,
and by Proposition~\ref{cor.directproduct} the group  $G_{i,l_i}$
cannot be a direct factor of a stably Cayley $l_i$-group.
This contradicts  the fact that $G_{i,l_i}$ is a direct factor
of the stably Cayley $l_i$-group $G_{l_i}$.
We conclude that $G_{i,l_i}$ cannot be
an outer form of $\gPGL_{n}$ for any even $n \ge 4$.
Thus $G_{i,l_i}$ is stably Cayley over $l_i$, as desired.
\end{proof}

\begin{corollary}\label{cor:sc}
Let $G$ be a  {\em simply connected} semisimple $k$-group over
a field $k$ of characteristic 0.
Then the following conditions are equivalent:
\begin{enumerate}[\upshape(a)]
\item $G$ is stably Cayley over $k$;
\item $G$ is $k$-isomorphic to
$R_{l_1/k} G_1 \times \dots \times R_{l_r/k} G_r$
for some finite field extensions $l_1/k, \dots, l_r/k$,
where
each $G_i$ is an arbitrary $l_i$-form of
$\gSL_3$, or $\gSp_{2n}$ $(n\ge 1)$, or $\gG_2$.
\end{enumerate}
\end{corollary}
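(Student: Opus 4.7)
The plan is to combine Lemma~\ref{lem:direct-product}, Theorem~\ref{cor.main2}, and a quasi-permutation calculation for induced lattices via Theorem~\ref{thm.main1}.

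For $(\mathrm{a})\Longrightarrow(\mathrm{b})$, I would argue as follows. Suppose $G$ is simply connected semisimple and stably Cayley over $k$. Since $G$ is simply connected, $G_\kbar$ decomposes as a direct product of (simply connected) simple $\kbar$-groups, so the hypothesis of Lemma~\ref{lem:direct-product} is satisfied, and the lemma produces an isomorphism $G\cong R_{l_1/k}G_1\times_k\cdots\times_k R_{l_r/k}G_r$, with each $G_i$ absolutely simple and stably Cayley over a finite extension $l_i/k$. Each $G_i$ is simply connected, since $\overline{G_i}$ is a simple factor of the simply connected group $G_\kbar$. Inspecting the classification in Theorem~\ref{cor.main2}(b), the only absolutely simple simply connected groups on that list are forms of $\gSL_3$, $\gSp_{2n}$ ($n\ge 1$), and $\gG_2$; the remaining entries $\gPGL_n$, $\gSO_n$ ($n\ge 5$), and inner forms of $\gPGL_n$ ($n\ge 4$ even) all have nontrivial fundamental groups, so they are excluded. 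This gives $(\mathrm{b})$.

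For $(\mathrm{b})\Longrightarrow(\mathrm{a})$, each $l_i$-form of $\gSL_3$, $\gSp_{2n}$, or $\gG_2$ is stably Cayley over $l_i$ by Theorem~\ref{cor.main2}, and a direct product of stably Cayley groups is obviously stably Cayley, so it suffices to prove that $R_{l/k}G_1$ is stably Cayley over $k$ whenever $G_1$ is stably Cayley over a finite extension $l/k$. I would verify condition $(\mathrm{d})$ of Theorem~\ref{thm.main1}. Set $H=\Gal(\kbar/l)\subset\Gamma=\Gal(\kbar/k)$, $n=[l:k]$, fix a maximal $l$-torus $T_1\subset G_1$, put $L_1=\XX(\overline{T_1})$, $\V_1=\V(G_1,T_1)$, and take $T=R_{l/k}T_1$ as a maximal $k$-torus of $R_{l/k}G_1$. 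Then $\overline{T}\cong\prod_{i=1}^n\overline{T_1}^{\sigma_i}$ indexed by the $n$ distinct $k$-embeddings $\sigma_i\colon l\hookrightarrow\kbar$, whence $\XX(\overline{T})\cong L_1^n$ is identified with the induced module $\operatorname{Ind}_H^\Gamma L_1$ as a $\Gamma$-lattice, the geometric Weyl group of $\overline{R_{l/k}G_1}$ is $W(\overline{G_1},\overline{T_1})^n$ acting componentwise, and the extended Weyl group $\V(R_{l/k}G_1,T)$ embeds naturally into the wreath product $\V_1\wr S_n=\V_1^n\rtimes S_n$. Starting from a quasi-permutation resolution $0\to L_1\to P_1\to Q_1\to 0$ of $\V_1$-lattices, the $n$-fold direct sum $0\to L_1^n\to P_1^n\to Q_1^n\to 0$ is a quasi-permutation resolution of $\V_1\wr S_n$-lattices: if $P_1=\Z[S]$ then $P_1^n=\Z[S\times\{1,\dots,n\}]$ is a permutation $\V_1\wr S_n$-lattice, and similarly for $Q_1^n$. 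Restricting to $\V(R_{l/k}G_1,T)\subset\V_1\wr S_n$, the lattice $L_1^n$ remains quasi-permutation, because restriction of a permutation lattice to a subgroup is again permutation. Theorem~\ref{thm.main1} then yields that $R_{l/k}G_1$ is stably Cayley over $k$.

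The principal technical point is the identification of the extended Weyl group of $R_{l/k}G_1$ as a subgroup of $\V_1\wr S_n$. This requires careful bookkeeping of the twisted permutation action of $\Gamma$ on $\prod_i\overline{T_1}^{\sigma_i}$, in which $\tau\in\Gamma$ permutes the factors according to its action on $\Gamma/H\hookrightarrow S_n$ and simultaneously acts within each factor by an element of $H$ determined by how $\tau\sigma_i$ is rewritten in the chosen system of coset representatives. Once this description is established, the quasi-permutation conclusion for the induced lattice is formal.
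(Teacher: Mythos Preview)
Your argument for $(\mathrm{a})\Longrightarrow(\mathrm{b})$ is exactly the paper's: apply Lemma~\ref{lem:direct-product} (which is legitimate because a simply connected $\Gbar$ decomposes as a direct product of simple factors), then filter the list in Theorem~\ref{cor.main2} by simply-connectedness.

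For $(\mathrm{b})\Longrightarrow(\mathrm{a})$ the paper simply writes ``Clearly'' and moves on, implicitly taking for granted that $R_{l/k}$ of a stably Cayley $l$-group is stably Cayley over $k$. Your route through Theorem~\ref{thm.main1} is correct: the extended Weyl group of $R_{l/k}G_1$ does sit inside $\V_1\wr S_n$ (since the Galois action of $\Gamma$ on $\XX(\overline{T})\cong L_1^n$ permutes the summands via $\Gamma/H$ and acts within each summand through $\operatorname{im}\lambda_{T_1}\subset\V_1$), and a permutation $\V_1$-lattice $\Z[S]$ induces the permutation $\V_1\wr S_n$-lattice $\Z[S\times\{1,\dots,n\}]$, so the quasi-permutation resolution survives. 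This is more work than the paper does, but it has the virtue of being fully explicit.

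A shorter argument in the spirit of the paper's ``Clearly'': apply $R_{l/k}$ to a Cayley map $G_1\times_l\bbG_m^r\dashrightarrow\Lie(G_1)\times_l\bbA^r$ to obtain an $R_{l/k}(G_1)$-equivariant birational isomorphism $R_{l/k}(G_1)\times_k(R_{l/k}\bbG_m)^r\dashrightarrow\Lie(R_{l/k}G_1)\times_k\bbA^{rn}$. Since $R_{l/k}\bbG_m$ is a $k$-rational torus, the left side is equivariantly birational to $R_{l/k}(G_1)\times_k\bbG_m^{rn}$, and this gives a Cayley map for $R_{l/k}(G_1)\times_k\bbG_m^{rn}$.
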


\begin{corollary}\label{cor:ad}
Let $G$ be an {\em adjoint} semisimple $k$-group
over a field $k$ of characteristic 0.
Then the following conditions are equivalent:
\begin{enumerate}[\upshape(a)]
\item $G$ is stably Cayley over $k$;
\item $G$ is $k$-isomorphic to
$R_{l_1/k} G_1 \times \dots \times R_{l_r/k} G_r$
for some finite field extensions $l_1/k, \dots, l_r/k$,
where
each $G_i$ is an arbitrary $l_i$-form of
$\gPGL_{2n+1}$ $(n\ge 1)$, or $\gSO_{2n+1}$ $(n\ge 1)$, or $\gG_2$,
or an {\em inner} form of $\gPGL_{2n}$ $(n\ge 2)$.
\end{enumerate}
\end{corollary}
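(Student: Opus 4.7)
The plan is to derive Corollary~\ref{cor:ad} from Lemma~\ref{lem:direct-product} and Theorem~\ref{cor.main2}, by restricting the classification of stably Cayley absolutely simple groups to those that are adjoint. Once the necessary functorial properties of Weil restriction are recorded, both implications reduce to essentially bookkeeping.

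For the implication (a) $\Rightarrow$ (b), the plan is as follows. Since $G$ is adjoint semisimple, $G\times_k\kbar$ decomposes as a direct product of adjoint absolutely simple $\kbar$-groups, so in particular the universal cover of $G$ is a product of simply connected simple $\kbar$-groups. The hypothesis of Lemma~\ref{lem:direct-product} is therefore satisfied, yielding a decomposition
\[
G\cong R_{l_1/k}G_1\times\cdots\times R_{l_r/k}G_r,
\]
where each $G_i$ is a stably Cayley absolutely simple group over a finite extension $l_i/k$. Because the centre of a Weil restriction is the Weil restriction of the centre, each factor $R_{l_i/k}G_i$ inherits adjointness from $G$, and hence so does $G_i$. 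Theorem~\ref{cor.main2} then lists the possible $G_i$, and the final step is to intersect that list with the adjoint groups: $\gSL_3$ and $\gSp_{2n}$ are not adjoint and are discarded; $\gSO_n$ is adjoint precisely for odd $n$; and $\gPGL_n$ and $\gG_2$ are always adjoint. Noting that the low-rank coincidence $\gSO_3\simeq\gPGL_2$ is recovered by the $\gSO_{2n+1}$ entry with $n=1$, one obtains exactly the list appearing in (b).

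For the implication (b) $\Rightarrow$ (a), the plan is to argue via the character lattice using Theorem~\ref{thm.main1}. By Theorem~\ref{cor.main2} every $G_i$ in (b) is stably Cayley over $l_i$, so by Theorem~\ref{thm.main1} the character lattice $\X(G_i)$ is quasi-permutation. The key observation is that $\X(R_{l_i/k}G_i)$ is obtained from $\X(G_i)$ by induction: over $\kbar$ we have $(R_{l_i/k}T_i)_\kbar=\prod_{\sigma}T_i^{\sigma}$ indexed by the embeddings $\sigma\colon l_i\hookrightarrow\kbar$, the Weyl group is the product of the individual Weyl groups, and $\Gal(\kbar/k)$ acts by permuting the factors together with its action on each factor, so that $\V(G_i,T_i)$ embeds into $\V(R_{l_i/k}G_i,R_{l_i/k}T_i)$ as the stabilizer of one factor and $\XX((R_{l_i/k}T_i)_\kbar)$ is the induced lattice. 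Since induction is exact and sends permutation lattices to permutation lattices, it preserves quasi-permutation lattices; hence $\X(R_{l_i/k}G_i)$ is quasi-permutation and $R_{l_i/k}G_i$ is stably Cayley over $k$ by Theorem~\ref{thm.main1}. Finally, Lemma~\ref{LPR-reduction-product} combined with the fact that the character lattice of a direct product is the direct sum of character lattices shows that $G$ itself is stably Cayley. The only genuinely nontrivial point in the whole argument is pinning down the induced-lattice description of $\X(R_{l_i/k}G_i)$; everything else is a direct application of results established earlier in the paper.
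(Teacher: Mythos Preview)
Your argument for (a)$\Rightarrow$(b) is exactly the paper's: use that an adjoint group over $\kbar$ is a direct product of simple groups, apply Lemma~\ref{lem:direct-product}, then filter the list in Theorem~\ref{cor.main2} by adjointness. This part is fine and matches the paper verbatim in spirit.

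For (b)$\Rightarrow$(a) you take a genuinely different route. The paper simply says this direction is clear: if $G_i$ is stably Cayley over $l_i$, one applies $R_{l_i/k}$ to a Cayley map for $G_i\times\bbG_{m,l_i}^r$ and uses that $R_{l_i/k}\bbG_{m,l_i}^r$ is a rational (quasi-trivial) $k$-torus, so $R_{l_i/k}G_i$ is stably Cayley over $k$; a product of stably Cayley groups is then stably Cayley. Your approach via Theorem~\ref{thm.main1}, identifying $\X(R_{l_i/k}G_i)$ as an induced lattice, is also correct and is arguably more in keeping with the lattice-theoretic viewpoint of the paper. One small imprecision: the stabilizer in $\V(R_{l_i/k}G_i,R_{l_i/k}T_i)$ of one factor $\XX(T_i^{\sigma_0})$ is not literally $\V(G_i,T_i)$ but rather \emph{surjects} onto it (the kernel being the product of the Weyl groups of the other factors); this is harmless, since Lemma~\ref{lem:quasi-perm} lets you pull back quasi-permutation along that surjection before inducing. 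Similarly, for the final product step, $\V(G,T)$ is in general only a subgroup of $\prod_i\V(R_{l_i/k}G_i,R_{l_i/k}T_i)$ (the Galois images may be correlated), but restriction to a subgroup preserves quasi-permutation, so the conclusion stands. The direct approach is shorter; yours makes the lattice mechanism explicit.
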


\begin{proof}[Proof of Corollaries \ref{cor:sc} and \ref{cor:ad}]
Clearly, the implication (b)$\Longrightarrow$(a) holds in both
cases. To prove that (a)$\Longrightarrow$(b), note that since $G$ is
either simply connected or adjoint, $G_\kbar$ is either simply
connected or adjoint. Hence, $G_\kbar$ is a direct product of simple
normal subgroups $G_{i,\kbar}$, and Lemma \ref{lem:direct-product}
applies to $G$. It tells us that $G$ is a product of its $k$-simple
normal subgroups of the form $R_{l_j/k} G_j$, where each $G_j$ is
stably Cayley and absolutely simple over some finite field extension
$l_j/k$. In other words, $G_j$ is one of the groups listed in
Theorem \ref{cor.main2}.  Since $G_j$ is either simply connected or
adjoint, Corollaries \ref{cor:sc} and~\ref{cor:ad} follow.
\end{proof}

\section{A family of non-quasi-invertible lattices}
\label{sect.family}

We will now use the results of Section~\ref{sect8} to exhibit
a large family of non-quasi-invertible lattices
(i.e., lattices that are not direct summands of quasi-permutation
lattices). These lattices will be used to complete the proof of
Theorem~\ref{thm:product-closed-field}.
\medskip

Let $\Delta$ be a Dynkin diagram, $\Delta=\bigcup_{i=1}^m \Delta_i$,
where $\Delta_i$ are the connected components of $\Delta$. We assume
that each $\Delta_i$ is of type $\BB_{l_i}$ ($l_i\ge 1$) or of type
$\DD_{l_i}$ ($l_i\ge 3$). Note that $\BB_1=\AA_1$ and $\DD_3=\AA_3$
are allowed. The root system $R(\Delta_i)$ can be realized in a
standard way in the space $V_i:=\Q^{l_i}$ with standard basis
$(\ve_s)_{s\in S_i}$, where $S_i$ is an index set consisting of
$l_i$ elements, see \cite[Planches II,~IV]{Bourbaki}.

Let $S=\dot{\bigcup} S_i$ (disjoint union). Consider the vector
space $V:=\bigoplus_i V_i$ over $\Q$ with standard basis
$(\ve_s)_{s\in S}$. Set
\begin{equation} \label{e.beta}
\beta =\half\sum_{s\in S}\ve_s \, .
\end{equation}
We denote by $M$ the additive subgroup in $V$ generated
by $\beta$ and by the basis
elements $\ve_s$ for all $s\in S$. In other words, $M$ is generated
by the vectors of the form $\half\sum_{s\in S}\pm\ve_s$.

Denote the Weyl group $W(\Delta_i)$ by $W_i$ and the
Weyl group $W(\Delta)=\prod_{i=1}^m W_i$ by $W$.
Consider the natural action of $W$ on $M$.  For $s\in S_i$ let $c_s$
denote the automorphism of $V_i$ acting as $-1$ on $\ve_s$ and as 1
on all the other $\ve_t$ ($t\in S_i,\ t\neq s$). The Weyl group
$W_i=W(\Delta_i)$ is the semidirect product of the symmetric group
$\Sym_{l_i}$, acting by permutations of the basis vectors
$\ve_s$, and an abelian group $\Theta_i$. If
$\Delta_i\cong\BB_{l_i}$, then $\Theta_i=\langle c_s\rangle_{s\in
S_i}$, in particular $c_s\in W_i$. If $\Delta_i\cong\DD_{l_i}$, then
$\Theta_i=\langle c_s c_{s'}\rangle_{s,s'\in S_i}$. In this case
$c_s\notin W_i$, but $c_s c_{s'}\in W_i$.

\begin{proposition} \label{prop1.not-qp}
Let $\Delta$, $S$, $M$, and $W$ be as above.
Assume that $|\Delta|\ge 3$. Then the $W$-lattice $M$ is not
quasi-invertible.
\end{proposition}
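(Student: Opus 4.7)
The plan is to exhibit a subgroup $\Gamma \subset W$ isomorphic to $(\Z/2\Z)^2$ with $\Sh(\Gamma, M) \neq 0$; this forces $M$ to fail to be quasi-invertible by the criterion noted just after Lemma \ref{lem:H-three}. The source of non-vanishing will be a copy of $J_\Gamma$ inside $M$, combined with Proposition \ref{thm:J-Gamma}.

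First I will select three distinct indices $s_1, s_2, s_3 \in S$ and set $g_1 := c_{s_1}c_{s_2}$, $g_2 := c_{s_2}c_{s_3}$. For both $g_j$ to lie in $W$, either each pair $\{s_1,s_2\}, \{s_2,s_3\}$ is contained in a single Dynkin component (of any type), or all the involved elements lie in $\BB$-type components. Since $|\Delta| \ge 3$ and every $\DD$-component has $l_i \ge 3$, such a triple always exists, after a short case analysis on the configuration of $\Delta$. The elements $g_1, g_2$ are commuting involutions, so $\Gamma := \langle g_1, g_2\rangle \cong (\Z/2\Z)^2$.

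Next, consider the element $\mu := \ve_{s_1}+\ve_{s_2}+\ve_{s_3} \in M$. A direct computation shows that the $\Gamma$-orbit of $\mu$ consists of the four vectors $\pm \ve_{s_1} \pm \ve_{s_2} \pm \ve_{s_3}$ with an even number of minus signs, and that its sum is zero. Hence the orbit map $\Z[\Gamma] \to M$, $\gamma \mapsto \gamma\mu$, factors through a $\Gamma$-equivariant map $j \colon J_\Gamma \to M$, which is injective because $\mu, g_1\mu, g_2\mu$ are $\Q$-linearly independent (their coordinate matrix has determinant $2$). By Lemmas \ref{lem:Sha-two} and \ref{lem:H-three}, $\Sh(\Gamma, j(J_\Gamma)) \cong H^3(\Gamma,\Z) \cong \Z/2\Z$.

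The remaining task---showing that the nonzero $\Sh$-class pushes forward to a nonzero element of $\Sh(\Gamma, M)$---is where the main technical content lies. In the base case $|S|=3$, the map $j$ is an isomorphism: the identities $\mu + g_1\mu = 2\ve_{s_3}$, $\mu + g_2\mu = 2\ve_{s_1}$, $\mu + g_1g_2\mu = 2\ve_{s_2}$ show that the image contains each $2\ve_{s_i}$ together with $\mu$, which generate $M$ (the rank is $3$ on both sides and the kernel is saturated), so the conclusion is immediate. For $|S| \ge 4$ one analyzes the long exact sequence attached to
\[
0 \to j(J_\Gamma) \to M \to M/j(J_\Gamma) \to 0.
\]
The vanishing $H^2(\langle g\rangle, M) = 0$ for each order-two element $g \in \Gamma$ (a direct calculation using periodicity of cyclic cohomology on the explicit $\langle g\rangle$-lattice structure of $M$) ensures that any class in $H^2(\Gamma, M)$ coming from $H^2(\Gamma, J_\Gamma)$ automatically lies in $\Sh(\Gamma, M)$, so it suffices to show that the image of $H^2(\Gamma, J_\Gamma) \to H^2(\Gamma, M)$ is nonzero---equivalently, that the $\Sh$-class is not hit by the connecting map from $H^1(\Gamma, M/j(J_\Gamma))$. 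This is the hard step, and I expect it to require an adaptive choice of the triple $(s_1, s_2, s_3)$ tailored to the component decomposition of $\Delta$, constituting the technical heart of the argument.
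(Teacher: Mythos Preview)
Your overall strategy---exhibit a Klein four subgroup $\Gamma\subset W$ with $\Sh(\Gamma,M)\neq 0$ coming from a copy of $J_\Gamma$---is exactly the paper's. But your specific choices of $\Gamma$ and of the orbit vector do not work, and the defect is not just the ``hard step'' you flag at the end.

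First, the base case $|S|=3$ is wrong as stated. Since $\mu=\ve_{s_1}+\ve_{s_2}+\ve_{s_3}=2\beta$, the image $j(J_\Gamma)$ lies in the index-$2$ sublattice $\Z\ve_{s_1}\oplus\Z\ve_{s_2}\oplus\Z\ve_{s_3}$ of $M$ and never contains $\beta$; in fact $[M:j(J_\Gamma)]=8$. The right orbit vector is $\beta$ itself: its $\Gamma$-orbit has sum zero and spans all of $M$, giving $M\cong J_\Gamma$ directly.

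Second, and this is fatal, for $|S|>3$ your $\Gamma=\langle c_{s_1}c_{s_2},\,c_{s_2}c_{s_3}\rangle$ acts nontrivially on only three coordinates. Set $N=\Z\ve_{s_1}\oplus\Z\ve_{s_2}\oplus\Z\ve_{s_3}$. Since $|S|>3$, any element of $M$ supported on $\{s_1,s_2,s_3\}$ has a zero coordinate elsewhere and hence integer coordinates, so $N$ is saturated in $M$; moreover $\Gamma$ fixes each $\ve_s$ with $s\notin\{s_1,s_2,s_3\}$ and $g_j\beta-\beta\in N$, so $\Gamma$ acts trivially on $M/N$. Thus the short exact sequence $0\to N\to M\to M/N\to 0$ has a permutation quotient, giving $M\sim N$ as $\Gamma$-lattices. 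But $N\cong \Z_{\chi_1}\oplus\Z_{\chi_2}\oplus\Z_{\chi_3}$ (one nontrivial character per coordinate), and each $\Z_\chi$ is quasi-permutation via $0\to\Z_\chi\to\Z[\Gamma/\ker\chi]\to\Z\to 0$. Hence $M$ is quasi-permutation as a $\Gamma$-lattice and $\Sh(\Gamma,M)=0$; no choice of orbit vector can produce a nonzero class. (Incidentally, your auxiliary claim $H^2(\langle g\rangle,M)=0$ is both unnecessary---functoriality already pushes $\Sh$ into $\Sh$---and false for $|S|\ge 4$.)

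The paper's fix is to make $\Gamma$ act on \emph{all} of $S$. One partitions $S=U_1\sqcup U_2\sqcup U_3$ into three nonempty pieces (with parity constraints on the $\DD$-components so that the needed products of $c_s$'s lie in $W$) and sets $\iota(\gamma_\kappa)=\prod_{s\notin U_\kappa} c_s$. The $\Gamma$-orbit of $\beta=\half\sum_s\ve_s$ then spans a rank-$3$ sublattice $M_0\cong J_\Gamma$ which is a genuine \emph{direct summand} of $M$ as a $\Gamma$-module, the complement being a sum of rank-$1$ sign lattices. This yields $\Sh(\Gamma,M)=\Sh(\Gamma,J_\Gamma)=\Z/2\Z$ immediately. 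The partition is obstructed only when $\Delta\cong\DD_4$, which the paper handles separately by quoting a known non-invertibility result for a rank-$3$ lattice under a subgroup of order $8$.
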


\begin{remark}
Note that $\rank(M) = \dim(V) = |\Delta|$. If
$|\Delta|=1$ or $2$ then $M$ is quasi-permutation
by Lemma~\ref{lem:Voskresenskii}.
\end{remark}

\begin{proof}
First we consider the case $\Delta \cong\DD_4$. Then $M$ is not
quasi-per\-mu\-ta\-tion, see \cite[\S7.1]{CK}. We will show that $M$
is not quasi-invertible. Indeed, in \cite[\S7.1]{CK} the authors
construct a subgroup $U \subset W$ of order 8\footnote{This group of
order $8$ is actually denoted by $W_2$ in \cite{CK}. We use the
symbol $U$ here to avoid a notational clash with the Weyl group
$W_2:= W(\Delta_2)$.}, such that $M$ restricted to $U$ is a direct
sum of $U$-sublattices $M=M_1\oplus M_3$ of ranks 1 and 3,
respectively.  Now in \cite[Theorem~1]{K-Selecta} it is {\em stated}
that the $U$-lattice $M_3$ is not quasi-permutation, but it is
actually {\em proved} that $[M_3]^\fl$  is not invertible. Hence
$M_3$ is not a quasi-invertible $U$-lattice, and $M$ is not a
quasi-invertible $W$-lattice.

   From now on we will assume that $\Delta\not\cong\DD_4$. Let
$\Gamma=\Z/2\Z\times\Z/2\Z=\{e,\gamma_1,\gamma_2,\gamma_3\}$. Then
by Proposition~\ref{thm:J-Gamma}, $\Sh(\Gamma, J_\Gamma)\cong\Z/2\Z$.
The idea of our proof is to construct an embedding
\begin{equation} \label{e.gamma}
\iota\colon\Gamma\to W
\end{equation}
in such a way that $M$, restricted to $\iota(\Gamma )$, is
isomorphic to a direct sum of a submodule $M_0 \simeq J_{\Gamma}$ and $|S|-3$
$\Gamma$-lattices of rank $1$.  This will imply that
\[ \Sh(\Gamma, M)= \Sh (\Gamma, M_0) = \Sh (\Gamma ,J_{\Gamma })=\Z/2\Z\neq 0,
\]
and hence $M$ is not quasi-invertible.
We will now fill in the details of this argument
in two steps.
\medskip

\noindent
  {\bf Step 1. Construction of the embedding~\eqref{e.gamma}.}
We begin by partitioning each $S_i$ for $i=1,\dots,m$ into three (non-overlapping)
subsets $S_{i, 1}$, $S_{i, 2}$ and $S_{i, 3}$,
subject to the requirement that
\begin{equation} \label{align:vk}
|S_{i, 1}| \equiv
|S_{i, 2}| \equiv
|S_{i, 3}|
\equiv l_i \text{ (mod ${2}$), if  $\Delta_i$ is of type
$\DD_{l_i}$.}
\end{equation}
We then set $U_1$ to be the union of the $S_{i, 1}$,
$U_2$ to be the union of the $S_{i, 2}$,
and $U_3$ to be the union of the $S_{i, 3}$,
as $i$ ranges from $1$ to $m$.

\begin{lemma} \label{lem.partitions}
If $|S| \ge 3$ and $\Delta \not\cong \DD_4$ then
the subsets $S_{i, 1}$, $S_{i, 2}$ and $S_{i, 3}$ of $S_i$
can be chosen, subject to~\eqref{align:vk}, so that
$U_1, U_2, U_3 \neq \emptyset$.
\end{lemma}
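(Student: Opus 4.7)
My plan is to argue by a capacity count. For each component $\Delta_i$, call a triple $(|S_{i,1}|, |S_{i,2}|, |S_{i,3}|)$ satisfying~\eqref{align:vk} a \emph{legal split}, and let $c_i$ denote the maximum number of nonzero entries in a legal split. A quick check gives $c_i = 1$ for $\BB_1$; $c_i = 2$ for $\BB_2$ and $\DD_4$ (for $\DD_4$ the parity condition forces each part to be even, and $(2,2,0)$ up to permutation is the best one can do); and $c_i = 3$ in all remaining cases, with witnesses $(1, 1, l_i-2)$ for $\BB_{l_i}$ with $l_i \ge 3$ and for $\DD_{l_i}$ with $l_i$ odd, and $(2, 2, l_i-4)$ for $\DD_{l_i}$ with $l_i \ge 6$ even.

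If some $\Delta_{i_0}$ has $c_{i_0} = 3$, I use it to populate all three $U_j$ and choose any legal split for each of the remaining components; such a split always exists, namely $(l_i, 0, 0)$ when $l_i$ is even (for $\BB_{l_i}$ and for $\DD_{l_i}$ with $l_i$ even), and $(1, 1, l_i-2)$ for $\DD_{l_i}$ with $l_i$ odd. This settles the case $\max_i c_i = 3$.

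In the remaining case every component is of type $\BB_1$, $\BB_2$, or $\DD_4$. If $m = 1$ then $|S| \in \{1, 2, 4\}$, and $|S| \ge 3$ forces $\Delta \cong \DD_4$, which is excluded by hypothesis. If $m \ge 3$, I pick three distinct components and assign each of them entirely to one of $U_1$, $U_2$, $U_3$; the all-into-one split $(l_i, 0, 0)$ is legal in each of these three types, and the remaining components are distributed arbitrarily. If $m = 2$, the possibilities compatible with $|S| \ge 3$ are $\BB_1 + \BB_2$, $\BB_1 + \DD_4$, $\BB_2 + \BB_2$, $\BB_2 + \DD_4$, and $\DD_4 + \DD_4$, and each is dispatched by a direct construction (for instance, $\BB_1 + \DD_4$: put the $\BB_1$ element in $U_3$ and split the $\DD_4$ as $(2, 2, 0)$; $\DD_4 + \DD_4$: split the first as $(2, 2, 0)$ and the second as $(0, 0, 4)$).

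The only mildly delicate point, and the reason the hypothesis $\Delta \not\cong \DD_4$ is imposed, is the observation that for $\DD_4$ the parity condition forbids any partition of $4$ into three positive even integers, so a single $\DD_4$ genuinely cannot hit all three $U_j$. Beyond identifying this obstruction, the proof is purely routine case analysis, and I do not foresee any other difficulty.
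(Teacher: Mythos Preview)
Your proof is correct and follows essentially the same case-by-case strategy as the paper's, though you organize it more cleanly via the capacity count $c_i$ rather than by the type of a distinguished component. One small imprecision: in the $\max_i c_i = 3$ case your list of legal splits for the remaining components omits $\BB_{l_i}$ with $l_i$ odd, but of course $(l_i,0,0)$ works there too since $\BB$-components carry no parity constraint.
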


To prove the lemma, note that if one of the $\Delta_i$, say $\Delta_1$,
is of type $\DD_l$, where $l \ge 3$ is odd, then we
partition $S_1$ into three non-empty sets of odd order.
If $m \ge 2$ then we
partition $S_{i}$ with $i \ge 2$ as follows:
\begin{equation} \label{e.Si}
  \text{$S_{i, 1} = S_{i, 2} = \emptyset$ and $S_{i, 3} = S_i$.}
\end{equation}
Clearly $U_1, U_2, U_3 \neq \emptyset$, as desired.

Similarly, if one of the $\Delta_i$, say $\Delta_1$, is
$\DD_l$, where $l \ge 6$ is even, then we
partition $S_1$ into three non-empty sets of even order,
and partition the other $S_i$ (if any) as in~\eqref{e.Si} for $i \ge 2$.
Once again, $U_1, U_2, U_3 \neq \emptyset$.

If one of the $\Delta_i$, say $\Delta_1$, is of type $\DD_4$, then
by our assumption $m \ge 2$. We can now partition $S_1$ so that
each of $S_{1, 1}$ and $S_{1, 2}$ has 2 elements and $S_{1, 3} = \emptyset$,
and partition $S_i$ as in~\eqref{e.Si} for every $i \ge 2$.
Once again, $U_1, U_2, U_3 \neq \emptyset$.

Thus we may assume without loss of generality that every
$\Delta_i$  is of type $\BB_{l_i}$. In this case condition~\eqref{align:vk}
doesn't come into play and the lemma is obvious. This completes
the proof of Lemma~\ref{lem.partitions}.
\qed
\smallskip

We now define the embedding $\iota$ of~\eqref{e.gamma} by
$$\iota(\gamma_\vk)=\prod_{s\in S \smallsetminus U_\vk}  c_s\in \Aut(M) \text{ for } \vk = 1, 2, 3.$$
  Recall that
$\Gamma=\Z/2\Z\times\Z/2\Z=\{e,\gamma_1,\gamma_2,\gamma_3\}$.
One easily checks that the map
$\iota \colon \Gamma \to \Aut(M)$ defined this way is
a group homomorphism.
By \eqref{align:vk}, its image is, in fact, in $W$. Moreover,
since $U_\vk \ne \emptyset$ for all $\vk=1,2,3$,
we have $S\smallsetminus U_\vk\ne\emptyset$, hence
$\iota(\gamma_\vk) \ne \id$, i.e.,
$\iota \colon \Gamma \to W$ is injective.
We identify $\Gamma$ with $\iota(\Gamma)\subset W$.

\medskip

\noindent
{\bf Step 2. Construction of the submodule $M_0$.}
Now let \[\beta_\vk:=\gamma_\vk (\beta) =
\half\left(\sum_{s \in U_\vk}\ve_s - \sum_{s \in S \smallsetminus U_\vk} \ve_s \right)
\]
for $\vk = 1, 2, 3$, where $\beta$ is as in~\eqref{e.beta}.
Since the set $\{\beta,\beta_1,\beta_2,\beta_{3}\}$ is the orbit of $\beta$ under $\Gamma$,
the sublattice $M_0:=\Span_{\mathbb Z}(\beta, \beta_1,\beta_2,\beta_3)\subset M$ is $\Gamma$-invariant.
Note that
\begin{equation} \label{eq:sigma-eps}
\beta + \beta_\vk =\sum_{s\in U_\vk} \ve_s\,.
\end{equation}
Since $U_1$, $U_2$ and $U_3$ are non-empty and disjoint,
$\beta +\beta_{1}$,   $\beta +\beta_{2}$, and  $\beta +\beta_{3}$
are linearly independent. On the other hand,
\[
  \beta +\beta_1+\beta_2+\beta_{3}=0.
\]
Therefore, the $\Gamma$-invariant sublattice $M_0 \subset M$ is of rank 3 and
is isomorphic (as a $\Gamma$-lattice) to
$J_\Gamma:=\Z[\Gamma]/\Z$.

It remains to show that $M$ can be written as a direct sum of
$M_0$ and $\Gamma$-lattices of rank $1$. Indeed,
for each $\vk =1,2,3$ choose an element $u_\vk \in U_\vk$ and set
$U'_\vk =U_\vk \smallsetminus \{u_\vk \}$. (Note that $U'_\vk$ may be
empty for some $\vk$). We set $S'=U'_1\cup U'_2\cup U'_3$. It follows from
\eqref{eq:sigma-eps} that the abelian group generated by the $\ve_s$,
as $s$ ranges over $S'$, together with
$\beta,\beta_1,\beta_2,\beta_{3}$, contains both $\beta$ and
$\ve_s$ for every $s\in S$ and hence, coincides with $M$.
Since $\rank(M) = |S|$, we conclude that
the set $\{\beta, \beta_1,\beta_2\}\cup \{\ve_s \ |\ s\in S'\}$
is a basis of
$M$. The group $\Gamma$ acts on $\ve_s$ by $\pm 1$. We see that the
$\Gamma$-lattice $M$ is a direct sum of
$M_0 =\Span_{\mathbb Z}(\beta, \beta_1,\beta_2)$ and
  the $\Gamma$-lattices $\mathbb Z e_s$ of rank $1$, as $s$ ranges over $S'$.
Thus
\[
\Sh(\Gamma, M)= \Sh(\Gamma, M_0)= \Sh(\Gamma, J_{\Gamma}) = \Z/2\Z,
\]
and therefore $M$ is not a quasi-invertible $W$-lattice, as desired.
\end{proof}

\section{More non-quasi-invertible lattices}

In this section we continue to create a stock of non-quasi-invertible
lattices which will be used in the proof of
Theorem~\ref{thm:product-closed-field}.

\begin{proposition} \label{prop2.not-qp}
Let $M= \{ (a_1,a_2,a_3) \in\Z^3\  |\   a_1+a_2+a_3\equiv 0 \pmod{2} \}$
be the $W:=(\Z/2\Z)^3$-lattice
with the action of $(\Z/2\Z)^3$ on $M\subset \Z^3$
coming from the non-trivial action of $\Z/2\Z$ on $\Z$.
Then $M$ is not quasi-invertible.
\end{proposition}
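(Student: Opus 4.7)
The plan is to apply the cohomological criterion from Section~\ref{sect8}: a quasi-invertible lattice $L$ satisfies $\Sh(\Gamma', L) = 0$ for every subgroup $\Gamma' \subset W$. Accordingly, I will exhibit a subgroup $\Gamma \cong (\Z/2\Z)^2$ of $W = (\Z/2\Z)^3$ with $\Sh(\Gamma, M) \neq 0$, which will force $M$ to be non-quasi-invertible.

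Take $\Gamma := \{e,\, c_1 c_2,\, c_1 c_3,\, c_2 c_3\}$ and denote its non-identity elements by $\gamma_i := c_j c_k$ for $\{i,j,k\} = \{1,2,3\}$. The key observation is that the three sign characters $\chi_1, \chi_2, \chi_3$ of $W$ (where $\chi_i$ acts by $-1$ on the $i$-th standard basis vector $e_i$ of $\Z^3$ and trivially on the others) restrict on $\Gamma$ to the three distinct nontrivial characters of $\Gamma$. First I reduce $\Sh(\Gamma, M)$ to $H^2(\Gamma, M)$: for each cyclic subgroup $C_i = \langle \gamma_i\rangle$ of $\Gamma$, periodicity of cyclic cohomology yields $H^2(C_i, M) \cong M^{C_i}/N_{C_i} M$, and a direct calculation shows $M^{C_i} = N_{C_i} M = 2\Z e_i$. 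Hence $H^2(C_i, M) = 0$ for each $i$, and $\Sh(\Gamma, M) = H^2(\Gamma, M)$.

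To compute $H^2(\Gamma, M)$, I use the $\Gamma$-equivariant short exact sequence $0 \to M \to \Z^3 \to \Z/2\Z \to 0$ (with $\Z^3 = \chi_1\oplus\chi_2\oplus\chi_3$ and trivial action on $\Z/2\Z$), giving the long exact sequence
\[
H^1(\Gamma, \Z^3) \to H^1(\Gamma, \Z/2\Z) \xrightarrow{\delta} H^2(\Gamma, M) \to H^2(\Gamma, \Z^3) \xrightarrow{h} H^2(\Gamma, \Z/2\Z).
\]
Using the resolution $0 \to \chi_j \to \Z[\Gamma/\Ker\chi_j] \to \Z \to 0$ together with the vanishing of the corestriction $H^2(\Ker\chi_j, \Z) \to H^2(\Gamma, \Z)$ (which holds because $\cor\circ\res = 2$ and $H^2(\Gamma, \Z)$ is $2$-torsion), one obtains $H^1(\Gamma, \chi_j|_\Gamma) = H^2(\Gamma, \chi_j|_\Gamma) = \Z/2\Z$. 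The reduction-mod-$2$ maps identify these generators with the character $\chi_j|_\Gamma$ in $H^1(\Gamma, \Z/2\Z) = \Hom(\Gamma, \Z/2\Z)$ and with its cup square $(\chi_j|_\Gamma)^2$ in $H^2(\Gamma, \Z/2\Z)$, respectively. Writing $H^1(\Gamma, \Z/2\Z) = \Z/2\Z\langle x, y\rangle$, the three restricted characters are $x$, $y$, $x+y$, which already span $H^1(\Gamma, \Z/2\Z)$; hence the first map of the LES is surjective and $\delta = 0$. Their squares $x^2, y^2, (x+y)^2 = x^2+y^2$, however, span only the $2$-dimensional subspace $\langle x^2, y^2\rangle$ of the $3$-dimensional $H^2(\Gamma, \Z/2\Z)$, so $h$ has kernel $\Z/2\Z$. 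Thus $H^2(\Gamma, M) \cong \Z/2\Z$ and $\Sh(\Gamma, M) \neq 0$, completing the argument.

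The main technical point to be verified is the identification of the reduction map $H^2(\Gamma, \chi_j|_\Gamma) \to H^2(\Gamma, \Z/2\Z)$ with the cup square of $\chi_j|_\Gamma$ (the twisted Bockstein); this can be established by comparing the LES of the integral resolution above with that of its mod-$2$ reduction $0 \to \Z/2\Z \to \Z/2\Z[\Gamma/\Ker\chi_j] \to \Z/2\Z \to 0$, whose extension class is precisely $\chi_j|_\Gamma \in H^1(\Gamma, \Z/2\Z)$. Once this cup-product identification is in hand, the conclusion follows from the elementary $\Z/2\Z$-linear algebra inside $H^*(\Gamma, \Z/2\Z)$.
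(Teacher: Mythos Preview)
Your argument contains a fatal error in the identification of the reduction map $H^2(\Gamma,\chi_j)\to H^2(\Gamma,\Z/2\Z)$, and in fact the whole $\Sh$-based strategy cannot succeed for this lattice.

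Concretely, with your $\Gamma=\{e,c_1c_2,c_1c_3,c_2c_3\}$ and $\bar\chi_1=y,\ \bar\chi_2=x,\ \bar\chi_3=x+y$, the generator of $H^2(\Gamma,\chi_j)\cong\Z/2\Z$ does \emph{not} reduce to $\bar\chi_j^{\,2}$. Using the twisted Bockstein for $0\to\chi_j\xrightarrow{2}\chi_j\to\Z/2\Z\to 0$ one finds that the composite $H^1(\Gamma,\Z/2\Z)\xrightarrow{\tilde\beta_j}H^2(\Gamma,\chi_j)\xrightarrow{\mathrm{red}}H^2(\Gamma,\Z/2\Z)$ is $\alpha\mapsto\alpha^2+\bar\chi_j\cdot\alpha$. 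Since $\tilde\beta_j$ is onto with kernel $\langle\bar\chi_j\rangle$, the image of $\mathrm{red}$ on $H^2(\Gamma,\chi_j)$ is generated by $x^2+xy$ (for $j=1$), $y^2+xy$ (for $j=2$), and $xy$ (for $j=3$). These three elements span all of $H^2(\Gamma,\Z/2\Z)$, so $h$ is an isomorphism and $H^2(\Gamma,M)=0$. Your justification via the mod-$2$ reduction of $0\to\chi_j\to\Z[\Gamma/\Ker\chi_j]\to\Z\to 0$ only yields the naturality square
\[
\xymatrix{H^i(\Gamma,\Z)\ar[r]^-{\partial}\ar[d]_{\mathrm{red}} & H^{i+1}(\Gamma,\chi_j)\ar[d]^{\mathrm{red}}\\
H^i(\Gamma,\Z/2\Z)\ar[r]^-{\cup\,\bar\chi_j} & H^{i+1}(\Gamma,\Z/2\Z),}
\]
but the generator of $H^2(\Gamma,\chi_j)$ is not in the image of $\partial$ (as $H^1(\Gamma,\Z)=0$), so this diagram does not compute its reduction.

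More seriously, no choice of subgroup will rescue the approach: a case-by-case check over all subgroups $\Gamma'\subset W=(\Z/2\Z)^3$ shows $\Sh(\Gamma',M)=0$ throughout. (For $\Gamma'=\langle c_1,c_2\rangle$ or $W$ one has $H^2(\Gamma',M)=\Z/2\Z$, but its generator restricts nontrivially to $H^2(\langle c_1\rangle,M)$; for the even subgroup $H^2=0$; for cyclic $\Gamma'$ trivially $\Sh=0$.) The non-quasi-invertibility of $M$ therefore cannot be detected by $\Sh$. This is exactly why the paper proceeds differently: it writes down an explicit change of basis identifying $(W,M)$ with a rank-$3$ lattice from Kunyavski\u\i's classification of three-dimensional tori, for which it is known that the flasque class $[M]^\fl$ is not invertible. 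The paper itself notes (end of the proof of Proposition~\ref{prop:SL3}) that non-quasi-invertible lattices with identically vanishing $\Sh$ do exist.
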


\begin{proof}
Let $\ve_1,\ve_2,\ve_3$ be the standard basis of $\Z^3$.
For $i=1,2,3$ let $c_i\in W$ denote the automorphism of $M$ taking $\ve_i$ to $-\ve_i$
and fixing each of  the  other two  $\ve_j$.
Set $\sigma=c_2 c_3$, $\tau=c_1c_2$, $\rho=c_1 c_2 c_3$.
We consider the following basis of $M$:
\[
e_1=\ve_2-\ve_1,\  e_2=\ve_2-\ve_3,\  e_3=-\ve_1-\ve_3.
\]
A direct calculation shows that in this new basis $\{e_1,e_2,e_3\}$,
the generators $\sigma,\tau,\rho$ of $W$ are given by the following matrices:
\[
\sigma=\left(
\begin{array}{rrr}
0 &0 &1\\
-1 &-1 &-1\\
1 &0 & 0
\end{array}\right),
\
\tau=\left(
\begin{array}{rrr}
-1 & -1 & -1\\
0 &  0 & 1\\
0 &1 &0
\end{array} \right),
\
\rho=\left(
\begin{array}{rrr}
-1 & 0 & 0\\
0 & -1 & 0\\
0 & 0 & -1
\end{array} \right).
\]
By \cite[Theorem~1, case $W_2$]{K-Selecta}, our $W$-lattice $M$ is
not quasi-permutation. Moreover, the pair $(W,M)$ is isomorphic to
$(U, M_3)$, where $M_3$ is the non-quasi-invertible $U$-lattice
we mentioned at the beginning of the proof of
Proposition~\ref{prop1.not-qp}. Therefore, $M$ is not
quasi-invertible.
\end{proof}
\medskip

Let $\Z \DD_3$ denote the root lattice of $\DD_3$.
Recall that
$$\Z\DD_3=\{a_1\ve_1+a_2\ve_2+a_3\ve_3\ |\ a_i\in \Z,\ a_1+a_2+a_3\in 2\Z\}\subset\Q^3,$$
where $\{\ve_1,\ve_2,\ve_3\}$ is the standard basis of
$\Q^3=\Q\DD_3$. The set
$$
\{\ve_1+\ve_2,\quad\ve_1-\ve_2,\quad\ve_2-\ve_3\}
$$
is a basis of $\Z \DD_3$.

Let $m\ge 2$. We consider $(\Z \DD_3)^m\subset (\Q \DD_3)^m$. Let
$L\subset (\Q \DD_3)^m$ be the lattice generated by $(\Z \DD_3)^m$
and the vector
$$
v_e:=\ve_1+\ve_4+\ve_7+\dots+\ve_{3m-2}.
$$
The group $W(\DD_3)^m$ acts on $L$.

\begin{proposition}\label{prop:SO6}
For $m\ge 2$, the  $W(\DD_3)^m$-lattice $L$ constructed above
is not quasi-invertible.
\end{proposition}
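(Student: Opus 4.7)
The plan follows the template of Proposition~\ref{prop1.not-qp}: exhibit a subgroup $\Gamma\subset W(\DD_3)^m$ isomorphic to $(\Z/2\Z)^2$ together with a $\Gamma$-sublattice $M_0\subset L$ isomorphic to $J_\Gamma$ that is a $\Gamma$-direct summand of $L$. Once this is achieved, Proposition~\ref{thm:J-Gamma} gives $\Sh(\Gamma,L)\supset \Sh(\Gamma,J_\Gamma)\ne 0$. Hence $L$ is not quasi-invertible as a $\Gamma$-lattice, and therefore not quasi-invertible as a $W(\DD_3)^m$-lattice (restricting any would-be short exact sequence $0\to L\to P\to I\to 0$ with $P$ permutation and $I$ invertible to $\Gamma$ preserves the structure).

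For the choice of $\Gamma=\langle\gamma_1,\gamma_2\rangle$ I would take diagonal products of double sign-changes acting uniformly in each block, for example $\gamma_1=\prod_{i=1}^m c_{3i-2}c_{3i-1}$ and $\gamma_2=\prod_{i=1}^m c_{3i-1}c_{3i}$; then $\varepsilon_{3i-2},\varepsilon_{3i-1},\varepsilon_{3i}$ realize the three non-trivial characters of $\Gamma$, so that for any element $\alpha\in L$ supported on these coordinates the orbit $\{\gamma_\kappa\alpha\}$ automatically sums to zero. The candidate $\alpha=\sum_{i=1}^m(\varepsilon_{3i-2}+\varepsilon_{3i-1}+\varepsilon_{3i})$ lies in $L$ because all block-sums equal $3$; its orbit under $\Gamma$ spans a rank-$3$ sublattice $M_0$, which is $\Gamma$-isomorphic to $J_\Gamma$.

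The main obstacle is showing that $M_0$ is actually a $\Gamma$-direct summand of $L$. For the naive choice above, $M_0$ fails to be primitive in $L$: identities of the form $\alpha+\gamma_\kappa\alpha = 2\cdot(\text{$\gamma_\kappa$-fixed part of }\alpha)$, together with the fact that the fixed parts (such as $v_e=\sum_i\varepsilon_{3i-2}$ and its cousins $\sum_i\varepsilon_{3i-1}$, $\sum_i\varepsilon_{3i}$) already lie in $L$, force $\tfrac12(\alpha+\gamma_\kappa\alpha)\in L\setminus M_0$. For $m\ge 3$ I would overcome this by making $\gamma_1,\gamma_2$ act differently from block to block according to a surjective assignment $\sigma\colon\{1,\dots,m\}\to\{1,2,3\}$, exactly as in Proposition~\ref{prop1.not-qp}: this breaks the parity condition on the would-be halves and restores primitivity of $M_0$, after which the complement of $M_0$ can be extended to a basis of $L$ consisting of $\pm 1$-eigenvectors of $\Gamma$ whose rank-one $\Gamma$-lattices have vanishing $\Sh^2$.

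The remaining case $m=2$ is more delicate, because any choice of $\Gamma$ made from products of in-block sign changes forces the three fixed parts of $\alpha$ to sum (up to parity) to the full block sums, so some fixed part necessarily lies in $L$. Here I would instead argue cohomologically via the short exact sequence
\[
0\longrightarrow (\Z\DD_3)^m \longrightarrow L\longrightarrow \Z/2\Z\longrightarrow 0
\]
(trivial action on the quotient), combined with Proposition~\ref{prop2.not-qp} applied to the subgroup $(\Z/2\Z)^3\subset W(\DD_3)^2$ generated by $c_1c_2,c_2c_3,c_4c_5$ (which surjects onto the $(\Z/2\Z)^3$ acting on one block by individual sign changes after restricting to that block). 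A careful analysis of the corresponding long exact cohomology sequence, together with the fact that $\Sh^2(\Gamma,J_\Gamma)\ne 0$ survives in any extension that does not split off the obstruction, should yield $\Sh^2(\Gamma,L)\ne 0$ and complete the proof.
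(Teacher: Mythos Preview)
Your proposal has a genuine gap. The $m=2$ case is not merely ``more delicate'' --- your suggested argument there does not work. The subgroup $\langle c_1c_2,\,c_2c_3,\,c_4c_5\rangle$ of $W(\DD_3)^2$ acts on the first block through $\langle c_1c_2,\,c_2c_3\rangle\cong(\Z/2\Z)^2$, which cannot surject onto $(\Z/2\Z)^3$; indeed no subgroup of $W(\DD_3)$ acts on $\Q^3$ by all individual sign changes, since $W(\DD_3)$ contains only even products of the $c_i$. So Proposition~\ref{prop2.not-qp} does not apply as you describe, and the long exact sequence sketch that follows is left entirely unverified. Even the $m\ge 3$ case is too vague: you assert that varying the block-actions ``restores primitivity of $M_0$'' and that the complement ``can be extended to a basis of $L$ consisting of $\pm 1$-eigenvectors,'' but the natural generators $\ve_s\pm\ve_t$ of $(\Z\DD_3)^m$ are \emph{not} simultaneous eigenvectors for your $\Gamma$, and the direct-summand claim needs real work.

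The idea you are missing is that $W(\DD_3)$ contains the symmetric group $\Sym_3$, not just sign changes. The paper takes $\Gamma=\langle a,b\rangle$ where
\[
a=(12)\,c_4c_5\,c_7c_8\cdots c_{3m-2}c_{3m-1},\qquad b=c_1c_2\,(45),
\]
mixing a transposition in one block with sign changes in the others. This makes the orbit of $v_e$ move to new coordinates ($\ve_2,\ve_5$) rather than merely flipping signs of the old ones, and one checks directly that $v_e+v_a+v_b+v_{ab}=0$, so $M_0=\langle v_e,v_a,v_b\rangle\cong J_\Gamma$. The paper then passes to the $\Gamma$-invariant subspace $V_0$ spanned by the $\ve_j$ with $j\not\equiv 0\pmod 3$, sets $L_0=L\cap V_0$, observes that $L/L_0\cong\Z^m$ has trivial $\Gamma$-action, and exhibits an explicit basis of $L_0$ (verified in a separate lemma) displaying $L_0$ as $M_0\oplus(\text{rank-one $\Gamma$-lattices})$. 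Hence $\Sh(\Gamma,L)\cong\Sh(\Gamma,L_0)\cong\Sh(\Gamma,J_\Gamma)\cong\Z/2\Z$ uniformly for all $m\ge 2$, with no special treatment of $m=2$.
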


\begin{proof}
We consider the subgroup $\Gamma\subset W(\DD_3)^m$ of order 4
generated by the following two commuting elements of order 2:
\begin{align*}
a=&(12)\ c_4 c_5\ c_7 c_8\ \dots\ c_{3m-2} c_{3m-1},\\
b=&c_1c_2\ (45).
\end{align*}
Here $c_i$ takes $\ve_i$ to $-\ve_i$.
Thus $\Gamma=\{e,a,b,ab\}\subset W(\DD_3)^m$.
We show that
$\Sh(\Gamma,L)=\Z/2\Z$.

Indeed, let $V=(\Q \DD_3)^m$ with the basis $\ve_1,\dots,\ve_{3m}$.
Let $V_0$ be the subspace of $V$
spanned by
$$ \ve_1,\ve_2,\ \ve_4,\ve_5,\
\dots,\ve_{3m-2},\ve_{3m-1}.
$$
It is $\Gamma$-invariant.
Set $L_0=L\cap V_0$.
Clearly $L/L_0$ injects into $V/V_0$.
Since $\Gamma$ acts trivially on $V/V_0$,
we see that $L/L_0\cong\Z^m$ with trivial $\Gamma$-action.
Thus we have a short exact sequence of $\Gamma$-lattices
$$
0\to L_0\to L\to \Z^m\to 0.
$$
Since $\Z^m$ is a permutation $\Gamma$-lattice, we see that
$$
\Sh(\Gamma,L)\cong\Sh(\Gamma,L_0).
$$
We prove that $\Sh(\Gamma,L_0)=\Z/2\Z$.

For $\gamma\in \Gamma$ we set $v_\gamma=\gamma\cdot v_e$.
If $m>2$
we set
$$
\delta=\ve_7+\ve_{10}+\dots+\ve_{3m-2}.
$$
If $m=2$ we set $\delta=0$.
We obtain
\begin{align*}
&v_e=\ve_1+\ve_4+\delta,\\
&v_a=\ve_2-\ve_4-\delta,\\
&v_b=-\ve_1+\ve_5+\delta,\\
&v_{ab}=-\ve_2-\ve_5-\delta.
\end{align*}
Clearly
$$
v_e+v_a+v_b+v_{ab}=0.
$$
Set $M_0=\langle v_e,v_a,v_b,v_{ab}\rangle$, then $M_0\cong
J_\Gamma:=\Z[\Gamma]/\Z$, and by Proposition~\ref{thm:J-Gamma} we have
$\Sh(\Gamma,M_0)=\Z/2\Z$.

Set $\beta_1=v_e,\ \beta_2=v_a,\ \beta_3=v_b$.
We set
\begin{align*}
&\beta_4=\ve_4-\ve_5,\\
&\beta_5=\ve_7+\ve_8,\\
&\beta_6=\ve_7-\ve_8,\\
&\dots\dots  \dots\dots\\
&\beta_{2m-1}=\ve_{3m-2}+\ve_{3m-1},\\
&\beta_{2m}=\ve_{3m-2}-\ve_{3m-1}.
\end{align*}
By Lemma \ref{lem:basis} below, the set
$\bb:=\{\beta_1,\dots,\beta_{2m}\}$ is a basis of $L_0$. We have
$M_0=\langle\beta_1,\beta_2,\beta_3\rangle$. Our $\Gamma$-lattice
$L_0$ decomposes into a direct sum of $\Gamma$-sublattices
$$
L_0=M_0\oplus\langle\beta_4\rangle\oplus\dots\oplus\langle\beta_{2m}\rangle.
$$
For $4\le i\le 2m$ the $\Gamma$-lattice $\langle\beta_i\rangle$ is of rank 1,
hence quasi-permutation, and therefore
$\Sh(\Gamma,\langle\beta_i\rangle)=0$.
It follows that $\Sh(\Gamma,L_0)=\Sh(\Gamma,M_0)=\Z/2\Z$,
hence  $\Sh(\Gamma,L)=\Z/2\Z$.
Thus $L$ is not a quasi-invertible $W(\DD_3)^m$-lattice.
\end{proof}

\begin{lemma}\label{lem:basis}
The set $\bb:=\{\beta_1,\dots,\beta_{2m}\}$ is a basis of $L_0$.
\end{lemma}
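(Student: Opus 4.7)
The plan is to identify $L_0$ explicitly, verify the $\beta_i$ all lie in it, check linear independence by looking at coefficients in the standard basis, and finally show that the $\beta_i$ $\Z$-span a generating set for $L_0$.

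First, I would describe $L_0$ concretely. Since $v_e\in V_0$ and every element of $L$ has the form $w+nv_e$ with $w\in(\Z\DD_3)^m$, the condition of being in $V_0$ forces $w\in V_0$ too, hence $L_0 = ((\Z\DD_3)^m\cap V_0) + \Z v_e$. A routine check (the $\ve_{3i}$-component of an element of $\Z\DD_3$ in the $i$-th factor must vanish) shows that $(\Z\DD_3)^m\cap V_0$ is free on the $2m$ vectors $\ve_{3i-2}+\ve_{3i-1},\ \ve_{3i-2}-\ve_{3i-1}$ for $i=1,\dots,m$, so $L_0$ has $\Z$-rank $2m = \dim V_0$.

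Next, that each $\beta_j$ belongs to $L_0$ is immediate: $\beta_1,\beta_2,\beta_3$ are $\Gamma$-translates of $v_e\in L$ and visibly lie in $V_0$, while $\beta_4,\dots,\beta_{2m}$ lie in $(\Z\DD_3)^m\cap V_0$. Linear independence over $\Q$ is an easy triangular calculation: in a hypothetical relation $\sum c_j\beta_j=0$ the coefficient of $\ve_2$ forces $c_2=0$, then the coefficients of $\ve_1,\ve_5,\ve_4$ force $c_3=c_1,\ c_4=c_3,\ c_1+c_4=0$ hence $c_1=c_3=c_4=0$, and then for each $k\ge 3$ the coefficients of $\ve_{3k-2}$ and $\ve_{3k-1}$ force $c_{2k-1}=c_{2k}=0$.

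The remaining (and main) step is to exhibit each generator of $L_0$ as a $\Z$-combination of the $\beta_j$. The key identities are
\[
\beta_1+\beta_2 = \ve_1+\ve_2,\qquad \beta_1+\beta_3 = \ve_4+\ve_5+2\delta,\qquad \beta_4=\ve_4-\ve_5,
\]
together with $2\delta = \sum_{k\ge 3}(\beta_{2k-1}+\beta_{2k})$. These give $\ve_4+\ve_5=\beta_1+\beta_3-2\delta\in\spann\bb$, then $2\ve_4=(\ve_4+\ve_5)+\beta_4\in\spann\bb$, and finally $\ve_1-\ve_2=(\beta_1-\beta_2)-2\ve_4-2\delta\in\spann\bb$. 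For $k\ge 3$ the vectors $\ve_{3k-2}\pm\ve_{3k-1}$ are simply $\beta_{2k-1},\beta_{2k}$. Since $v_e=\beta_1$, the set $\bb$ $\Z$-spans $(\Z\DD_3)^m\cap V_0 + \Z v_e = L_0$; combined with independence and the correct rank $2m$, it is a $\Z$-basis of $L_0$. The only subtlety is the bookkeeping needed to clear the $\delta$ and $\ve_4$ terms from $\beta_1-\beta_2$, which is precisely what forces one to use the identity $2\delta\in\spann\bb$.
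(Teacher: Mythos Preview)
Your proof is correct and follows essentially the same approach as the paper: both identify $L_0$ as $((\Z\DD_3)^m\cap V_0)+\Z v_e$, observe that $(\Z\DD_3)^m\cap V_0$ is generated by the vectors $\ve_{3i-2}\pm\ve_{3i-1}$, and then use the identical identities $\beta_1+\beta_2=\ve_1+\ve_2$, $\beta_1+\beta_3=\ve_4+\ve_5+2\delta$, $\beta_1-\beta_2=\ve_1-\ve_2+2\ve_4+2\delta$, and $2\delta\in\langle\bb\rangle$ to express the remaining generators. The only difference is that you include a linear-independence verification, which the paper omits as redundant (a spanning set of cardinality equal to the rank of a free $\Z$-module is automatically a basis).
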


\begin{proof}
First note that $\bb\subset L_0$.
Since the set $\bb$ has $2m$ elements and the lattice $L_0$ is of rank $2m$,
it suffices to show that $\bb$ generates $L_0$.

Recall that $L_0=L\cap V_0$ and that $L$ is generated by $(\Z \DD_3)^m$ and $v_e$.
Since $v_e\in V_0$, we see that $L_0$ is generated by $v_e$ and $(\Z \DD_3)^m\cap V_0$.
Since $v_e=\beta_1\in\bb$, it suffices to prove that $(\Z \DD_3)^m\cap V_0\subset \bgen$.
Clearly $(\Z \DD_3)^m\cap V_0$ is generated by the vectors
$$
\ve_1+\ve_2\ ,\ve_1-\ve_2,\ \ve_4+\ve_5, \ \ve_4-\ve_5,\ \dots,\ \ve_{3m-2}+\ve_{3m-1},\ \ve_{3m-2}-\ve_{3m-1}.
$$
Note that all the vectors in this list starting with $\ve_4-\ve_5$ are clearly contained in $\bb$.
It remains to show that the vectors $\ve_1+\ve_2\ ,\ve_1-\ve_2,\ \ve_4+\ve_5$ are contained in $\bgen$.

Note that $2\delta\in\bgen$ (because $2\ve_7\in\bgen,\dots,2\ve_{3m-2}\in\bgen$).
We have
$$
\beta_1+\beta_2=v_e+v_a=\ve_1+\ve_2,
$$
hence $\ve_1+\ve_2\in\bgen$.
We have
$$
\beta_1+\beta_3=v_e+v_b=\ve_4+\ve_5+2\delta,
$$
hence $\ve_4+\ve_5\in\bgen$.
Since also $\ve_4-\ve_5\in\bb\subset\bgen$,
we see that $2\ve_4\in\bgen$.
We have
$$
\beta_1-\beta_2=v_e-v_a=\ve_1-\ve_2+2\ve_4+2\delta,
$$
hence $\ve_1-\ve_2\in\bgen$.
We conclude that $(\Z \DD_3)^m\cap V_0\subset \bgen$, hence $\bb$ generates $L_0$ and is a basis of $L_0$.
This completes the proofs of Lemma \ref{lem:basis} and of Proposition  \ref{prop:SO6}.
\end{proof}
\medskip

We will now consider the root system $\AA_{n-1}$, which is
embedded in $\Z^n$, see \cite[Planche I]{Bourbaki}.
Let $\Z \AA_{n-1}$ denote the root lattice of $\AA_{n-1}$,
and let $\alpha_1,\alpha_2,\dots,\alpha_{n-1}$ denote the standard basis of the root system $\AA_{n-1}$
and of $\Z \AA_{n-1}$ ({\em loc.~cit}).
Let $\Lambda_n$ denote the weight lattice of $\AA_{n-1}$,
and let $\omega_1,\omega_2,\dots,\omega_{n-1}$ denote the standard basis of $\Lambda_n$
consisting of fundamental weights ({\em loc.~cit}).

Consider $\Z\AA_2\subset\Lambda_3$.
The nontrivial automorphism $\sigma$
of the basis $\Delta=\{\alpha_1,\alpha_2\}=\{\ve_1-\ve_2,\ve_2-\ve_3\}$ ({\em loc.~cit})
induces the automorphism $(-1)\circ(1,3)$ of $\Z\AA_2$
(where $-1\in\Aut\Z\subset\Aut\Z^3$, $(1,3)\in \Sym_3\subset\Aut\Z^3$),
and an automorphism $\sigma_*$ of $\Sym_3=W(\AA_2)$
(namely, the conjugation by the transposition $(1,3)$).

Let $m\ge 2$.
We consider $(\Z \AA_2)^m\subset (\Lambda_3)^m$. Let $(\Z
\AA_2)^{(i)}\subset\Lambda_3^{(i)}$ be the $i^{\textit{th}}$ factor.
Let $\omega_1^{(i)}, \omega_2^{(i)}$ be the basis of
$\Lambda_3^{(i)}$ consisting of fundamental weights.

Let $\ba=(a_1,\dots,a_m)$ (a row vector), where each  $a_i$ equals $1$ or $2$. In
particular, let  $\bo=(1,\dots,1)$. Let $L_\ba$ denote the
$(\Sym_3)^m$-lattice generated by  $(\Z \AA_2)^m$ and the vector
\[
x_\ba:=\sum_{i=1}^m a_i \omega_1^{(i)}.
\]

\begin{proposition}\label{prop:SL3}
For $m\ge 2$ and for any $\ba$ as above
$($i.e., each $a_i$ equals $1$ or $2)$, the $(\Sym_3)^m$-lattice
$L_\ba$  is not  quasi-invertible.
\end{proposition}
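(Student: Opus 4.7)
To prove that $L_\ba$ is not quasi-invertible, my plan is to exhibit a finite subgroup $\Gamma\subset(\Sym_3)^m$ with $\Sh(\Gamma,L_\ba)\ne 0$; since quasi-invertible lattices have trivial Tate--Shafarevich group on every subgroup (cf.~the discussion immediately following Lemma~\ref{lem:q-inv}), this will suffice.

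First, I normalize $\ba$. The involution $v\mapsto -v$ on $\Lambda_3$ is $\Sym_3$-equivariant and sends $\omega_1$ to an element congruent to $2\omega_1\pmod{\Z\AA_2}$. Applying it independently on each factor with $a_j=2$ produces a $(\Sym_3)^m$-equivariant isomorphism $L_\ba\cong L_\bo$, where $\bo=(1,\ldots,1)$. So it is enough to treat the case $\ba=\bo$.

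Second, I select $\Gamma\cong\Z/3\Z\times\Z/3\Z$ (possibly enlarged by transpositions) embedded in $(\Sym_3)^m$. The simplest candidate is $\Gamma=\langle\tau_1,\tau_2\rangle$, where $\tau_j$ is the 3-cycle $(1,2,3)$ in factor~$j$; for $m\ge 3$ one may also consider diagonal embeddings that mix 3-cycles across several factors, or a semidirect product $(\Z/3\Z)^2\rtimes\Z/2\Z$, with the $\Z/2\Z$ generated by a product of transpositions acting by inversion on the $(\Z/3\Z)^2$.

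Third, I construct a nonzero class in $\Sh(\Gamma,L_\bo)$ using the short exact sequence of $\Gamma$-modules
$$
0\to(\Z\AA_2)^m\to L_\bo\to\Z/3\Z\to 0
$$
(with trivial $\Gamma$-action on the quotient). One starts from the nonzero cup-product class in $\Sh(\Gamma,\Z/3\Z)\cong\Lambda^2\Gamma^\vee\cong\Z/3\Z$, lifts it through the connecting homomorphism to $H^2(\Gamma,L_\bo)$, and verifies using the companion long exact sequences for each cyclic subgroup $C\subset\Gamma$ that the lift can be chosen to restrict trivially on every such $C$.

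The main obstacle is two-fold. First, the most natural choice $\Gamma=\langle\tau_1,\tau_2\rangle$ actually yields $\Sh(\Gamma,L_\bo)=0$: after splitting off the permutation contribution from factors $j\ge 3$ (on which $\Gamma$ acts trivially), the computation reduces to $\Sh(\Gamma,\Z\AA_2^{(1)}\oplus\Z\AA_2^{(2)})$, and a direct calculation using the fact that the transfer $\Gamma\to\langle\tau_j\rangle$ is trivial shows this vanishes. Thus a more subtle choice of $\Gamma$ is required. Second, unlike in Propositions~\ref{prop1.not-qp} and~\ref{prop:SO6}, one cannot simply display a copy of $J_\Gamma$ as a $\Gamma$-sublattice of $L_\bo$, since $J_{(\Z/3\Z)^2}$ has rank $8$ while $L_\bo$ may have rank as small as $4$ (when $m=2$). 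The argument must therefore proceed through the cohomological long exact sequence rather than a clean direct-summand decomposition, and the delicate point is the choice of $\Gamma$ together with the verification that the lifted class restricts to zero on each of the (up to four) cyclic subgroups of $\Gamma$ of order $3$.
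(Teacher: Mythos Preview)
Your strategy cannot succeed, and the obstacle is not merely technical: for $m=2$ the Tate--Shafarevich obstruction vanishes on \emph{every} subgroup of $(\Sym_3)^2$. Indeed, the paper exhibits a short exact sequence of $(\Sym_3)^2$-lattices
\[
0\longrightarrow L_\bo\longrightarrow \Lambda_6\longrightarrow \Z\longrightarrow 0
\]
with trivial action on $\Z$. Since $H^1(\Gamma',\Z)=\Hom(\Gamma',\Z)=0$ for any finite $\Gamma'$, the long exact sequence gives an injection $H^2(\Gamma',L_\bo)\hookrightarrow H^2(\Gamma',\Lambda_6)$, functorial under restriction, and hence $\Sh(\Gamma',L_\bo)\hookrightarrow\Sh(\Gamma',\Lambda_6)$. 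But the paper records (as a parenthetical remark, citing \cite[Proposition~7.4(b)]{LPR06}) that $\Sh(\Gamma',\Lambda_6)=0$ for every subgroup $\Gamma'\subset\Sym_3\times\Sym_3$. So no choice of $\Gamma$, however clever, will produce a nonzero class. This is precisely the cautionary example alluded to in Section~\ref{sect8}: a lattice that is not quasi-invertible yet has $\Sh$ vanishing on all subgroups.

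The paper therefore uses the strictly finer invariant $[L]^{\fl}$. After the reduction to $\ba=\bo$ (your first step is correct), it embeds $(\Z\AA_2)^m\hookrightarrow\Z\AA_{3m-1}$, identifies $L_\bo$ with $\Lambda_{3m}\cap\psi((\Q\AA_2)^m)$, and shows $L_\bo\sim\Lambda_{3m}$ as $(\Sym_3)^m$-lattices. It then restricts along $(s,t)\mapsto(s,t,\dots,t)\colon\Sym_3\times\Sym_3\hookrightarrow(\Sym_3)^m$, uses \cite[Proposition~7.1(b)]{LPR06} to get $\Lambda_{3m}|_{\Sym_3\times\Sym_3}\sim\Lambda_6$, and appeals to \cite[Proposition~7.4(b)]{LPR06}, where it is shown that $[\Lambda_6]^{\fl}$ is not invertible. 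Non-invertibility of the flasque class implies non-quasi-invertibility and is what is genuinely required here; the $\Sh$-method is simply too coarse for this case.
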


\begin{proof}
First we note that $L_\ba$ is $\varphi$-isomorphic
to $L_\bo$ with respect to some
automorphism $\varphi$ of $(\Sym_3)^m$ (for a definition
of $\varphi$-isomorphism, see the beginning of Section~\ref{sect.lattices}).

Indeed, let $\alpha_1,\alpha_2$ be the
standard basis of the root system $\AA_2$ (and of $\Z\AA_2$). Let
\[
\omega_1=\frac{1}{3}\left(2\alpha_1+\alpha_2\right),\ \omega_2=\frac{1}{3}\left(\alpha_1+2\alpha_2\right)
\]
be the fundamental weights, this is the standard basis of $\Lambda_3$ ({\em loc.~cit.}).
Let $\ombar_1,\ombar_2$ be their images in $\Lambda_3/\Z\AA_2\cong\Z/3\Z$.
Since $$\omega_1+\omega_2=\alpha_1+\alpha_2\in\Z\AA_2,$$
we have $\ombar_1+\ombar_2=0$, hence $\ombar_2=2\ombar_1$.
Thus the nontrivial automorphism $\sigma$ of the Dynkin diagram  $\AA_2$
takes $\ombar_1$ to $\ombar_2=2\ombar_1$ when acting on $\Lambda_3/\Z\AA_2$.

Now let $\ba$ be as above. Write $\Delta=(\AA_2)^m$,
$\Delta=\Delta_1\cup\dots\cup \Delta_m$. For each $i=1,\dots,m$ we define an
automorphism $\tau_i$ of $\Delta_i=\AA_2$. If $a_i=1$, we set
$\tau_i=\id$, while if $a_i=2$, we set $\tau_i=\sigma_i$, where
$\sigma_i$ is the nontrivial automorphism of $\Delta_i$. Then the
automorphism $\tau=\prod_i\tau_i$ of $\Delta=(\AA_2)^m$ acts on
$(\Lambda_3)^m$ and takes $L_\bo$ to $L_\ba$. We see that the
$(\Sym_3)^m$-lattices $L_\bo$ and $L_\ba$ are $\tau_*$-isomorphic,
where $\tau_*$ is the induced automorphism of $(\Sym_3)^m$. Thus,
in order to prove that the $(\Sym_3)^m$-lattice $L_\ba$ is not
quasi-invertible, it suffices to show that $L_\bo$ is not quasi-invertible.

Let $\alpha_1^{(i)},\alpha_2^{(i)}$ be the standard basis of
$(\Z \AA_2)^{(i)}$.
Let $\omega_1^{(i)},\omega_2^{(i)}$ be the standard basis of $\Lambda_3^{(i)}$, then
\[
\omega_1^{(i)}=\frac{1}{3}(2\alpha_1^{(i)}+\alpha_2^{(i)}).
\]

Let $\alpha_1,\dots,\alpha_{3m-1}$ be the standard basis of $\Z \AA_{3m-1}$.
We denote by $\lambda_1,\dots,\lambda_{3m-1}$ (rather than $\omega_1,\dots,\omega_{3m-1}$)
  the standard basis of $\Lambda_{3m}$ consisting of fundamental weights.
Then we have ({\em loc.~cit.})
\begin{equation}\label{eq:lambda}
\lambda_1=\frac{1}{3m}\left( (3m-1)\alpha_1+(3m-2)\alpha_2+\dots+2\alpha_{3m-2}+\alpha_{3m-1}\right).
\end{equation}

We embed $(\Z \AA_2)^m$ into $\Z \AA_{3m-1}$
as follows:
\[
\alpha_1^{(i)}\mapsto \alpha_{3(i-1)+1},\quad \alpha_2^{(i)}\mapsto \alpha_{3(i-1)+2}
\]
(i.e., $\alpha_1^{(1)}\mapsto\alpha_1$,
$\alpha_2^{(1)}\mapsto\alpha_2$, $\alpha_1^{(2)}\mapsto\alpha_4$,
$\alpha_2^{(2)}\mapsto\alpha_5$, etc.). This embedding induces an
embedding
\[
\psi\colon (\Q \AA_2)^m \into \Q \AA_{3m-1}.
\]
Set
\[
M=\Lambda_{3m}\cap\psi((\Q \AA_2)^m).
\]
We show that $M=\psi(L_\bo)$.
Since by \eqref{eq:lambda} the image of $\lambda_1$ in $\Lambda_{3m}/\Z \AA_{3m-1}$
is of order $3m$, we see that
$\Lambda_{3m}$ is generated by $\Z \AA_{3m-1}$ and $\lambda_1$,
hence the set $\{\alpha_1,\dots,\alpha_{3m-1},\lambda_1\}$ is a generating set for $\Lambda_{3m}$.
From \eqref{eq:lambda} we see that
\[
\alpha_{3m-1}=3m\lambda_1-(3m-1)\alpha_1-(3m-2)\alpha_2-\dots-2\alpha_{3m-2},
\]
hence the set $\Xi:=\{\alpha_1,\dots,\alpha_{3m-2},\lambda_1\}$ is a basis for $\Lambda_{3m}$.
The subset
$$
\Xi':=\{\alpha_1,\alpha_2,\ \alpha_4,\alpha_5,\ \dots,\
                  \alpha_{3m-5},\alpha_{3m-4},\ \alpha_{3m-2}\}
$$
of $\Xi$ is contained in $M$.
Set $N:=\Z[\Xi\smallsetminus\Xi']\cap M\subset\Q\AA_{3m-1}$, then
clearly $M=\Z\Xi'\oplus N$.
Since $\rank M=2m=|\Xi'|+1$, we see that $\rank N=1$.
The element
\begin{align*}
\mu:= &
m\lambda_1-(m-1)\alpha_3-(m-2)\alpha_6-\dots-\alpha_{3m-3}=\frac{1}{3}((3m-1)\alpha_1\\
& +(3m-2)\alpha_2
  +(3m-4)\alpha_4+(3m-5)\alpha_5+\dots +2\alpha_{3m-2}+\alpha_{3m-1})
\end{align*}
is contained in $N$ and indivisible in $M$, hence the one-element set $\{\mu\}$ is a basis of $N$,
and $\Xi'\cup\{\mu\}$ is a basis of $M$.
Now
\begin{align*}
&\mu-(m-1)(\alpha_1+\alpha_2)-(m-2)(\alpha_4+\alpha_5)-\dots-1(\alpha_{3(m-2)+1}+\alpha_{3(m-2)+2})\\
&=\frac{1}{3}((2\alpha_1+\alpha_2)+(2\alpha_4+\alpha_5)+\dots+(2\alpha_{3m-2}+\alpha_{3m-1}))\\
&=\psi(\omega_1^{(1)}+\omega_1^{(2)}+\dots+\omega_1^{(m)}).
\end{align*}
We see that $M$ is generated by $\psi((\Z \AA_2)^m)$ and $\psi(\omega_1^{(1)}+\omega_1^{(2)}+\dots+\omega_1^{(m)})$,
hence $M=\psi(L_\bo)$, thus $M$ is isomorphic to $L_\bo$.
Therefore, it suffices to prove that $M$ is not quasi-invertible.

The quotient lattice $\Lambda_{3m}/M$ injects into the $\Q$-vector
space $$\Q \AA_{3m-1}/\psi((\Q \AA_2)^m)$$ with basis
$\ov{\alpha_3},\ov{\alpha_6},\dots,\ov{\alpha_{3(m-1)}}$ on which
$(\Sym_3)^m$ acts trivially. Thus we obtain a short exact sequence
\[
0\to M\to \Lambda_{3m}\to \Z^{m-1}\to 0,
\]
where $\Z^{m-1}$ is a trivial, hence permutation, $(\Sym_3)^m$-lattice.
It follows that the  $(\Sym_3)^m$-lattices $M$ and $\Lambda_{3m}$ are equivalent,
and therefore it suffices to show that $\Lambda_{3m}$ is not  a quasi-invertible
$(\Sym_3)^m$-lattice.

Now we embed  $\Sym_3\times \Sym_3$ into $(\Sym_3)^m$ as follows: $(s,t)\in
\Sym_3\times \Sym_3$ maps to $(s,t,\dots,t)\in (\Sym_3)^m$. With the notation
of \cite[(6.4)]{LPR06} we have $\Lambda_{3m}=Q_{3m}(1)$. By
\cite[Proposition~7.1(b)]{LPR06}, with respect to the above embedding
$\Sym_3\times \Sym_3\into (\Sym_3)^m$, we have
\[
Q_{3m}(1)|_{\Sym_3\times \Sym_3}\sim \Lambda_6|_{\Sym_3\times \Sym_3} .
\]
By  \cite[Proposition~7.4(b)]{LPR06}, $\Lambda_6$ is not a
quasi-permutation $\Sym_3\times \Sym_3$-lattice, and it is actually
proved there that $[\Lambda_6]^\fl$ is not  an invertible
$\Sym_3\times \Sym_3$-lattice. It follows that $\Lambda_6$ is not a
quasi-invertible $\Sym_3\times \Sym_3$-lattice (although we have
$\Sh(\Gamma',\Lambda_6)=0$ for every subgroup $\Gamma'$ of
$\Sym_3\times \Sym_3$). Thus $\Lambda_{3m}$ is not a
quasi-invertible $\Sym_3\times \Sym_3$-lattice, hence it is not a
quasi-invertible $(\Sym_3)^m$-lattice. Thus $L_\bo$ is not a
quasi-invertible $(\Sym_3)^m$-lattice, and therefore $L_\ba$ is not
a quasi-invertible $(\Sym_3)^m$-lattice for any $\ba$ as above. This
completes the proof of Proposition \ref{prop:SL3}.
\end{proof}

\section{Standard subgroups}
\label{sect10}

In this and the next sections we will collect several elementary results from
combinatorial linear algebra, which will be needed to complete
the proof of~Theorem~\ref{thm:product-closed-field}.
\medskip

Let $e_1, \dots, e_m$ be the standard $\bbZ/n \bbZ$-basis of $(\bbZ/n \bbZ)^m$.
We say that a subgroup $S \subset (\bbZ/n \bbZ)^m$ is
{\em standard} if $S$ is generated by
$n_1 e_1, \dots, n_r e_r$ for some $1 \le r \le m$ and some
integers $n_1, \dots, n_r$, where $n_i$ divides $n_{i+1}$
for $i = 1, \dots, r-1$.

Let $W$ be a finite group, $P$ a $W$-lattice, and
$\lambda\colon P \to \bbZ/n \bbZ$
a surjective morphism of $W$-modules, where $W$ acts trivially on
$\bbZ/n \bbZ$. Given a subgroup $S$ of $(\bbZ/n \bbZ)^m$, let
$P^m_S$ denote the preimage of $S$ in $P^m$ with respect to the
homomorphism $\lambda^m\colon P^m\to(\Z/n\Z)^m$. We regard $P^m_S$
as a $W$-submodule of $P^m$, where $W$ acts diagonally on $P^m$.

\begin{lemma} \label{lem:standard}
Let $W$, $P$, $n$ and $\lambda$ be as above.
For every subgroup $S \subset (\bbZ/n
\bbZ)^m$ there exists a standard subgroup $S_\st \subset (\bbZ/n
\bbZ)^m$ with the following property: there exist an isomorphism
$g_P\colon P^m_S\isoto P^m_{S_\st}$ of  $W$-modules and an automorphism
$g$ of $(\Z/n\Z)^m$ taking $S$ to $S_\st$ such that the following
diagram commutes:
\[
\xymatrix{
P^m_S \ar[d]^{g_P}\ar[rr]^{\lambda^m} & & S\ar[d]^{}\ar@{^{(}->}[r] &(\Z/n\Z)^m\ar[d]^{g}\\
P^m_{S_\st}\ar[rr]^{\lambda^m} & & S_\st\ar@{^{(}->}[r] &(\Z/n\Z)^m.
}
\]
\end{lemma}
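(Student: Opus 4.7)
The plan is to reduce the statement to a standard application of the Smith normal form on $\Z^m$, and then transport the resulting $\Z$-linear change of coordinates on the ``second factor'' $\Z^m$ to the $W$-module $P^m \cong P\otimes_\Z\Z^m$, using that $W$ acts only on the first tensor factor.

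First I would let $\widetilde S\subset\Z^m$ denote the preimage of $S$ under the reduction map $\Z^m \twoheadrightarrow (\Z/n\Z)^m$. Since $n\Z^m\subset\widetilde S$, the sublattice $\widetilde S$ has full rank $m$. By the Smith normal form theorem, there exist $A\in\GL_m(\Z)$ and positive integers $n_1\mid n_2\mid\cdots\mid n_m$ such that $A^{-1}(\widetilde S)$ is the sublattice of $\Z^m$ with basis $n_1 e_1,\dots,n_m e_m$, where $e_1,\dots,e_m$ is the standard $\Z$-basis of $\Z^m$. From $n\Z^m\subset\widetilde S$ we obtain $n\Z^m\subset A^{-1}(\widetilde S)$, so $n_i\mid n$ for every $i$. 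Letting $r$ be the largest index with $n_r<n$ (and $r=0$ if every $n_i=n$), the reduction $S_\st := A^{-1}(\widetilde S)\bmod n\subset(\Z/n\Z)^m$ is therefore standard: it is generated by $n_1 e_1,\dots,n_r e_r$ with $n_1\mid n_2\mid\cdots\mid n_r$ as required.

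Next I would set $g := A^{-1}\bmod n\in\GL_m(\Z/n\Z)$; by construction $g(S)=S_\st$. To produce $g_P$, observe that $A^{-1}\in\GL_m(\Z)$ acts $\Z$-linearly on $P^m$ by ``mixing the $m$ copies'', namely via the tensor-factor action on $P\otimes_\Z\Z^m\cong P^m$. Because the diagonal $W$-action lives entirely on the first tensor factor, this coordinate-mixing action of $\GL_m(\Z)$ commutes with $W$, and hence $A^{-1}$ induces a $W$-module automorphism $\widetilde g_P$ of $P^m$. The map $\lambda^m\colon P^m\to(\Z/n\Z)^m$ is nothing but $\lambda\otimes\mathrm{id}\bmod n$, and is therefore intertwined with the coordinate-mixing action, giving $\lambda^m\circ\widetilde g_P = g\circ\lambda^m$. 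Consequently $\widetilde g_P$ restricts to a $W$-module isomorphism $g_P\colon P^m_S\isoto P^m_{S_\st}$, and the square in the statement commutes.

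The only subtlety is the claim that $\GL_m(\Z)$, acting on $P^m$ by coordinate-mixing, commutes with the diagonal action of $W$; but once $P^m$ is viewed as $P\otimes_\Z\Z^m$ this is immediate because the two actions live on different tensor factors. Everything else is bookkeeping on top of the Smith normal form, so no serious obstacle is expected.
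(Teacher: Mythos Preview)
Your argument is correct and follows essentially the same route as the paper: lift $S$ to its preimage in $\Z^m$, apply the adapted-basis form of the structure theorem (Smith normal form) to produce a $\GL_m(\Z)$-change of coordinates making the image standard, and transport everything through the identification $P^m\cong P\otimes_\Z\Z^m$ with $W$ acting only on the first factor. The paper's proof differs only in presentation, setting up the tensor identification and commuting diagram for arbitrary $g\in\GL_m(\Z)$ first and then reducing to the pure lattice statement; the underlying content is identical.
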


\begin{proof}
The homomorphism $\lambda^m\colon P^m\to(\Z/n\Z)^m$ can be written as
\[
\xymatrix{
\lambda^m=\lambda\otimes_\Z \id \colon P\otimes_\Z \Z^m \ar[r] &
\Z/n\Z\otimes_\Z \Z^m.}
\]
Since for any $g\in\GL_m(\Z)=\Aut(\Z^m)$  the diagram
\[
\xymatrix{
P\otimes_{\Z}\Z^m\ar[d]^{\id_P\otimes g}\ar[rr]^{\lambda\otimes \id_{\Z^m}} &
&\Z/n\Z\otimes_{\Z}\Z^m\ar[d]^{\id_{\Z/n\Z}\otimes g}\\
P\otimes_{\Z}\Z^m\ar[rr]^{\lambda\otimes \id_{\Z^m}}  &     &\Z/n\Z \otimes_{\Z}\Z^m
}
\]
commutes, it suffices to show that for every subgroup
$S \subset (\bbZ/n \bbZ)^m$ there exists
$g \in \GL_m(\bbZ)$ such that $g(S)$ is  standard.

Let $\pi \colon \bbZ^m \to (\bbZ/ n \bbZ)^m$ be the natural
projection. Then $\pi^{-1}(S)$ is a finite index subgroup of
$\bbZ^m$.  There exist a basis $b_1,
\dots, b_m$ of $\bbZ^m$ and integers $n_1 \,|\, n_2 \,| \dots \, |
\, n_m$,  such that $n_1 b_1, \dots, n_m b_m$ form a basis of
$\pi^{-1}(S)$; cf.~\cite[Theorem III.7.8]{lang}.
  Now let $g \in \GL_m(\bbZ)$ be the element that takes
the basis $b_1, \dots, b_m$ to the standard basis of $\bbZ^m$. Then
$g(\pi^{-1}(S))$ is the subgroup $n_1 \bbZ \times\dots \times n_m
\bbZ$ of $\bbZ^m$ and thus $S_\st:=g(S) = \langle n_1 e_1,\dots n_m e_m\rangle =\langle n_1 e_1, \dots,
n_r e_r \rangle$ is standard, where $r\le m$ is the largest integer
such that $n$ does not divide $n_r$.
\end{proof}

Set $Q=\ker\,\lambda\subset P$.
For a subgroup $S_1\subset \Z/n\Z$ we set $P^1_{S_1}=\lambda^{-1}(S_1)$,
so that $Q\subset P^1_{S_1}\subset P$.

\begin{corollary}\label{cor:standard1}
Assume that $S$ in Lemma \ref{lem:standard}
is cyclic. Then
\[
P^m_S\cong P^1_{S_1}\oplus Q^{m-1}
\]
for some subgroup $S_1\subset \Z/n\Z$ isomorphic to $S$.
\end{corollary}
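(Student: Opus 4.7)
The plan is to reduce to the standard case via Lemma~\ref{lem:standard} and then read off the decomposition directly from the definition of $P^m_{S_\st}$. First I would apply Lemma~\ref{lem:standard} to obtain a standard subgroup $S_\st\subset(\Z/n\Z)^m$ together with a $W$-module isomorphism $g_P\colon P^m_S\isoto P^m_{S_\st}$ compatible with an automorphism $g$ of $(\Z/n\Z)^m$ carrying $S$ to $S_\st$. Since $S$ is cyclic, its image $S_\st$ is standard and cyclic as well, so in the definition of ``standard'' we must have $r=1$, i.e.\ $S_\st=\langle n_1 e_1\rangle$ for some $n_1\mid n$. In particular $S_\st\subset (\Z/n\Z)\oplus 0\oplus\cdots\oplus 0$.

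Next I would set $S_1:=\langle\ov{n_1}\rangle\subset\Z/n\Z$, the cyclic subgroup generated by $n_1\bmod n$. Under the identification $(\Z/n\Z)\oplus 0\oplus\cdots\oplus 0\cong \Z/n\Z$ via the first coordinate, $S_\st$ corresponds to $S_1$, so the automorphism $g$ restricts to an isomorphism $S\isoto S_1$. In particular $S_1\cong S$ as required.

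Finally I would compute $P^m_{S_\st}$ directly from its definition. An element $(p_1,\dots,p_m)\in P^m$ lies in $P^m_{S_\st}$ iff $\lambda^m(p_1,\dots,p_m)=(\lambda(p_1),\dots,\lambda(p_m))\in S_\st=S_1\oplus 0\oplus\cdots\oplus 0$, i.e.\ iff $\lambda(p_1)\in S_1$ and $\lambda(p_i)=0$ for $i\ge 2$. Equivalently, $p_1\in P^1_{S_1}$ and $p_i\in Q$ for $i\ge 2$. Since $W$ acts diagonally on $P^m$ and each of the conditions involves only a single coordinate, this description is $W$-equivariant, so it gives an isomorphism of $W$-modules
\[
P^m_{S_\st}\;\cong\;P^1_{S_1}\oplus Q^{m-1}.
\]
Composing with $g_P^{-1}$ yields the desired isomorphism $P^m_S\cong P^1_{S_1}\oplus Q^{m-1}$.

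No step here is a substantial obstacle: once Lemma~\ref{lem:standard} is invoked and the cyclicity of $S$ is used to force $r=1$, the decomposition is immediate from reading the defining condition coordinatewise. The only point needing a moment of care is checking that the decomposition is compatible with the diagonal $W$-action, which is automatic since the conditions cutting out $P^m_{S_\st}$ split across the coordinates.
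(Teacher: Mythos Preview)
Your proof is correct and follows essentially the same approach as the paper's: invoke Lemma~\ref{lem:standard}, use cyclicity to force $S_\st=\langle n_1 e_1\rangle$, and read off the coordinatewise decomposition $P^m_{S_\st}=P^1_{S_1}\oplus Q^{m-1}$. The paper's version is more terse (it simply asserts $S_\st=\langle (n/s)e_1\rangle$ and says ``clearly'' for the decomposition), but the content is identical.
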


\begin{proof}
By Lemma \ref{lem:standard}, we have $P^m_S\cong P^m_{S_\st}$. Since
$S$ is cyclic, say of order $s$, the group $S_\st$ is generated by
$(n/s)e_1$. Set $S_1=\langle (n/s)e_1\rangle\subset\Z/n\Z$, then
clearly
$$
P^m_{S_\st}=P^1_{S_1}\oplus Q^{m-1},
$$
and the corollary follows.
\end{proof}

\begin{corollary}\label{cor:standard2}
Assume that $S$ in Lemma \ref{lem:standard}
contains an element of order $n$. Then $P^m_S$ has a direct summand
isomorphic to $P$.
\end{corollary}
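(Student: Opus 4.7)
The plan is to reduce to the standard-subgroup case via Lemma \ref{lem:standard}, and then exhibit the summand $P$ by exploiting a coordinate that is completely unconstrained.

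First, applying Lemma \ref{lem:standard}, I would replace $S$ by the standard subgroup $S_\st = \langle n_1 e_1, \dots, n_r e_r\rangle$, using the $W$-module isomorphism $P^m_S \cong P^m_{S_\st}$. The hypothesis that $S$ contains an element of order $n$ transfers to $S_\st$ via the automorphism $g$ of $(\Z/n\Z)^m$ from the lemma. Since $n_1 \mid n_2 \mid \dots \mid n_r$ are all proper divisors of $n$, the element of $S_\st$ of maximal order has order $n/n_1$. Thus the hypothesis forces $n_1 = 1$, so $e_1 \in S_\st$.

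Second, once $e_1 \in S_\st$, the projection of $S_\st$ onto the first coordinate of $(\Z/n\Z)^m$ is all of $\Z/n\Z$, and $S_\st$ is the internal direct sum $\langle e_1\rangle \oplus S'$, where $S' := \langle n_2 e_2, \dots, n_r e_r\rangle$, viewed as a subgroup of the last $m-1$ coordinates $(\Z/n\Z)^{m-1}$. The membership condition $\lambda^m(p_1, \dots, p_m) \in S_\st$ then places no constraint on $p_1 \in P$ while requiring $(p_2, \dots, p_m) \in P^{m-1}_{S'}$. Consequently
\[
P^m_{S_\st} \;=\; \bigl\{(p_1, 0, \dots, 0) : p_1 \in P\bigr\} \;\oplus\; \bigl\{(0, p_2, \dots, p_m) : (p_2, \dots, p_m) \in P^{m-1}_{S'}\bigr\},
\]
and since $W$ acts diagonally on $P^m$, each summand is $W$-stable, so this is a decomposition of $W$-modules.

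Combining the two steps gives $P^m_S \cong P \oplus P^{m-1}_{S'}$ as $W$-modules, which exhibits $P$ as a direct summand of $P^m_S$. There is no real obstacle here; the only thing to be careful about is verifying that the element-of-order-$n$ hypothesis really forces $n_1 = 1$ in the elementary-divisor form and that the resulting coordinate decomposition is $W$-equivariant, both of which are immediate.
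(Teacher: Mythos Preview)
Your proof is correct and follows essentially the same approach as the paper: reduce to a standard subgroup via Lemma~\ref{lem:standard}, observe that the maximal order in $S_\st$ is $n/n_1$ so the hypothesis forces $n_1=1$, and then split off the first coordinate as a copy of $P$. The only cosmetic difference is that the paper decomposes $P^m_{S_\st}$ fully as $P^1_{S_1}\oplus\dots\oplus P^1_{S_m}$ rather than just $P\oplus P^{m-1}_{S'}$, but the argument is the same.
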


\begin{proof}
By Lemma \ref{lem:standard}, $P^m_S$ is isomorphic to $P^m_{S_\st}$
for some standard subgroup $S_\st\subset(\Z/n\Z)^m$. From the
definition of a standard subgroup we see that
\[
P^m_{S_\st}= P^1_{S_1}\oplus\dots\oplus P^1_{S_m}\;,
\]
where $S_i\subset\Z/n\Z$ is generated by $n_i e_i$ (for $i>r$ we
take $n_i=0$). Since $S_\st$ contains an element of order $n$, we
see that $n_1=1$, hence $S_1$ is generated by $e_1$, i.e., $S_1=\Z/n\Z$ and
$P^1_{S_1}=P$. Thus $P^m_S$ has a direct summand isomorphic to $P$.
\end{proof}

\section{Coordinate and almost coordinate subspaces}
\label{sect.coord}

Let $F$ be a field and let
$F^m$ be an $m$-dimensional $F$-vector space equipped with the
standard basis $e_1 = (1, 0, \dots, 0), \dots, e_m = (0, \dots, 0,
1)$.

Recall that the {\em Hamming weight} of a vector $v = (a_1, \dots,
a_m) \in F^m$ is defined as the number of non-zero elements among
$a_1, \dots, a_m$. We will say $v \in F^m$ is {\em defective} if its
Hamming weight is $< m$ or, equivalently, if at least one of its
coordinates is $0$. The following lemma is well known; a variant of
it is used to construct the standard open cover of the Grassmannian
$\operatorname{Gr}(m, d)$ by $d(m-d)$-dimensional affine spaces,
see, e.g., \cite[\S1.5]{GH}. For the sake of completeness, we
supply a short proof.

\begin{lemma} \label{lem.combinatorics0}
Let $V$ be a vector subspace of $F^m$ of dimension $d \ge 2$. Then
$V$ has a basis consisting of defective vectors.
\end{lemma}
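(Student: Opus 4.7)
The plan is to reduce to a standard fact about reduced row echelon form. Starting from any basis $v_1,\dots,v_d$ of $V$, I would form the $d\times m$ matrix $A$ with rows $v_1,\dots,v_d$ and apply Gaussian elimination. Elementary row operations do not change the row span, so the resulting rows $w_1,\dots,w_d$ of the reduced row echelon form $A'$ still form a basis of $V$.

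The key observation is that the RREF gives pivot columns $j_1<j_2<\dots<j_d$ with $(w_k)_{j_k}=1$ and $(w_k)_{j_l}=0$ for every $l\ne k$. Because $d\ge 2$, for each row index $k$ there is at least one index $l\ne k$, and hence $w_k$ has a zero entry in position $j_l$. In particular, the Hamming weight of $w_k$ is at most $m-(d-1)<m$, so $w_k$ is defective. This shows $\{w_1,\dots,w_d\}$ is the desired basis of defective vectors.

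There is no real obstacle here; the argument is essentially the existence and uniqueness of RREF over an arbitrary field. If one prefers to avoid invoking RREF, an equivalent formulation is: for each $i=1,\dots,m$ let $H_i=\{v\in F^m : v_i=0\}$, so a vector is defective iff it lies in $\bigcup_i H_i$. One can then show $V=\sum_{i=1}^m (V\cap H_i)$ by noting that if this sum $U$ were a proper subspace of $V$, then picking $v\in V\setminus U$ and using that $v-(v_i/w_i)w\in V\cap H_i\subset U$ for any $w\in V$ with $w_i\ne 0$ would force $\dim V/U=1$, hence $V\cap H_i=U$ for every $i$, and thus $U\subset \bigcap_i H_i=\{0\}$, contradicting $d\ge 2$. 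Since each summand $V\cap H_i$ consists of defective vectors, $V$ is spanned by defective vectors and therefore admits a basis consisting of such.
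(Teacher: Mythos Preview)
Your main argument is correct and essentially identical to the paper's: both form the $d\times m$ matrix of basis vectors and left-multiply by an invertible $d\times d$ matrix (the paper uses the inverse of a nonsingular $d\times d$ submatrix, you use row reduction to RREF) to produce a $d\times d$ identity block, then observe that $d\ge 2$ forces each row to have a zero entry. Your alternative hyperplane argument is also valid and gives a coordinate-free variant, though it is not the route the paper takes.
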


\begin{proof} Let $A$ be a $d \times m$ matrix whose rows
form a basis of $V$. Then
\[
  V =\{ wA \; | \;  w \in F^d \} \, .
\]
Note that for any invertible $d \times d$ matrix $B$, the rows of
$BA$ will also form a basis of $V$. Since the rows of $A$ are
linearly independent, $A$ has a nondegenerate $d \times d$ submatrix
$M$. Let $B = M^{-1}$. Then the $d \times m$ matrix $BA$ has a $d
\times d$ identity submatrix. Since $d \ge 2$, this implies that
every row of $BA$ is defective. The rows of $BA$ thus give us a
desired basis of defective vectors for $V$.
\end{proof}

\begin{definition} \label{def.coordinate}
We will say that a subspace $V\subset F^m$ is a {\em coordinate subspace}
if $V$ has a basis of coordinate vectors $e_{i_1}, \dots, e_{i_d}$,
for some $I = \{ i_1, \dots, i_d \} \subset \{ 1, \dots, m \}$.
We will denote such a subspace by $F_I$.

In subsequent sections we will occasionally use this notation in the more
general setting, where $F$ is a commutative ring but not necessarily
a field. In this setting $F_I$ will denote the free $F$-submodule
of $F^n$ generated by  $e_{i_1}, \dots, e_{i_d}$.
\end{definition}

\begin{lemma} \label{lem.combinatorics3}
Let $V \subset F^m$ be an $F$-subspace. Suppose $V \cap F_I$ is
coordinate for every $I \subsetneq \{ 1, \dots, m \}$, then either
\begin{itemize}
\item $V$ is the $1$-dimensional subspace spanned by 
some $\ba=(a_1, \dots, a_m)$, where $a_1\ne 0,\ \dots,\  a_m \ne 0$,
or
\item $V$ is coordinate.
\end{itemize}
\end{lemma}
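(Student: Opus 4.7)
The plan is to split into cases based on whether $V$ is contained in a proper coordinate subspace and on the dimension of $V$.

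First I would dispose of the easy case: if $V \subseteq F_I$ for some proper subset $I \subsetneq \{1,\dots,m\}$, then $V = V \cap F_I$ is coordinate by hypothesis, so we are done. Thus I may assume $V$ is \emph{not} contained in any proper coordinate subspace $F_I$, equivalently, every standard coordinate is hit non-trivially by some vector in $V$. If $\dim V \le 1$, then either $V = 0$ (coordinate) or $V = F\mathbf{a}$ for some $\mathbf{a}$. In the latter case $\mathbf{a}$ cannot be defective (otherwise $V$ would sit in a proper $F_I$), so all coordinates of $\mathbf{a}$ are non-zero and we land in the first alternative of the lemma.

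The main case is $\dim V = d \ge 2$, where I claim $V$ must actually be coordinate. The key step uses Lemma~\ref{lem.combinatorics0}: $V$ admits a basis $v_1,\dots,v_d$ of defective vectors. For each $v_j$, let $S_j := \mathrm{supp}(v_j) \subsetneq \{1,\dots,m\}$; then $v_j \in V \cap F_{S_j}$, which by hypothesis is coordinate, so it admits a basis of standard vectors $e_{i}$ with $i \in S_j$. Writing $v_j$ in this basis and comparing supports forces the indexing set of this basis to be exactly $S_j$, and in particular $e_i \in V$ for every $i \in S_j$.

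Having established that $e_i \in V$ for every $i \in \bigcup_j S_j$, I can conclude: each $v_j$ lies in the coordinate subspace $F_{\bigcup_j S_j}$, so $V \subseteq F_{\bigcup_j S_j}$, and since all generators $e_i$ of $F_{\bigcup_j S_j}$ (for $i \in \bigcup_j S_j$) are in $V$, we get $V = F_{\bigcup_j S_j}$, which is coordinate. The only subtle point, and the one I would write out carefully, is the support-matching argument identifying $S_j$ exactly with the indexing set of the coordinate basis of $V \cap F_{S_j}$; this is where the hypothesis on \emph{every} proper $I$ (not just maximal proper $I$) is genuinely used, and it is the only real obstacle in the proof.
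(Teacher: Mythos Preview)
Your proof is correct and follows essentially the same strategy as the paper: produce a basis of defective vectors via Lemma~\ref{lem.combinatorics0}, then use the hypothesis on $V\cap F_I$ to conclude that $V$ is spanned by coordinate vectors. The paper's version is slightly more streamlined---it simply notes that each defective basis vector lies in some $V\cap F_I$, which is spanned by coordinate vectors, hence so is $V$---without your explicit support-matching step; incidentally, your remark that this step ``is where the hypothesis on \emph{every} proper $I$ is genuinely used'' is not quite right, since any proper $I\supseteq S_j$ (in particular one with $|I|=m-1$) yields the same conclusion $e_i\in V$ for all $i\in S_j$.
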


\begin{proof} Assume that $V$ is not of the form
$\Span_F (\bf{a})$, where ${\bf a} = (a_1, \dots, a_m)$ and
$a_1\ne 0,\dots ,a_m \ne 0$.
Then $V$ has a basis of defective vectors. Indeed, if $\dim(V) = 1$
this is obvious, since every vector in $V$ is defective. The case
where $\dim(V) \ge 2$ is covered by Lemma~\ref{lem.combinatorics0}.

Clearly $v \in F^m$ is defective if and only if $v \in F_I$ for some
$I \subsetneq \{ 1, \dots, m \}$. Thus $V$ is spanned by $V \cap
F_I$, as $I$ ranges over the proper subsets of $\{ 1, \dots, m \}$.
By our assumption, each $V \cap F_I$ is coordinate and therefore is
spanned by coordinate vectors.
We conclude that $V$ itself is spanned by coordinate vectors, i.e.,
is coordinate, as desired.
\end{proof}

\begin{definition} \label{def.almost-coordinate}
We will say that $V \subset F^m$ is {\em almost coordinate} if
$V$ has a basis of the form
\begin{equation} \label{e.basis}
e_{i_1}, \dots, e_{i_r}, e_{j_1} + e_{h_1}, \dots, e_{j_s} +
e_{h_s},
\end{equation}
where $i_1, \dots, i_r, j_1, \dots, j_s, h_1, \dots, h_s$
are distinct integers between $1$ and $m$.
We will refer to a basis of this form as an {\em almost coordinate
basis} of $V$.
\end{definition}

\begin{remark} \label{rem.almost-coordinate-uniqueness}
An almost coordinate subspace $V \subset F^m$ has
a unique almost coordinate basis. In other words,
the set of integers $\{i_1, \dots, i_r\}$ and the set of unordered pairs
$\{ \{j_1,h_1\}, \dots, \{j_s, h_s\}\}$  in~\eqref{e.basis}
are uniquely determined by $V$.
Indeed, $\{i_1, \dots , i_r \}$ is the set of subscripts $i \in \{
1, \dots, m \}$ such that the coordinate vector $e_i$ lies in $V$.
The set $\{ \{j_1, h_1 \} , \{j_2, h_2\}, \dots , \{j_s, h_s \} \}$
is then the set of unordered pairs $\{j, h \}$ such that $j, h \not
\in \{ i_1, \dots , i_r \}$ and $e_j + e_h \in V$.
\end{remark}

\begin{proposition} \label{prop.combinatorics}
Let $F=\bbZ/2 \bbZ$, and let $V \subset F^m$ be an $F$-subspace for
some $m \ge 4$. Assume $V \cap F_I$ is almost coordinate in
$F_I\cong(\Z/2\Z)^r$ for every $I = \{ i_1, \dots, i_r \} \subsetneq
\{1,\dots, m\}$. Then either
\begin{itemize}
\item $V$ is the $1$-dimensional subspace spanned by $(1,\dots,1)$,
or
\item $V$ is almost coordinate.
\end{itemize}
\end{proposition}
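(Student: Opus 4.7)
The plan is to adapt the strategy of Lemma \ref{lem.combinatorics3}: first reduce to the case in which $V$ is spanned by defective vectors, and then show that the natural candidate for an almost coordinate basis works. Define
\[
E := \{i : e_i\in V\}, \qquad P := \{\{j,h\} : j\neq h,\ j,h\notin E,\ e_j+e_h\in V\};
\]
by Remark \ref{rem.almost-coordinate-uniqueness}, these sets are forced on us if an almost coordinate basis of $V$ exists at all. The goal will be to prove that $V$ is spanned by $\{e_i:i\in E\}\cup\{e_j+e_h:\{j,h\}\in P\}$ and that $P$ is a matching.

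The first step is the reduction. Over $\mathbb{F}_2$ a vector is non-defective if and only if it equals $(1,\dots,1)$. So if $V$ does not contain $(1,\dots,1)$, every nonzero vector of $V$ is defective; and if $V$ contains $(1,\dots,1)$ but is not equal to $\langle(1,\dots,1)\rangle$, then for any $u\in V\smallsetminus\langle(1,\dots,1)\rangle$ both $u$ and $u+(1,\dots,1)$ are defective, and their sum is $(1,\dots,1)$. In either case $V$ is spanned by defective vectors, so $V=\sum_{I\subsetneq\{1,\dots,m\}}V\cap F_I$. By hypothesis each $V\cap F_I$ has an almost coordinate basis, so $V$ is generated by vectors of the form $e_i$ or $e_j+e_h$ that lie in $V$. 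A short case check using the implication $e_j,\ e_j+e_h\in V\Rightarrow e_h\in V$ shows that a pair $e_j+e_h\in V$ with exactly one of $j,h$ in $E$ cannot occur; hence the generating set collapses to $\{e_i:i\in E\}\cup\{e_j+e_h:\{j,h\}\in P\}$.

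The main step — and the reason the hypothesis $m\geq 4$ enters — is to show that $P$ is a matching on $\{1,\dots,m\}\smallsetminus E$, for then the proposed generators have disjoint supports, are linearly independent, and constitute the desired almost coordinate basis of $V$. I would argue by contradiction: suppose $\{j,h\},\{j,k\}\in P$ are distinct. Then $j,h,k\notin E$ and $V\cap F_{\{j,h,k\}}$ contains the linearly independent pair $e_j+e_h,\ e_j+e_k$ (and hence also $e_h+e_k$), but contains no $e_i$ with $i\in\{j,h,k\}$. A direct inspection of $F^3=(\mathbb{Z}/2\mathbb{Z})^3$ shows that every almost coordinate subspace of dimension $\geq 2$ must contain at least one coordinate vector $e_i$ (two disjoint pair-sums would require four coordinates, and a $3$-dimensional subspace is all of $F^3$). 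This forces $V\cap F_{\{j,h,k\}}$ to fail the almost coordinate condition, contradicting the hypothesis — which applies precisely because $m\geq 4$ makes $\{j,h,k\}$ a proper subset of $\{1,\dots,m\}$. The only place in the whole argument where $m\geq 4$ is needed is in guaranteeing that this witnessing $I$ is proper, so this is the step that has to be handled with care.
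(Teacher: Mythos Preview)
Your proof is correct and close in spirit to the paper's, but the execution differs in a way worth noting. The paper chooses a spanning set of weight-$1$ and weight-$2$ vectors of \emph{minimal total Hamming weight} and then argues that the subscripts must be distinct: an overlap of type $i_1=j_1$ or $j_1=j_2$ would allow a weight-reducing replacement (in the second case, after first deducing from the hypothesis on $V\cap F_{\{j,h_1,h_2\}}$ that $e_j,e_{h_1},e_{h_2}\in V$). You instead go straight to the canonical candidate basis by defining $E$ and $P$ from $V$ and showing $P$ is a matching; the same $3$-index observation appears, but for you it yields an immediate contradiction with $j,h,k\notin E$ rather than a weight reduction. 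Your route is a bit more direct (no extremal argument), while the paper's has the advantage of not needing to verify separately that the $E$--$P$ generators span and are independent; both use $m\ge 4$ at exactly the same spot.
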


\begin{proof} Assume that $V$ is not of the form
$\Span_F \, \{ (1,\dots,1) \}$.
Then, once again,
Lemma~\ref{lem.combinatorics0} tells us that $V$ has a basis of
defective vectors, i.e., $V$ is spanned by $V \cap F_I$, as $I$
ranges over the proper subsets of $\{ 1, \dots, m \}$. By our
assumption, each $V \cap F_I$ is almost coordinate and therefore is
spanned by vectors of Hamming weight $1$ or $2$. We conclude that
$V$ itself is spanned by vectors of weight $1$ or $2$. Choose a
spanning set of the form
\begin{equation} \label{e.min-wt}
e_{i_1}, \dots, e_{i_r},
e_{j_1} + e_{h_1}, \dots, e_{j_s} + e_{h_s}
\end{equation}
of minimal total Hamming weight, i.e., with minimal value of $r + 2s$.
Here
\[ i_1, \dots, i_r, j_1, h_1, \dots, j_s, h_s \in \{ 1, \dots, m \} \]
and $j_1 \ne h_1, \dots, j_s \neq h_s$. We claim that~\eqref{e.min-wt}
is  an almost coordinate basis of $V$,
i.e., that the subscripts
\begin{equation} \label{e.subscripts}
i_1, \dots, i_r, j_1, \dots, j_s, h_1, \dots, h_s
\end{equation}
are all distinct.  Clearly,
Proposition~\ref{prop.combinatorics} follows from this claim.

It thus remains to prove the claim. The minimality of the total
Hamming weight of our spanning set~\eqref{e.min-wt} implies that we
cannot remove any vectors, i.e., that it is a basis of $V$.  In
particular, the subscripts $i_1, \dots, i_r$ and the pairs $(j_1,
h_1), \dots, (j_s, h_s)$ are distinct. If there is an overlap among
subscripts~\eqref{e.subscripts}, then, after permuting
  coordinates, we have either $i_1 = j_1$ or $j_1 = j_2$. We will now
show that neither of these equalities can occur.

If $i_1 = j_1$ then we may replace $e_{j_1} + e_{h_1}$ by
\[ e_{h_1} = (e_{j_1} + e_{h_1}) - e_{i_1} \in V \, . \]
We will obtain a new spanning set consisting of vectors
of weight $1$ or $2$ with smaller total weight,
a contradiction.

Now suppose $j_1 = j_2$. Denote this number by $j$. Then $V \cap F_{
\{j, h_1, h_2 \}}$ contains the vectors
\begin{equation} \label{e.combinatorics}
\text{$e_{j} + e_{h_1}$ and $e_j + e_{h_2} \in V$.}
\end{equation}
Since we are assuming that $m \ge 4$, $\{j, h_1, h_2 \} \subsetneq
\{ 1, \dots, m \}$ and hence, $V \cap F_{ \{j, h_1, h_2 \}}$ is
almost coordinate. The subspace in  $F_{ \{j, h_1, h_2 \}}$
generated by the two vectors \eqref{e.combinatorics} is cut by the
linear equation
$$
x_j+x_{h_1}+x_{h_2}=0
$$
and clearly is not almost coordinate. It follows that $V \cap F_{
\{j, h_1, h_2 \}}=F_{ \{j, h_1, h_2 \}}$, hence $V$ contains all
three of the coordinate vectors $e_j$, $e_{h_1}$ and $e_{h_2}$.
Replacing $e_j + e_{h_1}$ and $e_j + e_{h_2}$ by $e_j, e_{h_1}$ and
$e_{h_2}$ in our spanning set, we reduce the total weight by one, a
contradiction. This completes the proof of the claim and thus of
Proposition~\ref{prop.combinatorics}.
\end{proof}

\section{Coordinate subspaces and quasi-permutation lattices}
\label{sect13}

\begin{proposition}\label{prop:coordinate-qp}
Let $W$ be a finite group,
$M$  a $W$-lattice, and  $\lambda\colon M\to F:=\Z/p\Z$
a surjective morphism of $W$-modules,
where $p$ is a prime and $W$ acts trivially on $F$.
For any $m\ge 1$, and an $F$-subspace $S \subset V:=F^m$,
let $M^m_S$ be the preimage of $S \subset F^m$ under
the projection $\lambda^m \colon M^m \to F^m$.

Assume that
\begin{enumerate}[\upshape(a)]
\item[\rm(a)] $M$ and $\ker[M\to F]$ are  quasi-permutation $W$-lattices;
\item[\rm(b)] the $W^m$-lattice $M^m_{S_1}$ is {\em not} quasi-permutation
for any $1$-dimensional subspace $S_1$ of $F^m$
of the form $S_1 = \Span_F \{ (a_1, \dots, a_n) \}$,
where $a_1\ne 0,\ \dots, a_m\neq 0$.
\end{enumerate}
Then, given a subspace $S\subset F^m$, $M^m_S$ is a
quasi-permutation $W^m$-lattice if and only if $S$ is coordinate.
\end{proposition}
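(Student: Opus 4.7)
The $(\Leftarrow)$ direction is immediate: if $S = F_I$ for some $I \subset \{1,\dots,m\}$, then
\[
M^m_S \;\cong\; \bigoplus_{i\in I} M \;\oplus\; \bigoplus_{j\notin I} Q
\]
as a $W^m$-lattice, with each factor of $W^m$ acting nontrivially on exactly one summand. Assumption (a) says each summand is a quasi-permutation $W$-lattice, so by Lemma \ref{LPR-reduction-product} the whole direct sum is a quasi-permutation $W^m$-lattice.

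For $(\Rightarrow)$ I argue by induction on $m$. The base case $m = 1$ is trivial since every subspace of $F$ is either $0$ or $F$, both coordinate. For $m \geq 2$, suppose $M^m_S$ is quasi-permutation and, for contradiction, that $S \subset F^m$ is not coordinate. Lemma \ref{lem.combinatorics3} leaves exactly two possibilities: either $S$ is the $1$-dimensional span of some $(a_1,\dots,a_m)$ with every $a_i \neq 0$, or there exists a proper subset $I \subsetneq \{1,\dots,m\}$ such that $S \cap F_I$ is not coordinate in $F^{|I|}$. In the first case, assumption (b) directly contradicts the quasi-permutation property of $M^m_S$.

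In the second case I decompose $W^m = W^I \times W^{I^c}$ acting componentwise on $M^m = M^I \oplus M^{I^c}$ and apply Lemma \ref{LPR-reduction-two} to the $W^m$-invariant sublattice $M^m_S \subset M^I \oplus M^{I^c}$. The lemma gives that $M^m_S \cap M^I$, which one checks to be canonically isomorphic to $M^{|I|}_{S \cap F_I}$, is quasi-permutation as a $W^I$-lattice. The induction hypothesis applied at dimension $|I| < m$ then forces $S \cap F_I$ to be coordinate, contradicting the choice in (ii).

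The most delicate point in this plan is that invoking the inductive hypothesis at level $|I|$ requires hypothesis (b) to hold at level $|I|$ as well, whereas the statement only grants (b) at the original level $m$. Hypothesis (a) clearly carries over to every $m' \leq m$; to transfer (b) from $m$ to $m' = |I|$, given a $1$-dimensional all-nonzero subspace $S'_1 \subset F^{m'}$ spanned by $v' = (a_1,\dots,a_{m'})$, I extend to the all-nonzero vector $v = (a_1,\dots,a_{m'},1,\dots,1) \in F^m$ and compare $M^{m'}_{\Span_F(v')}$ with $M^m_{\Span_F(v)}$ via the short exact sequence
\[
0 \to Q^{m-m'} \to M^m_{\Span_F(v)} \to M^{m'}_{\Span_F(v')} \to 0,
\]
exploiting that $Q^{m-m'}$ is quasi-permutation by (a) and Lemma \ref{LPR-reduction-product} to conclude that the failure of $M^{m'}_{\Span_F(v')}$ to be quasi-permutation at level $m'$ is equivalent to the corresponding failure at level $m$, which is guaranteed by (b). This dimensional bookkeeping between the various levels of the induction is the main technical obstacle.
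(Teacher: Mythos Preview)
Your overall strategy is identical to the paper's: induct on $m$, use Lemma~\ref{LPR-reduction-two} to pass to $M^m_S \cap M_I \cong M^{|I|}_{S\cap F_I}$, apply the induction hypothesis to force $S\cap F_I$ coordinate for every proper $I$, then invoke Lemma~\ref{lem.combinatorics3} and rule out the all-nonzero line via~(b). The paper argues directly rather than by contradiction, but the content is the same.

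You are right that applying the induction hypothesis at level $|I|$ presupposes hypothesis~(b) at level~$|I|$; the paper does not comment on this. However, your short exact sequence argument for transferring~(b) downward is wrong. An extension of a quasi-permutation lattice by a quasi-permutation lattice need not be quasi-permutation, and your own construction already exhibits this: take $m'=1$, so that $S_1'=F$ and $M^{1}_{S_1'}=M$, which is quasi-permutation by~(a). Your sequence becomes
\[
0 \longrightarrow Q^{m-1} \longrightarrow M^m_{\langle(1,\dots,1)\rangle} \longrightarrow M \longrightarrow 0,
\]
with both outer terms quasi-permutation as $W^m$-lattices, yet the middle term is \emph{not} quasi-permutation by~(b). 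So the ``equivalence'' you assert fails, and nothing in the argument distinguishes $m'=1$ from $m'\ge 2$.

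The correct resolution is to read hypothesis~(b) as being assumed for every $m\ge 2$, not just for one fixed~$m$. This is how the paper actually applies the proposition: in Theorems~\ref{thm.SL3}, \ref{thm.Spin5}, and~\ref{thm.SO6} the supporting Propositions~\ref{prop:SL3}, \ref{prop1.not-qp}, and~\ref{prop:SO6} each establish~(b) for all $m\ge 2$ simultaneously. With that reading the induction hypothesis at level $|I|$ is available directly, and your step~5 is unnecessary.
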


The following notation will be helpful in the proof of
Proposition~\ref{prop:coordinate-qp} and in the subsequent sections.

\begin{definition} \label{defn.subscript}
Let $W$ be a finite group, $M$ a $W$-module and
$m$ a positive integer.
Given a subset $I \subset \{ 1 , \dots, m \}$, we define
the ``coordinate subgroup" $W_I \subset W^m$ as
\[ W_I: = \{ (w_1, \dots, w_m)\in W^m \, | \, \text{ $w_i = \id$
if $i \not \in I$}
\}.
\]
We will also define the $W_I$-submodule $M_I$ of $M^m$ as
$$
M_I := \{
(a_1, \dots, a_m)\in M^m \, | \, \text{$a_i = 0$ if $i \not \in I$}
\} .
$$
  We shorten $W_{\{i\}}$, $M_{\{i\}}$ to $W_i$, $M_i$.
\end{definition}

\begin{proof}[Proof of Proposition \ref{prop:coordinate-qp}]
The ``if'' assertion is clear. We will prove ``only if'' by
induction on $m$. In the base case, $m = 1$, every subspace of $V$
is coordinate, so there is nothing to prove.

For the induction step, assume that
$m \ge 2$ and that our assertion has been established for
all $m' < m$.  Suppose that for some subspace $S\subset F^m$
the lattice $M^m_S$ is quasi-permutation.
We want to show that $S$ is  coordinate.

Since  $M^m_S$ is  quasi-permutation, Lemma \ref{LPR-reduction-two}
tells us that $M^m_S\cap M_I$ is a quasi-permutation $W_I$-lattice for
every $I \subsetneq \{ 1 , \dots, m \}$
(cf.~Definition~\ref{defn.subscript} above).
But $M^m_S\cap M_I=M^m_{S\cap F_I}$, and so
by the induction
hypothesis $S\cap F_I$ is a coordinate subspace
in $F_I$ (and hence, in $F^m$).

Now Lemma~\ref{lem.combinatorics3} tells us that either $S$ is a
$1$-dimensional subspace of $F^m$ which does not lie in any
coordinate hyperplane or $S$ is a coordinate subspace in $F^m$. Our
assumption (b) rules out the first possibility. Hence, $S$ is a
coordinate subspace of $F^m$, as claimed.
\end{proof}

\section{Proof of Theorem~\ref{thm:product-closed-field}
\label{sect14} for $H$ of types $\AA_{n-1}$ ($n\ge 5$), $\BB_n$ ($n
\ge 3$) and $\DD_n$ ($n \ge 4$)}

Starting from this section,
we will prove Theorem~\ref{thm:product-closed-field}
case by case.

\begin{notation}\label{subsec:Q-P-F-W}
Let $R$ be an irreducible reduced root system. We denote by $Q=Q(R)$
the root lattice of $R$ and by $P=P(R)$ the weight lattice of $R$,
both lattices regarded as $W:=W(R)$-lattices.
Given a positive integer $m$ and a subset
$I \subset \{ 1 , \dots, m \}$, we
define $W_I \subset W^m$, and the $W_I$-modules
$Q_I$, $P_I$, etc.,
as in  Definition~\ref{defn.subscript}. The base field $k$ is assumed
to be algebraically closed of characteristic zero.
\end{notation}

\subsection{Case $\AA_{n-1}$ $(n\ge 5)$}

\begin{theorem} \label{thm.SLm}
Let $G = (\gSL_n)^m/C$, where $n\ge5$ and  $C$ is
a subgroup of $(\mu_n)^m=Z(\gSL_n^m)$. Then
the following conditions are equivalent:
\begin{enumerate}[\upshape(a)]
\item[\rm(a)] $G$ is Cayley,
\item[\rm(b)] $G$ is stably Cayley,
\item[\rm(c)] the character lattice $\X(G)$ is quasi-permutation,
\item[\rm(d)] $\X(G)=Q^m$,
\item[\rm(e)] $G$ is isomorphic to  $(\gPGL_n)^m$.
\end{enumerate}
\end{theorem}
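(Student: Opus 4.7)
The plan is to close a cycle of implications. (a)$\Rightarrow$(b) is immediate from the definition, (b)$\Leftrightarrow$(c) is Theorem~\ref{thm.main1}, (e)$\Rightarrow$(a) holds because $\gPGL_n$ is Cayley by \cite[Example 1.11]{LPR06} and the direct product of Cayley maps exhibits $(\gPGL_n)^m$ as a Cayley group, and (d)$\Leftrightarrow$(e) follows because a semisimple group over an algebraically closed field is determined by its character lattice together with its root system, while $(\gPGL_n)^m$ has character lattice $Q^m$. The crux is therefore (c)$\Rightarrow$(d).

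For (c)$\Rightarrow$(d) I would first identify $\X(G)$ explicitly. Writing $V := P/Q \cong \Z/n\Z$ and $\lambda \colon P \twoheadrightarrow V$ for the natural projection, a central subgroup $C \subset \mu_n^m = Z(\gSL_n^m)$ corresponds by Pontryagin duality to $S := \ker\bigl(V^m \twoheadrightarrow \XX(C)\bigr) \subset V^m$, and then $\X(G) = P^m_S$ in the notation of Section~\ref{sect10}. The condition $S = 0$ is equivalent to $\X(G) = Q^m$. Thus (c)$\Rightarrow$(d) reduces to: if $P^m_S$ is a quasi-permutation $W^m = (\Sym_n)^m$-lattice, then $S = 0$.

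I plan to induct on $m$. The base case $m = 1$ is covered by \cite[Theorem 1.28]{LPR06}: for $n \ge 5$ the only stably Cayley form of $\gSL_n/C_1$ is $\gPGL_n$, forcing $S_1 = 0$. For the inductive step, assume the result for all smaller values of $m$, and let $P^m_S$ be quasi-permutation. For each proper subset $I \subsetneq \{1,\dots,m\}$, Lemma~\ref{LPR-reduction-two} applied to the decomposition $P^m \otimes \Q = \bigoplus_{i=1}^m P \otimes \Q$ under $W^m = \prod_i W_i$ gives that $P^m_S \cap P_I$ is quasi-permutation as a $W_I$-lattice; after identifying $P_I$ with $P^{|I|}$ this intersection is $P^{|I|}_{S \cap V_I}$, so by the inductive hypothesis $S \cap V_I = 0$. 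Equivalently, every nonzero $v \in S$ has all $m$ coordinates nonzero modulo $n$.

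It remains to rule out $S \neq 0$ under this full-support constraint. Pick a nonzero $v = (a_1,\dots,a_m) \in S$; the fact that every nonzero multiple $tv$ again has full support forces all the $a_i$ to have the same order $n/d$ in $\Z/n\Z$ for some $d < n$, so after choosing any prime $p \mid n/d$ the reduction $\bar v \in (\Z/p\Z)^m$ is nonzero with full support. In the spirit of Propositions~\ref{prop1.not-qp}, \ref{prop:SO6}, and \ref{prop:SL3}, I would construct an embedding $\iota \colon \Gamma := (\Z/p\Z)^2 \hookrightarrow W^m$ through suitable transpositions in each $\Sym_n$ factor, together with a $\Gamma$-equivariant direct sum decomposition of $P^m_S$ containing a summand isomorphic to $J_\Gamma = \Z[\Gamma]/\Z$. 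By Proposition~\ref{thm:J-Gamma} this would give $\Sh(\Gamma, P^m_S) \supseteq \Sh(\Gamma, J_\Gamma) \cong \Z/p\Z \neq 0$, contradicting quasi-invertibility (and \emph{a fortiori} quasi-permutation) of $P^m_S$. The hard part will be this final combinatorial construction: exhibiting $\iota$ and the $J_\Gamma$-summand for $\AA_{n-1}$ with $n \ge 5$, in analogy with but distinct from the constructions for types $\BB_n$, $\DD_n$, and $\AA_2$.
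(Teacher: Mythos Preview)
Your setup for the easy implications and the inductive framework for (c)$\Rightarrow$(d) are correct, and your conclusion that every nonzero element of $S$ has full support (equivalently, that $S$ is cyclic) agrees with the paper. However, from that point on the paper proceeds very differently, and your proposed endgame has both a concrete error and a genuine gap.

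\textbf{The error.} Your claim that for any prime $p\mid n/d$ the reduction $\bar v\in(\Z/p\Z)^m$ has full support is false. Take $n=9$, $v=(3,\dots,3)\in(\Z/9\Z)^m$. Each coordinate has order $3=n/d$ with $d=3$; the only prime dividing $n/d$ is $p=3$, and every coordinate reduces to $0$ modulo $3$. More generally this fails whenever $p\mid d$, which happens for instance whenever $n=p^2$. (This could be repaired by first passing to the subgroup $d\cdot(\Z/n\Z)^m\cong(\Z/(n/d)\Z)^m$ before reducing mod $p$, but as written it is wrong.)

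\textbf{The gap.} Even after this repair, you still owe the ``hard construction'' of a $J_\Gamma$ summand for type $\AA_{n-1}$ with $n\ge 5$, and you explicitly flag this as unfinished. The paper contains no such construction, and in fact warns (in the Remark immediately following the $\BB_n/\DD_n$ case) that the quasi-invertibility route is genuinely obstructed in type $\AA$: for $n$ prime the weight lattice $P=\X(\gSL_n)$ is quasi-invertible as a $W$-lattice by \cite[Proposition~9.1 and Remark~9.3]{CTS2}, so one cannot simply mimic the $\gSpin$ argument.

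\textbf{What the paper does instead.} Having shown that $S$ is cyclic, the paper invokes Corollary~\ref{cor:standard1} to obtain an isomorphism of $W$-lattices (for the \emph{diagonal} $W\subset W^m$)
\[
L \;\cong\; L_1 \oplus Q^{m-1},
\]
with $Q\subset L_1\subset P$. Now $Q=\ker\bigl[\Z[\Sym_n/\Sym_{n-1}]\to\Z\bigr]$ is a quasi-permutation $W$-lattice, hence so is $Q^{m-1}$; since $L$ is quasi-permutation over $W^m$ it is quasi-permutation over $W$, and therefore $L_1\sim L_1\oplus Q^{m-1}\cong L\sim 0$ as $W$-lattices. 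Thus $L_1$ is quasi-permutation over $W$, and the $m=1$ case (i.e.\ \cite[Proposition~5.1]{LPR06}) forces $L_1=Q$, whence $S=0$ and $L=Q^m$. No $\Sha$ computation or $J_\Gamma$ construction is needed; the whole point is to restrict to the diagonal $W$ and cancel the quasi-permutation summand $Q^{m-1}$, reducing directly to the rank-one case. This is the key idea you are missing.
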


\begin{proof}

(a) $\Longrightarrow$ (b) is obvious.

(b) $\Longrightarrow$ (c) follows from \cite[Theorem 1.27]{LPR06}.

(d) $\Longrightarrow$ (e): clear.

(e) $\Longrightarrow$ (a): clear, because the group $\gPGL_n$ is
Cayley, see \cite[Theorem 1.31]{LPR06}, and a product of Cayley groups
is obviously Cayley.

The implication (c) $\Longrightarrow$ (d)
follows from the next proposition.
\end{proof}

\begin{proposition}\label{prop:An}
Let $R=\AA_{n-1}$, where  $n\ge5$.  Suppose an
intermediate $W^m$-lattice $L$ between $Q^m$ and $P^m$.
is quasi-permutation. Then $L=Q^m$.
\end{proposition}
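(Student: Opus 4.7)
The plan is to induct on $m$, using Lemma~\ref{LPR-reduction-two} to slice along coordinates and a combinatorial reduction modulo a prime to finish.

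\emph{Base case $m=1$.} Intermediate $W$-lattices between $Q$ and $P$ correspond bijectively to the central quotients $\gSL_n/C$ with $C \subseteq \mu_n$. By \cite[Theorem 1.28]{LPR06} the only stably Cayley group in this family for $n \ge 5$ is $\gPGL_n$, so Theorem~\ref{thm.main1} (or \cite[Theorem 1.27]{LPR06}) forces $L = Q$.

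\emph{Inductive step $m \ge 2$.} Let $L$ be quasi-permutation and set $\tilde L := L/Q^m \subseteq (\Z/n\Z)^m$. For each proper subset $I \subsetneq \{1,\ldots,m\}$, decompose $P^m \otimes_\Z \Q = V_I \oplus V_{I^c}$ as a $W^{|I|}\times W^{|I^c|}$-representation and apply Lemma~\ref{LPR-reduction-two}: $L \cap V_I$ is a quasi-permutation $W^{|I|}$-lattice lying between $Q_I$ and $P_I$, so the inductive hypothesis gives $L \cap V_I = Q_I$. Equivalently, $\tilde L \cap (\Z/n\Z)_I = 0$ for every proper $I$; call this condition $(\ast)$.

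\emph{Finishing.} To show $\tilde L = 0$, suppose not and pick a prime $p$ dividing $|\tilde L|$ (hence $p \mid n$). The $p$-torsion $\tilde L[p]$ is a nonzero $\F_p$-subspace of $((\Z/n\Z)[p])^m \cong (\Z/p\Z)^m$ still satisfying $(\ast)$. By Lemma~\ref{lem.combinatorics3}, $\tilde L[p]$ is either the full space $(\Z/p\Z)^m$ or a $1$-dimensional subspace $\Span_{\F_p}\{(a_1,\ldots,a_m)\}$ with every $a_i \ne 0$. The first possibility is ruled out immediately by $(\ast)$, because $(\Z/p\Z)_I \ne 0$ for any nonempty proper $I$. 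In the second, $L$ contains a ``diagonal'' $W^m$-sublattice analogous to $L_{\ba}$ of Proposition~\ref{prop:SL3}; one then argues that $L$ is not quasi-invertible by embedding a subgroup $\Gamma \cong \Z/p\Z \times \Z/p\Z$ of $W^m = (\Sym_n)^m$ in such a way that $L|_\Gamma$ contains $J_\Gamma$ as a direct summand, and invoking Proposition~\ref{thm:J-Gamma} to get $\Sh(\Gamma, L) \ne 0$, contradicting the hypothesis that $L$ is quasi-permutation.

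\emph{Main obstacle.} The hard part is this $1$-dimensional subcase. Proposition~\ref{prop:SL3} settles it for $\AA_2$ (i.e.\ $n=3$); extending the construction of $\Gamma$ and of the $J_\Gamma$-summand to $\AA_{n-1}$ with $n \ge 5$ is the technical heart of the argument. Once it is available, $\tilde L[p] = 0$ for every prime $p \mid n$, so $\tilde L = 0$ and $L = Q^m$.
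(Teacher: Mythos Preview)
Your argument is sound through condition $(\ast)$, and in fact $(\ast)$ with the single choice $I=\{2,\dots,m\}$ already forces the projection $\tilde L\to F_1=\Z/n\Z$ to be injective, so $\tilde L$ is cyclic; the $p$-torsion detour and Lemma~\ref{lem.combinatorics3} are not needed for this. The genuine gap is exactly the step you flag as the ``technical heart'': you do not construct the subgroup $\Gamma\cong(\Z/p\Z)^2$ or exhibit $J_\Gamma$ as a direct summand of $L|_\Gamma$, and for $\AA_{n-1}$ with $n$ prime this route is delicate. Indeed, the Remark following Theorem~\ref{thm.SpinGE7} points out that the weight lattice $P=\X(\gSL_n)$ is quasi-\emph{invertible} when $n$ is prime, so any argument aiming to show that $L$ is not quasi-invertible must exploit $m\ge 2$ in an essential way; Proposition~\ref{prop:SL3} (which is specific to $\AA_2$) does not carry over directly.

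The paper avoids this difficulty altogether. Once $S=\tilde L$ is known to be cyclic, Corollary~\ref{cor:standard1} (applied with the diagonal $W$-action on $P^m$) gives an isomorphism of $W$-lattices $L\cong L_1\oplus Q^{m-1}$ with $Q\subset L_1\subset P$. Since $Q$ is a quasi-permutation $W$-lattice and $L$ is quasi-permutation as a $W^m$-lattice (hence also as a $W$-lattice), one gets $L_1\sim L_1\oplus Q^{m-1}\cong L\sim 0$, so $L_1$ is quasi-permutation over $W$. The base case \cite[Proposition~5.1]{LPR06} then forces $L_1=Q$, whence $S=0$ and $L=Q^m$. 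Thus the inductive step reduces straight back to $m=1$, with no new computation of the kind you propose.
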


\begin{proof}
We proceed by induction on $m$. The base case, $m = 1$,
follows from~\cite[Proposition 5.1]{LPR06}. For the induction step,
assume that $m\ge 2$ and that the proposition holds for
$m-1$. We will show that it also holds for $m$.

Set $I :=\{2,\dots,m\}\subset\{1,2,\dots,m\}$
and $F=P/Q=\mathbb Z/n\mathbb Z$.
In view of Lemma~\ref{LPR-reduction-two}, $L\cap P_I$ is a
quasi-permutation $W_I$-lattice. By the induction hypothesis, $L\cap
P_I=Q_I$. Set $S=L/Q^m\subset F^m$, then  $S\cap F_I=0$. It follows
that the canonical projection $S\to F_1$ is injective. As $F=\mathbb
Z/n\mathbb Z$, we have $S\cong\Z/d\Z$ for some divisor $d$ of $n$.

In the notation of the beginning of Section~\ref{sect10}, $L=P^m_S$ as  a
$W$-lattice (where $W$ acts on $P^m$ diagonally). By Corollary
\ref{cor:standard1},
\begin{equation}\label{eq:sum}
L\cong L_1\oplus Q^{m-1},
\end{equation}
where $Q_1\subset L_1\subset P_1$. Clearly  $Q^{m-1}$ is
quasi-permutation as a $W$-lattice because so is $Q=\ker[\mathbb
Z[\Sym_n/\Sym_{n-1}]\to\mathbb Z]$. By assumption, $L$ is a
quasi-permutation $W^m$-lattice, hence it is quasi-permutation as a
$W$-lattice. Since $L$ and $Q^{m-1}$ are quasi-permutation
$W$-lattices, we see from  \eqref{eq:sum} that $L_1\sim L_1\oplus Q^{m-1}\cong L\sim 0$,
so that $L_1$ is a quasi-permutation $W$-lattice.
By \cite[Proposition 5.1]{LPR06} it follows
that $L_1=Q_1$, hence $S=0$, and $P^m_S=Q^m$. Thus $L=Q^m$, which
proves (d) for $m$ and completes the proofs of Proposition
\ref{prop:An} and Theorem \ref{thm.SLm}.
\end{proof}

\subsection{Case $\BB_n$ $(n\ge 3)$ and $\DD_n$ $(n\ge 4)$}

Let $n \ge 7$, let $R$ be the root system of $\gSpin_n$
(of type  $\BB_{(n-1)/2}$ for $n$ odd or of type $\DD_{n/2}$ for $n$ even)
and let $M$ be the character lattice of $\gSO_n$.
If $n$ is odd, then $M=Q$; if $n$ is even, then
$Q\subsetneq M\subsetneq P$. Set $F:=P/M \cong \Z/2\Z$.

\begin{theorem} \label{thm.SpinGE7}
Let $G = (\gSpin_n)^m/C$, where $n\ge 7$ and  $C$ is
a central subgroup of $(\gSpin_n)^m$.
Then the following conditions are equivalent:
\begin{enumerate}[\upshape(a)]
\item $G$ is Cayley,
\item $G$ is stably Cayley,
\item the character lattice $\X(G)$ of $G$ is quasi-permutation,
\item $\X(G) \cong M^m$, where $M=\X(\gSO_n)$,
\item $G$ is isomorphic to $(\gSO_n)^m$.
\end{enumerate}
\end{theorem}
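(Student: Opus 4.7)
The plan is to mirror the proof of Theorem~\ref{thm.SLm} (the type $\AA_{n-1}$ case), with Proposition~\ref{prop1.not-qp} playing the role that~\cite[Proposition~5.1]{LPR06} played there. The easy implications dispose of quickly: (a)$\Rightarrow$(b) is trivial; (b)$\Rightarrow$(c) is Theorem~\ref{thm.main1}; (e)$\Rightarrow$(a) follows because $\gSO_n$ is Cayley via the classical Cayley transform (\cite[Example~1.16]{LPR06}) and a direct product of Cayley groups is Cayley; (d)$\Rightarrow$(e) holds since a semisimple $k$-group isogenous to $(\gSpin_n)^m$ is determined up to $k$-isomorphism by its character lattice.

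For the main implication (c)$\Rightarrow$(d) I would set $R := \BB_l$ if $n = 2l+1$ and $R := \DD_l$ if $n = 2l$, and let $Q,P,W$ denote the root lattice, weight lattice, and Weyl group of $R$. Then $L := \X(G)$ is a $W^m$-sublattice of $P^m$ containing $Q^m$, and the goal is to show $L = M^m$. I would argue by induction on $m$. The base case $m = 1$ is immediate from~\cite[Theorem~1.28]{LPR06}: the only stably Cayley quotient of $\gSpin_n$ by a central subgroup is $\gSO_n$, so $L = M$.

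For the inductive step ($m \ge 2$), set $I_j := \{1, \ldots, m\} \setminus \{j\}$. By Lemma~\ref{LPR-reduction-two}, each $L \cap P_{I_j}$ is quasi-permutation as a $W_{I_j}$-lattice, lies between $Q_{I_j}$ and $P_{I_j}$, and hence equals $M_{I_j}$ by the induction hypothesis; summing over $j$ gives $L \supseteq M_{I_1}+M_{I_2}= M^m$. Setting $\bar L := L/M^m \subseteq (P/M)^m$, a short lift-and-subtract argument using $L \cap P_{I_j} = M_{I_j}$ shows $\bar L \cap F_{I_j} = 0$ for each $j$, so $\bar L$ projects injectively onto each coordinate factor of $(P/M)^m$. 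Since $P/M \cong \Z/2\Z$ in both cases (for $\BB_l$ because $M = Q$; for $\DD_l$ by inspection of $P/Q$), it follows that $\bar L$ is either $0$ or the line spanned by $(1, \ldots, 1)$.

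The main obstacle is ruling out the latter possibility. If $\bar L = \langle (1, \ldots, 1)\rangle$, then $L$ is generated by $M^m$ and the half-sum $\tfrac{1}{2}\sum_{j=1}^m\sum_{s=1}^l \ve_s^{(j)}$; unwinding the definitions, this is precisely the $W^m$-lattice produced by Proposition~\ref{prop1.not-qp} for the Dynkin diagram $\Delta = R^m$ (viewed as a disjoint union of $m$ copies of $R$). Since $|\Delta| = lm \ge 3$ under our hypotheses ($l \ge 3$ for $\BB_l$, $l \ge 4$ for $\DD_l$, combined with $m \ge 2$), that proposition shows $L$ is not quasi-invertible, contradicting the quasi-permutation assumption on $L$. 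Hence $\bar L = 0$ and $L = M^m$, completing the proof.
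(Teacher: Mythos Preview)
Your proof is correct but takes a somewhat different route from the paper's. The paper does not argue by induction on $m$. After establishing $\X(G)\supset M^m$ (via Lemma~\ref{LPR-reduction-two} applied to the singletons $I=\{i\}$ rather than your complements $I_j$), it sets $S=\X(G)/M^m\subset(\Z/2\Z)^m$, assumes $S\neq 0$, and invokes Corollary~\ref{cor:standard2}: since $S$ contains an element of order $|P/M|=2$, the lattice $\X(G)$ restricted to the \emph{diagonal} $W\subset W^m$ has a direct summand isomorphic to $P$ itself. Proposition~\ref{prop1.not-qp} applied to a single copy of $R$ (where already $|\Delta|=l\ge 3$) shows $P$ is not quasi-invertible, giving the contradiction. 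Your argument instead bypasses the standard-subgroup machinery of Section~\ref{sect10} entirely: the induction forces $\bar L$ to be $0$ or $\langle(1,\dots,1)\rangle$, and in the latter case you apply Proposition~\ref{prop1.not-qp} to the full diagram $R^m$ rather than a single factor. The paper's approach is shorter and, as its subsequent Remark explains, highlights why the same shortcut is unavailable for type~$\AA_{n-1}$ (where $P$ can be quasi-invertible). One minor point you elide: for $n=8$ (type $\DD_4$) there are three intermediate lattices between $Q$ and $P$, all isomorphic to $M$ via triality, so ``$L\cap P_{I_j}=M_{I_j}$'' should strictly read ``$\cong M_{I_j}$''; the paper handles this by a harmless abuse of notation.
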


\begin{proof}
Only (c) $\Longrightarrow$ (d) needs to be proved;
the other implications are easy.

Assume (c), i.e., $\X(G)$ is a quasi-permutation $W^m$-lattice.
Clearly $Q^m \subset \X(G) \subset P^m$.
We claim that $\X(G) \supset M^m$.
If $n$ is odd this is obvious, because $M^m=Q^m$.
If $n$ is even then by Lemma \ref{LPR-reduction-two},
$\X(G) \cap P_i$ is a quasi-permutation $W_i$-lattice.
Now by \cite[Theorem 1.28]{LPR06}, we have $\X(G)\cap P_i=M_i$.
Thus $\X(G) \supset M_1\oplus\dots\oplus M_m = M^m$,
as claimed.
(In the case $\DD_4$ we have $\X(G)\cap P_i\cong M_i$, and by abuse of notation
we write $M^m$ for $(\X(G)\cap P_1)\oplus\dots\oplus(\X(G)\cap P_m)$.)

We will now show that $\X(G)=M^m$. Assume the contrary.
Consider the surjection
$\lambda\colon P\to P/M\cong\Z/2\Z$. Set $S=\X(G)/M^m\subset
(\Z/2\Z)^m$, then $S\neq 0$. In the notation of
Lemma~\ref{lem:standard}, we have $\X(G) =P^m_S$. Since $S\neq 0$,
by Corollary~\ref{cor:standard2}
$\X(G)$ has a direct $W$-summand isomorphic to $P$.
By Proposition \ref{prop1.not-qp}, $P$ is not quasi-invertible,
hence  $\X(G)$ is not quasi-invertible as a $W$-lattice. It follows that
$\X(G)$ is not a quasi-invertible $W^m$-lattice, which contradicts (c).
Thus (d) holds, as desired.
\end{proof}

\begin{remark}
Alternatively, we can prove Theorem \ref{thm.SpinGE7}
similar to the proof of Proposition~\ref{prop:An}.
Namely, we prove by induction that $\X(G)=M^m$ using Corollary \ref{cor:standard1}.
Here we make use of the fact that by Proposition \ref{prop1.not-qp},
$P$ is not  quasi-permutation.
\end{remark}

\begin{remark}
Proposition \ref{prop:An} cannot be proved by an argument analogous
to the proof of Theorem \ref{thm.SpinGE7}. Indeed,
the proof of Theorem \ref{thm.SpinGE7} relies on the fact
that $\X(\gSpin_n)$ {\em is not} quasi-invertible
for $n\ge 7$ (see Proposition \ref{prop1.not-qp}). On the other hand,
$\X(\gSL_n)$ {\em is} quasi-invertible (though it is not quasi-permutation)
whenever $n$ is a prime; see~\cite[Proposition 9.1 and Remark 9.3]{CTS2}.
\end{remark}

\section{Proof of Theorem~\ref{thm:product-closed-field}
for $H$ of type $\AA_1 = \BB_1 = \CC_1$}
\label{sect.sl2}

We will continue using Notation \ref{subsec:Q-P-F-W}.
Let $R=\AA_1$.  Set $F=Q/P=\mathbb Z/2\mathbb Z$.

Let  $G = (\gSL_2)^m/C$, where $C$ is a subgroup of $Z((\gSL_2)^m) =
(\mu_2)^m$.
We have $Q^m\subset \X(G) \subset P^m$.
Set $S:=\X(G)/Q^m\subset F^m= (\Z/2\Z)^m$.

\begin{theorem} \label{thm.sl2}
Let $G = (\gSL_2)^m/C$, where $C$ is a subgroup of $Z((\gSL_2)^m) =
(\mu_2)^m$. Then the following conditions are equivalent:
\begin{enumerate}[\upshape(a)]
\item $G$ is Cayley,
\item $G$ is stably Cayley,
\item the character lattice $\X(G)$ is a quasi-permutation $W^m$-lattice,
\item $S:=\X(G)/Q^m$ is an almost coordinate subspace
of $F^m = (\bbZ/2 \bbZ)^m$,
\item $G$ decomposes into a direct product of normal subgroups
$G_1 \times_k \dots \times_k G_s$, where
each $G_i$ is isomorphic to either $\gSL_2$, $\gPGL_2$ or $\gSO_4$.
\end{enumerate}
\end{theorem}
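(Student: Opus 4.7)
The plan is to prove the cycle (a) $\Rightarrow$ (b) $\Rightarrow$ (c) $\Rightarrow$ (d) $\Rightarrow$ (e) $\Rightarrow$ (a). The implications (a) $\Rightarrow$ (b) and (b) $\Rightarrow$ (c) are, respectively, trivial and immediate from Theorem~\ref{thm.main1}. For (e) $\Rightarrow$ (a), each of $\gSL_2 \cong \Sympl_2$, $\gPGL_2$ and $\gSO_4$ is Cayley via the classical Cayley transform (cf.~the proof of Theorem~\ref{cor.main2}), and a product of Cayley groups is Cayley.

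For (d) $\Rightarrow$ (e), given an almost coordinate basis $e_{i_1},\ldots,e_{i_r}, e_{j_1}+e_{h_1}, \ldots, e_{j_s}+e_{h_s}$ of $S$, set $K := \{1,\ldots,m\} \setminus (\{i_1,\ldots,i_r\} \cup \{j_1,h_1,\ldots,j_s,h_s\})$. Under the perfect pairing $\mu_2^m \times F^m \to \mu_2$, the subgroup $C = S^\perp \subset \mu_2^m$ is cut out by the independent conditions $\epsilon_{i_k}=1$ and $\epsilon_{j_\alpha}=\epsilon_{h_\alpha}$; these conditions factor with respect to the partition of $\{1,\ldots,m\}$ into the singletons $\{i_k\}$, the pairs $\{j_\alpha,h_\alpha\}$, and the singletons $\{k\}$ for $k \in K$. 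Correspondingly, $G = (\gSL_2)^m/C$ decomposes as a direct product of the normal subgroups associated to each block, contributing one $\gSL_2$ factor per $i_k$, one $\gSO_4 \cong (\gSL_2 \times_k \gSL_2)/\Delta\mu_2$ per pair $\{j_\alpha,h_\alpha\}$, and one $\gPGL_2$ per $k \in K$.

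The heart of the proof is (c) $\Rightarrow$ (d), which I would prove by induction on $m$. The base cases $m=1,2$ are trivial, since every subspace of $F^m$ is almost coordinate for $m \leq 2$. For $m \geq 3$, assume (c) holds; then Lemma~\ref{LPR-reduction-two} applied to $W^m = W_1 \times \cdots \times W_m$ shows that $\X(G) \cap P_I$ is quasi-permutation for every $I \subsetneq \{1,\ldots,m\}$, so by induction $S \cap F_I$ is almost coordinate in $F_I$. For $m \geq 4$, Proposition~\ref{prop.combinatorics} then gives that either $S$ is almost coordinate or $S = \Span\{(1,\ldots,1)\}$. For $m = 3$, a direct enumeration of subspaces of $F^3$ identifies two further exceptional cases compatible with the induction hypothesis: $S = \Span\{(1,1,1)\}$ and $S = \ker(x_1+x_2+x_3)$.

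The key obstacle is ruling out these exceptional subspaces. For $m = 3$ and $S = \ker(x_1+x_2+x_3)$, the lattice $\X(G) = \{a \in \Z^3 : a_1+a_2+a_3 \equiv 0 \pmod 2\}$ is precisely the non-quasi-invertible $(\Z/2\Z)^3$-lattice of Proposition~\ref{prop2.not-qp}, contradicting (c). For $m \geq 3$ and $S = \Span\{(1,\ldots,1)\}$, one has $\X(G) = \{a \in \Z^m : a_1 \equiv \cdots \equiv a_m \pmod 2\}$; setting $\Gamma := \langle c_1 c_2, c_2 c_3 \rangle \cong (\Z/2\Z)^2 \subset W^m$ (which acts trivially on coordinates $\geq 4$), the projection to the first three coordinates yields a short exact sequence of $\Gamma$-lattices
\[ 0 \to (2\Z)^{m-3} \to \X(G) \to \X_3 \to 0, \]
where $\X_3 := \{a \in \Z^3 : a_1 \equiv a_2 \equiv a_3 \pmod 2\}$ and the kernel is a trivial, hence permutation, $\Gamma$-lattice. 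A direct orbit computation identifies $\X_3 \cong J_\Gamma$ as a $\Gamma$-module via the generator $(1,1,1)$ (the orbit spans $\X_3$ and satisfies only the relation $N_\Gamma \cdot (1,1,1) = 0$), so $\Sha(\Gamma, \X_3) \cong \Z/2\Z$ by Proposition~\ref{thm:J-Gamma}. Since a permutation kernel in a short exact sequence preserves the flasque class modulo permutation (the standard consequence of the horseshoe lemma for flasque resolutions, cf.~\cite{CTS1}), $\X(G)$ is not a quasi-invertible $\Gamma$-lattice; since quasi-invertibility descends from $W^m$ to any subgroup, $\X(G)$ is not a quasi-invertible $W^m$-lattice, contradicting (c).
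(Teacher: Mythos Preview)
Your overall strategy coincides with the paper's: the same cycle of implications, the same inductive structure for (c) $\Rightarrow$ (d), the same use of Lemma~\ref{LPR-reduction-two} and Proposition~\ref{prop.combinatorics}, and the same appeal to Proposition~\ref{prop2.not-qp} for the $m=3$ subspace $\ker(x_1+x_2+x_3)$. The argument for $m=3$ and $S=\langle(1,1,1)\rangle$ is also fine, since there your short exact sequence degenerates to $\X(G)=\X_3\cong J_\Gamma$.

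There is, however, a genuine gap in your treatment of $S=\Span\{(1,\dots,1)\}$ for $m\ge 4$. Your claim that ``a permutation kernel in a short exact sequence preserves the flasque class'' is false. The defining sequence $0\to\Z\to\Z[\Gamma]\to J_\Gamma\to 0$ is itself a counterexample: the kernel $\Z$ is permutation, the middle term $\Z[\Gamma]$ is permutation (so $[\Z[\Gamma]]^{\fl}=0$), yet $[J_\Gamma]^{\fl}\ne 0$ since $J_\Gamma$ is not even quasi-invertible. The horseshoe construction you invoke would require extending the identity map on the kernel to a $\Gamma$-retraction of $\X(G)\to\Z^{m-3}$, and one checks that $\Hom_\Gamma(\X_3,\Z)=0$, so no such splitting exists. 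Thus your sequence $0\to\Z^{m-3}\to\X(G)\to J_\Gamma\to 0$ does not by itself show that $\X(G)$ fails to be quasi-invertible for your chosen $\Gamma$.

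The paper's remedy (Proposition~\ref{prop1.not-qp}) is to let $\Gamma$ depend on $m$: partition $\{1,\dots,m\}$ into three nonempty blocks $U_1,U_2,U_3$ and set $\iota(\gamma_\kappa)=\prod_{s\notin U_\kappa} c_s$. With this choice the $\Gamma$-orbit of $\beta=(1,\dots,1)$ sums to $0$, so the span $M_0$ of that orbit is isomorphic to $J_\Gamma$ and is a \emph{direct summand} of $\X(G)$ (the complement being a sum of rank-one lattices). Then $\Sh(\Gamma,\X(G))\supset\Sh(\Gamma,J_\Gamma)=\Z/2\Z$ immediately, and $\X(G)$ is not quasi-invertible. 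Replacing your $\Gamma=\langle c_1c_2,c_2c_3\rangle$ by this $m$-dependent embedding fixes the gap without altering the rest of your argument.
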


\begin{remark}\label{rem:uniqueness-direct-factors}
The set of normal subgroups $G_1, \dots, G_s$ in part (e)
is uniquely determined by $G$; see
Remark~\ref{rem.almost-coordinate-uniqueness}.
\end{remark}

\begin{proof}[Proof of Theorem~$\ref{thm.sl2}$]
Only the implication (c) $\Longrightarrow$ (d) needs to be proved;
all the other implications are easy.  The implication (c)
$\Longrightarrow$ (d) follows from the next proposition.
\end{proof}

\begin{proposition}\label{prop:A1}
Let $R=\AA_1$ and let $L$ be an intermediate $W$-lattice
between $Q^m$ and $P^m$, i.e., $Q^m\subset L\subset P^m$. Write
$S=L/Q^m\subset F^m=(\Z/2\Z)^m$. Then $L$ is quasi-permutation if
and only if  $S$ is almost coordinate.
\end{proposition}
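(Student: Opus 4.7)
I plan to prove the two directions of Proposition~\ref{prop:A1} separately. For the ``if'' direction, suppose $S$ has an almost coordinate basis $e_{i_1},\dots,e_{i_r},\ e_{j_1}+e_{h_1},\dots,e_{j_s}+e_{h_s}$ with all subscripts distinct, and let $I'$ be the complement in $\{1,\dots,m\}$ of these subscripts. A direct inspection of the defining condition $L=\{(a_1,\dots,a_m)\in P^m:(\lambda(a_1),\dots,\lambda(a_m))\in S\}$ yields the direct sum decomposition
\[
L\ =\ \bigoplus_{k=1}^r P_{i_k}\ \oplus\ \bigoplus_{l=1}^s L_{\{j_l,h_l\}}\ \oplus\ \bigoplus_{i\in I'}Q_i,
\]
where $L_{\{j,h\}}:=\{(a,b)\in P_j\oplus P_h:\lambda(a)=\lambda(b)\}$ is the character lattice of $\gSO_4$. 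This matches the product decomposition $W^m=\prod_k W_{i_k}\ \times\ \prod_l(W_{j_l}\times W_{h_l})\ \times\ \prod_{i\in I'}W_i$. Each summand has rank at most $2$, hence is quasi-permutation over the relevant subgroup by Lemma~\ref{lem:Voskresenskii}, and Lemma~\ref{LPR-reduction-product} then gives that $L$ is a quasi-permutation $W^m$-lattice.

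For the ``only if'' direction I proceed by induction on $m$. The cases $m\le 2$ are immediate, since then every such $L$ has rank at most $2$ (hence is quasi-permutation by Lemma~\ref{lem:Voskresenskii}) and every subspace of $F^m$ is already almost coordinate. The case $m=3$ must be handled directly: inspecting the subspaces of $F^3$, the only ones that fail to be almost coordinate are the diagonal line $\Span\{e_1+e_2+e_3\}$ and the sum-zero hyperplane $\{v\in F^3:v_1+v_2+v_3=0\}$. When $S$ equals the diagonal, identifying $\omega_i\leftrightarrow\tfrac12\ve_i$ identifies the corresponding $L$ with the lattice $M$ of Proposition~\ref{prop1.not-qp} for $\Delta=\BB_1^3$, which is not quasi-invertible. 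When $S$ equals the hyperplane, identifying $P_i\cong\Z$ via $\omega_i\mapsto 1$ identifies $L$ with the $(\Z/2\Z)^3$-lattice of Proposition~\ref{prop2.not-qp}, which is again not quasi-invertible. So in either case $L$ cannot be quasi-permutation.

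For the inductive step at rank $m\ge 4$, assume the only-if direction has been established for all smaller ranks. For every proper subset $I\subsetneq\{1,\dots,m\}$, applying Lemma~\ref{LPR-reduction-two} to the decomposition $V=V_I\oplus V_{I^c}$ gives that $L\cap P_I$ is a quasi-permutation $W_I$-lattice, and a direct check yields $(L\cap P_I)/Q_I=S\cap F_I$. The induction hypothesis then forces $S\cap F_I$ to be almost coordinate in $F_I$ for every proper $I$. Proposition~\ref{prop.combinatorics}, which applies precisely because $m\ge 4$, now yields that $S$ is either $\Span\{(1,1,\dots,1)\}$ or almost coordinate; the first possibility is ruled out via Proposition~\ref{prop1.not-qp} applied to $\Delta=\BB_1^m$ (with $|\Delta|=m\ge 3$), exactly as in the $m=3$ diagonal case, so $S$ must be almost coordinate. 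The main obstacle of this strategy is the exceptional $m=3$ case, where Proposition~\ref{prop.combinatorics} does not apply and the extra non-almost-coordinate ``sum-zero'' subspace arises; that case is resolved only through the independent input of Proposition~\ref{prop2.not-qp}.
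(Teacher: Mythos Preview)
Your proof is correct and follows essentially the same approach as the paper's: the ``if'' direction via the direct sum decomposition and Lemmas~\ref{LPR-reduction-product} and~\ref{lem:Voskresenskii}, and the ``only if'' direction by induction on $m$ with the base cases $m\le 2$ trivial, the case $m=3$ handled separately via Propositions~\ref{prop1.not-qp} and~\ref{prop2.not-qp}, and the induction step $m\ge 4$ using Lemma~\ref{LPR-reduction-two} together with Proposition~\ref{prop.combinatorics}. Your explicit identifications of the $m=3$ lattices with those in Propositions~\ref{prop1.not-qp} and~\ref{prop2.not-qp} are a bit more detailed than the paper's, but the argument is the same.
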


\begin{proof}
The ``if" assertion follows easily from Lemmas
\ref{LPR-reduction-product} and \ref{lem:Voskresenskii}. To prove
the ``only if'' assertion, we begin by considering three special
cases which will be of particular interest to us.

\smallskip
{\bf Case 1}: $m \le 2$. Here every subspace
of $(\bbZ/2 \bbZ)^m$ is almost coordinate, and
condition (d) holds automatically.

\smallskip
{\bf Case 2}: $S$ is the line $\langle\bo\rangle=\{0,\bo\}
\subset(\bbZ/2 \bbZ)^m$, where $\bo=(1,\dots, 1)$.

This $S=\langle\bo\rangle$ is not almost coordinate for any $m \ge
3$. Thus we need to show that (c) does not hold, i.e., the lattice
$L=P^m_{\langle\bo\rangle}$ is not quasi-permutation. This lattice
is isomorphic to the lattice $M$ described at the beginning of
Section~\ref{sect.family}, in the case where $\Delta$ is the disjoint union
of $m$ copies of $\BB_1$ (or, equivalently, of $\AA_1$)
for $m\ge 3$. By Proposition~\ref{prop1.not-qp}, for $m \ge 3$,
the lattice $M \simeq L = P^m_{\langle\bo\rangle}$, is
not quasi-invertible, hence not quasi-permutation, as claimed.

\smallskip
{\bf Case 3}: $m = 3$. There are two subspaces $S$ of $(\bbZ/2 \bbZ)^3$
that are not almost coordinate:
(i) the line $\langle\mathbf{1}_3\rangle$ and
(ii) the 2-dimensional subspace
cut out by $x_1 + x_2 + x_3 = 0$.
Once again we need to show that in both
of these cases $L$ is not quasi-permutation.

(i) is covered by Case 2 (with $m = 3$). If $S$ is as in (ii),
then $L$ is isomorphic to the lattice $M$ defined
in the statement of Proposition~\ref{prop2.not-qp}.
By this proposition, $L$ is not quasi-invertible, hence not quasi-permutation, as claimed.

\smallskip
We now proceed with the proof of the proposition by induction on $m \ge 1$.
The base case, where $m \le 3$, is covered by
Cases 1 and 3 above.  For the induction step assume that
$m \ge 4$ and that the proposition has been established for all $m'\le m-1$.

Suppose that for some subspace $S=L/Q^m\subset (\bbZ/ 2\bbZ)^m$
we know that $L=P^m_S$ is quasi-permutation.
Our goal is to show that $S$ is almost coordinate.

Since  $L$ is  quasi-permutation, by Lemma \ref{LPR-reduction-two},
we conclude that $L\cap P_I$ is a quasi-permutation $W_I$-lattice
for every $I = \{ i_1, \dots, i_r \} \subsetneq \{ 1 , \dots, m \}$.
By the induction hypothesis, $(L\cap P_I)/Q_I=S\cap F_I$ is an almost
coordinate subspace in $F_I=(\Z/2\Z)^r$.

Now Proposition~\ref{prop.combinatorics} tells us that $S$ is either
the line $\go$, or almost coordinate. If $S$ is the line $\go$, then
$L$ is not quasi-permutation by Case 2, contradicting our
assumption. Thus  $S$ is almost coordinate, which completes the
proofs of Proposition \ref{prop:A1} and Theorem \ref{thm.sl2}.
\end{proof}

\section{Proof of Theorem~\ref{thm:product-closed-field}
for $H$ of types $\AA_2$, $\BB_2 = \CC_2$, and $\AA_3 = \DD_3$}
\label{sect.sl3}

\subsection{Case $\AA_2$} Once again, we will continue using Notation
\ref{subsec:Q-P-F-W}. Set $F:=P/Q\simeq\Z/3\Z$.

\begin{theorem} \label{thm.SL3}
Let $G = (\gSL_3)^m/C$, where  $C$ is a subgroup of
$(\mu_3)^m=Z((\gSL_3)^m)$. Then the following conditions are
equivalent:
\begin{enumerate}[\upshape(a)]
\item $G$ is Cayley,
\item $G$ is stably Cayley,
\item the character lattice $\X(G)$ is a quasi-permutation $W^m$-lattice,
\item $S:=\X(G)/Q^m$ is a coordinate subspace of $F^m\simeq (\bbZ/3 \bbZ)^m$,
\item $G$ decomposes into a direct product of normal subgroups
$G_1 \times_k \dots \times_k G_s$, where
each $G_i$ is isomorphic to either $\gSL_3$ or $\gPGL_3$.
\end{enumerate}
\end{theorem}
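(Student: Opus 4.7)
The plan is to establish the chain (a) $\Rightarrow$ (b) $\Rightarrow$ (c) $\Rightarrow$ (d) $\Leftrightarrow$ (e) $\Rightarrow$ (a), following the template of Theorems \ref{thm.SpinGE7} and \ref{thm.sl2}. Among these, (a) $\Rightarrow$ (b) is trivial, and (b) $\Rightarrow$ (c) is immediate from Theorem \ref{thm.main1}. For (e) $\Rightarrow$ (a), both $\gSL_3$ and $\gPGL_3$ are Cayley (cf.~\cite{LPR06}), and a direct product of Cayley groups is Cayley via the product of the component Cayley maps. The equivalence (d) $\Leftrightarrow$ (e) comes from unpacking the correspondence between central quotients of $(\gSL_3)^m$ and $W^m$-sublattices of $P^m$ containing $Q^m$: if $S = F_I = \Span_F\{e_i : i \in I\}$ is the coordinate subspace associated with some $I \subset \{1,\dots,m\}$, then $\X(G) = P^m_S = \bigoplus_{i \in I} P \oplus \bigoplus_{i \notin I} Q$, which is precisely the character lattice of $\prod_{i \in I} \gPGL_3 \times \prod_{i \notin I} \gSL_3$; conversely, a product decomposition of the form in (e) forces $S$ to be coordinate.

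The main step is (c) $\Rightarrow$ (d), which the plan is to deduce from Proposition \ref{prop:coordinate-qp} applied to $M = P$ (the weight lattice of $\AA_2$), $W = \Sym_3 = W(\AA_2)$, $F = P/Q \cong \bbZ/3\bbZ$, and $\lambda \colon P \twoheadrightarrow F$ the quotient map (so $\ker \lambda = Q$). Two hypotheses must be verified. Hypothesis (a) of that proposition---that $P$ and $Q$ be quasi-permutation $\Sym_3$-lattices---is immediate from Lemma \ref{lem:Voskresenskii}, since both are of rank $2$. Hypothesis (b)---that for every one-dimensional subspace $S_1 = \Span_F\{(a_1, \dots, a_m)\} \subset F^m$ with all $a_i \ne 0$ (equivalently, each $a_i \in \{1, 2\}$), the $(\Sym_3)^m$-lattice $P^m_{S_1}$ be not quasi-permutation---is exactly the content of Proposition \ref{prop:SL3}: upon unpacking the definitions, $P^m_{S_1}$ coincides with the lattice $L_{\ba}$ of that proposition (generated by $Q^m$ and $\sum_i a_i \omega_1^{(i)}$), which is shown there to be non-quasi-invertible, and a fortiori non-quasi-permutation.

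The chief technical obstacle, namely the non-quasi-invertibility of $L_{\ba}$, has therefore already been handled in Proposition \ref{prop:SL3} via reduction to the non-quasi-invertibility of $\Lambda_6$ as a $\Sym_3 \times \Sym_3$-lattice, which was established in \cite[Proposition 7.4]{LPR06} by a flasque-class computation. With both hypotheses of Proposition \ref{prop:coordinate-qp} in hand, the implication (c) $\Rightarrow$ (d) follows at once, completing the proof of Theorem \ref{thm.SL3}.
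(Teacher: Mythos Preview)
Your proposal is correct and follows essentially the same route as the paper's own proof: both verify hypotheses (a) and (b) of Proposition~\ref{prop:coordinate-qp} (with $M=P$, $\ker\lambda=Q$) and invoke Proposition~\ref{prop:SL3} for the non-quasi-permutation input, exactly as you do. The only cosmetic differences are that the paper cites \cite[Theorem~1.28]{LPR06} rather than Lemma~\ref{lem:Voskresenskii} for the quasi-permutation of $P$ and $Q$, and leaves the implications (d)$\Leftrightarrow$(e) and (e)$\Rightarrow$(a) as ``easy'' without spelling out the lattice--group dictionary you provide.
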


\begin{proof}
Once again, only the implication (c) $\Longrightarrow$ (d)
needs to be proved; the other implications are easy.

Clearly $Q^m\subset \X(G) \subset P^m$; assume
$\X(G)$ is quasi-permutation.  The $W$-lattices $P$ and $Q$ are
quasi-permutation, see \cite[Theorem 1.28]{LPR06}. If $S\subset F^m$ is the
$1$-dimensional subspace $\ga$ spanned by a vector  $\aa=(a_1,
\dots, a_m)$ such that  $a_1\ne 0,\ \dots, a_m \ne 0$, then from
Proposition \ref{prop:SL3} it follows that $\X(G) = P^m_\ga$ is not a
quasi-permutation $W^m$-lattice, a contradiction. Now by Proposition
\ref{prop:coordinate-qp}, $\X(G) =P^m_S$ is
quasi-permutation if and only if $S$ is coordinate. This
shows that (c) $\Longrightarrow$ (d).
\end{proof}

\subsection{Case $\BB_2=\CC_2$} Set $F:=P/Q=\Z/2\Z$.

\begin{theorem} \label{thm.Spin5}
Let $G = (\gSpin_5)^m/C$, where  $C$ is
a subgroup of the finite $k$-group
$(\mu_2)^m=\ker[(\gSpin_5)^m\to (\gSO_5)^m]$. Then
the following conditions are equivalent:
\begin{enumerate}[\upshape(a)]
\item $G$ is Cayley,
\item $G$ is stably Cayley,
\item the character lattice $\X(G)$ is quasi-permutation,
\item $S:=\X(G)/Q^m$ is a coordinate subspace of $F^m=(\bbZ/2 \bbZ)^m$,
\item $G$ decomposes into a direct product of normal subgroups
$G_1 \times_k \dots \times_k G_s$, where
each $G_i$ is isomorphic to either $\gSpin_5=\gSp_4$ or $\gSO_5$.
\end{enumerate}
\end{theorem}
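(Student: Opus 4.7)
The implications (a)$\Rightarrow$(b)$\Rightarrow$(c) are immediate, (e)$\Rightarrow$(a) follows because $\gSpin_5=\gSp_4$ and $\gSO_5$ are both Cayley via the classical Cayley transform (\cite[Example~1.16]{LPR06}) and a direct product of Cayley groups is Cayley, and (d)$\Leftrightarrow$(e) holds as follows: a coordinate subspace $S=\langle e_i:i\in I\rangle\subset F^m$ yields $\X(G)=P^m_S=\bigoplus_i L_i$ with $L_i=P$ for $i\in I$ and $L_i=Q$ otherwise, and the character lattice splitting as such a direct sum forces $C$ to split as a product of central subgroups of the $m$ factors, whence $G\cong \prod_{i\in I}\gSpin_5\times\prod_{i\notin I}\gSO_5$; the converse is obvious. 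So the heart of the proof is (c)$\Rightarrow$(d), and my plan follows the template of Theorem~\ref{thm.SL3}.

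For $m=1$ every subspace of $F=\bbZ/2\bbZ$ is coordinate, so (d) holds automatically. For $m\ge 2$ I intend to apply Proposition~\ref{prop:coordinate-qp} with $p=2$, taking its $M$ to be our weight lattice $P=P(\BB_2)$ and $\lambda\colon P\to P/Q\cong\bbZ/2\bbZ$ the natural surjection. Its hypothesis~(a) is satisfied: $P=\X(\gSpin_5)$ and $Q=\ker\lambda=\X(\gSO_5)$ are quasi-permutation $W$-lattices by \cite[Theorem~1.28]{LPR06}, since both $\gSpin_5$ and $\gSO_5$ are stably Cayley.

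The essential step is verifying hypothesis~(b): that $P^m_{S_1}$ is not quasi-permutation for every $1$-dimensional subspace $S_1\subset F^m$ spanned by a vector with all nonzero coordinates. Since $F=\bbZ/2\bbZ$, the only such $S_1$ is $\go$ with $\bo=(1,\ldots,1)$. I claim $P^m_{\go}$ is isomorphic to the lattice $M$ built in Proposition~\ref{prop1.not-qp} with $\Delta=\BB_2\sqcup\cdots\sqcup\BB_2$ ($m$ copies). Indeed, in the standard realization of $\BB_2$ one has $Q=\Z\ve_1\oplus\Z\ve_2$ and $P=Q+\Z\cdot\tfrac{1}{2}(\ve_1+\ve_2)$; using one such realization for each of the $m$ factors of $\gSpin_5$, the preimage $P^m_{\go}$ is generated over $Q^m$ by the single element $\sum_{i=1}^m\tfrac{1}{2}(\ve_{2i-1}+\ve_{2i})=\tfrac{1}{2}\sum_{s\in S}\ve_s$, which is exactly the distinguished generator $\beta$ of Proposition~\ref{prop1.not-qp}. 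Since $|\Delta|=2m\ge 4$ and every component of $\Delta$ is of type $\BB$, that proposition asserts the resulting lattice is not quasi-invertible, and in particular not quasi-permutation.

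With both hypotheses of Proposition~\ref{prop:coordinate-qp} verified, it yields that $\X(G)=P^m_S$ is quasi-permutation if and only if $S$ is coordinate, which is precisely (c)$\Leftrightarrow$(d). The only genuinely new input is the identification of $P^m_{\go}$ with a lattice already known to be non-quasi-invertible, and once this identification is in hand the rest is a direct application of Propositions~\ref{prop:coordinate-qp} and~\ref{prop1.not-qp}.
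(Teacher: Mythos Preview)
Your proof is correct and follows essentially the same route as the paper's: verify the easy implications, then establish (c)$\Rightarrow$(d) by invoking Proposition~\ref{prop:coordinate-qp}, checking its hypothesis~(a) via \cite[Theorem~1.28]{LPR06} and its hypothesis~(b) via Proposition~\ref{prop1.not-qp} applied to $\Delta=(\BB_2)^m$. Your explicit identification of $P^m_{\go}$ with the lattice $M$ of Proposition~\ref{prop1.not-qp} (noting $Q(\BB_2)=\Z\ve_1\oplus\Z\ve_2$, $P(\BB_2)=Q+\Z\cdot\tfrac12(\ve_1+\ve_2)$, so the generator over $Q^m$ is exactly $\beta$) is a bit more detailed than the paper's bare citation, but the argument is the same.
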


\begin{proof}
As in the proof of Theorem \ref{thm.SL3}, we only need to establish
the implication (c) $\Longrightarrow$ (d). We have $Q^m\subset \X(G) \subset
P^m$. The $W$-lattices $P$ and $Q$ are quasi-permutation, see \cite[Theorem 1.28]{LPR06}.
If $S\subset F^m$ is the $1$-dimensional subspace
$\go$ spanned by the vector  $\bo=(1, \dots, 1)$ then by Proposition
\ref{prop1.not-qp}, $P^m_\go$ is not a quasi-invertible
$W^m$-lattice. Now by Proposition \ref{prop:coordinate-qp}, the
$W^m$-lattice $\X(G) =P^m_S$ is quasi-permutation if and only if $S$ is
coordinate, which completes the proof of the theorem.
\end{proof}

\subsection{Case $\AA_3=\DD_3$}

Here $P/Q\simeq\Z/4\Z$.
We set $M=\X(\gSO_6)$, then $M/Q\simeq\Z/2\Z$.

\begin{theorem} \label{thm.SO6}
Let $G = (\gSpin_6)^m/C$, where $C$ is
a subgroup of $Z(G)=(\mu_4)^m=\ker[(\gSpin_6)^m\to (\gPSO_6)^m]$.
We have $Q^m \subset \X(G) \subset P^m$, where $P$, $Q$ and $\X(G)$
are the character lattices of $\gPSO_6$,
$\gSpin_6$ and $G$, respectively.
Then the following conditions are equivalent:
\begin{enumerate}[\upshape(a)]
\item $G$ is Cayley,
\item $G$ is stably Cayley,
\item $\X(G)$ is quasi-permutation,
\item $\X(G) \subset M^m$ and $\X(G)/Q^m$ is a coordinate subspace
of $(M/Q)^m=(\bbZ/2 \bbZ)^m$,
\item $G$ decomposes into a direct product of normal subgroups
$G_1 \times_k \dots \times_k G_s$, where
each $G_i$ is isomorphic to either $\gSO_6$ or $\gPSO_6=\gPGL_4$.
\end{enumerate}
\end{theorem}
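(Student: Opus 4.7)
The implications (e)$\Rightarrow$(a)$\Rightarrow$(b)$\Rightarrow$(c) and (d)$\Rightarrow$(e) are routine: $\gSO_6$ (by the classical Cayley transform) and $\gPSO_6 \cong \gPGL_4$ (by \cite[Theorem 1.31]{LPR06}) are both Cayley, so any direct product of such factors is Cayley; (b)$\Rightarrow$(c) is Theorem \ref{thm.main1}; and if $S := \X(G)/Q^m$ equals the coordinate subspace $F_I$ of $(M/Q)^m$ for some $I \subset \{1,\dots,m\}$, then $\X(G) = \bigoplus_{i\in I} M_i \oplus \bigoplus_{i\notin I} Q_i$ is the character lattice of $\prod_{i\in I} \gSO_6 \times \prod_{i\notin I} \gPSO_6$, forcing $G$ to be this product. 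The substantive content lies in (c)$\Rightarrow$(d), which I plan to split into two steps.

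\textbf{Step 1: $\X(G) \subset M^m$.} Writing $\X(G) = P^m_S$ with $S \subset P^m/Q^m \simeq (\Z/4\Z)^m$, the condition $\X(G) \subset M^m$ is equivalent to $S \subset (M/Q)^m$; since $M/Q$ is the unique subgroup of order $2$ in $P/Q \simeq \Z/4\Z$, its failure means that $S$ contains an element of order $4$. In that case, Corollary \ref{cor:standard2}, applied with $\lambda\colon P \to P/Q$ and $n = 4$, supplies a direct $W^m$-summand of $\X(G)$ isomorphic to $P = P(\DD_3)$, viewed as a $W^m$-lattice through the $i$-th projection $W^m \twoheadrightarrow W$ for some $i$. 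Proposition \ref{prop1.not-qp}, applied with $\Delta = \DD_3$ (so $|\Delta| = 3$), says $P$ is not quasi-invertible as a $W$-lattice, and Lemma \ref{lem:q-inv-Gamma1} lifts this to the $W^m$-level, contradicting the quasi-permutation (hence quasi-invertible) hypothesis on $\X(G)$.

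\textbf{Step 2: $S$ is a coordinate subspace of $(M/Q)^m \simeq (\Z/2\Z)^m$.} Apply Proposition \ref{prop:coordinate-qp} with ambient lattice $M = \X(\gSO_6)$ and surjection $\lambda\colon M \to M/Q \simeq \Z/2\Z$. Its two hypotheses are verified as follows: (i) $M$ and $\ker\lambda = Q$ are quasi-permutation because $\gSO_6$ and $\gPSO_6$ are stably Cayley by \cite[Theorem 1.28]{LPR06} combined with Theorem \ref{thm.main1}; (ii) over $F = \Z/2\Z$ the only $1$-dimensional subspace of $F^m$ with all coordinates nonzero is $\langle \mathbf{1}_m\rangle$, and in the realization $M = \Z\varepsilon_1 \oplus \Z\varepsilon_2 \oplus \Z\varepsilon_3$ inside $\Q\DD_3$ the lattice $M^m_{\langle \mathbf{1}_m\rangle}$ is generated by $Q^m = (\Z\DD_3)^m$ together with $(\varepsilon_1,\dots,\varepsilon_1)$, which is precisely the lattice $L$ of Proposition \ref{prop:SO6} and hence is not quasi-invertible for $m \ge 2$. (The case $m = 1$ is trivial, as every subspace of $F^1$ is coordinate.) Proposition \ref{prop:coordinate-qp} then forces $S$ to be coordinate, completing the proof of (d).

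The main obstacles are securing the two non-quasi-invertibility inputs: the boundary case $|\Delta| = 3$ of Proposition \ref{prop1.not-qp} for the weight lattice $P(\DD_3) = P(\AA_3)$, and the diagonal intermediate lattice $L$ from Proposition \ref{prop:SO6}. Unlike in Theorem \ref{thm.SpinGE7}, where $\X(G) = M^m$ can be deduced in one stroke, here the proper intermediate subgroup $M/Q \subset P/Q \simeq \Z/4\Z$ necessitates the two-step argument: Step 1 uses non-invertibility of $P$ to cap $\X(G)$ inside $M^m$, and Step 2 uses non-invertibility of $L$ to nail down the coordinate structure of $S$ from below.
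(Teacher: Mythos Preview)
Your argument follows the paper's two-step strategy for (c)$\Rightarrow$(d) and is correct in outline, but Step~1 contains a small misreading of Corollary~\ref{cor:standard2}. That corollary (and the underlying Lemma~\ref{lem:standard}) treats $P^m_S$ as a $W$-module via the \emph{diagonal} embedding $W\hookrightarrow W^m$; the isomorphism $g_P=\id_P\otimes g$ with $g\in\GL_m(\Z)$ mixes the coordinate factors and is not $W^m$-equivariant, so it does not produce a $W^m$-summand of $\X(G)$ isomorphic to $P$ through a coordinate projection. (For instance, with $S=\langle(1,1)\rangle\subset(\Z/4\Z)^2$ one checks $P^2_S\cap P_i=Q_i$ for $i=1,2$, so no such $W^2$-summand exists.) The fix, which is exactly what the paper does, is to restrict $\X(G)$ to the diagonal $W\subset W^m$: there Corollary~\ref{cor:standard2} gives a direct $W$-summand isomorphic to $P$, and since $P$ is not quasi-invertible as a $W$-lattice by Proposition~\ref{prop1.not-qp}, neither is $\X(G)$ as a $W$-lattice, hence a fortiori not as a $W^m$-lattice. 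Your appeal to Lemma~\ref{lem:q-inv-Gamma1} is then superfluous. Step~2 and the remaining implications match the paper exactly.
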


\begin{proof}
Both $\gSO_6$ and $\gPSO_6=\gPGL_4$ are Cayley;
see~\cite[Introduction]{LPR06}.  Consequently,
(e) $\Longrightarrow$ (a). Thus we only need to show that
(c) $\Longrightarrow$ (d); the other implications are immediate.
Assume that $\X(G)$ is quasi-permutation.

First we claim that $\X(G) \subset M^m$. Indeed, assume the contrary.
Then $\X(G)/Q^m$ contains an element of order 4.
By Corollary \ref{cor:standard2},
the $W^m$-lattice $\X(G)$ restricted to the diagonal subgroup $W$
has a direct summand  isomorphic to the character lattice $P$
of $\gSpin_6$.  By Proposition \ref{prop1.not-qp},
the $W$-lattice $P$ is not quasi-invertible. We conclude that
$\X(G)$ is not  quasi-invertible as a $W$-lattice
and hence not a quasi-invertible $W^m$-lattice, contradicting
our assumption that $\X(G)$ is quasi-permutation. This proves the claim.

As we mentioned above, $\gSO_6$ and $\gPSO_6$ are both Cayley.
Hence, the $W$-lattices $M$ and $Q$ are quasi-permutation.
Set $F=M/Q\simeq\Z/2\Z$.
If $S:=\X(G)/Q^m\subset F^m$ is the $1$-dimensional subspace
$\go$ spanned by the vector  $\bo=(1, \dots, 1)$, then
by Proposition \ref{prop:SO6}, $\X(G)$ is not a
quasi-invertible $W^m$-lattice, a contradiction.
Now Proposition
\ref{prop:coordinate-qp} tells us that the $W^m$-lattice
$\X(G)/Q^m$ is coordinate in $(M/Q)^m$, and (d) follows.
\end{proof}

This completes the proof of Theorem~\ref{thm:product-closed-field}.

\section{Proof of Theorem~\ref{thm.main3}}
\label{sect.proof-of-thm.main3}

\begin{proof}
Clearly (b) implies (a), so
we only need to show that (a) implies (b).

Let $G$ be a stably Cayley simple $k$-group (not necessarily absolutely simple).
Then $\Gbar :=G\times_k \kbar$ is stably
Cayley over $\kbar$ and is of the form $H^m/C$, where $H$ is a
simple and simply connected $\kbar$-group and $C$ is a central
$k$-subgroup of $H^m$.  By Theorem \ref{thm:product-closed-field},
$\Gbar= G_{1,\kbar} \times_\kbar \dots\times_\kbar G_{s,\kbar}$,
where each $G_{i,\kbar}$ is either a stably Cayley simple group or is
isomorphic to $\gSO_{4,\kbar}$.
(Recall that $\gSO_{4,\kbar}$ is stably Cayley and semisimple,
but is not simple.)
Here we write $G_{i,\kbar}$ for the factors in order to emphasize
that they are defined over $\kbar$.

If $H$ is not of type $\AA_1$, then the subgroups $G_{i,\kbar}$ are simple and hence,
intrinsic in $\Gbar$: they are the minimal closed connected normal subgroups
of dimension $\ge 1$. If $H$ is of type  $\AA_1$, this is no longer
obvious, since some of the groups $G_{i,\kbar}$ may not be simple
(they may be isomorphic to $\gSO_{4,\kbar}$). However, in this case
the subgroups $G_{i,\kbar}$ are intrinsic in $\Gbar$ as well by
Remark~\ref{rem:uniqueness-direct-factors}.  Hence, in all cases,
the Galois group $\Gal(\kbar/k)$ permutes $G_{1,\kbar}, \dots, G_{s,\kbar}$. Since $G$ is
simple over $k$, this permutation action is transitive.

Let $l\subset\kbar$ be the subfield corresponding
to the stabilizer of $G_{1,\kbar}$ in $\Gal(\kbar/k)$.
Then $G_{1,\kbar}$
is $\Gal(\kbar/l)$-invariant, and we obtain an $l$-form of this
$\kbar$-group, which we will denote by
$ G_{1,l}$.  Then $G=R_{l/k} ( G_{1,l})$, where
$G_{1,l}$ is either absolutely simple or an $l$-form of $\gSO_{4,l}$.
If $G_{1,l}$ is absolutely simple, then $G_\kbar$ is a product of simple $\kbar$-groups,
and by Lemma \ref{lem:direct-product} $G_{1,l}$ is stably Cayley over $l$.
If $G_{1,l}$ is an $l$-form of $\gSO_{4,l}$, then
it has to be an {\em outer} $l$-form of the split $l$-form of $\gSO_4$,
hence an outer $l$-form of $\gSO_4$;
otherwise $G_{1,l}$ will not be $l$-simple and consequently,
$G$ will not be $k$-simple.
This completes the proof of Theorem~\ref{thm.main3}.
\end{proof}

\bigskip
\noindent
{\bf Acknowledgements}
\smallskip

  The authors are very grateful to Jean-Louis Colliot-Th\'el\`ene
for proving Lemma \ref{lem:q-inv-Gamma1} and for stating and proving Lemma \ref{lem:q-inv}.
The authors also wish to thank the anonymous referee for helpful remarks.

\bigskip
\noindent
{\bf Funding}
\smallskip

{Borovoi was partially supported
by the Hermann Minkowski Center for Geometry.}
{Kunyavski\u\i\ was partially supported
by the Minerva Foundation through the Emmy Noether Institute for
Mathematics and by the Israel Science Foundation, grant 1207/12.}
{Lemire and Reichstein were partially supported by Discovery Grants
from the Natural Sciences and Engineering Research Council of Canada.}

\end{document}